\documentclass[10pt,reqno]{amsart}

 \usepackage{amsmath,amsfonts,amssymb,amscd,amsthm,amsbsy,bbm, epsf,calc,enumerate,color,graphicx,verbatim,cite,import}
\usepackage{color}
\usepackage{datetime,appendix}
\usepackage{chapterbib,epstopdf}

\usepackage{float}

\usepackage{graphicx}

\usepackage{ifpdf}
\ifpdf
    \usepackage[colorlinks,citecolor=black,linkcolor=black]{hyperref}
    % must be before amsrefs for backrefs to work
\fi

%\titlespacing{\paragraph}{12pc}{1.5ex plus .1ex minus .2ex}{1pc}

%%%%%%%  PAGE STYLE/SIZING  %%%%%%%%%%%%
\textwidth=6.25truein
\textheight=8.5truein
\hoffset=-0.65truein
\voffset=-.5truein

%%%%%%%%

%\newenvironment{myfont}{\fontfamily{AMSb}\selectfont}{\par}
%\DeclareTextFontCommand{\textcmr}{\myfont}
%\def\varv{\textcmr{$v$}}

%%%%%%% Theorem Styles %%%%%%%%%%%%%%%%%%%
\usepackage[mathlines]{lineno}
%\linenumbers

\theoremstyle{plain}
\newtheorem{definition}{Definition}[section]
\newtheorem{thm}{Theorem}[section]
\newtheorem{theorem}[thm]{Theorem}

\newtheorem{lemma}[thm]{Lemma}

\newtheorem{corollary}[thm]{Corollary}
\newtheorem{prop}[thm]{Proposition}
\newtheorem{proposition}[thm]{Proposition}
         %% un-numbered

\theoremstyle{definition}

\theoremstyle{remark}                  %% For unnumbered Remarks, etc.

\newtheorem{remark}[thm]{Remark}

%%%%%%%  OTHER MACROS  %%%%%%%%%%%%%%%%%%%%%%%

\definecolor{darkgreen}{rgb}{0,0.4,0}

\newcommand{\e}{\eps}
\newcommand{\R}{\mathbb{R}}
%%%%%%%%%%%%%%%%%%%%%%%%%%%%%%%%%%%%%%%%%%%%%%
\def \l {\left(}
\def\r {\right)}

\newcommand{\calL}{{\mathcal L}}

\newcommand{\calG}{{\mathcal G}}

\newcommand{\eps}{\varepsilon}

\numberwithin{equation}{section}

\setlength{\parindent}{1em}
\setlength{\parskip}{0em}

\def\XXint#1#2#3{{\setbox0=\hbox{$#1{#2#3}{\int}$ }
\vcenter{\hbox{$#2#3$ }}\kern-.6\wd0}}

\title{Porous Medium Equation with A Drift: Free boundary Regularity}
 
%\subjclass[2010]{Primary REQUIRED; Secondary OPTIONAL}

%% Please use the newest classification -- 2010
%% available at  http://msc2010.org/MSC-2010-server.html
%% and the newest amsproc.cls -- from 2009!!
%% Please, classify to the third level,
%% e.g., 26A and 26Axx are not satisfsctory.

\author[Inwon Kim and Yuming Paul Zhang]{\bfseries Inwon Kim and  Yuming Paul Zhang}

\address{
(I. Kim) Department of Mathematics \\ % \hfill (Received 00 00 2010)\\
University of California   \\ %\hfill (Revised  00 00 2010)\\
Los Angeles\\
USA}
\email{ikim@math.ucla.edu}

\address{
(Y. Zhang) Department of Mathematics \\ % \hfill (Received 00 00 2010)\\
University of California   \\ %\hfill (Revised  00 00 2010)\\
Los Angeles\\
USA}
\email{yzhangpaul@math.ucla.edu}

%\author[ ]{\bfseries }%, Inwon Kim}

%\address{
%Department of Mathematics \\ % \hfill (Received 00 00 2010)\\
%University of California   \\ %\hfill (Revised  00 00 2010)\\
%Los Angeles\\USA}
%\email{yzhangpaul@math.ucla.edu}

%% OTHER AUTHOR(S):
%\author[]{}
%\address{ }
%\email{}

%\thanks{Partially supported by ... } %% optional

%\dedicatory{Communicated by }
%% We use this for communication information.
%% If you want do dedicate your paper to somebody, then please use \thanks{}

\begin{document}

%{\begin{flushleft}\baselineskip9pt\scriptsize
%PUBLICATIONS DE L'INSTITUT MATH\'EMATIQUE\newline
%Nouvelle s\'erie, tome 91(105) (2012), od--do \hfill DOI:
%\end{flushleft}}
\vspace{18mm} \setcounter{page}{1} \thispagestyle{empty}

\begin{abstract}
We study regularity properties of the free boundary for solutions of the porous medium equation with the presence of drift.   We show the $C^{1,\alpha}$ regularity of the free boundary, when the solution is directionally monotone in space variable in a local neighborhood. The main challenge lies in establishing a local non-degeneracy estimate (Theorem 1.3 and Proposition 1.5), which appears new even for the zero drift case.
\end{abstract}

\maketitle

%\vspace{.1cm}
%\noindent{\small {\bf Keywords:}  free boundaries, degenerate diffusion, smooth drift, propagation/formation of singularities.}

%\vspace{.1cm}
%\noindent{\small  {\bf 2010 Mathematics Subject Classification}: 35R35, 35K65,	35A21.}

%\bigskip

%\tableofcontents
\section{Introduction}

Let us consider the drift-diffusion equation
\begin{equation}\label{main}
\varrho_t=\Delta \varrho^m+\nabla\cdot (\varrho\, {\vec{b}}) \quad \hbox{ in } Q:=\mathbb{R}^d\times (0,\infty),
\end{equation}
with a smooth vector field $\vec{b}:Q \to \mathbb{R}^d$, a non-negative initial data $\varrho(\cdot,0)=\varrho_0$ and $m>1$.  
The nonlinear diffusion term in \eqref{main} represents an anti-congestion effect (\cite{BGHP,TBL,HW,W}). 
\medskip

 Our interest is on the regularity of the {\it free boundary}  $\partial\{\varrho>0\}$, which is present at all times if starting with a compactly supported initial data. We are motivated by the intriguing fact that the free boundary regularity is open even for the travelling wave solutions in two space dimensions, with a smooth and laminar drift $\vec{b}(x_1,x_2) = (\sin x_2 ,0)$ (see \cite{traveling}). Our analysis provides a starting point of the discussion in a general framework, but the full answer to this question remains open (see Theorem~\ref{thm:trav} and the discussion below). The presence of the drift generates several significant challenges that are new to the problem, as we will discuss below.

\medskip

To illustrate the regularizing mechanism of the interface, let us write \eqref{main} in the form of continuity equation,
$$
\varrho_t - \nabla\cdot ((\nabla u + \vec{b})\varrho)=0,
$$
where
\begin{equation}
    \label{rho u}
    u=\frac{m}{m-1}\varrho^{m-1}.
\end{equation}

Hence formally the normal velocity for the free boundary can be written as 
\begin{equation} \label{with drift speed}
   V = - (\nabla u+\vec{b})\cdot \vec{n} = |\nabla u| -\vec{b}\cdot \vec{n} \quad \hbox{ on } (x,t) \in \Gamma:=\partial\{u>0\},
\end{equation}
where $\vec{n}=\vec{n}_{x,t}$ is the outward normal vector at given boundary points. Given that $\varrho$ solves a diffusion equation, it would be natural to expect that the free boundary is regularized by the pressure gradient $|\nabla u|$ if $\vec{b}$ is smooth, as long as $u$ stays non-degenerate near the free boundary and topological singularities are ruled out. In general neither can be guaranteed even with zero drift.  Below we discuss our main results and new challenges in the context of literature. We will always assume that
\begin{equation}
    \label{maincondition}
    \vec{b}\in C^{3,1}_{x,t}(Q)\quad \text{ and }\quad \varrho_0\in L^1(\mathbb{R}^d)\cap L^\infty(\mathbb{R}^d).
\end{equation} 
%\textcolor{We will }

\medskip

\textbf{Literature}

\medskip

Let us first discuss the case $\vec{b}=0$, in which case  our problem \eqref{main} corresponds to the well-known {\it Porous Medium Equation} $(PME)$. In this case a vast amount of literature is available: we refer to the book \cite{book}. What follows is a briefly discussion of  several prominent results that are relevant to our results. Aronson and Benilan \cite{fundamentalest} showd the semi-convexity estimate $\Delta u> -\infty \;\text{ for } t>0$ which played a fundamental role in the regularity theory of $(PME)$.  In general there can be a {\it waiting time} for degenerate initial data, where the free boundary does not move and regularization is delayed. When the initial data $u_0=u(\cdot,0)$ has super-quadratic growth at the free boundary, Caffarelli and Friedman \cite{CFregularity} showed that  there is no waiting time and the support of solution strictly expands in time. There an expansion rate of the support was obtained, by showing that its free boundary can be represented as $t=S(x)$ where $S$ is H\"{o}lder continuous. To discuss further regularity results, it is natural to require some geometric properties of the solution to rule out topological singularities such as merging of two fingers. The $C^{1,\alpha}$ regularity of the free boundary is established by Caffarelli and Wolanski \cite{C1alpha}, under the assumption of non-degeneracy and Lipschitz continuity of solutions. Their assumptions are shown to hold after a finite time $T_0>0$ by Caffarelli, Vazquez and Wolanski \cite{CVWlipschitz}, where $T_0$ is the first time the support of solution expands to contain its initial convex hull.  More recently, Kienzler explored the stability of solutions that are close to the flat traveling wave fronts to $(PME)$ \cite{kienzler}. Later Kienzler, Koch and Vazquez \cite{flatness}  improved this result and showed that solutions that are locally close to the traveling waves are smooth: see further discussion on their result in comparison to ours below Theorem~\ref{thm 1.3}.

\medskip

 When $\vec{b} \neq 0$, few results are available on the free boundary regularity of \eqref{main}. With the exception of  the particular choice $\vec{b}=x$, {in general} there appears to be no change of coordinates that eliminates the drift dependence in  \eqref{main}. Numerical experiments in \cite{numerics} present the interesting possibility that an initially planar solution with smooth drift could develop corners without topological changes. However the non-degeneracy of pressure or the free boundary regularity is unknown even for  traveling wave solutions  in $\R^2$ (see \cite{traveling}).  By comparison, well-posedness and regularity theory for the solutions of \eqref{main} has been much better understood. Existence and uniqueness results are shown in \cite{Bertsch} and \cite{dib83} for weak solutions and in \cite{kimlei} for viscosity solutions. Asymptotic convergence to equilibrium of \eqref{main} is shown in \cite{carrillo2001entropy} using energy dissipation when $\vec{b}$ is the gradient of a convex potential. Recently \cite{IKYZ,hwang2019continuity} proved H\"{o}lder continuity of solutions for uniformly bounded, but possibly non-smooth drifts.

\bigskip

 {\bf Discussion of main results and difficulties}

\medskip

For our analysis, we will consider the pressure variable \eqref{rho u} and the equation it satisfies:
\begin{equation}\label{premain}
u_t=(m-1) u\,\Delta u+|\nabla u|^2+\nabla u\cdot{\vec{b}}+(m-1)u \,\nabla\cdot{\vec{b}}
\end{equation}
in $Q= \mathbb{R}^d\times (0,\infty)$. 

\medskip

We first show the semi-convexity (Aronsson-Benilan) estimate through a simple but novel barrier argument on $\Delta u$. This is where we use the $C^3_x$ norm of $\vec{b}$. 

\begin{thm}{\rm [Theorem~\ref{fundamental estimate}]} %Suppose $\vec{b} \in C^{3,0}_{x,t}$. 
Let $\rho$ solve \eqref{main} in $Q$ with \eqref{maincondition}, and let $u$ be the corresponding pressure variable given by \eqref{rho u}.
Then for some $\sigma>0$, $\Delta u>-\frac{\sigma}{t}-\sigma$ in the sense of distribution for all $t> 0$.
\end{thm}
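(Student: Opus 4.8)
The plan is to establish the lower bound on $\Delta u$ via a barrier argument applied to the quantity $w := \Delta u$, exploiting the structure of the equation \eqref{premain} that $w$ satisfies. First I would differentiate \eqref{premain} twice in space to obtain the evolution equation for $w$. Schematically, taking the Laplacian of \eqref{premain} produces
\[
w_t = (m-1)u\,\Delta w + \bigl(2\nabla u + \vec{b}\bigr)\cdot\nabla w + (m-1)w^2 + 2|D^2u|^2 + (\text{lower-order terms in } w, \nabla u) + (\text{terms with } \vec{b} \text{ and its derivatives up to order } 3).
\]
The crucial sign is that the term $(m-1)w^2$ is nonnegative, and by Cauchy–Schwarz $2|D^2 u|^2 \ge \frac{2}{d} w^2$, so the genuinely nonlinear contributions push $w$ upward; the only way $w$ can go very negative is through the drift-dependent terms, which by \eqref{maincondition} are bounded (this is precisely where the $C^3_x$ norm of $\vec{b}$ enters, since taking two spatial derivatives of the terms $\nabla u\cdot\vec{b}$ and $(m-1)u\,\nabla\cdot\vec{b}$ generates $D^3\vec{b}$).

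Next I would construct an explicit subsolution of the form $\phi(t) = -\frac{\sigma}{t} - \sigma$ for a constant $\sigma$ depending only on $m$, $d$, and $\|\vec{b}\|_{C^{3,1}}$, and check that $\phi$ satisfies the differential inequality $\phi' \le (m-1)u\,\phi^2 + \frac{2}{d}\phi^2 - C(1+|\phi|)$ — here the $-\frac{\sigma}{t}$ part handles the initial blow-up (mimicking the classical Aronson–Benilan barrier $-\frac{1}{(m-1)t}$ for zero drift, where the $u\,\Delta u$ and $|\nabla u|^2$ terms alone give $w_t \ge (m-1)u\,w^2 + \frac{2}{d}w^2 \ge$ the needed coercivity), while the additive constant $-\sigma$ absorbs the bounded drift perturbations. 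The comparison then forces $w \ge \phi$. Because $w = \Delta u$ need not be smooth — indeed $u$ is only known to be Lipschitz and the bound is claimed in the distributional sense — the comparison must be carried out carefully: I would work with the standard approximation scheme for \eqref{main} (e.g. solutions of the regularized nondegenerate equation with strictly positive, smooth initial data on an expanding sequence of domains), prove the barrier bound for the smooth approximations where all computations are classical, and pass to the limit, using that $\Delta u$ bounded below is a closed condition under the relevant convergence (locally uniform convergence of $u$, weak-$*$ convergence of $\Delta u$).

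I expect the main obstacle to be the justification of the comparison principle at the level of the $w$-equation, rather than the algebra of deriving it. Two points need care: (i) at points where $u = 0$ the coefficient $(m-1)u$ of the highest-order term in the $w$-equation degenerates, so one cannot directly invoke a uniformly parabolic maximum principle — this is handled by the approximation argument, where the regularized pressure stays bounded away from zero, but one must ensure the constant $\sigma$ is uniform in the approximation parameter; (ii) the first-order transport term $(2\nabla u + \vec{b})\cdot\nabla w$ has a coefficient $\nabla u$ that is merely bounded (by the Lipschitz bound on $u$), which is fine for the maximum principle on the smooth approximations but means the barrier constant $\sigma$ should be controlled through $\|\nabla u\|_\infty$, which in turn is controlled a priori on the approximate solutions. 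A secondary subtlety is bookkeeping the $\vec{b}$-terms after two differentiations to confirm that only $C^3_x$ (plus the Lipschitz-in-$t$ regularity encoded in $C^{3,1}_{x,t}$) is needed and that all such terms are genuinely bounded, contributing only to the additive constant $-\sigma$ and not affecting the coercive $w^2$ mechanism.
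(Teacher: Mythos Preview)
Your overall strategy---differentiate twice, identify the coercive $w^2$ contribution, and run a comparison argument on smooth positive approximations---matches the paper's. However, there is a genuine gap in how you propose to handle the drift terms, and the paper's fix is precisely the ``novel barrier argument'' mentioned in the introduction.

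You assert that after two spatial derivatives the drift-dependent terms are bounded purely by $\|\vec{b}\|_{C^{3,1}}$ and so contribute only an additive constant. They do not. Differentiating $\nabla u\cdot\vec{b}$ and $(m-1)u\,\nabla\cdot\vec{b}$ twice produces, among other things, $\nabla u\cdot\Delta\vec{b}$ and $2(m-1)\nabla u\cdot\nabla(\nabla\cdot\vec{b})$, which are linear in $\nabla u$ with no a priori control. After Young's inequality these leave a residual $|\nabla u|^2$ with the wrong sign in the differential inequality for $w$. A barrier $\phi(t)$ depending only on $t$ cannot absorb it: you would need $\phi'\le c\,\phi^2-\epsilon|\nabla u|^2-C$, and no choice of constants makes this hold pointwise without a uniform bound on $\|\nabla u\|_\infty$.

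Your proposed escape---that $\|\nabla u\|_\infty$ is ``controlled a priori on the approximate solutions''---does not give a constant uniform in the approximation parameter. Parabolic regularity for the approximants $u_k$ yields gradient bounds depending on the ellipticity, hence on $\inf u_k\sim 1/k$, and these blow up as $k\to\infty$. The only uniform Lipschitz estimate on $u$ in the paper (Lemma~\ref{condlem nondeg}) is itself derived \emph{from} the fundamental estimate $\Delta u\ge -C_0$ together with the extra hypothesis \eqref{cond ii}, so invoking it here would be circular.

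The paper's remedy is to take as barrier not a function of $t$ alone but
\[
w:=-\frac{\sigma_1}{t+\tau}+u-\sigma_2.
\]
When one computes $\mathcal{L}_0(w)$, the term $u_t$ appears; substituting the pressure equation \eqref{premain} for $u_t$ then produces a $|\nabla u|^2$ with a favorable sign, and the net coefficient of $|\nabla u|^2$ in $\mathcal{L}_0(w)$ becomes $-(m-1)<0$. This kills the problematic term, and the remaining inequality closes using only the uniform boundedness of $u$ (Theorem~\ref{exitence boundedness}). With this barrier in place, the approximation-and-limit argument you describe goes through exactly as you say.
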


%For the following discussions on local regularity, let us assume$\Delta u\geq -C_0$ and $\|\vec{b}\|_{C_x^2}+\|\partial_t\vec{b}\|_{C_x^1}\leq L$ for some $C_0$ and $L$.

%Next we discuss a weak non-degeneracy property. With zero drift this corresponds to the \textcolor{waiting times} and strict expansion \textcolor{phenomena} of the positive set, see section 14 \cite{book}. 

Next we discuss a weak non-degeneracy property in the event of zero initial waiting time. With zero drift this corresponds to the strict expansion property of the positive set, see section 14 \cite{book}. 
In our case this property needs to be understood in terms of the  \textit{streamlines}, defined as
\begin{equation}\label{ode}
X(t):=X(x_0,t_0;t) \hbox{ is the unique solution of the ODE }\,\, \left\{\begin{aligned}
    &\partial_t X(t)=-\vec{b}(X(t),t_0+t), \quad t\in \R,\\
    &X(0)=x_0.
    \end{aligned}\right.
\end{equation}

%We will use the notation $\Omega:=\{(x,t),\, u(x,t)>0\}$ and $\Omega_t:=\{x,\,u(\cdot,t)>0\}$. 

While the streamlines are a natural coordinate for us to measure the strict expansion of the positive set over time, it does not cope well with the diffusion term in the equation. The most delicate scenario occurs with degenerate pressure, where the time range we need to observe is much larger than the space range. To deal with such case we need to carefully localize $\vec{b}$.

\begin{thm}\label{intro 1.3}
{\rm [Theorem~\ref{strictly expanding}] }
Let $u$ be as given in Theorem 1.1, and fix $(x_0,t_0) \in \Gamma:= \partial\{u>0\} \cap \{t>0\}$. Then either of the following holds:
\begin{itemize}
\item[] (Type one)  $X(-s):=X({x_0,t_0};-s) \in \Gamma$ for $s\in[0,t_0]$;\\
\item[] (Type two) {there exist} $C_*,\beta>1$ and $h>0$ such that for $s\in (0,h)$
\begin{align*}
   % u(x,t)=0 \text{ if }|x-X(t-t_0)|\leq C(t_0-t)^\beta \text{ for }t_0-h<t<t_0,\\
   % u(x,t)>0 \text{ if }|x-X(t-t_0)|\leq C(t-t_0)^\beta \text{ for }t_0<t<t_0+h.
   & u(x,t_0-s)=0\quad \text{ if }|x-X(-s)|\leq C_* s^\beta,\\
   &u(x,t_0+s)>0\quad \text{ if }|x-{X(s)}|\leq C_* s^\beta.
\end{align*}
\end{itemize}
%Here $\beta$ only depends on $m,d,C_0$ and $\|\nabla \vec{b}\|_\infty$.
Moreover, {if $u_0$ satisfies the near-boundary growth estimate}
\begin{equation}
\label{intro less quar grow}
u_0(x)\geq \gamma({d}(x,\Omega_0^C))^{2-\varsigma} \,\, \hbox{ for some } \gamma, \varsigma>0  ,
\end{equation}
then any point on $\Gamma$ is of type two.% with $C_*,h$ only depending on $\gamma, \varsigma, t$ and $\|\nabla\vec{b}\|_\infty$. 

\end{thm}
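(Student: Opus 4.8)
\emph{Sketch of the argument.}
The plan is to transfer \eqref{premain} to the frame of the streamline through $(x_0,t_0)$, reduce it there to a controlled perturbation of the zero‑drift pressure equation, and then argue by comparison with explicit sub‑ and supersolutions built from Barenblatt‑type profiles and shrinking balls. Concretely, let $X(t)=X(x_0,t_0;t)$ and set $\tilde u(z,s):=u(\Psi_s(z),t_0+s)$, where $\Psi_s$ is the time‑$s$ flow of $-\vec b(\cdot,t_0+\cdot)$ started from $x_0+z$, so that $\Psi_s(0)=X(s)$ and $\Psi_s$ is a diffeomorphism close to the identity for $|z|$ small. A direct computation turns \eqref{premain} for $\tilde u$ into the same equation with the translation part $\vec b(X(s),t_0+s)$ of the drift removed, so that the residual first‑order term is $O(|z|)$ near the streamline and the zeroth‑order term is $O(\|\vec b\|_{C^1})\,\tilde u$. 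The crucial point — and the source of the difficulty emphasized after the statement — is that when the pressure is degenerate the natural time window over which we must track the free boundary is much longer than the relevant spatial window; to keep these perturbation terms subordinate to the diffusive gain we first \emph{localize} $\vec b$ (replace it by $\vec b\,\eta$ for a space‑time cutoff concentrated near $(x_0,t_0)$), and in the quantitative steps below we in fact re‑localize at each dyadic spatial scale. The exponent $\beta>1$, rather than $\beta=1$, is precisely the loss one concedes in this scale‑by‑scale localization.

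With this reduction the dichotomy itself is a monotonicity statement. Comparing $u$ at two nearby times via the streamline flow shows that the positive set $\{u>0\}$ does not shrink along streamlines as time increases; in particular $X(-s_1)\notin\mathrm{int}\{u(\cdot,t_0-s_1)>0\}$ for every $s_1\in(0,t_0]$, since otherwise a ball of positivity pushed forward along streamlines would force $u(x_0,t_0)>0$, contradicting $(x_0,t_0)\in\Gamma$ together with the continuity of $u$ (\cite{IKYZ,hwang2019continuity}). Hence at every past time the backward streamline lies in $\Gamma\cup\mathrm{int}\{u=0\}$. If it lies in $\Gamma$ for all $s\in[0,t_0]$ we are in type one. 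If not, going backwards it must pass from $\Gamma$ into $\mathrm{int}\{u=0\}$, i.e.\ a genuine hole opens around it; the remaining point — ruling out that the streamline later re‑joins $\Gamma$ near $t_0$ (a resurgent waiting time after the hole closes) — I would settle by a blow‑up argument at $(x_0,t_0)$ using the semiconvexity estimate of Theorem~1.1, which prevents $u$ from vanishing to infinite order there and forces the hole to persist, with polynomial size, all the way to $t_0$.

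Quantifying the hole is the first of the two hard estimates. Starting from $u\equiv0$ on a ball $B_r(X(-s_1))$ at a time $t_0-s_1$ close to $t_0$, I would propagate this forward toward $t_0$ by comparison with supersolutions of the localized equation that vanish on a ball $B_{\psi(s)}(X(-s))$ shrinking in real time, with $\psi$ governed by an ODE whose coefficients come from the localized drift and the Lipschitz bound on $u$; finite speed of propagation controls the drift of the center, and the semiconvexity estimate supplies the lower bound that makes $\psi$ close like $s^{\beta}$ rather than faster. Iterating over dyadic scales in $s$ (re‑localizing $\vec b$ at each scale) yields $\{u(\cdot,t_0-s)=0\}\supset B_{C_*s^{\beta}}(X(-s))$ for $s\in(0,h)$. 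The forward positivity $\{u(\cdot,t_0+s)>0\}\supset B_{C_*s^{\beta}}(X(s))$ is the genuinely new non‑degeneracy content (the role of Proposition~1.5) and I expect it to be the main obstacle: the front through $(x_0,t_0)$ may move arbitrarily slowly, so $u$ is very flat there and there is little room to insert a positive barrier. The plan is to use the hole just constructed to locate, immediately outside it, positivity of a definite scale‑dependent height — here the semiconvexity estimate is again essential, forbidding $u$ to be simultaneously positive and too flat on an interval — and then place under it a Barenblatt subsolution of the localized equation whose support expands, along streamlines, to engulf a ball of radius $\gtrsim s^{\beta}$ about $X(s)$ by time $t_0+s$.

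Finally, for the last assertion: if $u_0\ge\gamma\, d(\cdot,\Omega_0^C)^{2-\varsigma}$ then the initial pressure is fatter than quadratic at $\partial\Omega_0$, and a Caffarelli--Friedman type argument \cite{CFregularity} adapted to the localized equation gives no waiting time at any point of $\partial\Omega_0$, with at least linear (indeed superlinear) expansion of the support there. Combined with the monotonicity above — the backward streamline is never in the open positive set — this is incompatible with type one: if $X(-t_0)\in\partial\Omega_0$ then the support expands past it immediately, putting the backward streamline into $\mathrm{int}\{u>0\}$ at times just after $0$, a contradiction; and $X(-t_0)\in\Omega_0$ is excluded the same way. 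Hence $X(-t_0)\notin\overline{\Omega_0}$, a hole is present already at time $0$, no point of $\Gamma$ is of type one, and by the dichotomy every point is of type two.
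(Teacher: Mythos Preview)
Your high-level plan---pass to streamline coordinates, localize the drift, and argue by barriers---matches the paper's setup, but you are missing the central mechanism. The paper does not use Barenblatt profiles, dyadic iteration, or blow-up. It proceeds via two quantitative lemmas in the spirit of Caffarelli--Friedman \cite{CFregularity}, phrased in terms of the \emph{average pressure over balls carried by the streamline}: (i) if $u(\cdot,t_0)=0$ on $B(x_0,R)$ and the average of $u(\cdot,t_0+\tau)$ over $B(X(x_0,t_0;\tau),R)$ is at most $c_0R^2/\tau$, then $u(\cdot,t_0+\tau)=0$ on $B(X(x_0,t_0;\tau),R/6)$; and (ii) if that average exceeds $c_1R^2/\tau$, then $u(X(x_0,t_0;\lambda\tau),t_0+\lambda\tau)>0$ for a fixed $\lambda$. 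The dichotomy then follows in two lines: if at some past time $t_2$ the backward streamline sits in $\mathrm{int}\{u=0\}$ (a ball of zeros of radius $R$) but at some intermediate $t_1\in(t_2,t_0)$ it lies on $\Gamma$, the contrapositive of (i) forces the average at $t_1$ to be $\gtrsim R^2/(t_1-t_2)$, and (ii) pushes this forward to $u(x_0,t_0)>0$, contradicting $(x_0,t_0)\in\Gamma$. The $s^\beta$ rates and the growth-condition statement come from iterating (i)--(ii), exactly as in \cite{CFregularity}.

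The concrete gap in your sketch is the step where you ``use the hole \dots\ to locate, immediately outside it, positivity of a definite scale-dependent height'', invoking semiconvexity. The bound $\Delta u\ge -C$ only makes $u+C|x|^2/(2d)$ subharmonic, hence yields \emph{upper} bounds on $u$ by its averages; it does not prevent $u$ from being positive yet arbitrarily small on an annulus, so it cannot by itself supply the lower bound needed to seed a Barenblatt subsolution. The paper manufactures exactly this lower bound from the contrapositive of (i)---a barrier argument in streamline coordinates with an explicit piecewise-linear cone, not a Barenblatt profile---and then propagates it via a Green's-function differential inequality for $t\mapsto\int G(x-\tilde X)\tilde\varrho\,dx$, not via a subsolution comparison. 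Your ``blow-up using semiconvexity'' for the resurgent-waiting-time scenario has the same defect. Finally, Proposition~1.5 plays no role here: it is a Section~6 result (non-degeneracy of $\nabla_\mu u$ near the free boundary under a cone-monotonicity hypothesis) used for the $C^{1,\alpha}$ theorem, not for the present dichotomy.
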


 The growth condition in \eqref{intro less quar grow} is optimal, since there is a stationary solution to \eqref{main} with a corner on its free boundary and with quadratic growth (see Theorem~\ref{prop corner}).

\medskip

Next we proceed to show the non-degeneracy property of $u$, as it is essential for the regularity of its free boundary. This step presents the most challenging and novel part of our analysis.  To illustrate the difficulties, let us briefly go over the main components of the celebrated arguments in \cite{CVWlipschitz}, which provides non-degeneracy of solutions  for (PME) for times $t>T_0$.
One key ingredient in their analysis was the scale invariance of the equation under the transformation
$$u_{\e, A}(x,t) := \frac{1+A\e}{(1+\e)^2} u((1+\e)x, (1+A\e)t +B)\quad \hbox{ for any }A,B, \e>0,
$$ 
 In \cite{CVWlipschitz}  $u_{\e, A}$ was compared to $u$ to obtain the space-time directional monotonicity  
  \begin{equation}
     \label{space-time mono}
     x\cdot \nabla u + (At +B) u_t \geq 0\quad \text{ on }\Gamma.
 \end{equation}
 Applying \eqref{with drift speed} with $\vec{b}=0$ we then have
 $$
 |\nabla u| = V=   \frac{u_t}{|\nabla u|} \geq \frac{1}{(At+B)} \nu \cdot (\frac{x}{|x|})\quad \hbox{ on } \Gamma,
 $$ 
where the first equality is from \eqref{with drift speed}, the second equality is due to the level set formulation of the normal velocity, and the last inequality is due to \eqref{space-time mono} and the fact that $\nabla u$ is parallel to the negative normal $-\nu$ on the free boundary.  Thus the non-degeneracy follows if we know that the free boundary is a Lipschitz graph with respect to the radial direction. This was shown in \cite{CVWlipschitz} for $t>T_0$ by the celebrated moving planes arguments, and thus we can conclude.

\medskip

For nonzero drift, neither scaling invariance nor the moving planes method is available due to the inhomogeneity in $\vec{b}$. In fact it is not reasonable to expect consistent free boundary behavior for large times, except possibly when $\vec{b}$ is a potential vector field. Still, it is reasonable to expect that, without topological singularites and waiting time, the diffusive nature of the equation \eqref{premain} regularizes the free boundary.  With this in mind we show a local non-degeneracy result under the assumption of directional monotonicity and zero waiting time.

\medskip

Let us define the spatial cone of directions
 \begin{equation}\label{cone}
 W_{\theta,\mu} := \left\{y\in\R^d: \left|\frac{y}{|y|}-\mu\right| \leq  2\sin\frac{\theta}{2}\right\} \quad\hbox{ with axis }\mu \in \mathcal{S}^{d-1} \hbox{ and } \theta \in (0,\pi/2].
 \end{equation}
We say $u$ is {\it monotone} with respect to $W_{\theta,\mu}$ if $u(\cdot,t)$ is non-decreasing along  directions in  $W_{\theta, \mu}$. We also denote $Q_r:= \{|x|\leq r\} \times (-r,r)$.

\begin{thm} {\rm [Local Non-degeneracy, Corollary~\ref{cor nondeg}] } \label{thm 1.3}   Let $\varrho$ be a weak solution to \eqref{main} in $Q_2 $, where $\Gamma$ is of type two, and let $u$ be the pressure. Suppose in $Q_2$, $\Delta u>-\infty$ and $u$ is monotone with respect to $W_{\theta, \mu}$ for some $\theta$ and $\mu$. Then 
there exists $\kappa_*>0$ such that 
\begin{equation*}\label{thm rs wk nondeg}
\liminf_{\e\to 0^+ }  \dfrac{u (x+\e\mu, t)}{\e} \geq \kappa_* \quad \hbox{ for } (x,t)\in \Gamma\cap Q_1.
\end{equation*}
\end{thm}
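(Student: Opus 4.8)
The plan is to establish the non-degeneracy by a contradiction/compactness argument combined with the barrier philosophy already developed for the type-two dichotomy. Suppose the conclusion fails: there is a sequence of free boundary points $(x_k,t_k)\in\Gamma\cap Q_1$ along which $\liminf_{\e\to 0^+} u(x_k+\e\mu,t_k)/\e \to 0$. After translating each $(x_k,t_k)$ to the origin (which changes $\vec b$ only by a bounded smooth perturbation, since $\vec b\in C^{3,1}$ on $Q_2$) and using the monotonicity with respect to $W_{\theta,\mu}$ together with $\Delta u>-\infty$, I would extract a limit profile $u_\infty$ that is a weak solution of \eqref{premain} (with a limiting drift) on a fixed neighborhood of the origin, still monotone in the cone $W_{\theta,\mu}$, still semiconvex, with $0\in\Gamma(u_\infty)$, and with $\nabla u_\infty\cdot\mu \equiv 0$ at the origin in the appropriate one-sided sense — i.e. the pressure gradient in the direction $\mu$ degenerates at a free boundary point.

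The core of the argument is to contradict this degeneracy using Theorem~\ref{intro 1.3} (type two): at the origin there exist $C_*,\beta>1$, $h>0$ so that $u(x,s)>0$ whenever $s\in(0,h)$ and $|x-X(s)|\le C_* s^\beta$, where $X$ is the streamline through the origin. The monotonicity in the cone $W_{\theta,\mu}$ upgrades this pointwise positivity to positivity on a full cone-shaped region opening in the $\mu$-direction: for small $s>0$, $u(\cdot,s)>0$ on $X(s)+W_{\theta,\mu}\cap B_{C_*s^\beta}$, and hence (sliding along cone directions) on a neighborhood of a whole segment in the $\mu$-direction emanating from near $X(s)$. I would then run a comparison argument: build an explicit subsolution of \eqref{premain} — a travelling-wave-type or ``tilted paraboloid'' barrier of the form $c\,(\,(x-x^*(t))\cdot e\,)_+$ for a suitable near-$\mu$ direction $e$ and moving vertex $x^*(t)$ tracking the streamline — supported initially inside $\{u(\cdot,s_0)>0\}$, and propagate it forward using the free boundary speed law \eqref{with drift speed}, $V=|\nabla u|-\vec b\cdot\vec n$. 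Because $\vec b$ is smooth (bounded with bounded derivatives) and the barrier has a fixed positive slope, the boundary of the barrier's support advances at a controlled speed, so by the comparison principle for \eqref{premain} the support of $u$ must contain the barrier's support at later times, which forces $u(\e\mu,0)\gtrsim \e$ with an explicit $\kappa_*$ depending only on $\theta,\mu,C_*,\beta,h$ and $\|\vec b\|_{C^{1}}$ — contradicting the vanishing slope of $u_\infty$ and yielding a uniform $\kappa_*$.

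The main obstacle, and the place where the type-two hypothesis and the semiconvexity from Theorem~1.1 are both essential, is the time-versus-space scale mismatch emphasized in the discussion before Theorem~\ref{intro 1.3}: the positivity guaranteed by type two lives at spatial scale $s^\beta$ with $\beta>1$, far thinner than the $O(s)$ scale one needs to conclude a linear lower bound for $u$ in a fixed spatial direction. Bridging this gap is exactly where the diffusion in \eqref{premain} must be exploited — one must show that once $u$ is positive on a (possibly very thin) cone-tip region, the parabolic nature of the equation fattens the positivity set to a cone of fixed aperture in finite time, and that the slope picked up in this fattening is bounded below independently of $k$. Controlling this requires the Aronson–Bénilan bound $\Delta u>-\sigma/t-\sigma$ to prevent the pressure from being too concave (hence from having an arbitrarily flat ``shelf'' near the free boundary), together with the cone monotonicity to guarantee that the fattening happens on the correct side; making the resulting slope estimate quantitative and uniform over the compactness sequence is the technical heart of the proof.
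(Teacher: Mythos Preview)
Your proposal has a genuine gap and takes a different route from the paper. The paper does \emph{not} use a compactness/blow-up argument, nor does it place an explicit traveling-wave subsolution under $u$. Instead it proceeds in two steps. First (Theorem~\ref{prop nondeg} / Proposition~\ref{lem supersolution}), it builds a \emph{supersolution} out of $u$ itself: an inf-convolution $w$ of the streamline-shifted pressure $v(x,t)=u(x+X(t),t)$, with a carefully chosen radius function $R_\eps(x,t)=\eps\varphi(x)(1-\alpha t)$, is shown to satisfy $\mathcal{L}_2(w)\ge 0$. Comparing $v$ to $w$ on a suitable annular domain (using cone monotonicity to handle the inner boundary and streamline monotonicity from Lemma~\ref{streamline} for the outer one) yields the space--time monotonicity $u(X(x,t;C\eps)-\eps\mu,\,t+C\eps)>0$ on $\Gamma\cap Q_1$. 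Second (Corollary~\ref{cor nondeg}), one argues by contradiction: if $\sup_{B(\hat x,\eps)}u(\cdot,\hat t)<\kappa\eps$ at a boundary point, then the average over a ball of radius $R\sim\eps\sin\theta$ centered at the backward streamline image is small enough that Lemma~\ref{lem part1} forces $u(\cdot,\hat t)=0$ on a ball around $\hat x$, contradicting $(\hat x,\hat t)\in\Gamma$. Cone monotonicity then upgrades the sup bound to the directional bound $u(x+\eps\mu,t)\ge\kappa_*\eps$.

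The specific gap in your plan is the barrier step. To slide a subsolution of the form $c\,((x-x^*(t))\cdot e)_+$ under $u$, you need $u$ to already dominate a linear profile on some region of fixed aperture; but the type-two information gives only \emph{positivity} on a set of width $C_* s^\beta$ with $\beta>1$, not a quantitative lower bound, so there is nothing to compare against at $t=s_0$. Your hope that ``diffusion fattens the positivity set with a uniform slope'' is exactly the conclusion to be proved, and you give no mechanism for it; the paper's inf-convolution (comparing $u$ to a perturbation of itself, not to an explicit profile) is precisely the device that avoids this circularity. A secondary issue: in your compactness step the type-two constants $C_*,h$ are not asserted to be uniform over $\Gamma\cap Q_1$ in the paper's local statement, so you would need to justify why the limit $u_\infty$ retains the type-two property with controlled constants before using it.
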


{For the proof we adopt a local barrier argument introduced in \cite{choijerisonkim} in the context of the Hele-Shaw flow. Heuristically speaking the barrier argument illustrates the fact that the nondegeneracy property of positive level sets propagates to the free boundary as the positive set expands out in diffusive free boundary problems.}

\medskip

As mentioned above, in the zero drift case \cite{flatness} considered solutions that are locally close to a planar traveling wave solution. Their assumption in particular endows a discrete small-scale flatness and non-degeneracy. It was shown there that over time the flatness improves in its scale to yield the smoothness of the solutions. It was conjectured there whether a cone monotonicity assumption could replace proximity to the planar travelling waves. While we do not pursue improvement of flatness in scale, our result yields a positive partial answer to this question.

\medskip

Building on the above non-degeneracy result, we proceed to study the free boundary regularity. To prevent sudden changes in the evolution caused by changes in the far-away region, we assume that, in the weak sense,
\begin{equation}
    \label{cond ii}   u_t \leq  A\,(\mu\cdot \nabla u + u + 1) \quad \hbox{  in } Q_1 \text{  for some } A>0.
\end{equation}

\begin{thm}{\rm [Theorem~\ref{col C1alpha}]}\label{thm 1.4} Let $u$ be given as in Theorem \ref{thm 1.3}. If in addition \eqref{cond ii} holds, then  $u$ is Lipschitz continuous and $\Gamma$ is $C^{1, \alpha}$ in $Q_{1/2}$.
\end{thm}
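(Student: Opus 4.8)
The plan is to upgrade the one-sided nondegeneracy from Theorem~\ref{thm 1.3} to full Lipschitz regularity of $u$ and then to $C^{1,\alpha}$ regularity of $\Gamma$ by a standard free-boundary bootstrap, using the cone monotonicity and \eqref{cond ii} as the substitutes for the global scaling/moving-planes machinery unavailable with a drift. First I would establish that $u$ is Lipschitz in $Q_{3/4}$. The upper bound for $|\nabla u|$ near $\Gamma$ is the key point: from \eqref{with drift speed} the free-boundary speed is $V=|\nabla u|-\vec b\cdot\vec n$, so a Lipschitz bound amounts to an upper bound on the expansion speed, equivalently a bound on $u_t$ from above near $\Gamma$; this is exactly what \eqref{cond ii} provides, since on $\Gamma$ one has $u=0$ and $\mu\cdot\nabla u\le|\nabla u|$, combined with the semiconvexity $\Delta u>-\infty$ to convert a bound on $\Delta u$ and $u_t$ into a gradient bound via the equation \eqref{premain}. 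Concretely, cone monotonicity gives $\partial_\mu u\ge 0$, and the semiconvexity bound controls $u$ from above by its boundary behavior along any line in the cone; together with the lower nondegeneracy $\liminf_{\e\to0}u(x+\e\mu,t)/\e\ge\kappa_*$ this pins $|\nabla u|$ between two positive constants in a neighborhood of $\Gamma$, and interior parabolic estimates for the (uniformly parabolic, after the nondegeneracy) equation \eqref{premain} in $\{u>0\}$ finish the global Lipschitz claim.

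Next I would use the two-sided bound $0<\kappa_*\le|\nabla u|\le L$ near $\Gamma$ together with the cone monotonicity to represent $\Gamma$ locally as a Lipschitz graph: monotonicity of $u(\cdot,t)$ along all directions of $W_{\theta,\mu}$ forces $\{u(\cdot,t)>0\}$ to be, near each of its boundary points in $Q_{1/2}$, the supergraph of a Lipschitz function of the $(d-1)$ variables orthogonal to $\mu$, with Lipschitz constant controlled by $\theta$ (the opening of the cone), and \eqref{cond ii} plus the speed formula controls the time-Lipschitz constant of the graph. At this stage $u$ is a solution of the nondegenerate parabolic free-boundary problem with Lipschitz free boundary and Lipschitz pressure, which is precisely the setting of Caffarelli–Wolanski \cite{C1alpha}; I would quote their result (the interior argument is local and does not use the zero-drift structure once non-degeneracy and Lipschitz bounds are in hand, with the smooth lower-order drift terms in \eqref{premain} being harmless perturbations) to conclude $\Gamma\in C^{1,\alpha}$ in $Q_{1/2}$. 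Alternatively one runs the same scheme directly: show that the inhomogeneous blow-ups of $u$ at a free-boundary point are half-plane solutions (using nondegeneracy to rule out vanishing limits and the cone to rule out degenerate directions), deduce flatness of $\Gamma$ at small scales, and then apply a boundary Harnack / improvement-of-flatness iteration for the linearized problem to upgrade flatness to $C^{1,\alpha}$.

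The main obstacle I expect is the Lipschitz (upper gradient) bound, i.e.\ ruling out that the positive set expands too fast or that $|\nabla u|$ blows up at $\Gamma$ from inside. In the zero-drift case this came for free from the scaling family $u_{\e,A}$ and the resulting space-time monotonicity \eqref{space-time mono}, which is unavailable here; \eqref{cond ii} is designed precisely to replace the inequality $u_t\le A(x\cdot\nabla u+u)$ that scaling would have given, so the real work is to show $\partial_\mu u$ is bounded \emph{above} near $\Gamma$ given only $\partial_\mu u\ge0$, $\Delta u>-\infty$, and \eqref{cond ii}. I would handle this by a barrier comparison in a thin space-time neighborhood of a free-boundary point: using \eqref{cond ii} to bound the normal speed above, construct a travelling-wave-type supersolution of \eqref{premain} with linear growth of slope $L=L(A,\|\vec b\|_{C^1},\kappa_*,\theta)$ sitting above $u$ on the parabolic boundary of the neighborhood, and invoke the comparison principle for \eqref{premain} (available from the viscosity/weak theory cited in \cite{kimlei,Bertsch}) to propagate the slope bound to $\Gamma$. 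Once both the non-degeneracy (already proved) and this Lipschitz bound are secured, the passage to $C^{1,\alpha}$ is essentially the classical argument and should present no new difficulty beyond bookkeeping of the lower-order drift terms.
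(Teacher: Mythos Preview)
Your outline has the right shape, but it misidentifies where the difficulty lies and underestimates two steps that the paper treats as the genuine content of the proof.

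First, the Lipschitz upper bound on $|\nabla u|$ is \emph{not} the main obstacle and does not require a barrier argument. In the paper it is a two-line algebraic consequence: from \eqref{premain} and $\Delta u\ge -C_0$ one gets $u_t\ge |\nabla u|^2-\sigma(C_0+1)u+\nabla u\cdot\vec b$ in $\{u>0\}$, and combining this with \eqref{cond ii} yields a quadratic inequality $(A+\sigma)|\nabla u|+C u+A\ge|\nabla u|^2$, hence $|\nabla u|\le C$. No travelling-wave supersolution is needed.

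Second, and more seriously, you gloss over the step the paper calls its ``novel ingredient'' (Proposition~\ref{prop:main}): propagating the free-boundary nondegeneracy $\liminf_{\e\to0}u(x+\e\mu,t)/\e\ge\kappa_*$ to a uniform lower bound $\nabla_\mu u\ge c_1$ on $\{u>0\}\cap Q_\delta$. Theorem~\ref{thm 1.3} only controls the linear growth \emph{at} points of $\Gamma$; it says nothing about $\nabla_\mu u$ at interior points a fixed distance from $\Gamma$. Your sentence ``this pins $|\nabla u|$ between two positive constants in a neighborhood of $\Gamma$, and interior parabolic estimates\ldots'' is circular: interior parabolic estimates require uniform ellipticity, which is precisely the lower bound on $u$ (equivalently, on $\nabla_\mu u$) that you are trying to establish. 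The paper handles this by differentiating \eqref{premain} in the $\mu$-direction, observing that $\phi=\nabla_\mu u$ satisfies a linear parabolic equation whose ellipticity is $\sim u$, rescaling by the distance $h$ to $\Gamma$ so that $u/h$ is bounded below (this uses Corollary~\ref{cor nondeg}), and applying Harnack to $\phi$ in the rescaled domain. This is a real step and not bypassed by anything in your outline.

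Third, your claim that the Caffarelli--Wolanski iteration goes through with the drift terms as ``harmless perturbations'' and ``bookkeeping'' is too optimistic. The iteration in \cite{C1alpha} compares $u$ with its translates, but with drift the equation is not translation-invariant, so a translate of a solution is not a supersolution. The paper (Remark~\ref{remark}) takes $w=v_k((x,t)+\gamma p)+E_k(t+2r)$ with $E_k\sim\delta J^k$ to absorb the error from shifting $\vec b$, and this forces a strengthened inductive hypothesis $(D_k)$: one must propagate $\hat\nabla_p v_k\ge (2L)^{-1}J^k$ (strictly positive, not just $\ge0$) through the iteration in order to dominate the $E_k$ correction at each step. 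This quantitative version of the cone-enlargement is what makes the argument close with drift, and it is not mere bookkeeping.
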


The proof of above theorem is given in Section 6. The novel ingredient in this section is the following result, which propagates the non-degeneracy of the solution at the free boundary to nearby positive level sets.

\begin{prop}\label{prop:main} [Propagation of non-degeneracy, Proposition~\ref{condlem nondeg'}]
Under the assumption of Theorem~\ref{thm 1.4},  there exist $\delta<\frac{1}{2}$ and $c_1>0$ such that
%\begin{equation}\label{local non deg}\frac{1}{C}\leq |\nabla u(x,t)|\leq C,\end{equation}
\begin{equation*}
    \nabla_{{{\mu}}} u(x,t)\geq c_1\quad \text{  in $\{u>0\}\cap Q_\delta$.}
\end{equation*} 
\end{prop}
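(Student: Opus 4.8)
The plan is to combine the local non-degeneracy at the free boundary (Theorem~\ref{thm 1.3}) with a barrier/comparison argument that transports this quantitative gradient lower bound inward to positive level sets, using the cone monotonicity together with the semi-convexity estimate $\Delta u > -\infty$ to control the geometry. First I would fix a point $(x_0,t_0)\in\{u>0\}\cap Q_\delta$ with $\delta$ to be chosen, and let $r_0=d(x_0,\partial\{u>0\}(t_0))$ along the direction $\mu$; monotonicity with respect to $W_{\theta,\mu}$ guarantees that moving from $x_0$ in $-\mu$ reaches the free boundary within a controlled distance comparable to $r_0$, and that the positive set near $x_0$ contains a cone opening in the $\mu$ direction. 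I would then distinguish two regimes: the "interior" regime where $r_0 \geq c\,\delta$ for a fixed small $c$, and the "near-boundary" regime where $r_0$ is small compared to $\delta$.

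In the near-boundary regime I would run a local barrier argument in the spirit of the one used for Theorem~\ref{thm 1.3} (the Choi--Jerison--Kim Hele-Shaw barrier, \cite{choijerisonkim}): using Theorem~\ref{thm 1.3}, at the free boundary point $z_0$ obtained by following $-\mu$ from $x_0$ we have $\liminf_{\e\to 0^+} u(z_0+\e\mu,t_0)/\e \geq \kappa_*$, and I would propagate this forward in a short time interval, constructing a subsolution of \eqref{premain} of the form (roughly) $\phi(x,t) = c\,\langle x - \xi(t), \mu\rangle_+ - $ (quadratic correction), where $\xi(t)$ tracks the streamline \eqref{ode} adjusted so that the comparison stays valid; condition \eqref{cond ii} is exactly what prevents the far-field influence from destroying the comparison over the short time scale, and \eqref{maincondition} controls the lower-order terms $\nabla u\cdot\vec b$ and $(m-1)u\,\nabla\cdot\vec b$. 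Evaluating this subsolution at $(x_0,t_0)$ gives $\nabla_\mu u(x_0,t_0)\geq c_1$ with $c_1$ depending only on $\kappa_*$, $\theta$, $A$, $m$ and $\|\vec b\|_{C^{3,1}}$.

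In the interior regime I would instead use the cone monotonicity directly: for a point at definite distance from the free boundary, monotonicity along $W_{\theta,\mu}$ plus the already-established lower bound $\kappa_*$ at the boundary, together with a Harnack-type / Caffarelli-type interior estimate for the nondegenerate equation \eqref{premain}, yields that $u$ grows at least linearly along $\mu$ inside $Q_\delta$; the semi-convexity $\Delta u>-\infty$ is what rules out $u$ having a "kink" that could make $\nabla_\mu u$ vanish in the interior while $u$ stays positive. Finally I would choose $\delta$ small enough that both the short-time barrier construction fits inside $Q_1$ and the constants from \eqref{cond ii} and the $C^{3,1}$ bounds on $\vec b$ are absorbed, and take $c_1$ to be the minimum of the two constants produced. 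The main obstacle I anticipate is the barrier construction in the near-boundary regime: because the waiting time is zero but the pressure may still be degenerate (type two allows growth like $s^\beta$ with $\beta>1$ along streamlines), the time scale on which the free boundary moves a given spatial distance is not linear, so the barrier must be carefully matched to the streamline flow \eqref{ode} and the comparison performed on a parabolic cylinder whose proportions are dictated by $\beta$ rather than the standard parabolic scaling — getting a lower bound $c_1$ that is independent of the (a priori unknown) degeneracy exponent $\beta$ is the delicate point, and it is presumably where the propagation-of-nondegeneracy mechanism and condition \eqref{cond ii} must be used most carefully.
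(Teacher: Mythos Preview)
Your proposal has a genuine gap and also diverges from the paper's approach in a way that matters.

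The paper does \emph{not} build a barrier for $u$ and does not split into interior/near-boundary regimes. Instead it differentiates \eqref{premain} in the $\mu$-direction to obtain a linear parabolic equation for $\phi:=\nabla_\mu u$ in $\{u>0\}$, with bounded lower-order source $f=\nabla u\cdot\nabla_\mu\vec b+(m-1)u\,\nabla\cdot\nabla_\mu\vec b$. Given $(\hat x,\hat t)\in\{u>0\}\cap Q_\delta$ at distance $h$ from $\Gamma_{\hat t}$, the Lipschitz regularity of $\Gamma$ (Lemma~\ref{condlem nondeg}) lets one locate a free-boundary point $(y,s)$ with $s=\hat t-h$ and $|y-\hat x|\lesssim h$. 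Averaging the known boundary non-degeneracy \eqref{nondeg0} over $B(y,h)\cap\{u>0\}$ and using the fundamental theorem of calculus produces a point $z$ with $\nabla_\mu u(z,s)\geq\kappa/2$. After rescaling by $h$, the equation for $\phi$ is \emph{uniformly} parabolic on the relevant domain because $u/h\geq \kappa_*\gamma/2$ there (again from \eqref{nondeg0} plus cone monotonicity). A single application of the parabolic Harnack inequality then transfers the lower bound from $(z,s)$ to $(\hat x,\hat t)$, with the source term absorbed by taking $\delta$ small.

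Your barrier argument, by contrast, would at best yield $u(x_0,t_0)\geq c\,r_0$, i.e.\ a lower bound on the \emph{value} of $u$; it does not give a pointwise lower bound on $\nabla_\mu u(x_0,t_0)$. Cone monotonicity tells you $\nabla_\mu u\geq 0$, and linear growth of $u$ in $\mu$ tells you the \emph{average} of $\nabla_\mu u$ along a segment is bounded below, but neither yields a pointwise bound without an additional oscillation or Harnack estimate for the derivative itself --- which is exactly what the paper supplies and your sketch omits. Your remark that ``semi-convexity rules out a kink'' is not a substitute: $\Delta u\geq -C_0$ does not prevent $\nabla_\mu u$ from vanishing at an interior point. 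Finally, your worry about the exponent $\beta$ is misplaced: once Corollary~\ref{cor nondeg} gives the \emph{linear} lower bound $u(x+\eps\mu,t)\geq\kappa_*\eps$ at the free boundary, the constants $C_*,h,\beta$ are already absorbed into $\kappa_*$ and play no further role.
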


From here, the proof of Theorem 1.4 largely follows the iterative argument given in \cite{C1alpha}, which compares in different scales the solution with its shifted version. For nonzero drifts \eqref{premain} changes under coordinate shifts, and thus a notable modification is necessary in the iteration procedure. See Remark~\ref{remark}.

\medskip

Now we address the traveling wave solutions discussed earlier in the introduciton. 

\begin{thm}\label{thm:trav}
Let $\alpha: \R\to \R$ be a smooth and bounded function. Let $u$ solve \eqref{premain} in $Q=\mathbb{R}^2\times (0,\infty)$
with $\vec{b}=(\alpha(x_2),0)$ and the initial data $u_0(x) = u(x,0)= (x_1)_+$, under linear growth condition at infinity.\\
Then $\Gamma$ is locally uniformly $C^{1,\alpha}$ in $Q$.

\end{thm}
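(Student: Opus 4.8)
The plan is to reduce Theorem~\ref{thm:trav} to a local application of Theorem~\ref{thm 1.4}, which requires verifying three things near each free boundary point: that the free boundary is of type two, that the solution enjoys a spatial cone monotonicity $W_{\theta,\mu}$, and that the one-sided bound \eqref{cond ii} holds. The drift here is $\vec{b}=(\alpha(x_2),0)$, a genuine non-potential laminar field, so none of the global structure (scaling, moving planes) is available; instead everything will be obtained from the specific choice of initial data $u_0(x)=(x_1)_+$.

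\textbf{Step 1: Directional monotonicity and its persistence.} First I would show that $u(\cdot,t)$ is monotone along $e_1=(1,0)$ for all $t>0$. At $t=0$ this is immediate from $u_0=(x_1)_+$. To propagate it, differentiate \eqref{premain} in the $x_1$ direction, or more robustly use the comparison principle: since $\vec{b}$ does not depend on $x_1$, the equation \eqref{main} is invariant under translations $x_1\mapsto x_1+h$ for $h\geq 0$, and $u_0(\cdot+he_1)\geq u_0(\cdot)$, so by uniqueness of weak/viscosity solutions (\cite{kimlei}) we get $u(x+he_1,t)\geq u(x,t)$ for all $h\geq0$, i.e. $\partial_{x_1}u\geq 0$. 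This gives monotonicity with respect to the degenerate cone $W_{\theta,e_1}$ with axis $\mu=e_1$; to obtain a genuine open cone $\theta>0$ one must use the semi-convexity estimate (Theorem~1.1, i.e. $\Delta u>-\infty$ for $t>0$) together with the linear growth at infinity. A clean way: the Aronson--B\'enilan bound plus $\partial_{x_1}u\geq0$ and a barrier comparison on bounded space-time cylinders forces $|\nabla_x u|$ to be comparable to $\partial_{x_1}u$ near the free boundary away from $t=0$, from which a small fixed aperture $\theta_0>0$ with axis $e_1$ follows on any compact subset $Q_R\subset Q$. This is the first place where care is needed, since the monotonicity cone must be uniform on the relevant scale.

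\textbf{Step 2: Type two / zero waiting time.} Next I would verify that $\Gamma$ consists only of type-two points, by invoking the last sentence of Theorem~\ref{intro 1.3}: it suffices that the initial data satisfies the near-boundary growth $u_0(x)\geq \gamma\, d(x,\Omega_0^C)^{2-\varsigma}$. But $u_0=(x_1)_+$ grows \emph{linearly} from its free boundary $\{x_1=0\}$, so $u_0(x)=d(x,\Omega_0^C)^1\geq \gamma\, d(x,\Omega_0^C)^{2-\varsigma}$ locally with $\varsigma=1$; hence the hypothesis is met and every point of $\Gamma$ at $t=0$ is type two. For later positive times, one notes that linear (indeed super-quadratic) growth is preserved under the flow near the free boundary — this is exactly the strict-expansion conclusion of Theorem~\ref{intro 1.3} combined with the fact that once the support strictly expands along streamlines the solution picks up at least linear growth — so type two holds along $\Gamma\cap\{t>0\}$ as well. (One could alternatively localize: at a given point of $\Gamma\cap\{t>0\}$, trace back along the streamline; if the streamline stays on $\Gamma$ we are in type one, which is excluded because the near-boundary growth at $t=0$ is strictly sub-quadratic, forcing strict expansion; so the point is type two.)

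\textbf{Step 3: The bound \eqref{cond ii} and conclusion.} To check $u_t\leq A(\mu\cdot\nabla u+u+1)$ on a fixed cylinder, I would again exploit that $\vec{b}$ is independent of $x_1$ and bounded with bounded derivatives: plugging $\mu=e_1$ into \eqref{premain} and using $u_t=(m-1)u\Delta u+|\nabla u|^2+\partial_{x_1}u\,\alpha(x_2)+0$ (since $\nabla\cdot\vec{b}=\partial_{x_1}\alpha(x_2)=0$), one sees $u_t$ is controlled by $|\nabla u|^2$, $u\Delta u$, and $\|\alpha\|_\infty\,\partial_{x_1}u$. On a compact cylinder the solution is bounded and, by the results quoted in the introduction (H\"older continuity \cite{IKYZ,hwang2019continuity} and, once Theorem~\ref{thm 1.3} is applied, Lipschitz bounds), $|\nabla u|$ is bounded and $u\Delta u$ is bounded above using the semi-convexity estimate $\Delta u\geq -\sigma/t-\sigma$ (which bounds $u\Delta u$ from below, not above — so for the \emph{upper} bound on $u_t$ one instead uses that $u\Delta u\leq$ something via the equation itself, or more directly compares $u$ with the explicit planar traveling wave supersolution built from $\alpha$). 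Concretely: construct on $Q_1$ a supersolution of the form $\phi(x_1-ct,x_2)$ moving faster than $u$, which exists because $\alpha$ is bounded, giving $u_t\leq c\,\partial_{x_1}u + (\text{lower order})$, i.e. \eqref{cond ii}. Once Steps 1--3 are in place, Theorem~\ref{thm 1.4} applies on each $Q_\delta(z)$ centered at a point $z\in\Gamma$ after rescaling/recentering to unit size, yielding that $\Gamma$ is $C^{1,\alpha}$ near $z$ with constants depending only on $R$ through $\|\alpha\|_{C^{3,1}}$, the linear growth constant, and the non-degeneracy constant $\kappa_*$ from Theorem~\ref{thm 1.3} — hence \emph{locally uniformly} $C^{1,\alpha}$ in $Q$.

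\textbf{Main obstacle.} The delicate point is Step 2 combined with the \emph{uniformity} in Step 1--3: one must ensure that the aperture $\theta$ of the monotonicity cone, the type-two constants $C_*,\beta,h$, the non-degeneracy constant $\kappa_*$, and the constant $A$ in \eqref{cond ii} can all be chosen uniformly over a compact region of $Q$, so that the conclusion of Theorem~\ref{thm 1.4} patches into a locally uniform statement. This hinges on the fact that the only genuinely $t$-dependent degeneration (the $\sigma/t$ in the semi-convexity estimate and the possible waiting-time behavior) is killed at $t=0$ by the linear growth of $u_0$ and does not re-appear later because linear growth is preserved; making this preservation quantitative and localized — so that it survives the transfer along curved streamlines generated by $\alpha(x_2)$ — is the technical heart of the argument.
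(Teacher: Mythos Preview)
Your overall strategy --- reduce to Theorem~\ref{thm 1.4} by verifying cone monotonicity, type two, and \eqref{cond ii} --- matches the paper exactly. Step~2 is fine: linear growth of $(x_1)_+$ gives the sub-quadratic condition \eqref{intro less quar grow}, hence type two everywhere.

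The genuine gap is Step~1. Monotonicity in $e_1$ alone (a degenerate cone with $\theta=0$) is easy from translation invariance, but your proposed upgrade to an \emph{open} cone --- ``Aronson--B\'enilan plus barriers forces $|\nabla u|$ comparable to $\partial_{x_1}u$'' --- does not work as stated: semi-convexity $\Delta u\ge -C$ gives no control on $|\partial_{x_2}u|$ in terms of $\partial_{x_1}u$. The paper instead proves cone monotonicity directly by a sup-convolution comparison: for $\sigma_2=\|\alpha'\|_\infty$ the function
\[
w(x,t):=\sup_{|y-x|\le \eps e^{-\sigma_2 t}} u(y-\eps e_1,t)
\]
is a \emph{subsolution} of \eqref{premain} (here the laminar structure $\vec b=\vec b(x_2)$ is used: the error from the drift is $\le \eps e^{-\sigma_2 t}\|\alpha'\|_\infty|\nabla w|$, exactly absorbed by the time-derivative of the shrinking radius). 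Since $w(\cdot,0)\le u_0$, comparison gives $w\le u$, i.e.\ $u(x,t)\ge u(x+y-\eps e_1,t)$ for $|y|\le \eps e^{-\sigma_2 t}$, which is precisely $\partial_{x_1}u\pm\tau\partial_{x_2}u\ge 0$ with $\tau$ depending on $T$. This exponentially shrinking cone is the source of the ``exponentially decaying lower bound on the non-degeneracy'' mentioned in the introduction, and is why the argument does not reach the traveling wave itself.

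Step~3 is closer than you realize, but still incomplete. Your final suggestion (planar supersolution moving faster than $u$) is right: the paper takes $\varphi(x,t)=(x_1+\sigma_1 t)_+$ with $\sigma_1=\sup|\alpha|+1$, gets $u\le\varphi$, and then observes that $u^\eps(x,t):=u(x-\sigma_1\eps e_1,t+\eps)$ is again a solution (translation invariance in $x_1$ and $t$) with $u^\eps(\cdot,0)\le (x_1)_+=u_0$. Comparison gives $u^\eps\le u$, i.e.\ $u_t\le\sigma_1\partial_{x_1}u$ directly --- no need to analyze $u\Delta u$ from the equation. Combined with \eqref{fundamental00} this also yields global Lipschitz continuity. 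Your attempt to bound $u_t$ by reading off terms from the PDE goes in circles precisely because the upper bound on $u\Delta u$ is not available; the translation-comparison bypasses this entirely.
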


In \cite{traveling} the existence of traveling wave solutions are shown with the above choice of $\vec{b}$. We consider the initially planar solution that was used in \cite{numerics} to approximate the traveling waves. Our argument yields an exponentially decaying lower bound on the nondegeracy of $u$. While it rules out the possibility of finite time singularity for the approximate solutions, the free boundary regularity of travelling wave solutions remains open.

\medskip

Lastly we present some examples which illustrate new types of free boundary singularities generated by drifts.

%For the Hele-Shaw problem, corners on the free boundary can be preserved for a finite time if the angles are small enough, see \cite{choijerisonkim}. But for our equation, corners of any angle can be preserved or actually they even shrink due to the force of the drift. Also because of the existence of a stationary solution with non-smooth support, we know that angles on the free boundary can be preserved for all time. As mentioned before, we have the formation of corners and cusps. But for this part we can only find examples with continuous spatial vector fields. This leaves open of the question that whether we have formation of corners/cusps given smooth drifts . 

\begin{thm}\label{intro 1.5} [Theorem~\ref{prop corner} and ~\ref{prop cusp}].
There is  $\vec{b} \in C_x^3(\R^d)$ such that  \eqref{premain} has a stationary profile with a corner on its free boundary. There is a continuous spatial vector field $\vec{b}$ such that an initially smooth solution to \eqref{premain} develops singularity on the free boundary in finite time.
\end{thm}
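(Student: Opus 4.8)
The plan is to construct both examples explicitly as stationary or self-similar-type solutions of the pressure equation \eqref{premain}, choosing the drift $\vec{b}$ to engineer the desired singularity on the free boundary. For the corner example, I would look for a stationary solution $u(x,t) = u(x)$, so that \eqref{premain} reduces to the elliptic relation $(m-1)u\,\Delta u + |\nabla u|^2 + \nabla u \cdot \vec{b} + (m-1)u\,\nabla\cdot\vec{b} = 0$. The natural candidate is a function whose positivity set is a wedge (a convex cone with opening angle $< \pi$) and which grows quadratically, i.e.\ $u(x) \sim c\,\mathrm{dist}(x,\Omega^C)^2$ near the free boundary — this matches the optimality discussion following Theorem~\ref{intro 1.3}. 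Concretely, in $\R^2$ one can try $u(x) = P(x_1,x_2)$ a suitable quadratic (or near-quadratic) polynomial that vanishes on two half-lines meeting at the origin and is positive inside the wedge, then simply \emph{define} $\vec{b} := -(\,(m-1)u\,\Delta u + |\nabla u|^2\,)\,\nabla u / ((m-1)u\,|\nabla u|^2 + \dots)$ — more carefully, solve the linear algebraic/ODE constraint for $\vec{b}$ on $\{u>0\}$ and check that the resulting field extends to a $C^3_x$ vector field across the free boundary. Because $u$ vanishes quadratically, the terms $(m-1)u\,\Delta u$ and $(m-1)u\,\nabla\cdot\vec{b}$ are controlled, and one needs the remaining first-order transport balance $\nabla u\cdot\vec{b} \approx -|\nabla u|^2$ near $\Gamma$, i.e.\ $\vec{b} \approx -\nabla u$ there; smoothness of $\vec{b}$ then follows from smoothness of the chosen polynomial profile.

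For the cusp example, I would instead build a time-dependent solution that starts smooth and forms a singularity in finite time, mimicking the numerical scenario referenced in \cite{numerics}. The cleanest route is to prescribe the motion of the free boundary directly: pick a family of smooth domains $\Omega_t$ that are smooth for $t<T$ but develop an inward cusp (or a corner) exactly at $t=T$ — for instance two smooth arcs approaching tangentially — and then reverse-engineer a pressure profile $u(x,t)$ supported on $\Omega_t$ together with a continuous drift $\vec b$ so that \eqref{premain} and the free-boundary velocity law \eqref{with drift speed} hold. Here one gives up on $C^3$ regularity of $\vec b$ (only continuity is claimed), which gives substantial freedom: one can take $u$ to be, say, a product of a fixed spatial quadratic-type shape adapted to $\Omega_t$ times a time factor, compute $u_t$, $\nabla u$, $\Delta u$ on $\{u>0\}$, and solve the resulting relation for $\vec b$ pointwise. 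The key checks are (i) that $\vec b$ so obtained is \emph{bounded and continuous} up to time $T$ — the cusp formation must be arranged so the singularity in the geometry does not force $\vec b$ to blow up, which is why we allow the drift to lose smoothness — and (ii) that the constructed $u$ is genuinely a weak solution in the sense used throughout the paper, in particular that the free boundary condition is consistent with the normal velocity $V = |\nabla u| - \vec b\cdot\vec n$ at every time $t<T$.

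The main obstacle I expect is the \emph{compatibility and regularity bookkeeping at the free boundary}: in the corner case one must verify that the algebraically-defined $\vec b$ really extends to a $C^3_x$ field across $\Gamma$ (no hidden singularity from dividing by $u$ or by $|\nabla u|$ where these vanish), and in the cusp case one must verify that $\vec b$ stays continuous and bounded as $t\uparrow T$ even though curvature of $\Gamma$ blows up. Both reduce to careful local expansions near the free boundary — Taylor-expanding the chosen $u$ to the needed order, using the quadratic vanishing to absorb the degenerate factors, and tracking how the geometry enters. A secondary point is making sure the examples are not vacuous: the corner solution must actually have a genuine corner (not a $C^1$ point) on $\partial\{u>0\}$, and the finite-time singularity must be a bona fide loss of regularity of $\Gamma$, not merely of $u$; this is handled by exhibiting the explicit wedge opening angle in the first case and the explicit degenerating arcs in the second. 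Once the local picture near $\Gamma$ is under control, extending $\vec b$ to all of $\R^d$ (or to a neighborhood, then cutting off) and verifying \eqref{maincondition}-type hypotheses where applicable is routine.
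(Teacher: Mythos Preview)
For the stationary corner, your idea is correct in spirit and can be sharpened considerably. The paper observes that if $\vec{b}=-\nabla\Phi$ for a smooth potential $\Phi$, then $u:=\max\{\Phi,0\}$ is automatically a stationary solution of \eqref{premain}: on $\{u>0\}$ one has $\nabla u\cdot\vec{b}=-|\nabla u|^2$ and $(m-1)u\,\nabla\cdot\vec{b}=-(m-1)u\,\Delta u$, so all four terms cancel in pairs. Taking $\Phi(x_1,x_2)=g(x_1)g(x_2)$ with $g$ smooth and positive exactly on $(0,1)$ gives a square positivity set with genuine corners, and $\vec{b}=-\nabla\Phi$ is as smooth as one likes. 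This removes all the regularity bookkeeping you flagged---no division, no extension across $\Gamma$ is needed. Your remark that ``$\vec{b}\approx -\nabla u$ near $\Gamma$'' is in fact the exact global choice.

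For the finite-time singularity, your reverse-engineering plan has a real gap. Once you prescribe $u(x,t)$ exactly, the relation you must solve for $\vec{b}$ is not pointwise algebraic: the term $(m-1)u\,\nabla\cdot\vec{b}$ makes it a first-order PDE in $\vec{b}$, and you must simultaneously match the free-boundary velocity law $V=|\nabla u|-\vec{b}\cdot\vec{n}$ on $\Gamma_t$. Showing that a globally consistent, \emph{continuous} $\vec{b}$ exists as the curvature of $\Gamma_t$ blows up is exactly the hard part, and your proposal offers no mechanism beyond ``careful local expansions.'' The paper sidesteps this by working with \emph{supersolutions} rather than exact solutions: one fixes a specific continuous (non-Lipschitz) drift first---e.g.\ $\vec{b}=-(x+|y|,\,y)$ for a corner, or $\vec{b}=(x\log x-10x^{1-\delta},0)$ for a cusp---then writes down an explicit barrier $\varphi(x,y,t)$ whose support already carries the shrinking corner or forming cusp, and verifies $\mathcal{L}\varphi\geq 0$ by direct computation on $\{\varphi>0\}$. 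The genuine solution $u$ with smooth initial free boundary is trapped under $\varphi$ by comparison, while the streamline-persistence Lemma~\ref{streamline} pins the origin to $\overline{\Omega_t(u)}$, forcing $\Gamma_t(u)$ to inherit the singularity. The point is that a one-sided inequality is far easier to engineer than an exact identity.
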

  \medskip

\textbf{Acknowledgements.} Both authors are partially supported by NSF grant DMS-1566578. We would like to thank Jean-Michel Roquejoffre and Yao Yao for helpful discussions.

\medskip

\section{Preliminaries}

\noindent $\circ$\textbf{ Notations.}

\medskip
\begin{itemize}
\item Throughout the paper we denote $\sigma$ as various \textit{universal constants}, by which we mean constants that only depend on $m$, $d$, $\|\vec{b}\|_{C^{3,1}_{x,t}}$ and $\|\varrho_0\|_{L^1}+\|\varrho_0\|_{L^\infty}$. \\

\item We use $C$ to represent constants which might depend on universal constants and other constants that are given in the assumptions of corresponding theorems. \\

\item For a continuous, non-negative function $u:\mathbb{R}^d\times (0,\infty) \to \R$, we denote
$$\Omega(u):=\{u>0\}, \quad \Omega_t(u):=\{u(\cdot,t)>0\}
$$
and
$$
\Gamma_t(u):=\partial\Omega_t,\quad \Gamma(u):=\bigcup_{t\in(0,\infty)}(\Gamma_t\times\{t\}).
$$
When it is clear from the context we will omit the dependence on $u$.\\

\item  $B(x,r):= \{x\in\mathbb{R}^d: |x|\leq r\}$, $B_r:=B(0,r)$,  $Q=\mathbb{R}^d\times [0,\infty)$ and $Q_r:=B_r\times (-r,r)$.\\

\item $\nabla := \nabla_x$, and $\hat{\nabla}:=(\nabla,\partial_t)$. We also denote  $f_i:=\partial_{x_i} f$, $ f_{ij}:=\partial^2_{x_ix_j} f $.\\

\item For $\nu,\mu \in  \R^d \setminus\{0\}$,  the angle between them are denoted by
\[\langle \nu,\mu\rangle:=\arccos\left(\frac{\nu\cdot\mu}{|\nu||\mu|}\right)\in [0,\pi].\]
For $\mu\in\mathbb{R}^d$, $\nu\in \mathbb{R}^{d+1}$ and $\theta\in [0,\pi/2]$, we define the space and space-time cones by
\begin{equation}\label{spacetime cone}
W_{\theta,\mu} := \{p\in\mathbb{R}^{d}: \,\langle p,\mu\rangle \leq \theta\},   \quad  \widehat{W}_{\theta,\nu}:=\{p\in\mathbb{R}^{d+1}:\, \langle p,\nu\rangle \leq \theta\}.
\end{equation} 

\end{itemize}

\vspace{20pt}

\noindent$\circ$\textbf{ Notions of solutions and their smooth approximations.}

\medskip

Next we recall the notion of weak solutions and their properties, including their smooth approximations that will be used in this paper.

\medskip

\begin{definition}\label{def1.1}
Let $\varrho_0$ be a non-negative function in  $L^\infty(\mathbb{R}^d)\cap L^1(\mathbb{R}^d)$. We say that a non-negative and bounded function $\varrho:\mathbb{R}^d\times [0,T]\to[0,\infty)$ is a subsolution (resp. supersolution) to \eqref{main} with initial data $\varrho_0$ if \begin{equation}\label{definitionsol}
 \varrho\in C([0,T],L^1(\mathbb{R}^d)), \,\varrho\,\vec{b}\in L^2([0,T]\times\mathbb{R}^d)\;\text{ and }\; \varrho^m\in L^2(0,T, \dot{H}^1(\mathbb{R}^d))
 \end{equation}
and
\begin{equation}
    \label{def sol2}
    \int_0^T\int_{\mathbb{R}^d} \varrho\,\phi_tdxdt\geq (\text{resp. } \leq) \int_{\mathbb{R}^d} \varrho_0(x)\phi(0,x)dx+\int_0^T\int_{\mathbb{R}^d}(\nabla \varrho^m+\varrho\,\vec{b})\nabla\phi\; dxdt,
    \end{equation}
     for all non-negative  $\phi\in C_c^\infty(\mathbb{R}^d\times [0,T))$.

We say $\varrho$ is a weak solution to \eqref{main} if it is both sub- and supersolution of \eqref{main}. We also say that $u:= \frac{m}{m-1}\varrho^{m-1}$ is a {\it solution} ({resp.} {\it super/sub solution}) to \eqref{premain} if $\varrho$ is a weak solution ({resp.} super/sub solution) to \eqref{main}.

%, or equivalently, 
%it satisfies for all test function $\phi\in C^\infty_c( \mathbb{R}^d\times [0,T))$, 
%\[\int_0^T\int_{\mathbb{R}^d} \varrho\phi_tdxdt=\int_{\mathbb{R}^d} \varrho_0(x)\phi(0,x)dx+\int_0^T\int_{\mathbb{R}^d}(\nabla \varrho^m+\varrho\vec{b})\nabla\phi\; dxdt.\]
\end{definition}

%{Let $U\subseteq\mathbb{R}^d$. We say a non-negative function $\rho: U\times[0,T]\to[0,\infty)$ is a solution to 
%\[\varrho_t=\Delta \varrho^m+\nabla\cdot (\varrho\, {\vec{b}}) \quad \hbox{ in } U\times (0,T)\]
%with initial data $\varrho_0$, if $\rho$ satisfies \eqref{definitionsol} with $\mathbb{R}^d$ replaced by $U$ and for all $\phi\in C_c^\infty(U\times [0,T))$, the equality \eqref{def sol2} holds. }

%=\frac{m}{m-1}\varrho^{m-1}

The well-posedness result of general degenerate parabolic type equations is established in \cite{alt1983quasilinear} - \cite{Bertsch}.  \cite{di1982continuity,dib83}  proved the H\"{o}lder regularity of solutions.

\begin{thm}[Theorem 1.7, \cite{alt1983quasilinear}]\label{exitence boundedness}
Let $\varrho_0$ {be} as given in Definition~\ref{def1.1}. When $\vec{b}\in C^3(Q)$, then there exists a weak solution $\varrho$ to \eqref{main} with initial data $\varrho_0 $. Moreover $\varrho$ is uniformly bounded for all $t\geq 0$.
\end{thm}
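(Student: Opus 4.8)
The plan is to produce the weak solution as a limit of smooth solutions to uniformly parabolic approximations; this is the classical scheme of \cite{alt1983quasilinear} (see also \cite{book}), so I would only need to assemble the standard pieces. First I would \emph{regularize}: mollify and truncate $\varrho_0$ far away to obtain smooth data $\varrho_{0,\eps}$ with $\eps\le\varrho_{0,\eps}\le\|\varrho_0\|_{L^\infty}+1$ and $\varrho_{0,\eps}\to\varrho_0$ in $L^1(\mathbb{R}^d)$; replace the nonlinearity $s\mapsto s^m$ by $\Phi_\eps(s):=s^m+\eps s$, so that $\Phi_\eps'\ge\eps>0$; and, on a large ball $B_R$ with constant boundary value $\eps$, solve $\partial_t\varrho=\Delta\Phi_\eps(\varrho)+\nabla\cdot(\varrho\,\vec b)$. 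Since $\vec b\in C^3(Q)$, this equation is uniformly parabolic with smooth coefficients, so the Ladyzhenskaya--Solonnikov--Ural'tseva theory gives a unique positive classical solution $\varrho^{\eps,R}$ on $B_R\times(0,T]$ for any fixed $T>0$.

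Next I would derive estimates independent of $\eps$ and $R$. For the $L^\infty$ bound I would compare $\varrho^{\eps,R}$ with the spatially constant supersolution $\bar\varrho(t):=(\|\varrho_0\|_{L^\infty}+1)e^{\sigma t}$, $\sigma:=\|\nabla\cdot\vec b\|_{L^\infty}$, which works because $\partial_t\bar\varrho-\Delta\Phi_\eps(\bar\varrho)-\nabla\cdot(\bar\varrho\,\vec b)=\bar\varrho(\sigma-\nabla\cdot\vec b)\ge0$; this yields $0\le\varrho^{\eps,R}\le\bar\varrho(T)$ on $[0,T]$, and to upgrade to the boundedness for all $t\ge0$ claimed in the statement I would additionally invoke conservation of the $L^1$ mass of $\varrho^{\eps,R}$ together with the smoothing effect of the porous-medium diffusion. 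For compactness I would (a) test the equation with $\Phi_\eps(\varrho^{\eps,R})-\eps$, absorbing the drift term by Young's inequality and the $L^\infty$ bound, to control $\nabla\Phi_\eps(\varrho^{\eps,R})$, hence $\nabla(\varrho^{\eps,R})^m$ and $\varrho^{\eps,R}\vec b$, uniformly in $L^2(\mathbb{R}^d\times(0,T))$; (b) read off $\partial_t\varrho^{\eps,R}$ from the equation to get a uniform bound in $L^2(0,T;H^{-1})$; and (c) use finite-speed-of-propagation bounds for \eqref{main} (or a far-field barrier) together with mass conservation to get tightness of the tails. The Aubin--Lions lemma then makes $\{\varrho^{\eps,R}\}$ precompact in $C([0,T];L^1_{loc})\cap L^2_{loc}$.

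Finally, letting $R\to\infty$ and $\eps\to0$ along a subsequence, I would obtain $\varrho^{\eps,R}\to\varrho$ strongly in $L^2_{loc}$ and a.e., with $\nabla(\varrho^{\eps,R})^m\rightharpoonup\nabla\varrho^m$ and $\varrho^{\eps,R}\vec b\rightharpoonup\varrho\,\vec b$ weakly; since the weak formulation \eqref{def sol2} is linear in $\varrho$, $\nabla\varrho^m$ and $\varrho\vec b$, and the weak limit of $(\varrho^{\eps,R})^m$ is identified with $\varrho^m$ via a.e. convergence and the uniform $L^\infty$ bound (dominated convergence), the limit $\varrho$ satisfies \eqref{definitionsol}--\eqref{def sol2}, i.e. it is a weak solution in the sense of Definition~\ref{def1.1} with data $\varrho_0$, and it inherits the $L^\infty$ bound. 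The step I expect to be the main obstacle is precisely this strong/a.e. compactness of $\varrho^{\eps,R}$ itself: because \eqref{main} is degenerate there is no estimate on $\nabla\varrho^{\eps,R}$, only on $\nabla(\varrho^{\eps,R})^m$, so the compactness must be routed through the nonlinear quantity and transferred back using continuity of $s\mapsto s^{1/m}$ — or, following Alt--Luckhaus, one tests the difference of two approximate equations with $\Phi_\eps(\varrho^{\eps})-\Phi_{\eps'}(\varrho^{\eps'})$ to obtain a direct $L^1$-Cauchy estimate on the approximations. Carrying this out on the noncompact domain $\mathbb{R}^d$ is the crux; everything else is routine.
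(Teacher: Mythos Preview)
The paper does not prove this statement at all: it is quoted verbatim as Theorem~1.7 of \cite{alt1983quasilinear} and used as a black box, with no argument supplied. Your outline is precisely the classical Alt--Luckhaus regularization/compactness scheme that the cited reference carries out, so there is nothing to compare against in the paper itself; your sketch is a reasonable summary of the proof in the source being cited.
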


\begin{thm}[Theorem 1, \cite{dib83}]\label{holder}
Suppose $\varrho$ is a non-negative, bounded weak solution to \eqref{main} in $Q_1$. Then $\varrho$ is H\"{o}lder continuous in $Q_{\frac{1}{2}}$.
\end{thm}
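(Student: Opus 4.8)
The plan is to reduce the statement to an interior modulus-of-continuity estimate at a single point and then run DiBenedetto's intrinsic-scaling De Giorgi iteration, treating the drift term $\nabla\cdot(\varrho\vec b)$ as a bounded first-order perturbation. First I would normalize: after the parabolic rescaling $\varrho\mapsto M^{-1}\varrho(x_0+\rho x,t_0+\rho^2 t)$, where $M:=\|\varrho\|_{L^\infty(Q_1)}$ and $(x_0,t_0)\in Q_{1/2}$ — a rescaling that turns \eqref{main} into the same equation with drift $\tilde{\vec b}(x,t):=\rho\,\vec b(x_0+\rho x,t_0+\rho^2 t)$, of sup-norm at most $\sigma$ for $\rho$ small — it suffices to produce, under $0\le\varrho\le 1$, a H\"{o}lder modulus of continuity at the origin, with constants depending only on $m$, $d$ and $\sigma$. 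Because $m>1$, the diffusivity $m\varrho^{m-1}$ degenerates where $\varrho$ is small, so the iteration cannot be run in the standard parabolic cylinders; instead one works in cylinders whose time-length is dilated by a factor comparable to $\omega^{1-m}$, with $\omega$ the (a priori unknown) local oscillation — the \emph{method of intrinsic scaling}.

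Next I would establish the two workhorse estimates for truncations. Testing the interior weak formulation with $\zeta^2(\varrho-k)_{\pm}$, for a parabolic cutoff $\zeta$ and a level $k$, gives an energy (Caccioppoli) inequality of the form
\[
\sup_{t}\int\zeta^2(\varrho-k)_{\pm}^2\,dx+\iint \zeta^2\,\varrho^{m-1}\,\big|\nabla(\varrho-k)_{\pm}\big|^2\,dx\,dt\le C\iint\big(|\nabla\zeta|^2+\zeta|\zeta_t|+\zeta^2\big)(\varrho-k)_{\pm}^2\,dx\,dt,
\]
in which the drift contribution $\iint\varrho\,\vec b\cdot\nabla(\zeta^2(\varrho-k)_{\pm})$ is absorbed into the left side by Young's inequality using $0\le\varrho\le1$ and $\|\vec b\|_{L^\infty}\le\sigma$; on $\{\varrho>k\}$ the factor $\varrho^{m-1}\ge k^{m-1}$ makes the gradient term genuinely coercive (for the lower truncation one first performs the standard reduction near the infimum). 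A companion logarithmic estimate, obtained by testing with $\zeta^2\,\partial_s[\Psi(\varrho)^2]$ for the usual logarithmic cutoff $\Psi$, then controls the measure of the bad set $\{\varrho\approx\mu^-\}$ (resp.\ $\{\varrho\approx\mu^+\}$) on each time slice.

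Then comes the core step: reduction of oscillation. Fix $Q_\rho$, put $\mu^{+}=\esssup_{Q_\rho}\varrho$, $\mu^-=\operatorname*{ess\,inf}_{Q_\rho}\varrho$, $\omega=\mu^+-\mu^-$, and form the intrinsic cylinder $Q(\theta\rho^2,\rho)$ with $\theta\sim(\omega/4)^{1-m}$ (with the symmetric construction near the infimum when $\mu^+$ is comparable to $\omega$). One argues by dichotomy: \emph{either} the fraction of this cylinder on which $\varrho$ is within $\omega/4$ of $\mu^+$ is below a universal threshold — in which case the energy inequality for $(\varrho-(\mu^+-\omega/4))_{+}$ feeds a De Giorgi iteration of level-set measures that forces $\varrho\le\mu^+-\omega/8$ on a smaller intrinsic cylinder — \emph{or} it is not, in which case the logarithmic estimate propagates a pointwise bound $\varrho\ge\mu^-+2^{-j_0}\omega$ on a smaller cylinder with $j_0$ universal. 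Either way one obtains $\osc_{Q_{\rho'}}\varrho\le(1-\eta)\osc_{Q_\rho}\varrho$ for universal $\eta\in(0,1)$ and $\rho'=c\rho$. Iterating over $\rho_n=c^n\rho$ — carefully converting the intrinsic time-dilations into honest inclusions of Euclidean cylinders — yields $\osc_{Q_{\rho_n}}\varrho\le C\rho_n^{\alpha}$ for some $\alpha=\alpha(m,d,\sigma)\in(0,1)$, hence H\"{o}lder continuity at the origin, and undoing the rescaling gives continuity throughout $Q_{1/2}$.

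The main obstacle is the bookkeeping inside this last step: one must ensure that both alternatives are realized in cylinders whose ratio of sizes is bounded below \emph{independently} of the unknown oscillation $\omega$ — exactly the subtlety that intrinsic scaling is designed to handle, and what makes the modulus a genuine power of $\rho$ rather than something slower. By contrast, the drift is essentially harmless here: being bounded (indeed, made small by the initial rescaling), it only generates lower-order terms that the coercive part of the energy estimate swallows, so the machinery of \cite{dib83} goes through with only cosmetic changes.
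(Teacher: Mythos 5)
This theorem is quoted in the paper directly from \cite{dib83} without proof, and your outline is precisely the intrinsic-scaling De Giorgi argument of that reference: Caccioppoli and logarithmic estimates for truncations, the dichotomy in cylinders dilated by $\omega^{1-m}$, and the resulting geometric reduction of oscillation, with the bounded drift absorbed as a lower-order term exactly as you describe. The only slip is cosmetic: the normalization $\varrho\mapsto M^{-1}\varrho$ together with the parabolic rescaling leaves a factor $M^{m-1}$ on the diffusion term unless you also dilate time accordingly, but since $M=\|\varrho\|_{L^\infty(Q_1)}$ is a fixed constant this does not affect the argument.
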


\begin{thm}[Theorem 2.2, \cite{alt1983quasilinear}]\label{thm:comp}
Suppose $U$ is an open subset of $\mathbb{R}^d$ and $\vec{b}\in C^{1,0}_{x,t}$. Let $\bar{\varrho},\underline{\varrho}$ be respectively a subsolution and a supersolution of \eqref{main} in $U\times\mathbb{R}^+$ such that $\bar{\varrho}\leq \underline{\varrho}$ a.e. in the parabolic boundary of $U\times \mathbb{R}^+$. Then $\bar{\varrho} \leq\underline{\varrho}$ in $U\times\mathbb{R}^+$.
\end{thm}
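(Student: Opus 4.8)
The plan is to prove the sharper statement that $t\mapsto\norm{(\bar\varrho-\underline\varrho)^+(\cdot,t)}_{L^1(U)}$ stays at its initial value $0$, via a Gr\"onwall inequality; this is the standard $L^1$-contraction scheme for degenerate parabolic equations, and the drift term is the only genuinely new ingredient. Write $w:=\bar\varrho-\underline\varrho$ and $W:=\bar\varrho^m-\underline\varrho^m$. Since $r\mapsto r^m$ is strictly increasing on $[0,\infty)$ one has $\{W>0\}=\{w>0\}$ (so $\mathrm{sgn}^+W=\mathrm{sgn}^+w$), and superadditivity of the convex function $r\mapsto r^m$ for $m\ge 1$ gives the pointwise bound $0\le w^+\le (W^+)^{1/m}$. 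Subtracting the supersolution inequality for $\underline\varrho$ from the subsolution inequality for $\bar\varrho$ in Definition~\ref{def1.1}, using test functions $\phi\in C^\infty_c(U\times[0,T))$ (this is where the hypothesis $\bar\varrho\le\underline\varrho$ on $\partial U$ enters), gives for every nonnegative such $\phi$
\[
\int_0^T\!\!\int_U w\,\phi_t\,dxdt \;\ge\; \int_U(\bar\varrho_0-\underline\varrho_0)\,\phi(\cdot,0)\,dx+\int_0^T\!\!\int_U\bigl(\nabla W+w\,\vec b\bigr)\cdot\nabla\phi\,dxdt.
\]

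First I would test against $\phi=H_\delta(W)\,\theta(t)$, where $H_\delta:\R\to[0,1]$ is a smooth nondecreasing approximation of the Heaviside function vanishing on $(-\infty,0]$ and $\theta\in C^\infty_c([0,T))$ is nonnegative. This is admissible: $\phi$ is supported in $\{W>0\}=\{w>0\}$, which by the boundary ordering has vanishing trace on $\partial U$, so $H_\delta(W)$ is approximable by functions in $C^\infty_c(U\times[0,T))$ (for unbounded $U$ I would insert a far-field cutoff $\psi(x/R)$ and let $R\to\infty$, the error disappearing by the integrability of $\nabla W$ and $w\vec b$ together with $\varrho\in C([0,T];L^1)$). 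With this choice the initial-data term vanishes, since $W(\cdot,0)\le 0$ forces $H_\delta(W(\cdot,0))=0$. To legitimize the time derivative I would first pass to Steklov averages in time; the resulting term $\int\!\int w\,\partial_t H_\delta(W)\,\theta$ is handled by the Alt--Luckhaus-type integration-by-parts lemma for $\partial_t$ of the monotone nonlinearity (using H\"older continuity of $r\mapsto r^{1/m}$), and in the limit $\delta\to 0$ yields $\tfrac{d}{dt}\norm{w^+(\cdot,t)}_{L^1(U)}$ in the distributional sense, with initial value $0$. The diffusion term equals $\int\!\int H_\delta'(W)\,\abs{\nabla W}^2\,\theta\,dxdt\ge 0$, which has the favorable sign. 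What remains is the drift term $\int\!\int w\,H_\delta'(W)\,\vec b\cdot\nabla W\,\theta\,dxdt$.

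The hard part will be this drift term, precisely because only $\varrho^m$, not $\varrho$, lies in $H^1$, so one cannot integrate by parts to move the gradient off $W$ onto $w$. The mechanism is that $H_\delta'(W)$ is supported on $\{0<W<\delta\}$, exactly the set where $\abs{w}\le(W^+)^{1/m}<\delta^{1/m}$ is small, and that the nonnegative dissipation term $\int\!\int H_\delta'(W)\abs{\nabla W}^2\theta$ produced above is available to absorb it via Young's inequality; if a bound uniform in $m>1$ is desired one first rewrites $\vec b\cdot\nabla W=\nabla\cdot(W\vec b)-W\,\nabla\cdot\vec b$ and integrates by parts in $x$ (legitimate since $W\vec b\in L^2$ and $\nabla\cdot\vec b\in C^0$, which is where $\vec b\in C^{1,0}_{x,t}$ is used). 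Carrying this out controls the drift contribution, after $\delta\to 0$, by $C\int_0^T\norm{w^+(\cdot,t)}_{L^1(U)}\theta(t)\,dt$ plus a remainder vanishing with $\delta$, with $C$ depending only on $\norm{\vec b}_{C^{1,0}_{x,t}}$ and $\sup\varrho$. Collecting the three contributions gives $\tfrac{d}{dt}\norm{w^+(\cdot,t)}_{L^1(U)}\le C\norm{w^+(\cdot,t)}_{L^1(U)}$ with $\norm{w^+(\cdot,0)}_{L^1(U)}=0$, hence $w^+\equiv 0$ and $\bar\varrho\le\underline\varrho$ on $U\times\R^+$. As an alternative I would note that the Kruzhkov--Carrillo doubling-of-variables method bypasses the $H^1$-regularity obstruction altogether; in either route the single delicate point is treating the first-order drift term and the degenerate second-order term simultaneously, so that the entropy/sign structure is respected by both at once.
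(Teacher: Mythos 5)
The paper does not prove this statement: it is imported verbatim as Theorem 2.2 of \cite{alt1983quasilinear}, so there is no in-paper argument to compare yours against. Your sketch follows the same route as that reference and the standard $L^1$-contraction theory: subtract the two weak inequalities, test with a smoothed Heaviside function of $W=\bar\varrho^m-\underline\varrho^m$, keep the sign of the dissipation $\int H_\delta'(W)|\nabla W|^2$, control the drift cross term, and close with Gr\"onwall. The structural observations ($\{W>0\}=\{w>0\}$, the bound $w^+\le (W^+)^{1/m}$ from superadditivity of $r\mapsto r^m$, the vanishing of the initial and lateral boundary contributions, and the need for the Alt--Luckhaus chain-rule lemma to make sense of the time term) are all correct and correctly placed.

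The one step that does not close as written is the drift remainder. After Young's inequality you are left with $\tfrac{1}{2}\|\vec b\|_\infty^2\int\!\!\int (w^+)^2H_\delta'(W)\,\theta$, and with $H_\delta'\lesssim \delta^{-1}\chi_{\{0<W<\delta\}}$ and $(w^+)^2\le (W^+)^{2/m}\le\delta^{2/m}$ this is only $O\bigl(\delta^{2/m-1}\,|\{0<W<\delta\}|\bigr)$, which tends to zero for $m\le 2$ (for $m=2$ because the sets $\{0<W<\delta\}$ decrease to the empty set) but not obviously for $m>2$. Your proposed repair --- rewriting $\vec b\cdot\nabla W=\nabla\cdot(W\vec b)-W\,\nabla\cdot\vec b$ and integrating by parts in $x$ --- does not apply verbatim, because the integrand is $w\,H_\delta'(W)\,\vec b\cdot\nabla W$ and the extra factor $w$ carries no spatial gradient; this is exactly the obstruction you identified at the outset, and taking absolute values before integrating by parts destroys the divergence structure. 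So for general $m>1$ you must either run the Alt--Luckhaus argument with their structure condition on the flux (which $a(\varrho,p)=p+\varrho\,\vec b$ satisfies since $\varrho$ is bounded, but which requires a more careful choice of cutoff than the plain $\min(r^+/\delta,1)$), or promote the doubling-of-variables argument you mention at the end from a parenthetical to the main line, since it does close for all $m>1$. As it stands the proposal is a correct strategy with one quantitative gap in its primary route.
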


\begin{remark}\label{remark 2.0}
Following from Theorem \ref{thm:comp}, we have comparison principle for \eqref{premain}: suppose $\bar{u},\underline{u}$ are respectively a subsolution and a supersolution of \eqref{premain} in $U\times\mathbb{R}^+$ such that $\bar{u}\leq \underline{u}$ a.e. on the parabolic boundary of $U\times \mathbb{R}^+$. Then $\bar{u} \leq\underline{u}$ in $U\times\mathbb{R}^+$.
\end{remark}

\medskip

%\begin{theorem}[Theorem 3.5 \cite{IKYZ}]\label{L1 contraction}Suppose $\|V\|_{L^2}<\infty$ and $1<m< 2$. Let $u_1,u_2$ be two non-negative weak solutions to \eqref{main} with initial datas $u_{1,0},u_{2,0}$ respectively. Assume in addition that \[\partial_t( u_1- u_2)\in L^1(\mathbb{R}^d\times [0,T]).\]Then the following holds:\[\int_{\mathbb{R}^d} (u_1-u_2)_+(t) dx\leq \int_{\mathbb{R}^d} (u_{1,0}-u_{2,0})_+ dx \quad\hbox{ for } 0\leq t\leq T.\]\end{theorem}

%Suppose $u_0$ is compactly supported and $V$ is bounded. By finite propagation property, $u$ is compactly supported (we will only consider such solutions in this paper) for any finite time and we can truncate the vector field $V$ without changing the solution. And the above $L^1$-contraction (Theorem \ref{L1 contraction}) still holds.

%Let us recall the famous \textit{Barenblatt solution}:\[\mathcal{B}(x,t)=\frac{m}{m-1}t^{-\alpha}(C-k|x|^2t^{-2\beta})_+\]where\[\alpha=\frac{d(m-1)}{d(m-1)+2},\quad\beta=\frac{\alpha}{d(m-1)},\quad k=\frac{\alpha}{2md}\]and $C>0$ is arbitrary. It is a fundamental solution to the Porous Medium Equation:\[\mathcal{B}_t=(m-1)\mathcal{B}\Delta \mathcal{B}+|\nabla \mathcal{B}|^2\]and $\mathcal{B}(x,t)\to M(C,m,d)\delta(x)$ (a delta mass) as $t\to 0$.

 %If $\|V\|_\infty+\|D_x V\|_\infty+\|V_t\|<+\infty$, it is straightforward to see\[X(x,s)=x+V(x,0)s+o(|s|).\]

 In our analysis it is often convenient to work with classical solutions of \eqref{main}, which is made possible by the following result. We will rely on this approximation lemma in Theorem \ref{fundamental estimate} and in Section \ref{sec 5}.
 
\begin{lemma}[Section 9.3 \cite{book}]\label{approximation}
Let $U$ be either $B_1$ or $\mathbb{R}^d$, and consider $\varrho_0\in L^1(U)\cap L^\infty(U)\cap C(U)$.  Let $\varrho$ be a weak solution of  \eqref{main} in $U \times [0,1]$ that is in $C(\overline{U}\times [0,1])$ with initial data $\varrho_0$. %{Do you need continuity up to $t=0$?} 
Then there exists a sequence of strictly positive, classical solutions $\varrho_k$ of \eqref{main}  such that $\varrho_k\to\varrho$ locally uniformly in $U\times (0,1]$ as $k\to \infty$. 
\end{lemma}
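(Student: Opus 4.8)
The plan is to build the $\varrho_k$ by solving \eqref{main} itself with \emph{strictly positive} data, so that each solution is nondegenerate and hence classical, and then to identify the limit of the $\varrho_k$ with $\varrho$ through the uniqueness/comparison theory recalled above.

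\textbf{Approximate data and solvability.} When $U=B_1$, for $k\in\mathbb N$ set $\varrho_0^k:=\varrho_0+\tfrac1k$ and prescribe on $\partial B_1\times[0,1]$ the lateral data $g_k:=\varrho|_{\partial B_1\times[0,1]}+\tfrac1k$; these are continuous, bounded, bounded below by $\tfrac1k$, and $\varrho_0^k\to\varrho_0$, $g_k\to\varrho|_{\partial B_1\times[0,1]}$ uniformly. When $U=\mathbb R^d$, take instead $\varrho_0^k:=\varrho_0+\tfrac1k\psi$ for a fixed strictly positive Schwartz function $\psi$, so $\varrho_0^k\in L^1\cap L^\infty\cap C$ is strictly positive with $\varrho_0^k\to\varrho_0$ in $L^1$ and locally uniformly. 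By the well-posedness theory for \eqref{main} (Theorem~\ref{exitence boundedness} and \cite{alt1983quasilinear,Bertsch}, in the bounded case with lateral data $g_k$) there is a unique bounded weak solution $\varrho_k$ with these data, and by Theorem~\ref{holder} it is continuous in $U\times(0,1]$.

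\textbf{Positivity and smoothness.} In the $B_1$ case, $\underline\varrho(t):=\tfrac1k e^{-Ct}$ with $C:=\|\nabla\cdot\vec b\|_{L^\infty}$ is a subsolution of \eqref{main} lying below $\varrho_k$ on the parabolic boundary of $B_1\times(0,1]$, so Remark~\ref{remark 2.0}/Theorem~\ref{thm:comp} gives $\varrho_k\ge\tfrac1k e^{-C}>0$ on $B_1\times[0,1]$. In the $\mathbb R^d$ case the positivity of $\varrho_k$ at an arbitrary point follows from a local comparison, inserting a small compactly supported subsolution transported and dilated along a streamline of \eqref{ode}; since $\{\varrho_0^k>0\}=\mathbb R^d$, this yields $\varrho_k>0$ throughout $\mathbb R^d\times(0,1]$. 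On the open set $\{\varrho_k>0\}=U\times(0,1]$, equation \eqref{main} is locally uniformly parabolic with $C^{3,1}_{x,t}$ coefficients, so interior parabolic Schauder estimates and bootstrapping give $\varrho_k\in C^\infty(U\times(0,1])$: each $\varrho_k$ is a strictly positive classical solution of \eqref{main}.

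\textbf{Uniform estimates, compactness and identification.} Comparison with spatially constant supersolutions gives $0\le\varrho_k\le M$ with $M$ independent of $k$ (depending only on $\|\varrho_0\|_{L^\infty}$ and $\|\vec b\|$). Applying Theorem~\ref{holder} to the $\varrho_k$ on subcylinders of $U\times(0,1]$ — its modulus depends only on $m$, $d$, $\|\vec b\|$ and the uniform $L^\infty$ bound, hence not on $k$, since all the $\varrho_k$ solve the same equation \eqref{main} — produces a modulus of continuity for $\{\varrho_k\}$ uniform in $k$ on every compact $K\subset U\times(0,1]$. By Arzel\`a--Ascoli and a diagonal argument some subsequence $\varrho_{k_j}\to\tilde\varrho$ locally uniformly in $U\times(0,1]$. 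The energy bound $\|\nabla\varrho_k^m\|_{L^2(U'\times(0,1))}\le C(U')$ from \eqref{definitionsol}, uniform in $k$, together with $\varrho_k\vec b\to\tilde\varrho\vec b$ in $L^2_{loc}$, lets us pass to the limit in the weak formulation \eqref{def sol2}; and since $\varrho_k\in C([0,1],L^1)$ with $\varrho_k(\cdot,0)=\varrho_0^k\to\varrho_0$ in $L^1$ (and, in the $B_1$ case, $\varrho_k$ attains $g_k\to\varrho$ on $\partial B_1\times[0,1]$), the limit $\tilde\varrho$ is a weak solution of \eqref{main} with initial data $\varrho_0$ and the same lateral data as $\varrho$. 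By the comparison principle (Theorem~\ref{thm:comp}), $\tilde\varrho=\varrho$. Since the limit is independent of the subsequence, the full sequence $\varrho_k\to\varrho$ locally uniformly in $U\times(0,1]$, proving the lemma.

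\textbf{Main obstacle.} The subtle points are the uniformity (in $k$) of the continuity estimate — which is in fact immediate here, because the $\varrho_k$ solve exactly \eqref{main} with one drift $\vec b$ and a common $L^\infty$ bound — and, for $U=\mathbb R^d$, the strict positivity of $\varrho_k$, which cannot be obtained from a single global comparison and instead requires propagating positivity of the data along the drift flow via a local barrier. The corner compatibility of $(\varrho_0^k,g_k)$ at $\partial B_1\times\{0\}$ and the $L^1$-in-time continuity used to pin down the initial trace are routine.
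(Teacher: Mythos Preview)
Your argument for $U=B_1$ is essentially identical to the paper's: lift the initial and lateral data by $\tfrac1k$, compare with the explicit subsolution $\tfrac1k\,e^{-\|\nabla\cdot\vec b\|_\infty t}$ to obtain a uniform positive lower bound, and conclude smoothness by interior parabolic regularity. The paper simply defers the convergence step to Lemma~9.5 of \cite{book}, whereas you spell it out via uniform H\"older estimates, Arzel\`a--Ascoli, and uniqueness through Theorem~\ref{thm:comp}; this is correct and in the same spirit.

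For $U=\mathbb{R}^d$ the paper gives no details at all. Your Schwartz-function perturbation keeps $\varrho_0^k\in L^1$, but then the initial data is strictly positive \emph{without} a uniform positive lower bound, so the single global barrier $\tfrac1k\,e^{-Ct}$ no longer applies and you are forced into the pointwise ``compactly supported subsolution transported along a streamline'' argument. That can be made to work, but it is more delicate than you indicate (one has to actually build such a barrier for \eqref{main} with drift, and one must avoid circularity with Lemma~\ref{streamline}, whose proof uses the fundamental estimate, which in turn relies on the present lemma). A cleaner route, since the statement only asks that the $\varrho_k$ be strictly positive \emph{classical} solutions (not $L^1$ weak solutions in the sense of Definition~\ref{def1.1}), is to take $\varrho_0^k=\varrho_0+\tfrac1k$ outright, reuse the same global barrier, and handle well-posedness for merely bounded data by localization or by the references already cited; the convergence argument you wrote then goes through unchanged.
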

\begin{proof}

Let us consider $U=B_1$.
Consider $\varrho_{0,k}=\varrho_0+\frac{1}{k}$ and let $\varrho_k$ be the weak solution to \eqref{main} in $U$ with initial data $\varrho_{0,k}$ and Dirichlet boundary condition $\varrho_k=\varrho+\frac{1}{k}$ on $\partial U \times (0,1]$. Note that 
\[\psi(x,t):=\frac{1}{k}\exp(-\|\nabla\cdot\vec{b}\|_\infty t)\]
is a subsolution to \eqref{main} in $U\times (0,1]$ with $\psi\leq \frac{1}{k}$ on the parabolic boundary. Thus from the comparison principle it follows that 
\[\varrho_k(x,t)\geq\psi(x,t)>0.\]
Since $\varrho_k$ is uniformly bounded away from zero in $U \times [0,T]$,   \eqref{main} is uniformly parabolic. In view of the standard parabolic theory, it follows that $\varrho_k$ is smooth in $U\times (0,T]$. The proof for locally uniform convergence of $\varrho_k$ to $\varrho$ is parallel to that of Lemma 9.5 in \cite{book}.
\end{proof}

To end this section, we state the following technical lemma which is used for comparison.

\begin{lemma}\label{lem comp supersl}
Set $U:=B_1$ or $\mathbb{R}^d$. Let $\psi$ be a non-negative continuous function defined in $U \times [0,T]$ such that
\begin{itemize}
    \item[(a)] $\psi$ is smooth in its positive set and in the set {we have} $\psi_t-\Delta \psi^m-\nabla\cdot(\vec{b}\,\psi)\geq 0$,
    \item[(b)] $\psi^\alpha$ is Lipschitz continuous for some $\alpha\in (0,m)$,
    \item[(c)] $\Gamma(\psi)$ has Hausdorff dimension $d-1$.
\end{itemize}
Then 
\[\psi_t-\Delta \psi^m-\nabla\cdot(\vec{b}\,\psi)\geq 0 \text{ in }U\times [0,T]\]
in the weak sense i.e. for all non-negative  $\phi\in C_c^\infty(U\times [0,T))$
\begin{equation}
    \label{wk frm supersl}
    \int_0^T\int_{\mathbb{R}^d} \psi\,\phi_tdxdt \leq \int_{\mathbb{R}^d} \psi(0,x)\phi(0,x)dx+\int_0^T\int_{\mathbb{R}^d}(\nabla \psi^m+\psi\,\vec{b})\nabla\phi\; dxdt.
\end{equation}
\end{lemma}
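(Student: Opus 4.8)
The plan is to reduce the weak differential inequality \eqref{wk frm supersl} to the already-available smooth inequality in $\{\psi>0\}$ by an integration-by-parts argument, with the free boundary contribution controlled by the size assumptions (b) and (c). Concretely, fix a non-negative $\phi\in C_c^\infty(U\times[0,T))$. I would like to multiply the pointwise inequality in (a) by $\phi$ and integrate over $\{\psi>0\}$, but since $\psi^m$ is only Lipschitz (not $C^1$) up to $\Gamma(\psi)$, I cannot integrate by parts directly across the free boundary. Instead I would regularize by cutting off near the free boundary: for $\eta>0$ let $\xi_\eta$ be a smooth function of $\psi$ (or of $\psi^\alpha$) that vanishes where $\psi^\alpha\le\eta$, equals $1$ where $\psi^\alpha\ge2\eta$, and satisfies $|\nabla\xi_\eta|\le C\eta^{-1}|\nabla\psi^\alpha|$. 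Apply the pointwise inequality, multiplied by $\phi\,\xi_\eta$, and integrate over $U\times[0,T]$; now all integrations by parts take place in the open set $\{\psi^\alpha>\eta\}\subset\{\psi>0\}$ where $\psi$ is smooth, so they are legitimate and produce the weak-form expression plus error terms carrying $\nabla\xi_\eta$ and $\partial_t\xi_\eta$.

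The main step is then to show that these error terms vanish as $\eta\to0$. The error terms are integrals over the annular region $A_\eta:=\{\eta<\psi^\alpha<2\eta\}$ of quantities of the form $\psi\,\partial_t\xi_\eta\,\phi$, $\nabla\psi^m\cdot\nabla\xi_\eta\,\phi$, and $\psi\,\vec b\cdot\nabla\xi_\eta\,\phi$. Using $|\nabla\xi_\eta|\le C\eta^{-1}|\nabla\psi^\alpha|$ and that on $A_\eta$ one has $\psi^\alpha\le 2\eta$, i.e. $\psi\le(2\eta)^{1/\alpha}$, together with the Lipschitz bound $|\nabla\psi^\alpha|\le L$ from (b), each integrand is bounded by $C\eta^{-1}\cdot(\text{power of }\eta)$ times the characteristic function of $A_\eta$; the powers of $\eta$ are chosen so that, provided $|A_\eta|\to0$, the products go to $0$. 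To handle $\nabla\psi^m=\frac{m}{\alpha}\psi^{m-\alpha}\nabla\psi^\alpha$ I use $m-\alpha>0$ so $\psi^{m-\alpha}\le(2\eta)^{(m-\alpha)/\alpha}$ on $A_\eta$, giving a bound $\le C\eta^{-1}\eta^{(m-\alpha)/\alpha}|A_\eta|$; similarly $\psi\,\partial_t\xi_\eta$ contributes $\psi\,\xi_\eta'(\psi^\alpha)\partial_t\psi^\alpha$, and since $\psi^\alpha$ is Lipschitz in time as well (part of (b), or by the equation) this is $\le C\eta^{-1}(2\eta)^{1/\alpha}|A_\eta|$; the drift term is even better since $\vec b$ is bounded. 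Thus it suffices that $|A_\eta|\to0$ as $\eta\to0$, which follows from assumption (c): $A_\eta$ shrinks to $\Gamma(\psi)\cap(U\times[0,T])$, a set of Hausdorff dimension $d-1$ and hence of $(d+1)$-dimensional Lebesgue measure zero, so by dominated convergence $|A_\eta|\to0$. (One should also note $|\nabla\psi^m|,|\nabla\psi^\alpha|$ are locally integrable, which is immediate from the Lipschitz bound in (b) plus boundedness of $\psi$.)

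Finally, on the set $\{\psi^\alpha>2\eta\}$ where $\xi_\eta=1$, the integration by parts reproduces exactly the right-hand minus left-hand side of \eqref{wk frm supersl} restricted to that set, plus the sign-correct contribution $\int\phi\xi_\eta(\psi_t-\Delta\psi^m-\nabla\cdot(\vec b\psi))\ge0$ from (a); letting $\eta\to0$ and using that $\psi$, $\nabla\psi^m$, $\psi\vec b$ are all in $L^1_{loc}$ (again from (b) and boundedness) so that the $\{\psi^\alpha>2\eta\}$ integrals converge to the full integrals over $U\times[0,T]$ — here the contributions from $\{\psi=0\}$ are zero since $\psi$ and $\nabla\psi^m=\frac{m}{\alpha}\psi^{m-\alpha}\nabla\psi^\alpha$ both vanish a.e.\ there — yields \eqref{wk frm supersl}. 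The one subtlety worth flagging, and the place I expect the real work, is making rigorous that $\nabla\psi^m$ computed in $\{\psi>0\}$ agrees with the distributional gradient of the globally-defined $L^\infty$ function $\psi^m$ (so that the integration by parts in the cut-off region is valid and the boundary terms are genuinely only those carrying $\nabla\xi_\eta$); this is exactly where assumptions (b) and (c) are used in tandem — (b) gives that $\psi^m=(\psi^\alpha)^{m/\alpha}$ is in $W^{1,\infty}_{loc}$ with the expected gradient, and (c) guarantees no singular part of the derivative is concentrated on $\Gamma(\psi)$.
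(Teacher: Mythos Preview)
Your level-set cutoff approach is natural, but there is a genuine gap in the error estimates. The bounds you derive are of the form
\[
\int_{A_\eta}|\nabla\psi^m\cdot\nabla\xi_\eta|\,\phi \;\le\; C\,\eta^{(m-\alpha)/\alpha-1}\,|A_\eta|,
\qquad
\int_{A_\eta}|\psi\,\partial_t\xi_\eta|\,\phi \;\le\; C\,\eta^{1/\alpha-1}\,|A_\eta|,
\]
and you then appeal only to $|A_\eta|\to0$. But the hypothesis allows any $\alpha\in(0,m)$; when $\alpha>m/2$ the first exponent is negative, and when $\alpha>1$ the second is negative, so the prefactor blows up and the qualitative fact $|A_\eta|\to0$ is not enough. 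Note also that your argument for $|A_\eta|\to0$ (pointwise $\chi_{A_\eta}\to0$ plus dominated convergence) does not actually use assumption~(c) at all, and the sublevel set $A_\eta=\{\eta<\psi^\alpha<2\eta\}$ need not lie in a small \emph{distance}-neighborhood of $\Gamma(\psi)$ since you have no lower bound on $|\nabla\psi^\alpha|$.

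The paper's proof avoids this by cutting off in \emph{distance} rather than in level sets: it covers $\Gamma(\psi)\cap\{\phi>0\}$ by $n\lesssim\eps^{-(d-1)}$ space--time balls $U_i$ of radius $\sim\eps$ (this is exactly where (c) is used, and it is used quantitatively), takes a subordinate partition of unity $\{\rho_i\}$ with $|\hat\nabla\rho_i|\lesssim\eps^{-1}$, and notes that on each $U_i$ one has $\psi\le C\eps^{1/\alpha}$ and $|\nabla\psi^m|\le C\eps^{(m-\alpha)/\alpha}$ from~(b). The total error is then bounded by $n\cdot\eps^{-1}\cdot|U_i|\cdot(\eps^{1/\alpha}+\eps^{(m-\alpha)/\alpha})$, and the covering count supplies the extra powers of $\eps$ that compensate the $\eps^{-1}$ regardless of the value of $\alpha$. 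To repair your argument you would have to replace $\xi_\eta(\psi^\alpha)$ by a cutoff supported in a genuine $\eps$-neighborhood of $\Gamma$ and invoke~(c) to control its volume --- at which point you have essentially reproduced the paper's proof.
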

We postpone the proof to the appendix.

\section{Regularity of the pressure}

{In this section we establish two basic properties for the pressure variable $u$ that we will frequently use in the rest of the paper.} We begin by obtaining the fundamental estimate.

\begin{theorem}\label{fundamental estimate}
Let $u$ be a solution of \eqref{premain} in $Q=\R^d\times [0,\infty)$ with non-negative initial data $u_0$ such that $u_0^{\frac{1}{m-1}}\in L^1(\mathbb{R}^d)\cap L^\infty(\mathbb{R}^d)$. Then there exists a universal constant $\sigma$ such that 
\begin{equation}\label{ineq fund}
    \Delta u>-\frac{\sigma}{\tau} - \sigma \quad \hbox{ in } \mathbb{R}^d\times [\tau,\infty)
\end{equation} 
in the sense of distribution. %Here $\sigma$ only depends on $d,\|\vec{b}\|_\infty,\|\varrho_0\|_1,\|\varrho_0\|_\infty$
%If assuming $\Delta u_0>-\infty$, the estimate holds for some constant $C$ for all $t\geq 0$.
\end{theorem}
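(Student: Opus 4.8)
\textbf{Proof strategy for Theorem~\ref{fundamental estimate}.}

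The plan is to prove the estimate for strictly positive classical solutions $u_k$ coming from Lemma~\ref{approximation}, establish \eqref{ineq fund} with a constant independent of $k$, and then pass to the limit using the locally uniform convergence $u_k \to u$ (so that $\Delta u_k \to \Delta u$ in the sense of distributions). Thus I may assume $u>0$ and smooth, and I only need a lower bound on $w := \Delta u$ that is universal. The classical route, going back to Aronson--Benilan, is to differentiate \eqref{premain} twice and derive a parabolic differential inequality for $w$; the new feature here is the presence of the drift terms $\nabla u \cdot \vec b + (m-1)u\,\nabla\cdot\vec b$, which produce extra lower-order contributions controlled by $\|\vec b\|_{C^3_x}$.

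First I would compute the equation satisfied by $w=\Delta u$. Applying $\Delta$ to \eqref{premain} and using $\Delta(u\Delta u) = u\,\Delta w + 2\nabla u\cdot\nabla w + w^2$ and $\Delta|\nabla u|^2 = 2\nabla u\cdot\nabla w + 2|D^2u|^2$, one gets schematically
\begin{equation*}
w_t = (m-1)u\,\Delta w + \big(2(m-1)\nabla u + 2\nabla u + \vec b\big)\cdot\nabla w + (m-1)w^2 + 2|D^2 u|^2 + (\text{drift terms}),
\end{equation*}
where the ``drift terms'' involve $\nabla u$, $u$, $w$ multiplied by derivatives of $\vec b$ up to third order, plus $(m-1)u\,\Delta(\nabla\cdot\vec b)$. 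Using $|D^2u|^2 \ge \frac{w^2}{d}$ and absorbing the first-order-in-$w$ drift terms via Cauchy--Schwarz (e.g. $|\nabla u|\,|\nabla(\nabla\cdot\vec b)|\,w \le \tfrac{m-1}{2d}w^2 + C(1+|\nabla u|^2)$), and using the standard a priori $L^\infty$ bounds on $u$ and (for $t\ge\tau$) on $|\nabla u|$ that follow from Theorem~\ref{exitence boundedness} together with known PME-type gradient estimates, the inequality reduces to the form
\begin{equation*}
w_t \ge (m-1)u\,\Delta w + \vec V\cdot\nabla w + c_0\, w^2 - \sigma_1
\end{equation*}
for a universal $c_0>0$ and universal $\sigma_1>0$, where $\vec V$ is some (bounded, for $t\ge\tau$) vector field. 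The point of the $+c_0 w^2$ term is that it self-improves any lower bound.

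The second and main step is the barrier argument. I would compare $w$ from below with the spatially constant, $x$-independent subsolution
\begin{equation*}
\underline w(t) := -\frac{1}{c_0 t} - \sqrt{\frac{\sigma_1}{c_0}},
\end{equation*}
which satisfies $\underline w' = \tfrac{1}{c_0 t^2} = c_0\big(\tfrac{1}{c_0 t}\big)^2$; one checks $c_0\underline w^2 - \sigma_1 \ge c_0\big(\tfrac{1}{c_0 t}\big)^2 + (\text{cross terms} \ge 0) - 0 \ge \underline w'$, so $\underline w$ is a (spatially constant, hence $\Delta\underline w = 0$, $\nabla\underline w = 0$) subsolution of the differential inequality above, and it blows down to $-\infty$ as $t\to 0^+$, so it lies below $w$ near $t=0$. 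A parabolic comparison principle on $\mathbb{R}^d\times(0,T]$ — justified because $w$ is bounded below near each fixed time (since $u$ is smooth and $u_0^{1/(m-1)}\in L^1\cap L^\infty$ gives decay at spatial infinity, so the comparison is genuinely at finite points) — then yields $w(x,t) \ge \underline w(t)$ for all $t>0$. Taking $\sigma := \max(1/c_0, \sqrt{\sigma_1/c_0})$ and restricting to $t\ge\tau$ gives \eqref{ineq fund}. Finally, I would pass $k\to\infty$: since $\Delta u_k \ge -\sigma/\tau - \sigma$ pointwise with $\sigma$ universal, and $u_k\to u$ locally uniformly, testing against non-negative $\phi\in C_c^\infty$ and integrating by parts preserves the inequality in the limit.

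\textbf{Expected main obstacle.} The delicate point is making the comparison principle for $w=\Delta u$ rigorous on the unbounded domain: one must ensure $w$ does not escape to $-\infty$ ``from infinity,'' which requires the quantitative decay/gradient control on $u_k$ near spatial infinity to be uniform in $k$, and one must handle the degeneracy of the coefficient $(m-1)u$ at the free boundary in the limit (this is precisely why the statement is only ``in the sense of distribution'' and why the approximation scheme of Lemma~\ref{approximation} is used). A secondary technical nuisance is bookkeeping the third-order $\vec b$-derivative terms — in particular the term $(m-1)u\,\Delta(\nabla\cdot\vec b)$, which is where the full $C^3_x$ norm of $\vec b$ enters — and checking they are all absorbable into $c_0 w^2 + \sigma_1$ using only universal bounds on $u$ and its gradient.
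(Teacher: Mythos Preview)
Your overall architecture (approximate by smooth positive solutions, differentiate twice, derive a differential inequality for $p=\Delta u$, compare with a barrier, pass to the limit) matches the paper, but there is a genuine gap in the step where you reduce to
\[
p_t \ge (m-1)u\,\Delta p + \vec V\cdot\nabla p + c_0\,p^2 - \sigma_1
\]
with a \emph{universal} constant $\sigma_1$. After applying $|D^2u|^2\ge p^2/d$ and Young's inequality to the drift terms, the honest inequality still carries a term of the form $-C|\nabla u|^2$ on the right-hand side (coming from $\Delta|\nabla u|^2$ and from absorbing $\nabla u\cdot\Delta\vec b$, etc.). You propose to control this by ``known PME-type gradient estimates'' for $t\ge\tau$, but those estimates are themselves consequences of the Aronson--B\'enilan bound you are trying to prove, so the argument is circular. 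Moreover, your barrier $\underline w(t)$ is launched from $t=0^+$, so you would actually need the gradient bound uniformly on $(0,\infty)$, not just on $[\tau,\infty)$; and any $k$-dependent gradient bound on the smooth approximants $u_k$ would not survive the limit.

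The paper sidesteps this entirely by a different choice of barrier: instead of a spatially constant function, it takes
\[
w(x,t)=-\frac{\sigma_1}{t+\tau}+u(x,t)-\sigma_2 .
\]
Because $u$ itself appears in the barrier, when you plug $w$ into the operator $\mathcal L_0$ the term $u_t-(m-1)u\Delta u$ shows up and can be replaced via the equation \eqref{premain} by $|\nabla u|^2+\nabla u\cdot\vec b+(m-1)u\,\nabla\!\cdot\vec b$. This produces a $|\nabla u|^2$ with the \emph{good} sign that exactly cancels the bad gradient contribution in $\mathcal L_0$, leaving only terms controlled by $\|u\|_\infty$ and $\|\vec b\|_{C^3}$. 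No a priori gradient estimate is needed. The rest of your outline (comparison, limit in distributions) is then as you described.
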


\begin{proof}

By Lemma \ref{approximation}, it is enough to consider positive smooth solutions with positive smooth initial data.  If \eqref{ineq fund} holds for the approximated smooth solutions, from the locally uniform convergence of the approximation we can conclude.

\medskip

Assume that $u$ is positive and smooth, and consider $p:=\Delta u$. By differentiating \eqref{premain} twice, we get
\begin{align*}
    p_t&=(m-1)u \Delta p+2m\nabla u\cdot\nabla p+(m-1)p^2+2\Sigma\, u_{ij}u_{ij}\\
    &+\nabla p\cdot{\vec{b}}+2\Sigma\, u_{ij}{b^i_j}+\nabla u\cdot \Delta{\vec{b}}+(m-1)\left(p\nabla\cdot{\vec{b}}+2\nabla u\cdot\nabla(\nabla\cdot\vec{b})+u\Delta(\nabla\cdot {\vec{b}})\right).
\end{align*}

By Young's inequality,
\begin{align*}
    \left|(m-1)p\nabla\cdot{\vec{b}}+2\Sigma\,u_{ij}{b}^i_j\right|&\leq \frac{m-1}{2}p^2+\Sigma |u_{ij}|^2+\sigma m \\
    &\leq  {\left(\frac{m-1}{2}-\frac{1}{d}\right)}p^2+2\Sigma\, |u_{ij}|^2+\sigma m;\\
    \left|\nabla u\cdot \Delta{\vec{b}}+2(m-1)(\nabla u\cdot\nabla(\nabla \cdot{\vec{b}})\right|&\leq {m}|\nabla u|^2+\sigma m;\\
    (m-1)\left(u\Delta(\nabla\cdot {\vec{b}})\right)
    &\leq \sigma m.
\end{align*}
Thus we obtain 
\[p_t-(m-1)u \Delta p-2m\nabla u\cdot \nabla p-\left(\frac{m-1}{2}+\frac{1}{d}\right)p^2-\nabla p\cdot{\vec{b}}+{m}|\nabla u|^2+\sigma m\geq 0.\]
 Viewing $u$ as a known function, we may write the above quasilinear parabolic operator of $p$ as $\mathcal{L}_0(p)$ and so we have $\mathcal{L}_0(p)\geq 0$. Below will construct a barrier for this operator to obtain a lower bound for $p$.

Suppose that $\Delta u(\cdot,0)\geq -\frac{1}{\tau}$ for some $\tau>0$. {By Theorem 1.7 \cite{alt1983quasilinear}, $u$ is uniformly bounded by a universal constant and we denote it as $\sigma_0$.} Let $w:=-\frac{\sigma_1}{t+\tau}+u-\sigma_2$ for some $\sigma_1\geq 1,\sigma_2\geq \sigma_0$ to be determined later. Then $p\geq w$ at $t=0$. 

Direct computation yields
\[\mathcal{L}_0(w)=\frac{\sigma_1}{(t+\tau)^2}+u_t-(m-1)u \Delta u-2m |\nabla u|^2-\left(\frac{m-1}{2}+\frac{1}{d}\right)\l-\frac{\sigma_1}{t+\tau}+u-\sigma_2\r^2\]\[-\nabla u\cdot{\vec{b}}+{m}|\nabla u|^2+\sigma m.\]
Now we use the equation \eqref{premain} to obtain
\begin{align*}
    \mathcal{L}_0(w)&\leq \frac{\sigma_1}{(t+\tau)^2}-\left(m-1\right) |\nabla u|^2-\left(\frac{m-1}{2}+\frac{1}{d}\right)\l-\frac{\sigma_1}{t+\tau}+u-\sigma_2\r^2+\sigma m\\
    &\leq \frac{\sigma_1}{(t+\tau)^2}-\left(\frac{m-1}{2}+\frac{1}{d}\right)\frac{\sigma_1^2}{(t+\tau)^2}-\left(\frac{m-1}{2}+\frac{1}{d}\right)(\sigma_2-u)^2+\sigma m\\
    &\leq 0,
\end{align*}
where the last inequality holds if we choose $\sigma_1:= d$ and $\sigma_2:=\sigma_0+(4d\sigma)^{1/2}$. Hence $\mathcal{L}_0(w)\leq 0\leq \mathcal{L}_0(p)$, and from the comparison principle for $\mathcal{L}_0$ we conclude that 
\[\Delta u=p\geq w \geq -\frac{\sigma_1}{t+\tau}-\sigma_2.\]
After taking $\tau\to 0$, we obtain that \eqref{ineq fund} holds for smooth solutions. We can conclude by Lemma \ref{approximation}.

\end{proof}

\begin{remark}
Using the same barrier in the proof of the lemma, it can be seen that if $\Delta u_0\geq -C_0$ in the sense of distribution, then $\Delta u\geq -\frac{\sigma_1}{t+(1/C_0)}-\sigma_2$ in the distribution sense for all time.
\end{remark}

\medskip

Next we prove a useful property: the consistency of positivity set of a solution along streamlines over time. The proof is parallel to the proof of Lemma 3.5 \cite{kim2017singular} where they used a barrier argument. Recall that we denote ${\Omega}_{t}=\{u(\cdot,t)>0\}$.
%The lemma is stated as a local estimate.

%We will consider for $t>0$, since at $t=0$, $u$ may not even be continuous. Again we are using smooth approximations.

\medskip

\begin{lemma}\label{streamline}
%If $\vec{b}\in C^{3,0}_{x,t}$, then 
Let $u$ solves \eqref{premain} with $\Delta u>-\infty$ in $Q_2$. Then for $X(x,t;s)$ given in \eqref{ode} and for $c_0:=\frac{1}{2(1+\|\vec{b}\|_\infty)}$ the following is true. 
\[\left(X({\Omega_t},t;s)\cap B_1\right)\subseteq {\Omega_{t+s}} \hbox{ for all } t\in (-1,1-c_0) \hbox{ and } s\in (0,c_0].
\]

If $u$ solves \eqref{premain} in $Q=\R^d\times [0,\infty)$ with initial data $u_0$ given as in Theorem~\ref{fundamental estimate}, then 
\[X({\Omega_t},t;s)\subseteq {\Omega_{t+s}} \quad \hbox{ for all } s,t>0. 
\]
\end{lemma}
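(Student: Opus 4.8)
\textbf{Proof proposal for Lemma~\ref{streamline}.}

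The plan is to construct a local subsolution that sits inside $\{u>0\}$ and travels precisely along the streamlines $X(x,t;s)$, so that comparison (Theorem~\ref{thm:comp}, in the form of Remark~\ref{remark 2.0}) forces the positivity set to move along the streamlines too. Fix $x_0 \in \Omega_t$ with $X(x_0,t;s) \in B_1$ for all $s\in[0,c_0]$. Since $u(\cdot,t)>0$ near $x_0$ and $u$ is continuous, there is a small ball $B(x_0,r_0)$ and a constant $\eta>0$ with $u(\cdot,t)\geq \eta$ on $B(x_0,r_0)$; the goal is to propagate positivity of this ball forward to a ball of comparable radius centered at $X(x_0,t;s)$. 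First I would change variables to the moving frame $y = x - X(x_0,t; s-t)$ (for times in $[t,t+c_0]$), under which the drift term $\nabla\cdot(\varrho\vec b)$ becomes a drift with the reference velocity subtracted off; because $\vec b\in C^{3,1}$ (hence Lipschitz) and the frame follows an exact streamline, the residual drift vanishes to first order at the center $y=0$ and is bounded linearly in $|y|$ on the relevant compact set.

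Next I would build the barrier. In the moving frame, the candidate subsolution is a Barenblatt-type (or simpler, a quadratic-profile) bump
\begin{equation*}
\psi(y,s) = \left(\frac{m-1}{m}\right)^{\frac{1}{m-1}}\left( a(s) - b(s)|y|^2 \right)_+^{\frac{1}{m-1}},
\end{equation*}
with $a(s),b(s)>0$ chosen so that at the initial time $\psi$ is dominated by $u$ on $B(x_0,r_0)$ (small initial amplitude and support), and so that $\psi$ remains a genuine subsolution of the equation in the moving frame for $s\in[t,t+c_0]$. The lower bound $\Delta u > -\infty$ is used here exactly as in Lemma~3.5 of \cite{kim2017singular}: it guarantees the semicontinuity/regularity needed so that $u$ is a supersolution in the relevant comparison and that no pathological oscillation of $\partial\Omega$ obstructs touching the barrier from below. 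I would verify the subsolution inequality $\psi_t \le (m-1)\psi\Delta\psi + |\nabla\psi|^2 + \nabla\psi\cdot\vec b_{\text{res}} + (m-1)\psi\,\nabla\cdot\vec b$ (in the moving frame) by direct substitution, absorbing the residual-drift and $\nabla\cdot\vec b$ terms into the freedom in choosing $\dot a, \dot b$; the restriction $s\le c_0 = 1/(2(1+\|\vec b\|_\infty))$ ensures the streamline stays inside $B_1$ and the accumulated deformation of the frame is controlled. Lemma~\ref{lem comp supersl} may be invoked if one wants to treat $\psi$ (which is only $C^{1,\alpha}$ across its free boundary) as a bona fide weak subsolution. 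Comparison on the parabolic boundary of a small cylinder around the streamline then yields $u \ge \psi > 0$ at $X(x_0,t;s)$, i.e.\ $X(x_0,t;s)\in\Omega_{t+s}$. Taking the union over $x_0\in\Omega_t$ with $X(x_0,t;\cdot)\subset B_1$ gives the first claim.

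For the global statement, when $u$ is the solution on all of $Q$ with initial data as in Theorem~\ref{fundamental estimate}, the same barrier argument applies without the confinement to $B_1$: there is no spatial boundary to worry about, $\vec b$ is globally $C^{3,1}$, and $\Delta u > -\sigma/t - \sigma$ holds for all $t>0$ by Theorem~\ref{fundamental estimate}, so the hypothesis $\Delta u > -\infty$ is available at every positive time. One then iterates the short-time step (each of length at most $c_0$, or any fixed small length) finitely many times over any compact time interval, using the semigroup property $X(x,t;s_1+s_2) = X(X(x,t;s_1),t+s_1;s_2)$ of the flow, to reach arbitrary $s>0$. I expect the main obstacle to be the drift bookkeeping: checking that in the streamline-following frame the transformed equation genuinely admits the explicit bump as a subsolution on the full interval $[0,c_0]$, i.e.\ controlling the residual drift $\vec b(y + X(\cdot), \cdot) - \dot X$ uniformly and choosing $a(s),b(s)$ to beat it while keeping $\psi$ below $u$ initially — this is where the $C^{3,1}$ (really just Lipschitz) regularity of $\vec b$ and the smallness of $c_0$ are spent. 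Everything else is a routine adaptation of the barrier construction in \cite{kim2017singular}.
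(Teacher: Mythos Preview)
Your barrier approach is a valid route, but the paper's proof is dramatically simpler and worth knowing. Despite the remark that the argument is ``parallel to'' a barrier construction in \cite{kim2017singular}, the paper does not build any subsolution at all. Instead it argues directly: since $u$ is continuous and \eqref{premain} is uniformly parabolic on compact subsets of $\{u>0\}$, $u$ is smooth there; then along a streamline one computes, using \eqref{premain} and the chain rule,
\[
\partial_s\, u(X(x,t;s),t+s) = (m-1)u\,\Delta u + |\nabla u|^2 + (m-1)u\,\nabla\!\cdot\vec b \;\geq\; -C\,u(X(x,t;s),t+s),
\]
where the inequality comes from $\Delta u\geq -C_0$ and $|\nabla\!\cdot\vec b|\leq\sigma$. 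Gronwall gives $u(X(x,t;s),t+s)\geq e^{-Cs}u(x,t)>0$ up to the first time the streamline would leave the positive set, yielding a contradiction. The whole argument is two lines once smoothness in $\{u>0\}$ is noted. This also clarifies the role of the hypothesis $\Delta u>-\infty$: it is not used for ``semicontinuity/regularity so that $u$ is a supersolution'' as you write (weak solutions are automatically supersolutions), but is plugged directly into the equation to bound the diffusion term $(m-1)u\,\Delta u$ from below by $-(m-1)C_0 u$.

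Your route buys robustness---a moving Barenblatt bump would survive even without the semiconvexity bound, since comparison for weak solutions needs no pointwise information on $\Delta u$---at the cost of having to actually solve for $a(s),b(s)$ and check the subsolution inequality against the residual drift. If you pursue it, note that Lemma~\ref{lem comp supersl} is stated for \emph{super}solutions; the subsolution analogue is easier (no outward free-boundary speed to control) but is not literally what is written there. Also, the restriction $s\leq c_0$ in the local statement is used in the paper only to keep the streamline inside $B_2$, not to control ``accumulated deformation of the frame''; with the Gronwall argument no smallness of $s$ is needed beyond staying in the domain, which is why the global statement follows immediately from the local one together with Theorem~\ref{fundamental estimate}.
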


\begin{proof}
In view of Theorem \ref{fundamental estimate}, the second statement follows easily from the first one. To prove the first statement, 
it is suffices to show that for all $x\in\Omega_t$ and $s\in (0,c_0]$, if $X(x,t;s)\in B_1$ then $u(X(x,t;s))>0$.

If $x\in B_{\frac{3}{2}}^c$, by the choice of $c_0$, 
\[|X(x,t;s)|\geq |x|-\|\vec{b}\|_\infty s> 1\quad \text{ for all }s\in (0,c_0].\]
Thus we take $x\in \Omega_t\cap B_{\frac{3}{2}}$ and then $X(x,t;s)$ is inside the domain $B_2$ for all $s\in (0, c_0]$.  By Theorem \ref{holder}, $u$ is continuous in $Q_2$. Then we can suppose for contradiction that there exists $s_0\in (0, c_0]$ such that
\[u(X(x,t;s),t+s)>0\text{ for all $s\in (0,s_0)$ } \quad \text{ and }\quad u(X(x,t;s_0),t+s_0)=0.\]

Suppose $\Delta u\geq -C_0$ in $Q_2$. Note that \eqref{premain} is uniformly parabolic in any compact subset of $\{u>0\}$, due to the continuity of $u$. Therefore by the standard parabolic theory, $u$ is smooth in $\Omega
\cap Q_2$. It follows from \eqref{premain} that for all $s\in(0,s_0)$,
\begin{align*}
   \partial_s u(X(x,t;s),t+s)&= (u_t +\nabla u\cdot \vec{b})(X(x,t;s),t+s)\\
    &\geq (-C_0(m-1)u+|\nabla u|^2+(m-1)u\nabla\cdot \vec{b})(X(x,t;s),t+s)\\
    &\geq -C u(X(x,t;s),t+s)
\end{align*}
where $C:=(m-1)(C_0+\|\nabla\cdot \vec{b}\|_\infty)$.
This yields
\begin{equation}\label{mono stream}
    u(X(x,t;s),t+s)\geq e^{-Cs}u(x,t)>0,
\end{equation}
which, after taking $s\to s_0<1$, contradicts with the assumption that $u(X(x,t;s_0),t+s_0)=0$.

\end{proof}

\section{Regularity of the Free Boundary}

%In this section, we only consider time independent vector fields. 

    %& \Delta u\geq -C_0,\; \| \vec{b}\|_{C^{3,1}_{x,t}}\leq L \text{ in the range of consideration.}

In this section we study {finer properties on expansion of the positive set} $\{u>0\}$ along the streamlines associated with the drift $\vec{b}$.
\begin{comment}
 The central idea in \cite{CFregularity} was to measure the time the free boundary moves away from a given point by distance $R$, in terms of the average pressure in a ball of size $R$, making it sufficient to track the size of the pressure average over time instead of the free boundary movement.
 \end{comment}
  We largely follow the ideas in \cite{CFregularity} applied to the zero drift case, and obtain corresponding statements (Lemma 4.1 and 4.2) for our problem.

\medskip

\begin{lemma}\label{lem part1}
Let $u$ be given as in Theorem \ref{fundamental estimate}. For any $t_0\geq \eta_0>0$  there {exist} $\tau_0,c_0$ depending only on $\eta_0$ and universal constants 
such that the following holds. For any $R>0$ and $\tau\in (0,{\tau_0})$, if 
\begin{equation}\label{if}
u(\cdot,t_0)=0 \hbox{ in } B(x_0,R)\quad \hbox{ and }\quad\oint_{B(X(x_0,t_0;\tau),R)}u(x,t_0+\tau)dx\leq \frac{c_0R^2}{\tau},
\end{equation}
then
\begin{equation}\label{then}
u(x,t_0+\tau)=0 \quad\text{ for }x\in B(X(x_0,t_0;\tau),R/6). 
\end{equation}
\end{lemma}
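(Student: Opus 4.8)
The plan is to adapt the Caffarelli--Friedman mechanism from \cite{CFregularity} to the drift setting via a barrier argument in the moving frame determined by the streamlines. Fix $(x_0,t_0)$ and the time shift $\tau\in(0,\tau_0)$, and write $y_0:=X(x_0,t_0;\tau)$ for the streamline image of the center. The idea is to build a supersolution $w$ of \eqref{premain} on a parabolic cylinder around the streamline through $(x_0,t_0)$ over the time interval $[t_0,t_0+\tau]$, with the properties that (i) $w(\cdot,t_0)\ge u(\cdot,t_0)$ on the relevant spatial ball, using the hypothesis $u(\cdot,t_0)=0$ in $B(x_0,R)$ together with the global $L^\infty$ bound on $u$ to control $u$ outside that ball by imposing a large boundary value for $w$; (ii) $w\ge u$ on the lateral boundary of the cylinder over $[t_0,t_0+\tau]$, again by choosing the boundary data of $w$ large; and (iii) $w(\cdot,t_0+\tau)$ vanishes on the smaller ball $B(y_0,R/6)$. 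The comparison principle (Remark~\ref{remark 2.0}) then forces $u(\cdot,t_0+\tau)=0$ on $B(y_0,R/6)$, which is exactly \eqref{then}.

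The construction of $w$ is the heart of the matter. After changing coordinates to follow the streamline $X(x_0,t_0;\cdot)$ (i.e. setting $z = x - X(x_0,t_0;s)$), the drift term in \eqref{premain} is largely absorbed, leaving an equation close to the pressure PME with lower-order perturbations controlled by $\|\vec b\|_{C^{3,1}}$ and by the semiconvexity bound $\Delta u>-\infty$. In the zero-drift case the natural barrier is of self-similar Barenblatt/Graveleau type, a function of the form $w(x,s) = \frac{a}{t_0+\tau-s}\bigl(|x-y(s)|^2 - r(s)^2\bigr)_+$ or a suitable radial profile solving the pressure equation, whose support is a ball shrinking from radius $\sim R$ at time $t_0$ down to radius $<R/6$ at time $t_0+\tau$; the shrinking is driven precisely by the negative curvature term, and the rate at which the support contracts is governed by the size of the pressure, which is where the smallness hypothesis $\oint_{B(y_0,R)} u(\cdot,t_0+\tau)\,dx \le c_0R^2/\tau$ enters — it guarantees the average pressure is small enough that the ``focusing'' barrier wins. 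I would pick $\tau_0$ and $c_0$ so that this contraction happens within the allotted time, and $\tau_0$ small enough (depending on $\eta_0$ through $t_0\ge\eta_0$, so that we stay away from $t=0$ where the semiconvexity constant $\sigma/t$ blows up, and also to keep the streamline inside the domain) that the perturbation terms from the drift do not destroy the supersolution property.

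The main obstacle I anticipate is verifying the supersolution inequality $\mathcal{L}(w)\le 0$ for the explicit barrier in the drifted, moving-frame equation: one must check that the error terms generated by (a) the residual drift after the change of frame, (b) the time dependence of $\vec b$, and (c) the fact that $w$'s support is a genuinely moving/shrinking ball rather than a static one, are all dominated by the good (focusing) terms, uniformly for $\tau\in(0,\tau_0)$ and all $R>0$. A secondary technical point is the matching at $t=t_0$: since $u(\cdot,t_0)$ need only vanish on $B(x_0,R)$ and is merely bounded (not small) outside, $w(\cdot,t_0)$ must be taken to blow up fast enough near $\partial B(x_0,R)$ to dominate $u$ there while still having the correct vanishing set — this is the standard reason the barrier conclusion is stated on the shrunken ball $B(y_0,R/6)$ rather than on $B(y_0,R)$, and I would track the constants ($1/6$ versus, say, $1/2$) carefully through the ODE for the support radius. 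Throughout, I would use Lemma~\ref{lem comp supersl} to justify that the (merely Lipschitz-in-a-power, compactly supported) barrier is a legitimate weak supersolution, and Lemma~\ref{streamline} to ensure the geometric setup along streamlines is consistent.
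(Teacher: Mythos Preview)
Your plan has a genuine gap: the barrier you describe, with boundary data taken ``large'' (of order $\|u\|_\infty$) so as to dominate $u$ on the lateral boundary and near $\partial B(x_0,R)$, cannot yield the conclusion uniformly in $R>0$. For a PME-type barrier with height $L\sim\|u\|_\infty$ over a ball of radius $R$, the zero set fills in within time of order $R^2/L$; hence the hole disappears before time $\tau$ whenever $R^2/\tau\ll L$, which is exactly the small-$R$ regime the lemma must cover. Your sentence ``the rate at which the support contracts is governed by the size of the pressure'' conflates the pressure of $u$ (which is small in average) with that of the barrier $w$ (which you have made large); once $w\ge u$ is arranged on the parabolic boundary, the evolution of $w$'s zero set is dictated by $w$'s own data, and the average hypothesis on $u$ at the \emph{final} time $t_0+\tau$ never enters your argument in a quantitative way.

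The missing mechanism, and what the paper actually does, is to use the semiconvexity $\Delta u\ge -C_0(\eta_0)$ twice before any barrier is introduced. First, after rescaling so that $R=\tau=1$, the bound $\Delta \tilde u\ge -\eps$ makes $\tilde u+\frac{\eps}{2d}|x|^2$ subharmonic, so the mean-value inequality upgrades the average hypothesis at the final time to a \emph{pointwise} bound $\tilde u(\cdot,1)\lesssim c_0+\eps$ on the half-ball. Second, in the moving frame $v(x,t)=\tilde u(x+\tilde X(t),t)$, the equation together with $\Delta v\ge -\eps$ gives $v_t\ge -\sigma\eps v-\sigma\eps^2|x|^2$, which propagates this pointwise smallness \emph{backward} to the whole slab $B_{1/2}\times(0,1)$. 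Only then does one compare $v$ with a barrier $\varphi(x,t)=\lambda\bigl(\tfrac{t}{36}+\tfrac{|x|-1/3}{6}\bigr)_+$ whose lateral boundary value is \emph{small} (of order $\lambda$, matched to $c_0+\eps$), not large; its zero set shrinks only from $\{|x|\le 1/3\}$ to $\{|x|\le 1/6\}$ over unit time, giving the factor $1/6$. In short: replace ``large boundary data from $\|u\|_\infty$'' by ``small boundary data obtained from semiconvexity + backward Gr\"onwall,'' and the argument closes.
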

\begin{proof}

For simplicity, suppose $x_0=0,t_0=0$,  and consider the rescaled function
\begin{equation}\label{rescale v}
    \tilde{u}(x,t):=\frac{\tau}{R^2}u(Rx,\tau t)\quad \hbox{ with } \vec{b}'(x,t):=\frac{\tau}{R} \vec{b}(Rx,\tau t), \, \tilde{X}(t):=\frac{1}{R}X(0,0;\tau t).
\end{equation} 
Then $\tilde{u}$ satisfies
\[\tilde{u}_t=(m-1)\tilde{u}\Delta \tilde{u}+|\nabla \tilde{u}|^2+\nabla \tilde{u}\cdot \vec{b}'+(m-1)\tilde{u}\, \nabla \vec{b}'.\]

\medskip

 Theorem~\ref{fundamental estimate} yields  
 \begin{equation}\label{bound00}
 \Delta u\geq -C_0 = -C_0(\eta_0) \hbox{ for }t\geq \eta_0.
 \end{equation}  Set $\eps:=C_0\tau_0$ so that  $\Delta \tilde{u} = \tau \Delta u \geq -\eps$.  From our assumption, it follows that \[\oint_{B(\tilde{X}(1),1)}\tilde{u}(x,1)dx\leq c_0.\]
 Using this and that $\tilde{u}+\eps |x|^2/(2d)$ is subharmonic, we find for $x\in B(\tilde{X}(1),\frac{1}{2}),$
\begin{equation}\label{est v t1}
\begin{aligned}
\tilde{u}(x,1)&\leq -\frac{\eps|x|^2}{2d}+\oint_{B(\tilde{X}(1),\frac{1}{2})}\tilde{u}(y,1)+\frac{\eps|y|^2}{2d}dy\\
&\leq 2^d\oint_{B_1}\tilde{u}(y,1)dy+\sigma\eps 
\leq 2^dc_0+\sigma\eps.
\end{aligned}
\end{equation}

Now consider 
\[v(x,t):=\tilde{u}(x+\tilde{X}(t),t).\]
Then $\Delta v \geq -\eps$.  
Moreover, observe that $v$  is the weak solution of 
$$
\calL(v):= {v}_t - (m-1){v}\Delta {v}- |\nabla {v}|^2-\nabla{v}\cdot (\vec{b}'(x+\tilde{X},t)-\vec{b}'(\tilde{X},t))-(m-1){v}\nabla\cdot \vec{b}'(x+\tilde{X},t)=0.
$$
We used Definition \ref{def1.1} as the notion of weak solutions, where $\vec{b}$ is replaced by $\vec{b}'(x+\tilde{X},t)-\vec{b}'(\tilde{X},t)$. Since the operator $\mathcal{L}$ is locally uniformly parabolic  in its positive set, $v$ is smooth in the set due to the standard parabolic theory.  From the above equation, $v$ satisfies the following in the classical sense in its positive set \begin{equation*}
   % \label{est tilde v prime}
\begin{aligned}
    v_t(x,t)
    &\geq -\eps (m-1){v}+|\nabla {v}|^2-\sigma\tau |\nabla{v}||x| - \sigma\tau {v}\\
    &\geq -\eps (m-1){v}-\sigma\tau{v}-\sigma \tau^2|x|^2,
\end{aligned}
\end{equation*}
where the first inequality is due to the fact that $|\nabla\, \vec{b}'|\leq \tau\sigma$ and the second inequality follows from Young's inequality. Because $v$ is continuous and non-negative, the above estimate also holds weakly in the whole domain.

 Since $\e = C_0\tau\geq \tau$,  we obtain
\begin{equation}
    \label{est tilde v prime}
    {v}_t(x,t)\geq -\sigma \eps\, {v}(x,t) -\sigma \eps^2|x|^2,  %\text{ with }C_L=(m-1)(1+L) .
\end{equation}
and thus{ by Gronwall}
$$
    {v}(x,1)\geq e^{\sigma\eps(t-1)}{v}(x,t)-\sigma(1-e^{\sigma\eps(t-1)})\eps\,|x|^2\geq e^{-\sigma\eps}{v}(x,t)-\sigma \eps \hbox { in } B_{\frac{1}{2}}\times (0,1).
    $$

Using \eqref{est v t1}, we conclude that for all $(x,t)\in B_{\frac{1}{2}} \times (0,1)$ and some $\sigma\geq 1$,
\begin{equation}\label{est v 111}
\begin{aligned}
{v}(x,t)&\leq e^{\sigma\eps}v(x,1)+e^{\sigma\eps}\sigma\eps=e^{\sigma\eps}\tilde{u}(x+\tilde{X}(t),1)+e^{\sigma\eps}\sigma\eps\\
&\leq e^{\sigma\eps}(2^d c_0+2\sigma\eps)\leq \sigma (c_0+\eps).
\end{aligned}
\end{equation}
if $\e$ is sufficiently small.

\medskip

To conclude we proceed with a barrier argument applied to the operator $\mathcal{L}$. Define
\[\varphi(x,t):=\lambda\left(\frac{t}{36}+\frac{(|x|-1/3)}{6}\right)_+\]
and we aim at showing $\calL(\varphi)\geq 0$ weakly.
Using Lemma \ref{lem comp supersl} to $(\frac{m-1}{m}\varphi)^{\frac{1}{m-1}}$, the corresponding density variable of $\varphi$, and the Lipschitz continuity of $\varphi$, we find that to show $\varphi$ is a supersolution of $\calL$, it suffices to prove $\calL(\varphi)\geq 0$ in the positive set of $\varphi$. 

Notice
\[\nabla{\varphi}\cdot (\vec{b}'(x+\tilde{X},t)+\vec{b}'(\tilde{X},t))-(m-1){\varphi}\nabla\cdot \vec{b}'(x+\tilde{X},t)\leq    \sigma\eps|\nabla {\varphi}|\,  |x| + \sigma \eps {\varphi},\]
so direct computations yield that if
\begin{equation}
    \frac{1}{\lambda}\geq \left(\frac{t}{6}+|x|-\frac{1}{3}\right)\left((m-1)(d-1)|x|^{-1}+\frac{\sigma\eps}{\lambda}\right)+1+\frac{\sigma\eps}{\lambda^2},\label{4.1}
\end{equation}
then $\mathcal{L}(\varphi)\geq 0$ for $\frac{1}{3}-\frac{t}{6}< |x|< \frac{1}{2}$ in the classical sense.
The inequality \eqref{4.1} is valid for $t\in (0,1)$ provided that we take $0<\eps <<\lambda<<1$.
With this choice of $\eps,\lambda$, we get $\mathcal{L}(\varphi)\geq 0$ in $|x|< \frac{1}{2}$ weakly.
By the assumption ${v}(x,0) = 0$ in $B_{\frac{1}{2}}$ and thus ${v}\leq \varphi$ on $|x|\leq \frac{1}{2},t=0$. 
On the lateral boundary $|x|=\frac{1}{2}, t\in(0,1)$, by \eqref{est v 111} if $c_0,\eps$ are small enough depending on universal constants we have
\[{v}\leq \sigma(c_0+\eps)\leq \frac{\lambda}{36}\leq \varphi. \]
%Then from here, $c_0,\tau_0$ are selected such that \[c_0\lesssim_{m,d}1, \quad \tau_0\lesssim_{m,d} \frac{1}{C_0(1+L)}.\]
Hence {by comparison principle for the operator $\mathcal{L}$} (see Remark \ref{remark 2.0}) in $B_{\frac{1}{2}} \times (0,1)$  we have
${v}\leq \varphi.$
In particular
\[\tilde{u}(x+\tilde{X}(1),1)={v}(x,1)\leq \varphi(x,1)=0\]
for $|x|<\frac{1}{6}$ and we proved the lemma. %\textcolor{with $\tau_0,c_0$ only depending on $m,d,C_0, L$.}

\end{proof}

\begin{remark}\label{remark part1}
One can check that the conclusion of the lemma also holds in a local setting: If $u$ solves \eqref{premain} with $\Delta u\geq-C_0$ in $Q_1$ for some $C_0$. Then there exist $\tau_0,c_0,\sigma$ such that \eqref{if} implies \eqref{then} for any $R\in (0,\sigma)$ and $\tau\in (0,{\tau_0})$. Here $\tau_0,c_0$ depend only on $C_0$ and universal constants, and $\sigma$ is universal. This local version of the lemma will be used in Lemma \ref{condlem nondeg}.
\end{remark}

\begin{lemma}\label{lem part2 conv}
Let $u$ be as in Theorem \ref{fundamental estimate}. For any $t_0\geq \eta_0>0$ and $c_1>0$, there exist $\lambda, c_2,\tau_0>0$ depending on $c_1$, $\eta_0$ and universal constants such that the following holds. For any $R>0$ and $0<\tau \leq \tau_0$, if 
\begin{equation}
\oint_{B(x_0,R)}u(x,t_0)dx\geq c_1\frac{R^2}{\tau},
\end{equation}
then
\begin{equation}
(X(x_0,t_0;\lambda \tau),t_0+\lambda\tau)\geq c_2\frac{R^2}{\tau}.
\end{equation}
\end{lemma}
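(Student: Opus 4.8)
The plan is to run the same parabolic rescaling and moving-frame reduction used in the proof of Lemma~\ref{lem part1}, after which the claim becomes a one-sided Gronwall inequality for the pressure mass inside a fixed ball; no barrier is needed. Translating so that $x_0=0$, set $\tilde u(x,t):=\frac{\tau}{R^2}u(Rx,t_0+\tau t)$ as in \eqref{rescale v} (with the time shift $t_0$), so that $\tilde u$ solves the rescaled equation with drift $\vec b'(x,t):=\frac{\tau}{R}\vec b(Rx,t_0+\tau t)$ obeying $\|\nabla\vec b'\|_\infty+\|\nabla\cdot\vec b'\|_\infty\le\sigma\tau$, and, since $t_0+\tau t\ge t_0\ge\eta_0$, Theorem~\ref{fundamental estimate} gives $\Delta\tilde u=\tau\Delta u\ge-\e$ with $\e:=C_0(\eta_0)\,\tau$. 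Writing $\tilde X(t):=\frac1R X(x_0,t_0;\tau t)$ and passing to the moving frame $v(x,t):=\tilde u(x+\tilde X(t),t)$ exactly as there, $v$ is the weak solution of $\calL(v)=0$ on $\R^d\times(0,1)$ with $\Delta v\ge-\e$; by the same change of variables the hypothesis becomes $\oint_{B_1}v(\cdot,0)\ge c_1$, and it suffices to prove $\oint_{B_1}v(\cdot,\lambda)\ge c_2$.

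Next I would reuse the pointwise inequality established in the proof of Lemma~\ref{lem part1}: from $v\ge0$, $\Delta v\ge-\e$, the Lipschitz bound on the modified drift and Young's inequality, $v$ satisfies $v_t\ge-\sigma\e v-\sigma\e^2|x|^2$ classically in $\{v>0\}$, and hence, $v$ being continuous, non-negative and a solution of $\calL(v)=0$, in the weak sense throughout $\R^d\times(0,1)$. Testing this against products $\chi(t)\zeta_k(x)$ with $0\le\chi\in C^\infty_c((0,1))$ and $0\le\zeta_k\le\mathbf{1}_{B_1}$ smooth, $\zeta_k\uparrow\mathbf{1}_{B_1}$, and letting $k\to\infty$ (legitimate since $v$ is bounded and continuous on $\overline{B_1}\times[0,1]$, cf.\ Theorems~\ref{exitence boundedness} and~\ref{holder}), one obtains that $g(t):=\int_{B_1}v(x,t)\,dx$ is continuous on $[0,1]$ and satisfies, in $\mathcal{D}'((0,1))$,
\[
g'(t)\ \ge\ -\sigma\e\,g(t)-\sigma\e^2\kappa,\qquad \kappa:=\int_{B_1}|x|^2\,dx\le|B_1| .
\]
Applying the Gronwall trick to $g+\sigma\e\kappa$ (using $\sigma\ge1$ and $1-e^{-s}\le s$) yields $g(\lambda)\ge e^{-\sigma\e\lambda}\,g(0)-\sigma^2\e^2\kappa\lambda$ for all $\lambda\in(0,1]$.

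To conclude, I would first fix $\tau_0=\tau_0(\eta_0)>0$ so small that $\sigma C_0(\eta_0)\,\tau_0\le1$, which forces $\sigma\e\le1$ (hence $\sigma^2\e^2\le1$) whenever $\tau\le\tau_0$; then set $\lambda:=\tfrac14\min\{1,c_1\}$ and $c_2:=c_1/2$. Since $g(0)\ge c_1|B_1|$ and $\kappa\le|B_1|$, this gives $e^{-\sigma\e\lambda}\ge e^{-1/4}\ge\tfrac34$ and $\sigma^2\e^2\kappa\lambda\le\kappa\lambda\le\tfrac14 c_1|B_1|$, whence $g(\lambda)\ge\tfrac34 c_1|B_1|-\tfrac14 c_1|B_1|=\tfrac12 c_1|B_1|$, i.e.\ $\oint_{B_1}v(\cdot,\lambda)\ge c_2$. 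Undoing the moving-frame change of variables and the parabolic rescaling turns this into $\oint_{B(X(x_0,t_0;\lambda\tau),R)}u(x,t_0+\lambda\tau)\,dx\ge c_2\,R^2/\tau$, with $\lambda,c_2$ depending only on $c_1$ and $d$ and $\tau_0$ on $\eta_0$ and universal constants, as required.

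The step I expect to need the full strength of the construction in Lemma~\ref{lem part1}, rather than a direct computation on $u$, is the integration in the second paragraph: integrating $\tilde u_t$ over the ball $B(\tilde X(t),1)$ that drifts along the streamline produces a transport term from the moving boundary together with an a priori uncontrolled $\int\nabla\tilde u\cdot\vec b'$ term, and these only match up to lower-order errors once one is in the frame $v$, where the drift of $\calL$ vanishes at the centre and is $O(\tau|x|)$ elsewhere. This is exactly what makes the clean bound $v_t\ge-\sigma\e v-\sigma\e^2|x|^2$ available over a fixed ball, and everything after that is the elementary Gronwall estimate above; unlike Lemma~\ref{lem part1}, no comparison with an explicit barrier is needed here.
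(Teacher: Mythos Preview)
Your argument establishes only an \emph{averaged} lower bound
\[
\oint_{B(X(x_0,t_0;\lambda\tau),R)}u(\cdot,t_0+\lambda\tau)\ge c_2\,\frac{R^2}{\tau},
\]
but the lemma asks for the \emph{pointwise} bound $u\bigl(X(x_0,t_0;\lambda\tau),\,t_0+\lambda\tau\bigr)\ge c_2\,R^2/\tau$ (the displayed conclusion in the statement is missing a ``$u$'' in front, as is clear from the paper's own proof and from the way the lemma is used in Theorem~\ref{strictly expanding}). Since $\Delta u\ge -C_0$ only gives the sub-mean-value inequality $u(\text{centre})\le\oint+C_0R^2$, your averaged bound cannot be upgraded to a pointwise one; the mean-value inequality goes the wrong way. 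In other words, you have reproved (a version of) the easy direction --- that the $L^1$ mass in the moving ball persists --- which is indeed just Gronwall on \eqref{est tilde v prime}. What is missing is precisely the mechanism that forces this mass to concentrate at the centre point after a long enough time.

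The paper supplies that mechanism via the Caffarelli--Friedman Green's function identity: testing the density equation against the Green's function $G$ of \eqref{prop G} yields the integral inequality \eqref{est Y}, namely
\[
e^{-\sigma_1\e t}\int_0^t Y(s)\,ds\ \le\ \sigma_2\Bigl(\int_0^t\xi^m(0,s)\,ds+\e^\gamma+Y^{1/m}\Bigr),
\]
which ties the time-integrated mass $Y$ to the pointwise value $\xi^m(0,\cdot)$ at the centre. One then argues by contradiction: if the centre value at time $\lambda$ were small, the same $v_t$-inequality you used keeps it small on all of $(0,\lambda]$, so the first term on the right is negligible; combined with your lower bound $Y\ge c_1'/2$ this forces $Z(t):=\int_0^t Y\le\sigma Y^{1/m}=\sigma(Z')^{1/m}$, an ODE that blows up before time $\lambda$ provided $\lambda$ is chosen \emph{large} (the paper takes $\lambda\gg1$, not $\lambda\le1$ as you do). This blow-up is the contradiction. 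Your proposal contains none of this structure, and without it the pointwise conclusion does not follow.
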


\begin{proof}

%\[\eps=C_0\tau_0,\quad c_1\geq C\eps^\gamma+Ce^{C(L+1)\eps\lambda}\lambda c_2^\frac{m}{m-1}+CL\eps\lambda^2,\quad \eps\lambda<<1/(1+L).\]

 Let $C_0$ be as in \eqref{bound00}, and set $(x_0,t_0)=(0,0)$ by shifting coordinates. We consider the corresponding density variable $\varrho(x,t):=(\frac{m-1}{m}u(x, t))^{\frac{1}{m-1}}$ and its rescaled version
\[\tilde{\varrho}(x,t):=(\frac{\tau}{R^2})^{\frac{1}{m-1}}\varrho(Rx,\tau t).\]
Let $\vec{b}'$, $\tilde{X}$ be as in \eqref{rescale v} and let $\eps=C_0\tau$ as in the proof of Lemma 4.1. Then $\tilde{\varrho}$ solves the re-scaled density equation
\[{\tilde{\varrho}}_t=\Delta {\tilde{\varrho}}^m+\nabla\cdot({\tilde{\varrho}}\,\vec{b}'). \]
The fundamental estimate on $u$ implies that $\Delta \tilde{\varrho}^m\geq -\eps\tilde{\varrho}$ in the sense of distribution.

 Let us define ${\xi}(x,t):=\tilde{\varrho}(x+\tilde{X},t)$ and $Y(t) := \int_{B_1} {\xi}^m(x,t) dx$. {Below we study properties on the growth rate of $Y$ using properties of $\tilde{\varrho}$,  namely we derive \eqref{Y lower} and \eqref{est Y}. We then use these estimates to argue by a contradiction to prove our main statement. }

\medskip

First let us show that $Y(\lambda)$ stays sufficiently positive if {$\eps\lambda$} is small.  Since $\tilde{X}(0)=0$,  our assumption yields that
\begin{align*}
    Y(0)&=\oint_{{{B_1}}}{\xi}^m(x,0)dx=\sigma(\frac{\tau}{R^2})^{\frac{m}{m-1}}\oint_{B(0,R)}\varrho^m(x+\tilde{X}(0),0)dx\\
    &=\sigma\oint_{B(0,R)} (\frac{\tau}{R^2}u)^{\frac{m}{m-1}}(x,0)dx\\
    &\geq \sigma\left(\frac{\tau}{R^2} \oint_{B(0,R)}u(x,0)dx\right)^{\frac{m}{m-1}}\geq \sigma c_1^\frac{m}{m-1}=:c_1'.
\end{align*}

Due to \eqref{est tilde v prime} and ${v}(x,t)=\frac{m}{m-1}{\xi}^{m-1}(x,t)$, for $\eps$ small enough
\begin{equation}
    \label{est v tilde sq prime}
\begin{aligned}
    ({\xi}^m)_t&\geq -\sigma\eps {\xi}^m-\sigma \eps^2 |x|^2{\xi}\geq -\sigma\eps{\xi}^m-\sigma \eps \hbox{ for } x\in B_1\cap\{\xi>0\}.
\end{aligned}
\end{equation}
%So
%\begin{align}\nonumber    \partial_t w^m(x,t)&\geq C_{L,m}w^m(x,t)-C_L\eps w^m(x,t)-\eps |x|^2\rho^{m-1}(x+\tilde{X},t)\\&\geq -C_L\eps w^m(x+\tilde{X},t)-C\eps.\label{est v tilde sq prime}\end{align}
Consequently
\begin{equation}
    \label{Y lower}Y(t)\geq e^{-\sigma\eps t}Y(0)-\sigma \eps t\geq e^{-\sigma\eps \lambda}c_1'-\sigma \eps \lambda>\frac{c_1'}{2}\sim c_1^{\frac{m}{m-1}}
\end{equation}
for $t\in (0,\lambda]$ if $\eps\lambda<<_\sigma 1$.

\medskip
  
Next we obtain an upper bound for the growth of $Y$ over time.

\medskip

{\bf Claim:}
For some universal constants $\sigma_1, \sigma_2$ and $\gamma$, 
\begin{equation}\label{est Y}
e^{-\sigma_1\eps t}\int_0^t Y(s) ds \leq \sigma_2(\int_0^t {\xi}^m(0,s)ds + \eps^\gamma+ Y^{\frac{1}{m}}).
\end{equation}

{\bf Proof of the Claim}.
As in  \cite{CFregularity}, we introduce the Green's function in a unit ball so that $G$ solves 
\begin{equation}
    \label{prop G}
    \Delta G= -\sigma_d\delta(x)+\sigma_d I_{{{B_1}}}\quad \text{ and }\qquad G = |\nabla G|=0 \text{ on }\partial {{B_1}}.
\end{equation}
Let us only discuss the dimension $d\geq 3$, where $G$ is defined as 
\begin{equation}\label{green}
G(x)=|x|^{2-d}-1-\frac{d-2}{2}(1-|x|^2).
\end{equation}
%For simplicity of notations, we write \[\tilde{X}=\tilde{X}(x,t),\, \tilde{X}=\tilde{X}(0,t),\,r=|\tilde{X}-\tilde{X}|(x,t)\,\text{ and } S=S(t)=B(\tilde{X},1).\]

%By direct computation we have\[\int_{B(\tilde{X},1)}G(y-\tilde{X})\rho(y,t)dy=\int_{\tilde{X}^{-1}(\cdot,t)B(\tilde{X},1)}G(\tilde{X}-\tilde{X}(0,t)){\xi}\left|\frac{\partial \tilde{X}}{\partial x}\right|dx.\]

We want to differentiate $\int_{B(\tilde{X},1)}G(x-\tilde{X})\tilde{\varrho}(x,t)dx$ with respect to $t$. Since 
$G(x-\tilde{X})=0$
on $\partial B(\tilde{X},1)$, %and
%, 

\begin{equation}
    \begin{aligned}
    &\left(\int_{B(\tilde{X},1)}G(x-\tilde{X})\tilde{\varrho}(x,t)dy\right)' =
    \int_{B(\tilde{X},1)}\nabla G(x-\tilde{X}) \cdot\vec{b}'(\tilde{X}){\tilde{\varrho}}\,dx
    +\int_{B(\tilde{X},1)}G(x-\tilde{X})\,{\tilde{\varrho}}_t\,dx\\
    &=\int_{B(\tilde{X},1)}\nabla G(x-\tilde{X})\cdot (\vec{b}'(\tilde{X})-\vec{b}'(x)){\tilde{\varrho}}\,dx
    +\int_{B(\tilde{X},1)}\Delta G(x-\tilde{X})\,{\tilde{\varrho}}^m\,dx=:A_1+A_2. \label{G prime}
    \end{aligned}
\end{equation}
Since ${\nabla}\vec{b}'  \geq -\sigma\eps  I_d$,  
\begin{equation} \label{diffence Vx}
\begin{aligned}
    A_1&= -\int_{B(\tilde{X},1)}(d-2)(|x-\tilde{X}|^{-d}-1)(x-\tilde{X})\cdot(\vec{b}'(\tilde{X})-\vec{b}'(x))\tilde{\varrho}\,dx\\
    &\geq -\sigma\eps\int_{B(\tilde{X},1)}(d-2)(|x-\tilde{X}|^{-d}-1)|x-\tilde{X}|^2{\tilde{\varrho}}\,dx\\
    &\geq-\sigma \eps\int_{B(\tilde{X},1)}G(x-\tilde{X}) {\tilde{\varrho}}\,dx.
\end{aligned}
\end{equation}
As for $A_2$, applying \eqref{prop G}, we obtain \begin{align}
A_2= -\sigma_d\, {\tilde{\varrho}}^m(\tilde{X},t)+\sigma \int_{B(\tilde{X},1)}{\tilde{\varrho}}^m(x,t)\,dx.
\label{G prime 2}
\end{align}

Using \eqref{diffence Vx}, \eqref{G prime 2}, we find for some universal $\sigma>0$
\begin{align*}
    \left(\int_{B(\tilde{X},t)}G(x-\tilde{X})\tilde{\varrho}(x,t)dy\right)'    
    &\geq -\sigma_d\, {\tilde{\varrho}}^m(\tilde{X},t)+\sigma \int_{B(\tilde{X},t)}{\tilde{\varrho}}^m(x,t)\,dx\\
    &\quad -\sigma \eps\int_{B(\tilde{X},t)}G(x-\tilde{X})\,{\tilde{\varrho}}(x,t) dx.
\end{align*}
Hence we derive
\begin{align*}
    e^{\sigma \eps t}\int_{{{B_1}}}G(|x|){\xi}(x,t)dx\geq -\sigma_d\int_0^t e^{\sigma \eps s}{\xi}^m(0,s)ds+\sigma \int_0^t\int_{{{B_1}}}e^{\sigma \eps s}{\xi}^m(x,s)\,dxds,
\end{align*}
which simplifies to
\begin{align}\label{ineq Y G}
  \int_0^t e^{-\sigma \eps t}Y(s)ds\leq  \sigma \int_{{{B_1}}}G(|x|){\xi}(x,t)dx+\sigma \int_0^t {\xi}^m(0,s)ds.
\end{align}

Now following the proof of Lemma 2.3 \cite{CFregularity}, using \eqref{ineq Y G} and the integrability property of $G$, we can obtain the upper bound $\int_{{{B_1}}}G{\xi}\,dx$ to conclude. We omit the computation since it is parallel to \cite{CFregularity}.

\hfill$\Box$.

\medskip

  Going back to the proof of Lemma~\ref{lem part2 conv}, let us suppose that our statement is false, which means $u(X(\lambda\tau),\lambda\tau)< c_2\frac{R^2}{\tau}$ for any choice of $\lambda,c_2,\tau_0$, where $X(t):=X(0,0;t)$. Later we will pick the constants satisfying
  \[\lambda>>1,\quad c_2^{\frac{m}{m-1}}\lambda<<1,\quad \eps\lambda<<1.\]
 In terms of ${\xi}=\tilde{\varrho}(\cdot+\tilde{X},\cdot)$, we have
\[{\xi}^m(0,\lambda)\leq \sigma(m)\, c_2^\frac{m}{m-1}.\]

Since $\eps\lambda<<1$, by \eqref{est v tilde sq prime} again, we obtain 
\[{\xi}^m(0,t)\leq \sigma e^{\sigma\eps \lambda}c_2^\frac{m}{m-1}+\sigma \eps \lambda \hbox{ for } t\in (0,\lambda].
\]
If follows from \eqref{est Y} that for all $t\in (0,\lambda]$ and some $\sigma=\sigma(\sigma_2)$,
\[
e^{-\sigma_1\eps t}\int_0^t Y(s) ds \leq \sigma( e^{\sigma\eps\lambda}c_2^\frac{m}{m-1}\lambda+\eps\lambda^2+ \eps^\gamma+ Y^{\frac{1}{m}}).
\]
Recall \eqref{Y lower}, and we have
\begin{equation}
    \label{cond c2}
\sigma Y^{\frac{1}{m}}\geq\sigma   c_1^\frac{1}{m-1}\geq \sigma(e^{\sigma\eps\lambda}c_2^\frac{m}{m-1}\lambda+\eps\lambda^2+\eps^\gamma).
\end{equation} 
Hence we get for $t\in (0,\lambda]$ and some universal $\sigma>0$,
\[\sigma Y^\frac{1}{m}\geq e^{-\sigma_1\eps t}\int^t_0 Yds.\]

Writing $Z(t):=\int_0^t Y(s) ds$, in view of \eqref{Y lower} we obtain $Z(\frac{\lambda}{2})\geq c_3\lambda$ with
\[c_3:=\frac{1}{2}(e^{-\sigma\eps\lambda}c_1'-\eps\lambda)\geq \sigma c_1^\frac{m}{m-1}>0.\]
Solving the ODE problem
\[\sigma Z'\geq  e^{-\sigma\eps t}Z^m,\quad \text{ with }Z(\frac{\lambda}{2})\geq c_3\lambda\] 
shows that 
\begin{equation}
    \label{est Z}Z(t+\frac{\lambda}{2})\geq \left({(c_3\lambda)^{1-m}-f(t)}\right)^{\frac{1}{1-m}},\quad \text{ for $t\in (0,\frac{\lambda}{2}]$ }
\end{equation}
where 
\[f(t):=\int_{\lambda/2}^{t+\lambda/2}\sigma e^{-\sigma\eps s}ds=\sigma e^{-\sigma\lambda\eps/2}\frac{(e^{\sigma\eps t}-1)}{\sigma\eps}.\]
Since $\sigma \eps << 1$,
\[f(t)\geq \sigma t-\sigma \eps t^2.\]

It is obvious that $f$ is monotone increasing in $t$. Notice the right-hand side of \eqref{est Z} goes to $+\infty$ as 
\[t\to f^{-1}((c_3\lambda)^{1-m})
\] 
which is impossible provided that $f^{-1}((c_3\lambda)^{1-m})\leq \frac{\lambda}{2}$. However if $ \lambda\geq C (c_3,\sigma)$ and $ \eps\lambda <<1$, we indeed have
\begin{equation}
    \label{cond lambda}
   f(\frac{\lambda}{2})\geq \sigma \frac{\lambda}{2}-\sigma\eps \frac{\lambda^2}{4}\geq(c_3\lambda)^{1-m},
    \end{equation}
which leads to a contradiction.

\medskip

We proved that ${\xi}^m(0,\lambda)\leq \sigma(m)\, c_2^\frac{m}{m-1}.$  Since $c_3$ only depends on $c_1,\sigma$, the choices of $\lambda,c_2,\eps$ only depend on $c_1,\sigma$.
We conclude the lemma with $\tau_0£º=\eps/C_0$, $\lambda$ satisfying \eqref{cond lambda}, and $c_2,\eps$ satisfying \eqref{cond c2} and $\eps\lambda<<1$.

\end{proof}

For any $(x_0,t_0)\in\Gamma$, we use the notation
\[{\Upsilon}(x_0,t_0):=\left\{(X(x_0,t_0;-s),t_0-s), \quad s\in (0,t_0)\right\}.\]

\begin{theorem}\label{strictly expanding}
For a given point $(x_0,t_0)\in \Gamma$ with $t_0\geq \eta_0>0$, the following is true:
\begin{flushleft}
 (1) Either (a) ${\Upsilon}(x_0,t_0)\subset \Gamma$ or (b) ${\Upsilon}(x_0,t_0)\cap \Gamma=\emptyset$.\\
(2) If (b) holds,  then there exist positive constants $C_*,\beta,h$ such that for all $s\in (0,h)$
\begin{align*}
    &\varrho(x,t_0-s)=0 \quad\text{ if }\quad |x-X(x_0,t_0;-s)|\leq C_* s^\beta ;\\
    &\varrho(x,t_0+s)>0 \quad\text{ if }\quad |x-X(x_0,t_0;s)|\leq C_*s^\beta.
\end{align*}
\end{flushleft}
Here $\beta$ only depends on $\eta_0$ and universal constants. 
If (b) holds for $(x_0,t_0)\in\Gamma$, we say $(x_0,t_0)$ is ``of the second type" free boundary point.

\end{theorem}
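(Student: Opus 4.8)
\textbf{Proof plan for Theorem~\ref{strictly expanding}.}

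The plan is to run a dichotomy argument driven by the two averaging lemmas (Lemma~\ref{lem part1} and Lemma~\ref{lem part2 conv}), passing between the pressure $u$ and the space-scale $R$ via the quantity
\[
m(x_0,t_0,R,\tau) := \oint_{B(x_0,R)} u(x,t_0)\,dx \Big/ \frac{R^2}{\tau},
\]
which measures the normalized average pressure. The first step is to fix the point $(x_0,t_0)\in\Gamma$, set $\eta_0 = t_0/2$ (say), and for $s$ in a short time interval $(0,h)$ track the streamline point $X(-s) = X(x_0,t_0;-s)$, backward in time, and $X(s)$ forward in time. Because $u$ vanishes at $(x_0,t_0)$, by continuity of $u$ (Theorem~\ref{holder}) the average $\oint_{B(x_0,R)}u(\cdot,t_0)\,dx$ is small for small $R$; the question is how it compares to $R^2/\tau$ for the natural choice $\tau \sim s$ and $R \sim$ (some power of $s$). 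One then iterates: Lemma~\ref{lem part1} says that if the normalized average is below a threshold $c_0$ at some scale, then $u$ stays zero at a ball of comparable size a time $\tau$ later along the streamline; Lemma~\ref{lem part2 conv} says that if the normalized average is above a threshold $c_1$, then a time $\lambda\tau$ later the average has grown (in the sense $u(X(\lambda\tau),\lambda\tau) \geq c_2 R^2/\tau$), and then a further application of the growth-from-positivity mechanism plus Lemma~\ref{streamline} forces the streamline point to stay in the positive set for all subsequent times up to $t_0$.

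For part (1), I would argue as follows. Suppose ${\Upsilon}(x_0,t_0)\cap\Gamma \neq\emptyset$, so there is $s_1\in(0,t_0)$ with $(X(-s_1),t_0-s_1)\in\Gamma$, i.e. $u(X(-s_1),t_0-s_1)=0$. I want to conclude $(X(-s),t_0-s)\in\Gamma$ for all $s\in(0,t_0)$, equivalently that $u=0$ along the whole backward streamline. One direction is Lemma~\ref{streamline}: if $u(X(-s),t_0-s)>0$ for some $s<s_1$, then propagating forward along the streamline from time $t_0-s$ to $t_0-s_1$... wait, that is the wrong direction; rather, if $u(X(-s_1),t_0-s_1)>0$ is false but $u$ were positive at some earlier streamline point $X(-s_2)$ with $s_2 > s_1$, then Lemma~\ref{streamline} would force $u(X(-s_1),t_0-s_1)>0$, a contradiction. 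So positivity cannot occur ``below'' the first zero. It remains to rule out positivity at streamline points between time $t_0-s_1$ and $t_0$, i.e. to show $u(X(-s),t_0-s)=0$ for all $s\in(0,s_1)$. This is exactly where Lemma~\ref{lem part1} is used: starting from the zero at $(X(-s_1),t_0-s_1)$, and using that the solution vanishes on a ball there (by the non-degeneracy-free argument — one needs $u(\cdot,t_0-s_1)=0$ on a full ball $B(X(-s_1),R)$, which follows because the point lies on $\Gamma$ and applying Lemma~\ref{lem part1} at small scales, bootstrapping with the Aronson--B\'enilan bound to control the average), one propagates the zero forward in time along the streamline up to $t_0$. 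Combined with the continuity of $u$ and the fact that a streamline point with $u>0$ at $t_0$ would contradict $(x_0,t_0)\in\Gamma$, this gives the dichotomy. The two cases are then: (a) the average stays below threshold at all small scales all the way, giving ${\Upsilon}\subset\Gamma$; (b) at some scale the average exceeds threshold, and then Lemma~\ref{lem part2 conv} together with Lemma~\ref{streamline} forces the forward streamline into the positive set, and a symmetric backward argument forces ${\Upsilon}\cap\Gamma = \emptyset$.

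For part (2), assuming case (b) holds, I would quantify the two statements by choosing $\tau = s$ and carefully picking $R = R(s)$. The backward statement $\varrho(x,t_0-s)=0$ for $|x - X(-s)|\le C_* s^\beta$: this is an application of the local version of Lemma~\ref{lem part1} (Remark~\ref{remark part1}) iterated over dyadic time scales, where the smallness of the average at each scale is guaranteed precisely because we are in case (b) (the alternative would put us in case (a)); the exponent $\beta$ emerges from how the threshold $c_0 R^2/\tau$ and the scaling $\tilde u = (\tau/R^2)u(Rx,\tau t)$ interact, and from the fact that we lose a geometric factor ($R \to R/6$) at each iteration, so after $\sim\log(1/s)$ iterations the radius is a fixed power $s^\beta$ of $s$. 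The forward statement $\varrho(x,t_0+s)>0$ for $|x-X(s)|\le C_* s^\beta$: this follows by iterating Lemma~\ref{lem part2 conv}, which upgrades ``average above $c_1 R^2/\tau$'' to ``$u$ above $c_2 R^2/\tau$ at the streamline point a time $\lambda\tau$ later'', hence (using the mean value inequality / subharmonicity of $u + C_0|x|^2/2d$ from the Aronson--B\'enilan bound and Lemma~\ref{streamline} to spread positivity to a ball) ``average above $c_1 R'^2/\tau'$ at a comparable scale at the later time'', closing the induction. Here $\beta$ depends only on $\eta_0$ and universal constants through the ratios $\lambda$, $c_1/c_2$, $C_0(\eta_0)$, exactly as the cited lemmas assert.

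Finally, for the last assertion of the theorem (really the last sentence of the excerpt, stated in Theorem~\ref{intro 1.3}): if $u_0$ satisfies the super-linear / sub-quadratic growth bound $u_0(x)\ge \gamma(d(x,\Omega_0^C))^{2-\varsigma}$, then no point of $\Gamma$ can be of type one. Indeed, a type-one point $(x_0,t_0)$ would have its whole backward streamline in $\Gamma$, hence trace back to a free boundary point $(X(-t_0),0)$ at the initial time; but the growth bound says $u$ is strictly positive just inside $\Omega_0$ at a rate that, when fed into Lemma~\ref{lem part2 conv} with $R\sim d(x,\Omega_0^C)$ and $\tau$ chosen so that $R^2/\tau \sim u$-average $\sim R^{2-\varsigma}$, i.e. $\tau \sim R^\varsigma \to 0$, produces instantaneous expansion of the support along the streamline — contradicting that the backward streamline stays on $\Gamma$. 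The main obstacle in the whole proof is the bookkeeping in part (2): making the dyadic iteration of the two lemmas consistent (the scales $R_k$, times $\tau_k$, thresholds $c_0,c_1,c_2$, and the error terms $\eps^\gamma$ must all be tracked so that neither the smallness condition of Lemma~\ref{lem part1} nor the largeness condition of Lemma~\ref{lem part2 conv} degrades across infinitely many steps), and extracting a single exponent $\beta$ uniform in $s$; by contrast part (1) is a fairly direct combination of the two lemmas with the comparison principle and Lemma~\ref{streamline}.
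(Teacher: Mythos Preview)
Your overall strategy for both parts --- driving a dichotomy with Lemma~\ref{lem part1} and Lemma~\ref{lem part2 conv}, and for part (2) iterating over geometric time scales to produce the exponent $\beta$ --- is the same as the paper's, which simply cites the parallel argument in \cite{CFregularity}. Your description of the forward/backward iteration in part (2) is accurate.

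There is, however, a genuine confusion in your argument for part (1). You write that it ``remains to rule out positivity at streamline points between time $t_0-s_1$ and $t_0$, i.e.\ to show $u(X(-s),t_0-s)=0$ for all $s\in(0,s_1)$.'' But this is automatic: since $(x_0,t_0)\in\Gamma$ forces $u(x_0,t_0)=0$, Lemma~\ref{streamline} already excludes $u(X(-s),t_0-s)>0$ for any $s\in(0,t_0)$. What actually has to be shown is that each $X(-s)$ lies on $\Gamma_{t_0-s}$ rather than in the \emph{interior} of the zero set. Relatedly, your claim that ``$u(\cdot,t_0-s_1)=0$ on a full ball $B(X(-s_1),R)$, which follows because the point lies on $\Gamma$'' is false: a free boundary point has positive values arbitrarily close by, by definition. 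A full ball of zeros is available only for points in the interior of $\{u=0\}$, and that is precisely what the paper exploits.

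The paper's argument for (1) is a short contradiction: if the dichotomy fails, the set $S=\{s\in(0,t_0): X(-s)\in\Gamma_{t_0-s}\}$ is a proper, nonempty initial interval $(0,s^*)$ (this uses Lemma~\ref{streamline} applied to nearby positive points to show $S$ is downward-closed). Pick $t_1=t_0-s_1$ with $s_1$ just below $s^*$ and $t_2=t_0-s_2$ with $s_2$ just above $s^*$, so $t_0-t_1\gg t_1-t_2$. Then $X(-s_2)$ is in the interior of the zero set, giving a genuine ball of zeros at time $t_2$; since $X(-s_1)\in\Gamma_{t_1}$, the conclusion of Lemma~\ref{lem part1} fails there, hence its average hypothesis fails, giving $\oint_{B(x_1,R)}u(\cdot,t_1)\ge c_0R^2/(t_1-t_2)$. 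Lemma~\ref{lem part2 conv} then yields $u>0$ at a streamline point shortly after $t_1$, and Lemma~\ref{streamline} pushes this to $u(x_0,t_0)>0$, a contradiction. Once you correct the role of $\Gamma$ versus interior-of-zero-set, your dichotomy sketch at the end of your part (1) paragraph collapses to exactly this argument.
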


%\begin{definition}\label{strictly expand}
%We call the second case as: the support of the solution is \textit{strictly expanding relatively to the streamlines} at $(x_0,t_0)$. In terms of $X(x_0,t_0;s)$, we have
%\begin{align*}
%&X(x_0,t_0;s)\in \Omega^c(t) &\text{ for }&s<0,\\
%&X(x_0,t_0;s)\in \Omega(t) &\text{ for }&s>0.
%\end{align*}
%\end{definition}

{\bf Sketch of the proof}

The proof is parallel to those for Theorems 3.1-3.2 \cite{CFregularity}, based on the Lemmas \ref{lem part1} and \ref{lem part2 conv}.
Let us only sketch the proof for part (1) below. 

If the assertion of (1) is not true, then we can find $ t_0>t_1>t_2>0$ such that $t_0-t_1>>t_1-t_2$ and
\[x_0\in\Gamma_{t_0},\quad x_1:=X(x_0,t_0;t_1-t_0)\in\Gamma_{t_1},\quad x_2:=X(x_0,t_0;t_2-t_0)\notin\Gamma_{t_2}.\]
Consequently  $u(\cdot,t_2)=0$ in $B(x_2,R)$ for some $R>0$. Since $x_1=X(x_2,t_2;t_1-t_2)$, by Lemma \ref{lem part1}, 
\[\oint_{B(x_1,R)}u(x,t_1)dx\geq \frac{c_0R^2}{t_1-t_2}.\]
Since $t_0-t_1>> (t_1-t_2)$,  Lemma \ref{lem part2 conv} yields
 $u(x_0,t_0)=u(X(x_1,t_1;t_0-t_1),t_0)>0$, which is a contradiction.

\hfill$\Box$

\medskip

 When the initial data grows faster than quadratically near its free boundary, it is possible to characterize the constants $C_*, h$ in above theorem in terms of time variable. By a compactness argument, iteratively using Theorem \ref{strictly expanding} and arguing as in the remark on Theorem 3.2 in \cite{CFregularity}, we have the following theorem.

\begin{theorem}\label{strictly expanding initial condition}
{Suppose \eqref{intro less quar grow}.} Then any point $x_0\in\Gamma_{t_0}$ with $ t_0\leq T$ is of the second type and the constants $C_*,h$ in Theorem \ref{strictly expanding} (2) only depend on $\gamma,\varsigma,t_0$ and universal constants.
\end{theorem}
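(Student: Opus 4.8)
The plan is to bootstrap Theorem~\ref{strictly expanding} into a uniform (initial-data-dependent) statement by a compactness/contradiction argument. First I would observe that condition \eqref{intro less quar grow} is \emph{quantitatively} stable under the local smooth approximation of Lemma~\ref{approximation} and under the streamline flow: if $u_0(x)\geq\gamma\,d(x,\Omega_0^C)^{2-\varsigma}$ then, using Lemma~\ref{streamline} together with the Gronwall-type estimate \eqref{mono stream}, one propagates a quantitative lower bound of the form $u(X(x,0;t),t)\geq e^{-Ct}u_0(x)$ along streamlines, so that $\Omega_t$ contains a quantitatively thick inner tube around $X(\Omega_0,0;t)$ for $t$ up to any fixed $T$. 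In particular no point of $\Gamma_{t_0}$ with $t_0\leq T$ can be of type one: a type-one point would force $X(x_0,t_0;-s)\in\Gamma_{t_0-s}$ for all $s\in[0,t_0]$, hence $X(x_0,t_0;-t_0)\in\Gamma_0=\Omega_0^C\cap\overline{\Omega_0}$, but the super-quadratic growth forces $\Omega_0$ to strictly expand along every streamline with a nondegenerate average pressure, contradicting $x_0\in\Gamma_{t_0}$. So every such point is of the second type, and it remains to make $C_*,h$ uniform.

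Next I would run the contradiction/compactness scheme in the spirit of the remark following Theorem~3.2 of \cite{CFregularity}. Suppose the conclusion fails: there is a sequence $(x_0^n,t_0^n)\in\Gamma$ with $\eta_0\leq t_0^n\leq T$ and constants $C_*^n\to 0$ (or $h^n\to 0$) for which the type-two conclusion of Theorem~\ref{strictly expanding}(2) does not hold with the claimed uniform constants. Translate and apply the fundamental estimate (Theorem~\ref{fundamental estimate}), the $L^\infty$ bound, and Hölder continuity (Theorem~\ref{holder}), so that along a subsequence $u^n\to u^\infty$ locally uniformly, with $u^\infty$ a solution of \eqref{premain} whose initial trace still satisfies \eqref{intro less quar grow} with the same $\gamma,\varsigma$ (this is where the quantitative propagation above is used — the growth hypothesis is inherited by the limit). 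Then Theorem~\ref{strictly expanding} applied to $u^\infty$ at the limiting boundary point $(x_0^\infty,t_0^\infty)$ — which is of the second type by the previous paragraph — yields fixed $C_*^\infty,\beta,h^\infty>0$; passing back to the tail of the sequence via uniform convergence and the quantitative non-degeneracy estimates in Lemmas~\ref{lem part1}–\ref{lem part2 conv} contradicts $C_*^n\to 0$ (resp. $h^n\to 0$). The exponent $\beta$ is unchanged because in Theorem~\ref{strictly expanding} it already depends only on $\eta_0$ and universal constants.

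The remaining point is to extract the dependence of $C_*,h$ explicitly on $\gamma,\varsigma,t_0$. For this I would redo the dyadic iteration underlying Theorem~\ref{strictly expanding}(2) but starting the iteration from $t=0$ rather than from an abstract waiting-time-free point: the super-quadratic growth \eqref{intro less quar grow} gives, for each scale $R$, a lower bound $\oint_{B(x,R)}u_0\gtrsim \gamma R^{2-\varsigma}$ at points $x$ at distance $R$ inside $\Omega_0$; feeding this into Lemma~\ref{lem part2 conv} with $\tau\sim R^\varsigma$ (so that $R^2/\tau\sim R^{2-\varsigma}$ matches the available average) propagates positivity along the streamline after time $\lambda\tau$, and iterating across dyadic scales accumulates a positivity tube of the claimed radius $C_* s^\beta$ with $\beta=\beta(\eta_0)$ and $C_*,h$ controlled by $\gamma,\varsigma,t_0$. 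The main obstacle I anticipate is precisely the bookkeeping in this iteration — tracking how the constants $c_0,c_1,c_2,\lambda,\tau_0$ from Lemmas~\ref{lem part1} and \ref{lem part2 conv}, which are allowed to depend on $\eta_0$, compound over the (finitely many, since $t_0\leq T$) iteration steps without degenerating, and ensuring the streamline drift does not destroy the nested-ball geometry; the smooth-approximation and compactness steps are routine given the a priori estimates already established. Since this is entirely parallel to the argument in \cite{CFregularity}, I would state the theorem and refer to that reference for the detailed iteration, as the excerpt already does.
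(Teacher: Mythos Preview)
Your proposal is correct and matches the paper's approach, which itself consists only of a one-line reference to compactness, iterating Theorem~\ref{strictly expanding}, and the remark on Theorem~3.2 in \cite{CFregularity}. Your third paragraph --- feeding $\oint_{B(x,R)}u_0\gtrsim\gamma R^{2-\varsigma}$ into Lemma~\ref{lem part2 conv} with $\tau\sim R^{\varsigma}$ and iterating dyadically --- is exactly the \cite{CFregularity} argument; the compactness sketch in your second paragraph is slightly over-engineered (for a single fixed $u$ the compactness is on the boundary points $(x_0^n,t_0^n)\in\Gamma\cap\{t\leq T\}$, not on a sequence of solutions, so the clause about the limit's ``initial trace still satisfying \eqref{intro less quar grow}'' is unnecessary), but this is a presentational point rather than a gap.
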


\section{Monotonicity Implies Non-degeneracy}\label{sec 5}

 In this section we discuss non-degeneracy property of solutions in local settings.  %Note that continuity of weak solutions are guaranteed by Theorem \ref{holder}.  
 We start with the following theorem.

\begin{theorem}\label{prop nondeg}
Let $u$ solve \eqref{premain} in $Q_2$ with $\Delta u\geq -C_0$. Suppose that $\Gamma$ is of type two in $Q_2$,  and that  
\begin{equation}
    \label{e.5.1}
    u\text{ is monotone with respect to $W_{\theta, \mu}$ in $Q_2$ for some $\theta\in (0,\pi/2)$ and $\mu\in \mathcal{S}^{d-1}$}. 
\end{equation}
Then there exist constants $C,\eps_0>0$ such that we have %for all $(x,t)\in Q_1\cap\Gamma$ and $\eps$ small enough 
\begin{equation*}\label{thm 5.1}
    u(X(x,t;C\eps)-\eps{\mu},{t+}C\eps)>0 \text{ for } (x,t) \in \Gamma\cap Q_1 \hbox{ and for } \e<\eps_0.
\end{equation*}
\end{theorem}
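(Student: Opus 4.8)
The plan is to transfer the non-degeneracy of the positive set away from the free boundary to the free boundary itself, via a barrier argument applied along the streamlines. The monotonicity assumption \eqref{e.5.1} is the source of the interior non-degeneracy: if $(x,t)\in\Gamma\cap Q_1$, then because $u(\cdot,t)$ is non-decreasing along all directions in the cone $W_{\theta,\mu}$, the positive set $\Omega_t$ contains a spatial cone of directions $-W_{\theta,\mu}$ with vertex at $x$, and since $\Gamma$ is of type two, at a slightly earlier time $t-s$ there is a ball $B(X(x,t;-s),C_*s^\beta)\subset\Omega_{t-s}^c$ while at $t+s$ there is a ball inside $\Omega_{t+s}$. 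First I would fix such a point $(x,t)$, use the cone monotonicity to locate a small ball $B(y_0,r_0)\subset\Omega_{t-s_0}$ at some earlier time $t-s_0$, at a controlled distance $\sim r_0$ from the streamline through $(x,t)$, on which — using Theorem~\ref{holder}, interior parabolic regularity in the positive set, and the monotonicity — one gets a quantitative lower bound $u(\cdot,t-s_0)\geq c_0 r_0$ (or at least $\varrho\geq$ a definite constant) on that ball.

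Next I would build a self-similar subsolution of \eqref{premain} — essentially a shrinking-support Barenblatt-type profile or a radially expanding cap, adapted from the construction in \cite{choijerisonkim} for Hele–Shaw — supported initially on $B(y_0,r_0)$ and whose support expands at a linear-in-time rate large enough to swallow the point $X(x,t;C\eps)-\eps\mu$ at time $t+C\eps$. The drift $\vec{b}$ is smooth and bounded, so it perturbs the free-boundary speed \eqref{with drift speed} only by a universally controlled amount; comparing in the moving frame along the streamline $X(\cdot)$ (as in the proof of Lemma~\ref{streamline} and Lemma~\ref{lem part1}, where one subtracts $\vec{b}'(\tilde X,t)$ to kill the leading drift term) reduces the equation for $v(x,t):=u(x+X(t),t)$ to a PME with a small lower-order drift of size $O(\|\vec{b}\|_{C^1}\,|x|+\|\vec{b}\|_\infty\eps)$, which the self-similar barrier can absorb by slightly slowing its expansion. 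The key point is that the barrier's support, starting from $B(y_0,r_0)$ and with the lower bound $c_0 r_0$ on the density there, reaches a full neighborhood of $X(x,t;C\eps)-\eps\mu$ within time $C\eps$ for $C$ chosen large relative to $r_0$ and the cone angle but independent of $\eps$; then the comparison principle (Theorem~\ref{thm:comp}, Remark~\ref{remark 2.0}) forces $u(X(x,t;C\eps)-\eps\mu,t+C\eps)>0$.

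I expect the main obstacle to be the delicate interplay between the two time scales that is flagged in the introduction: when the pressure is highly degenerate near $(x,t)$, the earlier time $t-s_0$ at which we can locate a genuinely non-degenerate interior ball may be far from $t$ relative to the spatial scale $r_0$, so the barrier must be run for a comparatively long time, during which the drift's inhomogeneity $|\vec{b}(x+X)-\vec{b}(X)|\lesssim\|\vec{b}\|_{C^1}|x|$ accumulates. Controlling this requires carefully localizing $\vec{b}$ and keeping the barrier's support within a small fixed neighborhood where $|x|$ is small, so that the accumulated drift perturbation stays subcritical compared to the barrier's expansion rate — exactly the localization issue the authors emphasize. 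A secondary technical point is ensuring the interior non-degeneracy bound $c_0 r_0$ is genuinely quantitative and uniform over $(x,t)\in\Gamma\cap Q_1$; this needs the type-two property (Theorem~\ref{strictly expanding}) together with a compactness/continuity argument using Hölder continuity of $u$, rather than merely positivity of $u$ on the interior ball. Once these are in place, the statement with explicit constants $C,\eps_0$ follows, and the $\liminf$ form in Corollary~\ref{cor nondeg} / Theorem~\ref{thm 1.3} is then obtained by iterating this positivity estimate and unwinding the definition of the pressure.
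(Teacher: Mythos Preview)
Your approach has the right instincts --- work in the moving frame, control the drift perturbation locally, use a barrier --- but there is a genuine gap at the step where you claim a quantitative lower bound $u(\cdot,t-s_0)\geq c_0 r_0$ on an interior ball. Cone monotonicity tells you only that the \emph{zero} set contains $x-W_{\theta,\mu}$ (you have the sign reversed), and even where $u>0$, neither H\"older continuity nor interior parabolic regularity yields a lower bound, only smoothness. The type-two property supplies a ball of zeros at the earlier time and a ball of positivity at the later time, but no quantitative size. In fact the paper derives the linear lower bound $u(x+\eps\mu,t)\geq\kappa_*\eps$ in Corollary~\ref{cor nondeg} \emph{after} and \emph{using} Theorem~\ref{prop nondeg} (combined with Lemma~\ref{lem part1}); it is not available as input. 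Without a quantitative floor you cannot seed an expanding Barenblatt-type subsolution, and ``compactness/continuity'' does not manufacture one: the solution could be arbitrarily small at any fixed distance from $\Gamma$, which is precisely the degenerate scenario you must rule out. This is the circularity the paper's method is designed to avoid.

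The paper's device is fundamentally different from an explicit subsolution. It compares $v(x,t)=u(x+X(t),t)$ with an inf-convolution of $v$ \emph{itself},
\[
w(x,t)=e^{A_0\eps t}\inf_{y\in B(x,R_\eps(x,t))}v\bigl(y+r\eps\mu,\,p_\eps(t)\bigr),
\]
shown in Proposition~\ref{lem supersolution} (via Lemmas~\ref{lem delta}--\ref{lem nabla}) to be a \emph{supersolution} of $\mathcal{L}_2$, for a spatially varying radius $R_\eps=\eps\varphi(x)(1-\alpha t)$ with $\varphi$ solving $\Delta\varphi=\sigma_1|\nabla\varphi|^2/\varphi$. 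The type-two property is used to go \emph{back} in time to where $v(\cdot,0)\equiv 0$ on a ball, so $w\geq v$ holds initially for free; the cone monotonicity is used on the inner and outer lateral boundaries of an annulus to verify $w\geq v$ there --- no lower bound on $u$ is ever invoked. From $w\geq v$ in the interior, the choice of $\varphi$ (large near the center) forces $v(-r_\delta\eps\mu,p_\eps(t_\delta))\geq\sup_{|x|\leq r_\delta\eps/5}v(x,t_\delta)$, and the right side is strictly positive simply because $(\hat x,\hat t)\in\Gamma$. Thus the argument extracts positivity from a comparison of the solution with a shift of itself, rather than from an external barrier requiring quantitative initial data.
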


\begin{remark}
The constants $C,\eps_0$ in Theorem \ref{prop nondeg} only depend on 
\begin{equation}
    \label{C.5.1}
    C_0, \theta,C_*,h,\beta,\text{ and universal constants},
\end{equation}
where  $C_*,h,\beta$ are constants given in Theorem \ref{strictly expanding}. 
In the global setting, an estimate of $C_0$ can be found in Theorem~\ref{fundamental estimate}.

Let us also mention that  Theorem \ref{holder} allows us to consider continuous local solutions. 

\end{remark}

\medskip

%\textcolor{it is not really outline so I removed that sentence}

%\medskip

{The central ingredient of the proof is a barrier argument motivated from \cite{choijerisonkim} in the context of Hele-Shaw flow. The barrier argument illustrates the fact that in diffusive free boundary problems the nice regularity properties of $u$ propagate from positive level sets to the free boundary as the positive set expands out. This argument in our setting corresponds to the proof of \eqref{claim}. Compared to the Hele-Shaw flow which is driven by a harmonic function, our solutions features a nonlinear diffusion that degenerates near the free boundary and thus it requires more careful arguments. On the other hand, we will benefit from the weak formulation of the problem using the density formula (see $\mathcal{G}$ below.)}

\medskip

For $u$ as given above we consider 
\begin{equation}\label{v}
v{(x,t)}:=u(x+X(t),t),\quad \hbox{ where } X(t):=X(0,0;t) \hbox{ is given in } \eqref{ode}.
\end{equation} Then $v$ is a weak solution of  $\mathcal{L}_2 (\cdot) = 0$, where the operator $\mathcal{L}_2$ is given by 
\begin{equation}\label{cal L 2}
\begin{aligned}
      \mathcal{L}_2 ({f})&:=\partial_t{f} -(m-1){f}\Delta {f}-|\nabla {f}|^2-\nabla {f}\cdot (\vec{b}(x+X(t),t)\\
      &\quad\quad-\vec{b}(X(t),t))-(m-1){f}\nabla\cdot \vec{b}(x+X(t),t).
\end{aligned}
\end{equation}
Since the operator $\mathcal{L}_2$ is the same as in  \eqref{premain} with $\vec{b}$ replaced by $\vec{b}(x+X(t),t)-\vec{b}(X(t),t))$, the notion of sub- and supersolution is given in Definition \ref{def1.1}. 

\medskip

{Below we construct of a supersolution for the operator $\mathcal{L}_2$ for the aforementioned barrier argument, using a inf-convolution construction introduced first by \cite{caffarellipart}. }Since the supersolution to be constructed is a rescaled inf-convolution of $v$ (see \eqref{w}), comparison of the two  functions gives a space-time monotonicity of $v$, yielding the theorem.  To this end, we will use both smooth approximations of $u$ and the density version of the equation $\mathcal{L}_2$.

\medskip

We begin with some basic properties of the inf-convolution of smooth functions.

\medskip

Let $\psi, h \in C^\infty(B_2)$ with $0<\psi<\frac{1}{2}$ and $h\geq 0$. Define
\begin{equation}\label{inf_convolution}
f(x):=\inf_{B(x,\psi(x))}h(y)
\end{equation}
which is  Lipschitz continuous. 
The proofs of the following two lemmas are in the appendix.

\begin{lemma} \label{lem delta} Let $h$ and $f$ be as given in \eqref{inf_convolution}. Furthermore, suppose $\Delta h\geq -C $ for some $C\in\mathbb{R}$ and $\|\nabla\psi\|_\infty\leq 1$. Then there are dimensional constants $\sigma_1>0$ and $\sigma_2\geq 3$ such that if $\psi$ satisfies
\[\Delta\psi\geq \frac{\sigma_1|\nabla\psi|^2}{|\psi|} \quad \hbox{ in } B_2,
\]
we have 
\[\Delta f(\cdot)-(1+\sigma_2 \|\nabla\psi\|_\infty)\Delta h(y(\cdot))\leq {\sigma_2}\|\nabla\psi\|_\infty C\quad \text{ in $B_1$ in the sense of distribution},
\] 
where $y(\cdot)$ satisfies that $f(\cdot)=h(y(\cdot))$ a.e. in $B_1$.
\end{lemma}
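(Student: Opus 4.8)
\textbf{Proof proposal for Lemma~\ref{lem delta}.}

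The plan is to work pointwise at a point $x_0 \in B_1$ where, after a mollification argument, we may assume everything is smooth, and to localize the infimum. Let $y_0 \in \overline{B(x_0,\psi(x_0))}$ be a point realizing $f(x_0) = h(y_0)$; write $r_0 = \psi(x_0)$ and $e = (y_0 - x_0)/|y_0-x_0|$ when the infimum is attained on the boundary sphere (the interior case is easier, since there $\Delta f(x_0)$ is controlled directly by $\Delta h(y_0) \geq -C$ with no extra terms). The first step is to make precise the standard fact that, since $y_0$ is a constrained minimizer of $h$ on the ball, one has $\nabla h(y_0) = \lambda e$ for some $\lambda \geq 0$ (Lagrange multiplier), and the constrained second-order condition holds: the Hessian of $h$ at $y_0$, restricted to directions tangent to the sphere $\partial B(x_0,r_0)$, is bounded below by $-\lambda/r_0$ times the identity. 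This is the key ``free'' curvature term that will have to be paid for.

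The second step is to compute an upper bound for $\Delta f(x_0)$ in the viscosity/distributional sense. I would test $f$ from above: near $x_0$, $f(x) \leq h(y(x))$ for any smooth selection $y(x)$ with $|y(x) - x| \leq \psi(x)$, so I choose the natural selection $y(x) = x + \psi(x)\,e(x)$ where $e(x)$ tracks the minimizing direction, or more robustly just take $y(x) = x + (r_0 + \nabla\psi(x_0)\cdot(x-x_0))\,e$ to leading order. Differentiating twice and using $h$'s Taylor expansion at $y_0$ together with the first-order condition $\nabla h(y_0) = \lambda e$ gives $\Delta f(x_0) \leq \Delta h(y_0) + \lambda\,\Delta\psi(x_0) + (\text{quadratic in }\nabla\psi\text{ times }D^2 h\text{ tangential}) + (\text{lower order})$. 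Here is where the hypothesis $\Delta\psi \geq \sigma_1 |\nabla\psi|^2/\psi$ enters: the dangerous term $\lambda\Delta\psi$ is \emph{positive} (bad) but the constrained second-order condition gives a compensating $-\lambda/r_0$ on the tangential Hessian, which when fed through the $|\nabla\psi|^2$-weighted terms produces $-\sigma_1 \lambda |\nabla\psi|^2/(r_0 \cdot \text{something})$; choosing $\sigma_1$ large dimensionally forces the net contribution of all the $\lambda$-terms to be nonpositive, so they can be dropped. What survives is $\Delta f(x_0) \leq (1 + \sigma_2\|\nabla\psi\|_\infty)\Delta h(y_0) + \sigma_2 \|\nabla\psi\|_\infty C$, once we use $\Delta h \geq -C$ to absorb the sign-indefinite remainder terms (they are bounded by $\|\nabla\psi\|_\infty$ times $|\Delta h(y_0)| \leq |\Delta h(y_0) + C| + C$, whence the two terms on the right).

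The main obstacle, and the step requiring genuine care, is controlling the \emph{tangential} part of $D^2 h(y_0)$ and splitting off the Lagrange-multiplier curvature correctly: $D^2 h(y_0)$ is only bounded below (by $-C$ in the full space, and by $-\lambda/r_0$ on the tangent sphere), never above, so every place where $D^2 h(y_0)$ appears must be arranged to carry a favorable sign or to be reducible (via the second-order optimality condition) to the Laplacian $\Delta h(y_0)$ plus controlled error. I would handle this by decomposing $\mathbb{R}^d = \mathbb{R} e \oplus e^\perp$, writing $\Delta h(y_0) = \partial_{ee} h + \operatorname{tr}_{e^\perp} D^2 h$, noting $\operatorname{tr}_{e^\perp} D^2 h \geq -(d-1)\lambda/r_0$ from optimality, and tracking the radial direction separately where no curvature penalty arises. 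The bookkeeping of which second-derivative terms get multiplied by $|\nabla\psi|$ versus $|\nabla\psi|^2$ is what fixes the two dimensional constants $\sigma_1$ and $\sigma_2$; the requirement $\sigma_2 \geq 3$ is presumably just to have a clean uniform constant in the statement of Lemma~\ref{lem w inf}. Finally, to pass from the pointwise smooth estimate to the distributional statement in $B_1$, I would either mollify $h$ and $\psi$ and pass to the limit, or invoke that $f$, being a Lipschitz inf-convolution, is semiconcave-type so its distributional Laplacian is a measure and the pointwise a.e. bound upgrades to the distributional one.
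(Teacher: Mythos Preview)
Your approach has a genuine gap in the choice of test selection $y(x)$. If you take a \emph{fixed} direction $e$ (or your ``more robust'' linearized radius $y(x)=x+(r_0+\nabla\psi(x_0)\cdot(x-x_0))e$, which still keeps $e$ fixed), then the Laplacian of the barrier $h(y(x))$ at $x_0$ expands as
\[
\Delta h(y_0)\;+\;2\nabla\psi(x_0)\cdot D^2h(y_0)\,e\;-\;\lambda\,\Delta\psi(x_0)\;+\;|\nabla\psi(x_0)|^2\,\partial^2_{ee}h(y_0),
\]
with $\nabla h(y_0)=-\lambda e$, $\lambda\ge 0$ (note the sign: your $\nabla h(y_0)=\lambda e$ with $\lambda\ge 0$ is reversed, since the minimum on the boundary forces $\nabla h$ to point \emph{into} the ball). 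The dangerous terms are the mixed term $2\nabla\psi\cdot D^2h\,e$ and the purely radial term $|\nabla\psi|^2\partial_{ee}^2h$; neither is tangential, so the constrained second-order condition (which bounds only $D^2h|_{e^\perp}\ge -\lambda/r_0$) says nothing about them, and $\Delta h\ge -C$ alone cannot absorb them either since they are individual Hessian entries, not the trace. Your claim that what survives is ``quadratic in $\nabla\psi$ times $D^2h$ tangential'' is therefore not correct, and the hypothesis $\Delta\psi\ge \sigma_1|\nabla\psi|^2/\psi$ is not used the way you describe (the $-\lambda\Delta\psi$ term already carries a good sign).

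The paper, following Caffarelli's Lemma~9 in \cite{caffarellipart}, avoids exactly this by letting the direction \emph{rotate}: one picks $\nu(x)=\nu_*(x)/|\nu_*(x)|$ with an explicit linear $\nu_*$ so that $y(x)=x+\psi(x)\nu(x)$ equals, to first order, a conformal map $Y_*$ (rigid rotation composed with dilation by a factor $1+O(\|\nabla\psi\|_\infty)$). Conformality is what makes all the first-order Hessian contributions recombine into a scalar multiple of $\Delta h(y_0)$, eliminating the uncontrolled $\partial_{ee}^2h$ and mixed terms entirely. What remains is a second-order (in $x$) correction $h(y(x))-h(Y_*(x)+y(0))$, which involves only $\nabla h(y_0)=-\lambda e$ paired with second derivatives of $\psi$; the hypothesis $\Delta\psi\ge\sigma_1|\nabla\psi|^2/\psi$ is precisely what forces this correction to be nonpositive. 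To make your strategy work you would in effect have to rediscover this rotating choice of $\nu(x)$.
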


%\[ (1+\sigma \|\nabla\psi\|_\infty)\Delta h(y(0))+\sigma \|\nabla\psi\|_\infty C_0.\]

\begin{lemma}\label{lem nabla}

Let $h,f$ be as given in \eqref{inf_convolution}. Then for a.e. $x\in B_1$ we have
$$
\left|\nabla f(x)-\nabla h(y)\right|=  |\nabla h(y)||\nabla\psi(x)|\quad\text{ if }f(x)=h(y) \hbox{ and } y\in B(x,\psi(x)).
$$
\end{lemma}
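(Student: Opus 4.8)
\textbf{Proof plan for Lemma~\ref{lem nabla}.}

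The plan is to compute the gradient of the inf-convolution $f(x)=\inf_{y\in B(x,\psi(x))}h(y)$ at a point $x\in B_1$ where $f$ is differentiable (which is a.e.\ by Rademacher, since $f$ is Lipschitz), using the first-order optimality conditions at a minimizing point $y=y(x)$. First I would record the structural dichotomy: either the minimizer $y$ lies in the open ball $B(x,\psi(x))$, in which case $\nabla h(y)=0$ and both sides of the claimed identity vanish trivially; or $y$ lies on the boundary sphere $|y-x|=\psi(x)$, which is the interesting case. In the boundary case, write $y = x + \psi(x)\,e$ for a unit vector $e$, and note that $-\nabla h(y)$ must be an outward normal to the constraint sphere at $y$ (otherwise one could decrease $h$ by moving $y$ along the sphere), hence $\nabla h(y) = -|\nabla h(y)|\,e$, i.e.\ $\nabla h(y)$ is antiparallel to $e=(y-x)/\psi(x)$.

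The key computation is then to differentiate the value function. Fix a direction $\xi\in\mathbb{R}^d$ and consider $x_t = x + t\xi$; the constraint ball moves and its radius changes to $\psi(x_t)$. Using the envelope theorem / Danskin-type differentiation (valid here because near $x$ the minimizer is locally unique modulo the explicit sphere geometry, and $h,\psi$ are smooth), the directional derivative of $f$ at $x$ in direction $\xi$ equals the partial derivative with respect to $x$ of the Lagrangian $h(y) + \lambda(|y-x|^2 - \psi(x)^2)$ evaluated at the optimal $(y,\lambda)$. Since $h(y) = f(x)$, $\nabla h(y) = -|\nabla h(y)| e$ and the multiplier is determined by $2\lambda\psi(x) e = \nabla h(y)$, one gets
\begin{equation*}
\nabla f(x) = \nabla h(y) + |\nabla h(y)|\,\nabla\psi(x),
\end{equation*}
where the second term arises precisely from the $x$-dependence of the radius $\psi(x)^2$ in the constraint. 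Taking norms and using that $\nabla h(y)\perp$-component interacts with $\nabla\psi(x)$ in a way that must be bounded — actually one should be slightly careful: the identity $\nabla f(x) = \nabla h(y) + |\nabla h(y)|\nabla\psi(x)$ does \emph{not} immediately give $|\nabla f(x) - \nabla h(y)| = |\nabla h(y)||\nabla\psi(x)|$ unless we only compare magnitudes, which is exactly what the lemma claims: $|\nabla f(x)-\nabla h(y)| = \big||\nabla h(y)|\nabla\psi(x)\big| = |\nabla h(y)||\nabla\psi(x)|$. So the conclusion follows directly once the gradient formula is established.

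The main obstacle I expect is making the envelope-theorem differentiation rigorous at a.e.\ point without assuming uniqueness of the minimizer globally: one needs to argue that at a.e.\ $x$ where $f$ is differentiable, picking \emph{any} minimizer $y$ yields the stated relation. The clean way is to prove the two one-sided bounds separately — for the upper bound on $\nabla f$ in direction $\xi$, use the specific competitor $y + t\,(\text{appropriate shift})$ obtained by translating and rescaling the minimizing configuration, giving $f(x_t) \le h(y) + t\,\xi\cdot(\nabla h(y) + |\nabla h(y)|\nabla\psi(x)) + o(t)$; for the matching lower bound, use that $f(x_t)$ is attained at some $y_t\to y$ and Taylor-expand $h$ there together with the constraint $|y_t - x_t| \le \psi(x_t)$. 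Combining the two at points of differentiability gives the formula, and hence the norm identity, a.e.\ in $B_1$.
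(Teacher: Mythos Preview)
Your approach is correct and conceptually equivalent to the paper's, though the presentations differ. The paper fixes coordinates so that $x=0$, $y=(y_1,y_2,0,\dots,0)$, linearizes $h$ around $y$, explicitly minimizes the linear approximation over the shifted ball $B((\delta,0,\dots,0),\psi(\delta,0,\dots,0))$, and takes $\delta\to 0$ to read off $\partial_1 f(0)-\partial_1 h(y)=-|\nabla h(y)|\,\partial_1\psi(0)$. Your Lagrangian/envelope-theorem computation is the abstract version of exactly this calculation: the paper's explicit minimization of the linearized problem is what makes the envelope argument rigorous. One small slip: carrying out your Lagrangian differentiation carefully gives $\nabla f(x)=\nabla h(y)-|\nabla h(y)|\nabla\psi(x)$ (minus, not plus), matching the paper; as you note, the lemma only asks for the norm, so this does not affect the conclusion.
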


\medskip

Now for  a weak solution  $u$ to \eqref{premain} {in $Q_2$},  let  $\{u_k\}_k$ be its smooth approximations as given in Lemma \ref{approximation}. In particular $u_k$ is positive in $Q_2$ for each $k$.  Set $v_k(x,t):=u_k(x+X(t),t)$ and introduce the corresponding density variable of $v_k$ as
\begin{equation}\label{e.5.7}
    \xi_k(x,t):=\left(\frac{m-1}{m}v_k(x,t)\right)^{\frac{1}{m-1}}=\left(\frac{m-1}{m}u_k(x+X(t),t)\right)^{\frac{1}{m-1}}.
\end{equation}
We define the density version of the operator $\calL_2$ as $\calG(\xi):=\calL_2(v)$ where $\xi=(\frac{m-1}{m}v)^{\frac{1}{m-1}}$ i.e.
$$
\calG(f):= \partial_t f-\Delta f-\nabla\cdot(f(\vec{b}(x,t)-f(x+X(t),t))),
$$
and thus $\calG(\xi_k)=0$.

Let $\varphi:\R^d\to (0,\infty)$ be a smooth function and $\sigma_1,\sigma_2$ be from Lemma \ref{lem delta}. For some constants $ \alpha,A_0, M_0 \geq 1$ to be determined, we define 
\begin{equation}\label{w_k}
w_k(x,t):=e^{A_0\eps t}\inf_{y\in B(x,R_\eps(x,t))}v_k(y+{ r \eps } {{\mu}},{p_\eps}(t)),
\end{equation}
\begin{equation}\label{w}
w(x,t):=e^{A_0\eps t}\inf_{y\in B(x,R_\eps(x,t))}v(y+{ r \eps } {{\mu}},{p_\eps}(t)),
\end{equation}
and
\begin{equation}\label{e.5.8}
\eta_k(x,t):=e^{A_1\eps t}\inf_{y\in B(x,R_\eps(x,t))}\xi_k(y+{ r \eps } {{\mu}},{p_\eps}(t))\quad\text{ with }A_1:=\frac{A_0}{m-1},
\end{equation}

where
\begin{align}
R_\eps(x,t)&:=\eps\varphi(x)(1- \alpha t)\label{R ep}\\
{p_\eps}(t)&:=(1+{\sigma_2}M_0\eps)\left(\frac{e^{A_0\eps t}-1}{A_0\eps}\right).\label{t hat}    
\end{align}
Then $w_k$ is Lipschitz continuous, and
\[\eta_k(x,t)=\left(\frac{m-1}{m}w_k(x,t)\right)^{\frac{1}{m-1}}.\]
Thus to show that $w_k$ is a supersolution for $\calL_2$, it suffices to show that $\eta_k$ is a supersolution for $\calG$.

\medskip

We will apply Lemmas \ref{lem delta}, \ref{lem nabla} with 
$$h=\xi^{m}_k(\cdot+r\eps\mu,p_\eps)\quad \hbox{ and }\quad \psi=R_\eps(\cdot,t).
$$ 
Based on these lemmas we estimate the density equation $\mathcal{G}(\eta_k)$ in the weak sense, to go around the potential lack of smoothness for inf-convolutions, to conclude.

\medskip

 We will choose  the constants $A_0=A_0(M_0)$ and $\alpha=\alpha(M_0)$ in Proposition \ref{lem supersolution}, the constants
 $M_0,r$ and the function $\varphi$ in the proof of Theorem \ref{prop nondeg}.

\begin{proposition}\label{lem supersolution}
Let $u_k,w_k$ be defined from above, and suppose that $u_k$ satisfies $\Delta u_k\geq -C_0$ in $Q_2$. Fix any $M_0\geq 1$ and consider $\varphi :B_2 \to \R$ such that
\begin{equation}\label{cond varphi}
 \left\{  
  \begin{array}{ll}
&\Delta\varphi= \frac{\sigma_1|\nabla\varphi|^2}{|\varphi|},\\ 
  &\frac{r}{M_0}\leq \varphi(\cdot)\leq rM_0,\quad \|\nabla \varphi\|_\infty \leq M_0 \quad \text{ for some }r\in(0,1).
\end{array}\right.
\end{equation}
Then {there exist} { positive constants}  $A_0,\alpha,\tau$ depending only on $M_0$ and universal constants such that for all $\eps< \frac{1}{M_0}$ 
 the function $w_k$ given in \eqref{w_k} is a  weak supersolution of
$$
\mathcal{L}_2( w_k) \geq 0\quad
\text{ in } B_r\times (0,\tau) .
$$

%for ${A_0},\alpha$ depending on $M_0,C_0,{L}$, and for $r,\e,\tau$ depending on $M_0, C_0, L, \sigma_2$.

\end{proposition}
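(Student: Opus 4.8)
The plan is to verify the supersolution property for the density operator $\mathcal{G}$ applied to $\eta_k$, since $\eta_k = (\tfrac{m-1}{m} w_k)^{1/(m-1)}$ and a supersolution for $\mathcal{G}$ yields a supersolution for $\mathcal{L}_2$ acting on $w_k$. Write $h(y,t) := \xi_k^m(y + r\eps\mu, p_\eps(t))$ and $\psi(x,t) := R_\eps(x,t) = \eps\varphi(x)(1-\alpha t)$, so that $\eta_k^m = e^{m A_1 \eps t}\inf_{B(x,\psi)} h = e^{A_0\eps t/(m-1)\cdot(m-1)} \cdots$ — more precisely $\eta_k = e^{A_1 \eps t}(\inf_{B(x,\psi)}\xi_k(\cdot + r\eps\mu, p_\eps))$. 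Since $u_k$ is smooth and strictly positive, $\xi_k$ is smooth and positive, and the chain of estimates can be done classically, then passed to the weak formulation via the Hausdorff-dimension/Lipschitz hypotheses of Lemma~\ref{lem comp supersl} only at the very end (here it is automatic since everything is smooth and positive). First I would record that $\psi$ satisfies $\|\nabla\psi\|_\infty \le \eps M_0 \le M_0/M_0 = 1$ and $\Delta\psi = \sigma_1|\nabla\psi|^2/|\psi|$ by the choice of $\varphi$ in \eqref{cond varphi}, so Lemmas~\ref{lem delta} and \ref{lem nabla} apply with constant $C = C(C_0)$ coming from $\Delta u_k \ge -C_0$ (note $\Delta h = \Delta \xi_k^m \ge -\eps \xi_k$ via the fundamental estimate translated to the density, as used in \eqref{est v tilde sq prime}).

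\textbf{Key steps.} (1) At a point $x \in B_r$, $t \in (0,\tau)$, let $y^* = y^*(x,t)$ be a point achieving the infimum in \eqref{e.5.8}, so $\eta_k(x,t) = e^{A_1\eps t}\xi_k(y^* + r\eps\mu, p_\eps(t))$. Differentiate: the time derivative of $\eta_k$ picks up (a) the factor $A_1\eps\, \eta_k$ from $e^{A_1\eps t}$, (b) $p_\eps'(t) = (1+\sigma_2 M_0\eps)e^{A_0\eps t}$ times $\partial_t \xi_k$ at the shifted point, and (c) a contribution from the $t$-dependence of the radius $R_\eps$, which by the envelope structure and Lemma~\ref{lem nabla} is controlled by $|\partial_t R_\eps| \cdot |\nabla \xi_k| = \alpha\eps\varphi|\nabla\xi_k|$; this is the term the factor $(1-\alpha t)$ is designed to handle. (2) For the spatial part, use Lemma~\ref{lem delta} to compare $\Delta(\inf h)$ with $(1+\sigma_2\|\nabla\psi\|_\infty)\Delta h$ up to an error $\sigma_2\|\nabla\psi\|_\infty C \le \sigma_2 M_0 \eps \cdot C$, and Lemma~\ref{lem nabla} to compare the gradient term $|\nabla(\inf h)|$ with $|\nabla h|(1 + |\nabla\psi|)$. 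The factor $(1+\sigma_2 M_0\eps)$ in $p_\eps$ is exactly tuned so that, when $\xi_k$ is plugged into its own equation $\mathcal{G}(\xi_k)=0$ at the point $(y^*+r\eps\mu, p_\eps(t))$, the Laplacian mismatch is absorbed by the extra factor on $p_\eps'$. (3) Collect all error terms: they are all $O(\eps)$ times $\eta_k$ plus $O(\eps^2)$ times $|x|^2\eta_k^{(m-1)/m}$-type quantities (from the drift difference $\vec{b}(x+X,t)-\vec{b}(X,t)$ being $O(\eps)$ Lipschitz-small on $B_r$, as in \eqref{est tilde v prime}), and choosing $A_0$ and $A_1 = A_0/(m-1)$ large enough (depending on $M_0$, $\sigma_1$, $\sigma_2$, $C_0$, and universal constants) makes the good term $A_1\eps\eta_k$ dominate, giving $\mathcal{G}(\eta_k)\ge 0$ classically on $B_r\times(0,\tau)$ for $\tau = \tau(M_0)$ small and all $\eps < 1/M_0$. (4) Translate back: $\mathcal{G}(\eta_k)\ge 0 \Rightarrow \mathcal{L}_2(w_k)\ge 0$ in the classical sense (both $\eta_k$ and $w_k$ are Lipschitz, and in the smooth-positive approximation everything is differentiable a.e. with the inequality holding where defined; the inf-convolution is semiconcave, so $\Delta$ in the distributional sense only helps the supersolution inequality), hence $w_k$ is a weak supersolution of $\mathcal{L}_2$.

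\textbf{Main obstacle.} The delicate point is handling the non-smoothness of the inf-convolution $w_k$ rigorously: although $u_k$ is smooth, $x \mapsto \inf_{B(x,R_\eps)}v_k(\cdot)$ is only Lipschitz and semiconcave, so $\Delta w_k$ exists only as a measure. The resolution is that Lemma~\ref{lem delta} is precisely stated in the distributional sense, and the inequality $\Delta(\inf h) \le (\text{smooth stuff}) + (\text{error})$ goes the right way — the distributional Laplacian of a semiconcave function is bounded above, which is what a supersolution of a parabolic equation with a $+\Delta$ (equivalently $-(m-1)u\Delta u$ with the sign as in $\mathcal{L}_2$) needs. So one must be careful to track which direction each inequality points and confirm that the envelope/inf-convolution structure produces exactly the favorable sign at every step; the bookkeeping of the three competing $O(\eps)$ terms (from $e^{A_0\eps t}$, from $\alpha$ in the radius, and from the Lemma~\ref{lem delta}/\ref{lem nabla} errors) against the single good term, and verifying the tuning of $(1+\sigma_2 M_0 \eps)$ in $p_\eps$, is where the real work lies. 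A secondary technical point is ensuring $y^*(x,t)+r\eps\mu$ and the ball $B(x,R_\eps(x,t))$ stay inside $Q_2$ so that all quoted estimates for $u_k$ are valid; this follows from $x\in B_r$, $\eps < 1/M_0$, $\varphi \le rM_0$, and $r < 1$ with $\tau$ small, keeping $p_\eps(t) = O(\tau)$ small.
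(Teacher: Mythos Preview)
Your proposal follows the same route as the paper and is essentially correct: reduce to $\mathcal{G}(\eta_k)\ge 0$, apply Lemmas~\ref{lem delta}--\ref{lem nabla} with $\psi=R_\eps$, use $\mathcal{G}(\xi_k)=0$ at the shifted point $(z(x),p_\eps(t))$, and absorb the leftover errors by choosing the free constants large.

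One point is miscast, however, and it matters for closing the estimate. The contribution from the shrinking radius in your step~(1)(c), namely $-\partial_t R_\eps\cdot|\nabla\eta_k| = \alpha\eps\varphi(x)|\nabla\eta_k| \ge \tfrac{\alpha r\eps}{M_0}|\nabla\eta_k|$, is not an error to be ``handled'' --- it is a \emph{positive} term in the lower bound for $\partial_t\eta_k$ (the infimum over a shrinking ball increases), produced precisely by the factor $(1-\alpha t)$. This good gradient term is needed to absorb the $O(M_0 r\eps)|\nabla\eta_k|$ errors coming from the drift: comparing $\nabla\eta_k$ with $\nabla\xi_k$ via Lemma~\ref{lem nabla} and the spatial arguments $x$ versus $z(x)$ in the drift (see \eqref{5.17}--\eqref{grad est 3}) generates errors of gradient type, which your step~(3) omits (you list only $O(\eps)\eta_k$ and $O(\eps^2)|x|^2$-type terms). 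Consequently it is not enough to take $A_0$ large; one must also take $\alpha\ge \sigma M_0^2$ so that $\tfrac{\alpha}{M_0}-\sigma M_0\ge 0$ (this is the paper's condition $C_2\ge\sigma$ in \eqref{C34}). With that correction the sketch matches the paper's argument.
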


\begin{proof}

Let $\xi_k,\eta_k$ be from \eqref{e.5.7}, \eqref{e.5.8} respectively. As discussed before to prove the statement, it suffices to show that $\calG(\eta_k)\geq 0$ weakly in $ B_r\times (0,\tau)$.

% satisfies 
%\[\partial_t\eta_k-\Delta \eta_k^m-\nabla\cdot(\eta_k(\vec{b}(x,t)-\vec{b}(x+X(0,0;t),t)))\geq 0\quad
%\text{ weakly in } B_r\times (0,\tau) .\]

 %, where $\calG$ is the density version of the operator $\mathcal{L}_2$:

%{define an operator and show how it is related to $\mathcal{L}_2$.}

% satisfies 
%\[\partial_t\eta_k-\Delta \eta_k^m-\nabla\cdot(\eta_k(\vec{b}(x,t)-\vec{b}(x+X(0,0;t),t)))\geq 0\quad
%\text{ weakly in } B_r\times (0,\tau) .\]

%We denote the above operator as $\calG(\eta_k)$ {this is not even operator since you plugged in $\eta_k$ in it.}

\medskip

Below we  estimate each term in $\calG (\eta_k)$ {in $B_r\times (0,\tau)$} using $\xi_k$. We begin with some preliminary estimates on $\eta_k$. 

\medskip

Since $u_k$ is smooth and positive, $\xi_k$ is also smooth and positive. From the definition of the inf-convolution, it follows that $\eta_k$ is Lipschitz continuous. Since $\Delta u_k\geq -C_0$, direct computation yields that 
\begin{equation}
    \label{5.12}\Delta(\xi^m_k)\geq -\sigma C_0\xi_k \hbox{ for some } \sigma = \sigma(m)>0.
\end{equation}

\medskip

Let us set the constants
\begin{equation}
    \label{def Aalpha}
    A_0:=\sigma_3 M_0(1+C_0),\quad  
\alpha :=\sigma_3 M_0^2
\end{equation}
for some $\sigma_3\geq \sigma_2$ to be determined, and 
\begin{equation}
    \tau
 : = \min\left\{\frac{1}{2A_0},\,\frac{1}{2A_1},\, \frac{1}{\sigma_2 M_0},\,\frac{1}{5\alpha}\right\}.
    \label{def tau}
\end{equation}

By definition of ${\eta_k}$, there is $z(x)$ satisfying
\begin{equation}\label{compare y x}
    |z(x)-x|\leq |R_\eps|+r\eps\leq 2 M_0 r\eps,
\end{equation}
such that 
\[{\eta_k}(x,t)=g(t)\,\xi_k(z(x) ,{p_\eps}(t)),\]
where we use the notation $g(t):=e^{A_1\eps t}$. 

\medskip

It  follows from the definition of ${p_\eps}(t)$ in \eqref{t hat} that
\begin{equation}
    \label{5.14}
    p_\eps'(t)=(1+\sigma_2 M_0\eps)g(t)^{m-1}
\end{equation}
and
\begin{equation}
    \label{est st}
    0\leq{p_\eps}(t)-t\leq \sigma M_0 t\eps\leq \sigma \eps \quad\hbox{ for } 0<t<\tau.
\end{equation}

\medskip

We now proceed to estimating each terms in $\mathcal{G}(\eta_k)$, starting with $\partial_t {\eta_k}$. All estimates in the domain $B_r(0) \times (0,\tau)$. In the rest of the proof, for simplicity, $X(t):=X(0,0,;t)$, $p_\eps$, $\eta_k$ denotes the values of them at $(x,t)$, and $\xi_k,\partial_t \xi_k,\nabla \xi_k,\Delta \xi_k$ denotes the values of them evaluated at point $(z(x),p_\eps(t))$.

 \medskip
 
In \cite{kim2017singular}, $\partial_t {\eta_k}$ is computed in the viscosity sense. Since our $\eta_k$ is Lipschitz continuous, the same computation carries out almost everywhere in $ B_r\times (0,\tau)$. We have
\begin{equation}\label{partial}
    \partial_t {\eta_k}\geq { A_1 \eps }\, \eta_k-\partial_tR_\eps\,  |\nabla \eta_k|+(p_\eps')g\,\partial_t \xi_k.
\end{equation}
Applying \eqref{R ep}, \eqref{5.14} and the assumption that $\varphi\geq \frac{r}{M_0}$, \eqref{partial} implies
\begin{equation}\label{partial t w1}
    \partial_t\eta_k\geq { A_1 \eps } \,\eta_k+\frac{  \alpha r \eps}{M_0} |\nabla \eta_k|+(1+{{\sigma_2}}M_0 \eps)g^m\partial_t \xi_k.
 \end{equation}

From the assumptions on $\varphi$, $\|R_\eps\|_\infty\leq r M_0\eps$, $\|\nabla R_\eps\|_\infty\leq M_0\eps$. 
We now apply Lemma \ref{lem delta} with $h=\xi^{m}_k(\cdot+r\eps\mu,p_\eps)$ and $\psi=R_\eps(\cdot,t)$.  From \eqref{cond varphi} and \eqref{5.12}, the following holds in the sense of distribution:
\begin{equation}
    \label{5.1}
    \begin{aligned}
    -\Delta {\eta^m_k}&\geq -(1+\sigma_2\|R_\eps\|_\infty)g^m\Delta \xi^m_k-\sigma_2\|\nabla R_\eps\|_\infty C_0 \xi_k \\
    &\geq -(1+{{\sigma_2}}{M_0} \eps )g^m\Delta \xi^m_k-\sigma M_0 C_0 \eps \,\eta_k.
    \end{aligned} 
\end{equation}

Next we consider the terms coming from the drift. Due to Lemma \ref{lem nabla},
\[
    |\nabla {\eta_k}-g \nabla \xi_k|=|\nabla R_\eps ||g \nabla \xi_k|\leq M_0 \eps g|\nabla \xi_k|,
\]
 since $\eps<\frac{1}{M_0}$, we have
$
    |\nabla {\eta_k}-g \nabla \xi_k|\leq \sigma M_0 \eps |\nabla \eta_k|.
$
This implies that for $t\leq \tau$,
\begin{equation}\label{5.17}
     \left|\nabla \eta_k-(1+{{\sigma_2}}M_0\eps)g^m\nabla \xi_k\right|\leq \sigma M_0 \eps |\nabla \eta_k|.
\end{equation}

Next using the regularity of $\vec{b}$ and $|x|\leq r$, we have
\begin{equation}
    \label{grad est 1}
 \left|  \vec{b}(x+{X(p_\eps)},{p_\eps})-\vec{b}({X(p_\eps)},{p_\eps})\right| \leq \|D\vec{b}\|_\infty r\leq   \sigma  r,
\end{equation}
and, by \eqref{compare y x},
\begin{equation}
\label{grad est 3}
\left|\vec{b}(x+{X(p_\eps)},{p_\eps})-\vec{b}(z+{X(p_\eps)},{p_\eps})\right|\leq \sigma M_0r\eps.\end{equation} 
Then \eqref{5.17}-\eqref{grad est 3} imply
\begin{equation}
    \label{5.3}
\begin{aligned}
    & -\nabla \eta_k\cdot \left(\vec{b}(x+{X(p_\eps)},{p_\eps}) -\vec{b}({X(p_\eps)},{p_\eps})\right)\\
   \geq \, &-(1+{{\sigma_2}}M_0\eps)g^m\nabla \xi_k\cdot \left(\vec{b}(x+{X(p_\eps)},{p_\eps})-\vec{b}({X(p_\eps)},{p_\eps})\right)-\sigma M_0 r\eps |\nabla\eta_k|\\
  \geq\,  &-(1+{{\sigma_2}}M_0\eps)g^m\nabla \xi_k\cdot \left(\vec{b}(z+{X(p_\eps)},{p_\eps})-\vec{b}({X(p_\eps)},{p_\eps})\right)-\sigma M_0 r\eps |\nabla\eta_k|.
\end{aligned}
\end{equation}
{Parallel computations yield}
\begin{equation}
    \label{5.4}
    \begin{aligned}
   &-\eta_k\nabla\cdot\vec{b}(x+{X(p_\eps)})\\
   \geq\,& -(1+\sigma_2M_0\eps)g^m \xi_k \nabla\cdot\vec{b}(x+{X(p_\eps)})-\sigma\,\eta_k\, \left|g-(1+{{\sigma_2}}M_0\eps)g^m\right| \|D\vec{b}\|_\infty\\
    \geq\,& -(1+\sigma_2M_0\eps)g^m \xi_k \nabla\cdot\vec{b}(z+{X(p_\eps)})-\sigma M_0\eps\,\eta_k-\sigma\,\eta_k\, \|D^2\vec{b}\|_\infty M_0r\eps\\
\geq\,  & -(1+\sigma_2M_0\eps)g^m \xi_k \nabla\cdot\vec{b}(z+{X(p_\eps)})-\sigma M_0\eps\,\eta_k.   
    \end{aligned}
\end{equation}

Combining the estimates \eqref{partial t w1}, \eqref{5.1}, \eqref{5.3}  and \eqref{5.4}, we have
\begin{align*}
 \tilde{\calG}(\eta_k):= \, &\partial_t {\eta_k}-\Delta {\eta^m_k}-\nabla\left( {\eta_k}\cdot \left(\vec{b}(x+{X(p_\eps)},{p_\eps})-\vec{b}({X(p_\eps)},{p_\eps})\right)\right)\\
   \geq \, &A_1 \eps \,\eta_k+\frac{ \alpha r \eps }{M_0}|\nabla \eta_k|+(1+{{\sigma_2}}M_0\eps)g^m (\partial_t \xi_k- \Delta \xi^m_k)\\
    &\quad -(1+{{\sigma_2}}M_0\eps)g^m\,\nabla \left(\xi_k\cdot \left(\vec{b}(z+{X(p_\eps)},{p_\eps})-\vec{b}({X(p_\eps)},{p_\eps})\right)\right)\\
    &\quad -\sigma M_0(1+C_0)\eps \,\eta_k-\sigma M_0 r\eps |\nabla \eta_k|.
\end{align*}

Since $\calG(\xi_k)=0$ we obtain
\begin{align*}
   \widetilde{\calG} (\eta_k) &\geq   {A_1} \eps \,\eta_k+\frac{\alpha r\eps}{M_0}|\nabla \xi_k|+(1+\sigma_2 M_0\eps)g^m \calG(\xi_k(\cdot,\cdot))(z,{p_\eps})- \sigma M_0(1+C_0)\eps \,\eta_k- \sigma M_0 r\eps|\nabla \eta_k|    \\
    &= C_1\eps\, \eta_k+C_2 r\eps|\nabla \eta_k|,
\end{align*}
where
\begin{equation}\label{C34}
C_1:= A_1-\sigma M_0(1+C_0),\quad  
C_2:= \frac{\alpha}{M_0}-\sigma M_0.
\end{equation}

Finally we proceed from $\widetilde{\calG}(\eta_k)$ to $\calG (\eta_k)$:
\begin{equation}
    \label{L hat w}
\begin{aligned}
    \calG(\eta_k)&\geq \calG(\eta_k)-{\widetilde{\calG}}({\eta_k})+C_1\eps \,\eta_k+ C_2 r|\nabla \eta_k|\\
    &\geq C_1
\eps \,\eta_k+ C_2  r\eps|\nabla \eta_k|-\eta_k\left|\nabla\cdot \vec{b}(x+{X(p_\eps)},{p_\eps})-\nabla\cdot\vec{b}(x+{X(t)},t)\right|\\
&\quad -|\nabla \eta_k|\underbrace{\left|\vec{b}(x+{X(p_\eps)},{p_\eps})-\vec{b}({X(p_\eps)},{p_\eps})-(\vec{b}(x+{X(t)},t)-\vec{b}({X(t)},t))\right|}_{V_0:=}.
\end{aligned}
\end{equation}
Let us estimate $V_0$:
\begin{align*}
    V_0&=\left|\int_t^{{p_\eps}} \partial_s\vec{b}(x+{X(s)},s)-\partial_s\vec{b}({X(s)},s)ds\right|\\
    &\leq \int_t^{{p_\eps}}\left|\left((D\vec{b})(x+{X(s)},s)-(D\vec{b})({X(s)},s)\right)\vec{b}(X(s))\right|+\left|(\partial_t\vec{b})(x+{X(s)},s)-(\partial_t\vec{b})({X(s)},s)\right| ds\\
    &\leq \sigma|x|\int_t^{{p_\eps}}\left\|D^2\vec{b}\right\|_\infty\left\|\vec{b}\right\|_\infty+\left\|D\partial_t\vec{b}\right\|_\infty ds\leq \sigma  r\eps.
\end{align*}
Similarly,
\begin{align*}
    &\left|\nabla\cdot\vec{b}(x+{X(p_\eps)},{p_\eps})-\nabla\cdot\vec{b}(x+X(t),t)\right|\\
    \leq\,& \int_t^{{p_\eps}}\left|(D\nabla\cdot\vec{b})(x+{X(s)},s)\vec{b}(X(s))\right|+\left|(\partial_t\nabla\cdot\vec{b})(x+{X(s)},s)\right|ds\\
    \leq\,& \sigma\left(
    \left\|D^2\vec{b}\right\|_\infty\left\|\vec{b}\right\|_\infty+\left\|D\partial_t\vec{b}\right\|_\infty
    \right)\,\eps.
\end{align*}
Thus it follows from \eqref{L hat w}  that, if $C_1\geq \sigma $, $C_2\geq \sigma$,
\begin{equation}
    \label{5.24}
    \calG(\eta_k)\geq (C_1-\sigma r)\eps \,\eta_k+(C_2-\sigma )r\eps|\nabla \eta_k|\geq 0 \hbox { in } B_R\times (0,\tau).
\end{equation}
 In view of \eqref{C34}, $C_1, C_2\geq \sigma$ if 
 $\sigma_3$ in \eqref{def Aalpha} is chosen to be large enough depending only on universal constants.
Hence with this choice of $\sigma_3$ we have proved that $\calG(\eta_k)\geq 0$ in the sense of distribution in $B_r\times (0,\tau)$. From the Lipschitz continuity of $\eta_k$ we conclude that $\calG(\eta_k)\geq 0$ weakly in $B_r\times (0,\tau)$.

\medskip

Lastly
it is not hard to see that the choices of $A_0,\alpha,\tau$ are independent of $r$ and $k$.
\end{proof}

%\textcolor{Paul: the domain $\Sigma$ you use below is space-time. Here it has to be $U$. I put it back as $U$.}

\begin{corollary}\label{C.5.5}

Let $u$ be from Theorem \ref{prop nondeg}, and
let $v$ and  $w$ be given by \eqref{v} and \eqref{w} respectively. Suppose that the assumptions in Proposition \ref{lem supersolution} are satisfied. Then for any open set $U\subseteq B_r$, if $w\geq v$ on the parabolic boundary of $U\times (0,\tau)$,  then
\[w\geq v\quad \text{ in }U\times (0,\tau).\]
\end{corollary}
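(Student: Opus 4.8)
The plan is to reduce the statement to the comparison principle for $\mathcal{L}_2$ applied to the smooth approximations, and then to pass to the limit. Let $\{u_k\}$ be the smooth positive approximations of $u$ from Lemma~\ref{approximation} (applied on $Q_2$ after a harmless time shift), so $u_k\to u$ locally uniformly; recall that in that construction $u_k$ is the solution whose initial and lateral data dominate those of $u$, so by Theorem~\ref{thm:comp} one also has $u_k\geq u$. Writing $v_k(x,t):=u_k(x+X(t),t)$, we then get $v_k\geq v$ and $v_k\to v$ locally uniformly; since the inf-convolutions defining $w_k$ and $w$ in \eqref{w_k}, \eqref{w} use the same parameters $R_\eps$, $p_\eps$, $r$, $\mu$ and the inf-convolution is order-preserving, it follows that $w_k\geq w$ and $w_k\to w$ locally uniformly on $\overline{B_r}\times[0,\tau]$.

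Next, fix $k$ and work on the bounded cylinder $U\times(0,\tau)$ with $U\subseteq B_r$ open. By Proposition~\ref{lem supersolution}, $w_k$ is a weak supersolution of $\mathcal{L}_2$ on $B_r\times(0,\tau)$, hence also on $U\times(0,\tau)$; moreover $\eta_k=(\frac{m-1}{m}w_k)^{1/(m-1)}$ is Lipschitz, so continuous up to the parabolic boundary. On the other hand $v$ is a weak solution (hence subsolution) of $\mathcal{L}_2=0$ there: for $\tau$ small and $U\subseteq B_r$ the points $x+X(t)$ stay inside $B_2$, so $v$ and the operator $\mathcal{L}_2$ (whose drift $\vec{b}(x+X(t),t)-\vec{b}(X(t),t)$ is smooth in $x$ and $C^1$ in $t$) are well defined on this cylinder. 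On the parabolic boundary of $U\times(0,\tau)$ we have $w_k\geq w\geq v$, the last inequality being the hypothesis. Hence the comparison principle of Remark~\ref{remark 2.0} gives $w_k\geq v$ in $U\times(0,\tau)$, and letting $k\to\infty$ with $w_k\to w$ locally uniformly yields $w\geq v$ in $U\times(0,\tau)$, as claimed.

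I expect the steps to be routine once Proposition~\ref{lem supersolution} is in hand; the points needing a little care are (i) checking that $U\times(0,\tau)$ sits inside the region where $v$ and $w_k$ are defined and $\mathcal{L}_2$ makes sense, which uses the smallness of $\tau$ from \eqref{def tau}, of the parameters in \eqref{cond varphi}, and $U\subseteq B_r$; (ii) upgrading the distributional supersolution inequality of Proposition~\ref{lem supersolution} on the open cylinder to a weak supersolution in the sense of Definition~\ref{def1.1}, which is standard given the continuity of $\eta_k$ up to $t=0$; and (iii) transferring the boundary ordering from $w$ to $w_k$, which relies on $u_k\geq u$ and the monotonicity of the inf-convolution. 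An alternative that avoids (iii) is to first pass to the limit in Proposition~\ref{lem supersolution}, using the locally uniform convergence together with a local $L^2$ bound on $\nabla\eta_k^m$, to conclude that $w$ itself is a weak supersolution of $\mathcal{L}_2$, and then to apply Remark~\ref{remark 2.0} directly to $w$ and $v$.
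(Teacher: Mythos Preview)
Your proposal is correct and follows essentially the same approach as the paper: approximate by $u_k\geq u$, use Proposition~\ref{lem supersolution} to get $\mathcal{L}_2(w_k)\geq 0$, transfer the boundary ordering via $w_k\geq w\geq v$, apply the comparison principle of Remark~\ref{remark 2.0}, and pass to the limit. The additional care you flag in points (i)--(iii) and the alternative route are reasonable remarks, but the core argument matches the paper's proof.
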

\begin{proof}
Let $\{u_k\}_k$ be the smooth approximations of $u$ and $u_k\geq u$. Let $ v_k(x,t)=u_k(x+X(t),t)$ and $ w_k$ be from \eqref{w_k}. It follows from the proposition that $\calL_2(w_k)\geq 0$ weakly in $B_r\times (0,\tau)$. We have $w_k\geq w$ due to the fact that $u_k\geq u$. Then by the assumption, $w_k\geq v$ on the parabolic boundary of $U\times (0,\tau)$. 
By comparison principle for $\calL_2$, we get $w_k\geq v$ in $U\times (0,\tau)$. Due to Lemma \ref{approximation}, $u_k$ converges locally uniformly to $u$, and so $w_k$ converges locally uniformly to $w$. We conclude by sending $k\to\infty$.
\end{proof}

\medskip

Now we are able to prove Theorem~\ref{prop nondeg}. 

\medskip

{\bf Proof of  Theorem~\ref{prop nondeg}.}
%By our assumption $u$ is monotone with respect to $W_{\theta,\mu}$. 
Let $\sigma_1$ be given in Lemma \ref{lem delta}, and
let $\Phi$ be the unique solution of  
\begin{equation*}
    \left\{\begin{aligned}
        & \Delta (\Phi^{-\sigma_1+1})=0 &\text{ in }&B_\frac{1}{2}\backslash B_{\sin \theta/10}\\
        & \Phi=A_{d,\theta} &\text{ on }&\partial  B_{\sin \theta/10}\\
        & \Phi=\frac{1}{2}\sin{\theta}& \text{ on }&\partial B_{\frac{1}{2}}.
    \end{aligned}
    \right.
\end{equation*}
Here $A_{d,\theta}$ is chosen sufficiently large so that
 
\begin{equation}\label{5.8}
    \Phi\left(y+\frac{{{\mu}}}{5}\right)\geq 3\quad \text{ for all }y\in B_{\frac{1}{10}}.  
\end{equation}
Then for some $M_0(\theta,d)\geq 1$
$$\frac{1}{M_0}\leq \Phi\leq M_0,\quad \|\nabla \Phi\|_\infty \leq M_0 \quad \hbox{ in } B_{\frac{1}{2}}.
$$
With this $M_0$, let ${A_0},\alpha,\tau$ be as given in  Proposition \ref{lem supersolution}.

\medskip
Fix any $(\hat{x},\hat{t})\in Q_1\cap\Gamma$ and let $C^*, h, \beta$ be from Theorem~\ref{strictly expanding} and $\tau$ be from \eqref{def tau}. We will show that the support of the solution strictly expands relatively to the streamlines at $(\hat{x},\hat{t})$. 

Let $\delta=\delta(\theta,C_0)>0$, which will be chosen as a constant satisfying \eqref{delta1} and \eqref{delta2}. Define
\begin{equation}\label{const t1}
    t_\delta: = \min\{\tau, h,\delta\},
\end{equation} 
and
\begin{equation}\label{5.26}
    r_\delta:=\min\left\{C_*t_\delta^\beta,\frac{1}{4}\right\}>0.
\end{equation} 
Due to Theorem~\ref{strictly expanding},
 \begin{equation}
     \label{eqn varrho 0}
u(x,\hat{t}-t_\delta)=0 \quad \text{ for } x\in B(X(\hat{x},\hat{t};-t_\delta),{r_\delta}).
\end{equation}
After translation, we assume $(X(\hat{x},\hat{t};-t_\delta),\hat{t}-t_\delta)$ to be the origin. Using the notation $X(t)=X(0,0;t)$, we have 

\[(X(t_\delta),t_\delta)=(X(X(\hat{x},\hat{t};-t_\delta)),\hat{t}-t_\delta;t_\delta),t_\delta)=(\hat{x},\hat{t})\in\Gamma(u).\]

Let $v$ be as given in \eqref{v}, and then $\mathcal{L}_2(v)=0$ weakly in $Q_{\frac{1}{2}}$, where $\calL_2$ is given in \eqref{cal L 2}.   %In view of \eqref{const t1}, we have \[\{(x+{X(t)},t), x\in {B_1},t\in [0,t_\delta]\}\subset Q_1.\]
It follows from \eqref{eqn varrho 0} that
\begin{equation}\label{ini zero}
    v(x,0)=0 \quad\text{ in }B_{r_\delta}.
\end{equation}
For $P:=-\frac{{r_\delta}}{5}{{\mu}}$, set $\varphi(x):={r_\delta}\Phi(\frac{x-P}{{r_\delta}})$.

Let $w$ be defined as in \eqref{w} with the above $\varphi$ and $r=r_\delta$:
\begin{align*}
w(x,t)&:=e^{A_0\eps t}\inf_{B(x,{{{\eps}}\varphi(x)(1-\alpha t)})}{u}(y+{r_\delta}{{\eps}}\mu + {X(p_\eps(t))} ,{p_\eps}(t))\\
&=e^{A_0\eps t}\inf_{B(x,{{{\eps}}\varphi(x)(1-\alpha t)})}{v}(y+{r_\delta}{{\eps}}\mu ,{p_\eps}(t)).
\end{align*}
%where $p_\eps(t)$ is given by \eqref{t hat}.
Next denote the cylindrical domain
$$
\Sigma:=(B(P, \frac{r_\delta}{2})\setminus B(P,{r_\theta})) \times [0,{t_\delta}]
$$
where $r_\theta:= \frac{{r_\delta}}{10}\sin\theta$.
We claim that
\begin{equation}\label{claim}
 w \geq v \hbox{ in } \Sigma.
\end{equation}

{Roughly speaking, \eqref{claim} states that the nondegeneracy property of $u$ propagates from the positive set to the free boundary, as the positive set expands out relative to the streamlines. }

The proof of \eqref{claim} will be given below. We first discuss its consequences. 

\begin{figure}[t]\centering\includegraphics[width=0.7\textwidth]{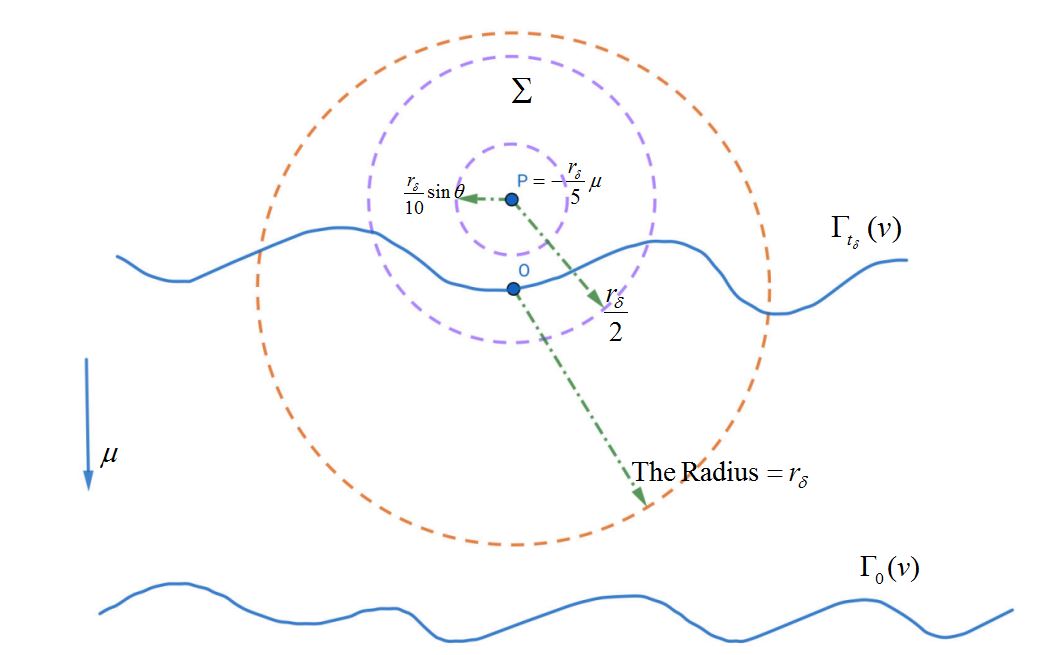}\label{figure1}\caption{}\end{figure}

\medskip

%Since $t_\delta$ is small (see \eqref{def tau}), we have $(1-\alpha t_\delta)\geq 4/5$. 
Using \eqref{def tau} and \eqref{5.8},
\[\varphi(x)=r_\delta\Phi\left(\frac{x}{r_\delta}+\frac{\mu}{5}\right)\geq 3{r_\delta}\quad \text{ for }x\in B_{\frac{r_\delta}{10}}(0).\]
From this, it follows that for all $|x|\leq \frac{r_\delta\eps}{5}\leq  \frac{r_\delta}{10}$,
 \begin{align*}
     -{r_\delta}\eps  {{\mu}}\in B\left(x, \frac{12}{5}{r_\delta}\eps \right)+{r_\delta}\eps  {{\mu}}\subseteq B(x,\eps\varphi(x) (1-\alpha t_\delta))+{r_\delta}\eps {{\mu}} .
 \end{align*}
 In the inclusion, we used that $\alpha t_\delta\leq \frac{1}{5}$.
Then using \eqref{claim} and the definition of $w$, we get for $|x|\leq \frac{r_\delta\eps}{5}$,
\begin{align*}
e^{A_0\eps t_\delta}\,{v}(-{r_\delta}\eps  {{\mu}},{p_\eps}(t_\delta))&\geq 
e^{A_0\eps t_\delta}\,\inf_{B(x,\eps\varphi(x)(1-\alpha t_\delta))}{v}(y+{r_\delta}\eps  {{\mu}},{p_\eps}(t_\delta))\\
&\geq {w}(x,t_\delta)\geq v(x,t_\delta).
\end{align*}
From \eqref{t hat} it follows that $p_\eps(t_\delta)=t_\delta+c\eps$ for some $c=c(t_\delta,\sigma)$ which is independent of $\eps$. Thus
\[u(-r_\delta\eps\mu+X(t_\delta+c\eps),t_\delta+c\eps)\geq e^{-A_0\eps t_\delta}\sup_{|x|\leq r_\delta\eps/5}u(x+X(t_\delta),t_\delta).\]
Recall that $(X(t_\delta),t_\delta)=(\hat{x},\hat{t})\in\Gamma(u)$ and $X(t_\delta+c\eps)=X(X(t_\delta),t_\delta;c\eps)$. We proved
\[u(-r_\delta\eps\mu+X(\hat{x},\hat{t};       c\eps),\hat{t}+c\eps)>0,\]
which implies
\[
u(X(\cdot,\cdot\,;c\eps)-r_\delta\eps{\mu},{\cdot\,+}c\eps)>0\quad \text{ on }  \Gamma\cap Q_1 .\]

\medskip

Now we proceed to prove our claim.

\medskip

%\noindent{\bf The Proof of \eqref{claim}:} 

{\bf Proof of \eqref{claim}}.
{Here we apply Corollary \ref{C.5.5} with the choice of $U:= B(P, \frac{r_\delta}{2})\setminus B(P,{r_\theta})$. To this end, it suffices to show that $w\geq v$ on the parabolic boundary of $\Sigma$.}

\medskip

First observe that  from \eqref{ini zero}, 
 $$
 {w}(x,0)\geq 0=v(x,0)\hbox{ in } B\l P,{\frac{{r_\delta}}{2}}\r.
 $$  
 Since $v(0,t_\delta)=u(X(t_\delta),t_\delta)=0$ and due to Lemma \ref{streamline},
\[{v}(0,t)=u(X({t}),t)=0 \text{ for }t\in [0,{t_\delta}].\]  
Due to the cone monotonicity assumption \eqref{e.5.1},
    \[ w\geq v=0 \text{ in } B(P, r_\theta) \subset B\left(P,{\frac{{r_\delta}}{5}\sin\theta}\right)\times [0,{t_\delta}].\]

Hence to show \eqref{claim}, it remains to show that $w\geq v$ on $\partial B(P,\frac{r_{\delta}}{2}) \times [0,t_{\delta}]$. 

\medskip

By definition of $\varphi$, we have $\varphi(x)=\frac{{r_\delta}}{2}\sin \theta$ on $\partial B(P,{{r_\delta}/2})$. From \eqref{def Aalpha}, we know $A_0\geq \sigma_2M_0$. It follows that for  $x\in\partial B(P,{r_\delta}/2)$,
%Below we will show that the right-hand side of   \eqref{eqn med 1} is no less than $ v(x,t)$ on $\partial B(P, \frac{{r_\delta}}{2})\times [0, t_\delta]$. 
\begin{equation}
    \label{eqn med 1}
\begin{aligned}
    {w}(x,t)&\geq e^{\sigma_2M_0{\eps}}\inf_{y\in B\left(x,{{r\eps}(1-\alpha t)\frac{{\sin\theta}}{2}}\right)}{v}(y+{r_\delta}{{\eps}} {{\mu}},{p_\eps(t)})\\
    &=e^{\sigma_2M_0{\eps}}\inf_{y\in B\left(x,{\frac{{r_\delta}\eps}{2}{\sin\theta}}\right)}{u}(y+{r_\delta}\eps {{\mu}}+{X(p_\eps(t))},{p_\eps(t)})\\
    &=:e^{\sigma_2M_0{\eps}}V_1(x,t).
\end{aligned}
\end{equation}
In view of \eqref{e.5.1}, we have
$$
 \inf_{B(x,{{{{r_\delta}\eps}}{\sin\theta}})}{u}(y+{{r_\delta}\eps} {{\mu}}+{X(t)},t)\geq v(x,t).
 $$ 
Thus it remains to show that
\begin{equation}\label{monotone}
e^{\sigma_2M_0{\eps}} V_1(\cdot,\cdot)\geq \inf_{B(\cdot,{{{{r_\delta}\eps}}{\sin\theta}})}{u}(y+{{r_\delta}\eps} {{\mu}}+{X(\cdot)},\cdot) \qquad\hbox{ on } \partial B(P, \frac{{r_\delta}}{2})\times [0, t_\delta].
 \end{equation}

Take any $(x,t)\in  \partial B(P, \frac{{r_\delta}}{2})\times [0, t_\delta]$, and denote 
\[z:=z(y,\eps)=y+{{r_\delta}\eps}{{\mu}}+X(t)\quad\text{ for any }y\in B\left(x,{\frac{{r_\delta}\eps}{2}{\sin\theta}}\right).\] 
With this notation we can rewrite $V_1(x,t)$ as
\begin{equation}
    \label{5.6}
    \begin{aligned}
\inf_{y\in B\left(x,{\frac{{r_\delta}\eps}{2}{\sin\theta}}\right) } u(z-{X(p_\eps(t))}+{X(t)},p_\eps(t)).
\end{aligned}
\end{equation}

By \eqref{est st} and \eqref{const t1}, we know 
\begin{equation}
    \label{5.29}
        {s_\eps}(t):={p_\eps}(t)-t\leq \sigma\delta\eps. 
\end{equation}
Then 
\begin{equation}
    \label{5.5}
\begin{aligned}
    |X(z,t;{s_\eps(t)})-z-{X(p_\eps(t))}+{X(t)}|&=|X(z,t;{s_\eps(t)})-X(z,t;0)-{X(X(t),t;s_\eps(t))}+{X(X(t),t;0)}|\\
    &=\left|\int_0^{s_\eps(t)} \vec{b}(X(z,t;h),h)-\vec{b}(X(X(t),t;h),h)dh\right|\\
    &\leq \int_0^{s_\eps(t)}\left(\|D\vec{b}\|_\infty|X(z,t;h)-X(X(t),t;h)|\right)dh.
\end{aligned}
\end{equation}
Note that, for some universal $\sigma$,
\begin{align*}
    |X(z,t;h)-X(X(t),t;h)|&\leq |X(z,t;0)-X(X(t),t;0)|+\sigma h\\
    &=|z-X(t)|+\sigma h\\
    &\leq \sigma r_\delta+\sigma h.
\end{align*}
Therefore, \eqref{5.5} and \eqref{5.29} imply that
\begin{align*}
     |X(z,t;s_\eps(t))-z-{X(p_\eps(t))}+{X(t)}|&\leq \sigma r_\delta {s_\eps(t)}+\sigma {s_\eps(t)}^2\nonumber\\
    & \leq \sigma(\delta r_\delta\eps+\delta^2 \eps^2 )\leq \frac{r_\delta\eps}{2}\sin\theta,
\end{align*}
where the last inequality holds if
\begin{equation}
    \label{delta1}
    \delta\leq \frac{\sin\theta}{4\sigma}\quad\text{ and }\quad\eps\leq \frac{r_\delta\sin\theta}{4\sigma\delta^2}. 
\end{equation}

Combining above estimate with \eqref{5.6}, it follows that
\[V_1(x,t)\geq \inf_{y\in B\left(x,{{{r_\delta}\eps}{\sin\theta}}\right) } u(X(z(y),t;{s_\eps(t)}),t+{s_\eps(t)}).\]
Due to \eqref{mono stream}, for ${C}:=(m-1)(C_0+\|\nabla\cdot\vec{b}\|_\infty)$,
\begin{align*}
\inf_{y\in B\left(x,{{{r_\delta}\eps}{\sin\theta}}\right) } u(X(z(y),t;{s_\eps(t)}),t+{s_\eps(t)})    \geq e^{-{{C}}{s_\eps(t)}}\inf_{y\in B(x,{{r_\delta}\eps}{\sin\theta}) }u(y+{{r_\delta}\eps}{{\mu}}+X(t),t).
\end{align*}
In view of \eqref{eqn med 1}, 
we derive 
\begin{equation}
\label{5.7}
\begin{aligned}
    {w}(x,t)
    \geq e^{\sigma_2M_0{\eps}} e^{-{C}s_\eps(t)}\inf_{B(x,{{{{r_\delta}\eps}}{\sin\theta}})}{u}(y+{{r_\delta}\eps} {{\mu}}+{X(t)},t).
\end{aligned}    
\end{equation}
Using \eqref{5.29} again shows
\begin{equation}
    \label{delta2}
e^{\sigma_2M_0{\eps}-{C}s_\eps(t)}\geq e^{\sigma_2M_0{\eps}-{C}\sigma\delta\eps}\geq 1\quad \text{ if  }\quad  
\delta\leq \frac{\sigma}{1+C_0}.
\end{equation}
Now after fixing $\delta=\delta(\theta,C_0)>0$ such that \eqref{delta1} and \eqref{delta2} hold, we can conclude with \eqref{monotone} and then the claim \eqref{claim}. 

\hfill $\Box$

%In the following theorem, we are going to show that the solution $u$ grows linearly near the free boundary, if known that free boundary is strictly expanding relatively to the streamlines.

\medskip

In view of the velocity law \eqref{with drift speed}, non-degeneracy follows once we know that the positive set of the solution is strictly expanding relatively to the streamlines. In the following theorem, we are going to show that indeed the solution $u$ grows linearly near the free boundary.

\begin{corollary}\label{cor nondeg} 
Under the conditions of Theorem \ref{prop nondeg}, there exist $\eps_0,\kappa_*>0$ depending only on constants in \eqref{C.5.1} such that, for all $\e\in (0,\e_0)$, 
\begin{equation}
    \label{nondeg0}
     u(x+\eps \mu,t)\geq \kappa_*\eps \quad \hbox{ for all } (x,t)\in \Gamma\cap Q_1.
     \end{equation}  
\end{corollary}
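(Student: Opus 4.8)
\textbf{Proof plan for Corollary~\ref{cor nondeg}.} The plan is to upgrade the qualitative ``strict expansion along streamlines'' statement of Theorem~\ref{prop nondeg} into the quantitative linear lower bound \eqref{nondeg0} by combining it with the velocity law \eqref{with drift speed} and the semiconvexity bound $\Delta u\geq -C_0$. First I would fix $(x,t)\in\Gamma\cap Q_1$ and, for each small $\e$, apply Theorem~\ref{prop nondeg} to get a point $y_\e:=X(x,t;C\e)-\e\mu$ with $u(y_\e,t+C\e)>0$. The heuristic is that the free boundary point $(x,t)$, when transported forward along the streamline by time $C\e$, has moved a distance comparable to $\e$ past $y_\e$; since the normal velocity of $\Gamma$ is $V=|\nabla u|-\vec b\cdot\vec n$ and the drift contributes only an $O(\e)$ correction, the free boundary must have swept out a set whose width is controlled by $|\nabla u|$, forcing $|\nabla u|$ to be bounded below.

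To make this rigorous I would run a barrier comparison rather than argue directly with the (possibly non-smooth) velocity. The cleanest route: since $\Delta u\geq -C_0$, on the positive set near $(x,t)$ the pressure is a supersolution of a uniformly parabolic equation, and on any ball $B(z,\rho)\subset\{u>0\}$ one has the standard PME-type bound $u(\cdot,t)\gtrsim$ (value on a slightly smaller ball) which lets one construct a radially-symmetric ``mesa'' subsolution of \eqref{premain} (of the ZKB/pancake type, adjusted by the $e^{-\sigma t}$ factors from the drift as in Lemma~\ref{streamline}) sitting underneath $u$. The time it takes such a barrier, released from the positive region guaranteed by Theorem~\ref{prop nondeg}, to reach the point $x+\e\mu$ is $O(\e)$; comparing its profile there gives $u(x+\e\mu,t)\geq \kappa_*\e$ with $\kappa_*$ depending only on the data in \eqref{C.5.1}. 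Alternatively, and perhaps more in the spirit of the paper, one can re-use the inf-convolution supersolution $w$ from the proof of Theorem~\ref{prop nondeg}: the comparison $w\geq v$ already encodes a space-time directional monotonicity $\e\mu\cdot\nabla u + (\text{time shift})\,u_t\gtrsim u$ near $\Gamma$, which together with \eqref{with drift speed} and the fact that $\nabla u \parallel -\vec n$ on $\Gamma$ yields $|\nabla u|\geq\kappa_*$ on $\Gamma\cap Q_1$, hence the linear growth.

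The main obstacle I anticipate is the degeneracy of \eqref{premain} at the free boundary: $u$ is only Hölder a priori (Theorem~\ref{holder}), so one cannot differentiate it on $\Gamma$, and the ``$|\nabla u|=V$'' identity holds only formally. The work is therefore in replacing that identity with an honest barrier argument that is valid in the weak sense — this is exactly where Lemma~\ref{lem comp supersl}, Remark~\ref{remark 2.0} (comparison for \eqref{premain}), and the quantitative type-two estimates of Theorem~\ref{strictly expanding} (which give the $C_*s^\beta$ positive balls to anchor the barrier) must be invoked carefully. A secondary technical point is tracking how the streamline displacement $X(x,t;C\e)-x$ and the time reparametrization interact, so that the final constant $\kappa_*$ genuinely depends only on $C_0,\theta,C_*,h,\beta$ and universal constants, and not on the point $(x,t)$; this should follow from the uniformity already built into Theorem~\ref{prop nondeg} and its accompanying remark, so it is bookkeeping rather than a genuine difficulty.
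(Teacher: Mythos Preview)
Your plan has a real gap at the anchoring step. Both routes you sketch --- the ZKB/mesa subsolution and the recycled inf-convolution --- need a \emph{quantitative} lower bound $u\geq c$ on some set in order to launch the barrier, but neither Theorem~\ref{prop nondeg} nor the type-two estimate of Theorem~\ref{strictly expanding} provides one: they only give $u>0$ at a point or in a ball, with no control on the value. So there is nothing to compare the barrier against, and the argument stalls exactly where you say ``anchor the barrier.'' The inf-convolution alternative has the same problem in disguise: the comparison $w\geq v$ in the proof of Theorem~\ref{prop nondeg} yields a qualitative ordering, not a quantitative gap, and the heuristic $|\nabla u|=V$ on $\Gamma$ is, as you note, unavailable.

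The paper's proof sidesteps this by going through Lemma~\ref{lem part1} (in its local form, Remark~\ref{remark part1}) in a contradiction argument. Suppose $\sup_{B(\hat x,\e)}u(\cdot,\hat t)<\kappa\e$ at some $(\hat x,\hat t)\in\Gamma\cap Q_1$. Using Theorem~\ref{prop nondeg} together with cone monotonicity, one locates a free boundary point $(x_1,t_1)$ at the earlier time $t_1=\hat t-C\e$ whose streamline image at time $\hat t$ is $\hat x+C_1\e\mu$ with $C_1\geq 1$; the cone geometry then forces $u(\cdot,t_1)=0$ on $B\bigl(X(\hat x,\hat t;-C\e),\,R\bigr)$ with $R=\tfrac12\e\sin\theta$. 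The smallness assumption makes the average of $u(\cdot,\hat t)$ over the forward-translated ball $\leq c_0R^2/(C\e)$ (this is where $\kappa=\tfrac{c_0\sin^2\theta}{4C}$ comes from), so Lemma~\ref{lem part1} gives $u(\cdot,\hat t)=0$ on $B(\hat x,R/6)$, contradicting $(\hat x,\hat t)\in\Gamma$. This yields $\sup_{B(x,\e)}u(\cdot,t)\geq\kappa\e$; the directional statement \eqref{nondeg0} then follows in one more line by shrinking the ball to radius $\gamma\e$ with $B(\mu,\gamma)\subset W_{\theta,\mu}$, so that $x+\e\mu$ lies in the forward cone of whichever point realizes the sup. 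The key tool you are missing is thus Lemma~\ref{lem part1} used in contrapositive, not a new barrier.
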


\begin{proof}
Let $c_0$ be from Lemma \ref{lem part1} and $C$ be from Theorem \ref{prop nondeg}. Define $\kappa:=\frac{c_0\,\sin^2\theta }{4C}$.
We first claim that for all $\eps>0$ sufficiently small 
\begin{equation}
     \label{contradiction delta star}
     \sup_{y\in B(x,{\eps})}u(y,t)\geq \kappa\eps\quad  \text{ for }(x,t)\in \Gamma\cap Q_1.
\end{equation}    
We argue by contradiction.  Suppose that the above claim is false. Then for any $\e_0>0$ there exist  $\e\in (0,\e_0]$ and $(\hat{x},\hat{t})\in \Gamma\cap Q_1$ such that \eqref{contradiction delta star} fails.

Set $t_1:=\hat{t}-C\eps$ and consider the map
$X(\cdot,t_1;C\eps):\mathbb{R}^d\to\mathbb{R}^d,$
which is an isomorphism when $\eps_0$ is small enough.
Since the {positive set of $u$} is strictly expanding relatively to the streamlines, we have
\[u(X(x,t_1;C\eps),\hat{t})>0\quad \text{ for }x\in B_1\cap \Gamma_{t_1}.\]
Using the cone monotonicity condition \eqref{e.5.1} and the fact that $u(\hat{x},\hat{t})=0$, it follows that $(\hat{x}+\mathbb{R}^+\mu) \cap \Gamma_{t_1}\neq \emptyset$. Therefore there exists $(x_1,t_1)\in\Gamma$ such that
\[ X(x_1,t_1;C\eps)=\hat{x}+C_1\eps\mu \quad\text{ for some }C_1>0.\]
Due to \eqref{e.5.1} again, we have
\begin{equation}
    \label{monotone cor 5.5}
   {d}( x_1-{c\eps}{\mu},\Gamma_{t_1})\geq c\eps\sin\theta\quad \text{ for all }c\geq 0.
\end{equation}
In view of Theorem~\ref{prop nondeg}, for all $\eps$ sufficiently small
\[u(X(x_1,t_1;C\eps)-\eps\mu,t_1+C\eps)>0.\]
Therefore, combining with the fact that
\[u(X(x_1,t_1;C\eps)-C_1\eps u,t_1+C\eps)=u(\hat{x},\hat{t})=0,\]
we obtain $C_1\geq 1$.

Next define
\[x_2:=X(\hat{x},\hat{t};-C\eps),\quad f(t):=X(\hat{x}+C_1\eps{\mu},\hat{t};t)-X(\hat{x},\hat{t};t).\] 
Due to \eqref{ode}, 
\[|f'(t)|\leq \|D_x\vec{b}\|_\infty|f(t)|=\sigma |f(t)|, \quad f(0)=C_1\eps\mu \hbox{ and }  f(-C\eps)=x_1-x_2.\]
Thus
\[
    |x_1-x_2-C_1\eps{\mu}|=|f(-{C}\eps)-f(0)|\leq  \sigma CC_1\eps^2.
\]
Using this, \eqref{monotone cor 5.5} and the fact that $C_1\geq 1$, if  $\eps\leq \eps_0$ is sufficiently small compared to $C$, it follows that
\[{d}(x_2,\Gamma_{t_1 })\geq \frac{C_1\eps\sin\theta}{2}\geq\frac{\eps\sin\theta}{2}=:R,\]
which yields
\begin{equation}\label{5.10}
    u(\cdot,t_1)=0 \text{ in }B(x_2,R).
\end{equation}

Note that $t_1+C\eps=\hat{t}$ and $X(x_2,t_1;C\eps)=\hat{x}$ from definition. Therefore the failure of \eqref{contradiction delta star} implies that

\begin{equation}\label{5.11}
\begin{aligned}
\oint_{B(X(x_2,t_1;C \eps),R)}u(x,\hat{t})dx&=\oint_{B(\hat{x},R)}u(x,\hat{t})dx\\
&\leq \kappa\eps=\frac{c_0R^2}{C\eps}.
\end{aligned}
\end{equation}
In the last equality, we used that $\kappa=\frac{c_0\,\sin^2\theta }{4C}$. 

With \eqref{5.10}-\eqref{5.11}, we are able to apply Lemma \ref{lem part1} to get
\[u(x,\hat{t})=0 \quad\text{ in }B(X(x_2,t_1;C\eps),R/6)=B(\hat{x},R/6),\]
which contradicts with the assumption that $(\hat{x},\hat{t})\in\Gamma$. We proved \eqref{contradiction delta star}. It can be seen from the proof that $\eps_0$ only depends on constants in \eqref{C.5.1}.

\medskip

Now we show \eqref{nondeg0}. Let us take $\gamma\in(0,1)$ to be small enough depending only on $\theta$ such that 
$B(\mu,\gamma)\subseteq W_{\theta,\mu}$, which implies that  for any $\eps\in (0,1)$,
\begin{equation}
    \label{inc1}
    \eps \mu\in \bigcap_{z\in B(0,\gamma\eps)}\{z+W_{\theta,\mu}\}.
\end{equation}

Fix any $(x,t)\in\Gamma\cap Q_1$, and set $\kappa_*:=\kappa\gamma$. By \eqref{contradiction delta star}, there exists $\eps_0>0$ such that
\begin{equation*}
\sup_{y\in B(x,{\gamma\eps})}u(y,t)\geq \kappa_*\eps \quad \hbox{ for any } \e\in (0,\eps_0].
\end{equation*} 
Therefore we can find $y\in {B(x,\gamma\eps)}$ that $u(y,t)\geq \kappa_*\eps$. It follows from \eqref{inc1} that $x+\eps\mu\in y+W_{\theta,\mu}$. Due to \eqref{e.5.1}, we conclude with
\begin{equation*}
u(x+\eps {\mu},t)\geq \kappa_*\eps \quad \hbox{ for any $(x,t)\in \Gamma\cap Q_1$ and $\eps\in(0,\eps_0]$}.
\end{equation*}

\end{proof}

\medskip

\section{Flatness Implies Smoothness}
\label{sec 6}

In this section we prove the following theorem.

\begin{theorem}\label{col C1alpha} Let $u$ be as given in Theorem \ref{prop nondeg}. If \eqref{cond ii} holds in $Q_1$, then $u$ is Lipschitz continuous and  $\Gamma\cap Q_{1/2}$ is a $d$-dimensional $C^{1,\alpha}$ surface for some $\alpha\in (0,1)$.
\end{theorem}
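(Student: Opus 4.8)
The plan is to carry out, in order: (i) interior Lipschitz continuity of $u$; (ii) the interior non-degeneracy of Proposition~\ref{prop:main}; (iii) the representation of $\Gamma$ near $Q_{1/2}$ as a Lipschitz graph in the direction $\mu$; and (iv) the scale-by-scale flatness-improvement iteration of Caffarelli--Wolanski \cite{C1alpha} upgrading this to $C^{1,\alpha}$. For (i): in the open set $\{u>0\}$ equation \eqref{premain} is uniformly parabolic and $u$ is smooth, so $\Delta u\ge-C_0$ and \eqref{cond ii} hold there pointwise. Rearranging \eqref{premain},
\[
|\nabla u|^2=u_t-(m-1)u\,\Delta u-\nabla u\cdot\vec{b}-(m-1)u\,\nabla\cdot\vec{b},
\]
and estimating the right-hand side using \eqref{cond ii} (with $\mu\cdot\nabla u\le|\nabla u|$), $\Delta u\ge-C_0$ together with $u\ge0$, and the uniform bound on $u$, one gets $|\nabla u|^2\le C_1|\nabla u|+C_2$ pointwise, hence $|\nabla u|\le L$ in $\{u>0\}\cap Q_1$; the same identity also bounds $|u_t|$. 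Since $u\equiv0$ off its positive set and $u$ is continuous (Theorem~\ref{holder}), $u$ is Lipschitz in $Q_1$.

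Proposition~\ref{prop:main} gives $\nabla_\mu u\ge c_1>0$ in $\{u>0\}\cap Q_\delta$ for some $\delta<\tfrac12$. I would prove it by observing that $w:=\nabla_\mu u$ solves, on compact subsets of $\{u>0\}$, a uniformly parabolic linear equation (the linearization of \eqref{premain}) whose coefficients are controlled by (i) and $\Delta u\ge-C_0$; cone monotonicity gives $w\ge0$, and Corollary~\ref{cor nondeg} (together with cone monotonicity) gives a definite lower bound for $w$ on a thin layer adjacent to $\Gamma$; an interior Harnack inequality along a chain of parabolic cylinders of size comparable to the distance to $\Gamma$ then propagates this bound to all of $\{u>0\}\cap Q_\delta$. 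With this in hand, cone monotonicity already forces each line in direction $\mu$ to meet $\Gamma_t$ in at most one point of $Q_\delta$, so taking $\mu=e_1$ the positive set is locally $\{x_1>g(x',t)\}$ with $g$ Lipschitz in $x'$ (constant $\cot\theta$, from cone monotonicity) and Lipschitz in $t$ (from \eqref{premain}--\eqref{cond ii} and the bound on $|u_t|$); thus $\Gamma\cap Q_{\delta/2}$ is a Lipschitz graph.

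For (iv), center at a free-boundary point and use the scaling $u_\lambda(x,t):=\lambda^{-1}u(\lambda x,\lambda t)$, under which \eqref{premain} is invariant up to replacing $\vec{b}$ by $\vec{b}_\lambda(x,t):=\vec{b}(\lambda x,\lambda t)$, whose relevant norms shrink with $\lambda$. Following \cite{C1alpha}, one shows inductively that if $u_{\lambda_k}$ is monotone in a cone $W_{\theta_k,\mu_k}$ at scale $\lambda_k=\rho^k$, then comparing $u_{\lambda_k}$ with its spatial translates $u_{\lambda_k}(\,\cdot\,+he,\,\cdot\,)$ --- using the non-degeneracy from (ii), the Lipschitz bound, and a barrier argument --- $u_{\lambda_{k+1}}$ is monotone in a strictly wider cone $W_{\theta_{k+1},\mu_{k+1}}$, with $\tan(\tfrac\pi2-\theta_{k+1})\le(1-c)\tan(\tfrac\pi2-\theta_k)$ and $|\mu_{k+1}-\mu_k|$ controlled by $\tan(\tfrac\pi2-\theta_k)$. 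Summing the resulting geometric series yields $|\mu_k-\mu_\infty|\le C\lambda_k^\alpha$, i.e.\ H\"older continuity of the normal at the base point (uniformly in $Q_{1/2}$), which is the asserted $C^{1,\alpha}$ regularity of $\Gamma\cap Q_{1/2}$.

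The step I expect to require real care is the translate comparison in (iv), which is also where the proof departs from \cite{C1alpha}: since $u_{\lambda_k}(\,\cdot\,+he,\,\cdot\,)$ solves \eqref{premain} with the shifted drift $\vec{b}(x+he,t)$ rather than the original equation, the comparison produces an extra term of size $\sim\|D\vec{b}\|_\infty\,h\sim\lambda_k$, which must be absorbed into the barrier at scale $\lambda_k$; because the scales $\lambda_k=\rho^k$ are summable, the accumulated perturbation stays finite and does not destroy the geometric gain in flatness, so the iteration still closes (cf.\ Remark~\ref{remark}). Within (ii)--(iii), the delicate point is obtaining the lower bound for $\nabla_\mu u$ uniformly up to $\Gamma$ --- precisely the content of Proposition~\ref{prop:main}, which is the main new analytic input of the section.
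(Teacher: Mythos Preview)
Your outline is essentially the paper's, and you correctly identify the main new difficulty (translates solve a perturbed equation) and its resolution. Two substantive points where the paper proceeds differently and where your sketch would need adjustment:

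\textbf{Order of (ii) and (iii).} In the paper, Lipschitz regularity of $\Gamma$ (your (iii)) is established \emph{before} the interior non-degeneracy (your (ii), Proposition~\ref{prop:main}), because the Harnack argument for $\nabla_\mu u$ uses the space-time Lipschitz constant of $\Gamma$ to control the geometry: one needs $d(\hat{x},\Gamma_{\hat{t}-h})\le Ch$ to locate a reference point at distance $\sim h$ from $\Gamma$ on which $\nabla_\mu u$ is bounded below (via averaging and Corollary~\ref{cor nondeg}), and only then Harnack propagates this. Your argument for Lipschitz-in-$t$ of $\Gamma$ relies on the non-degeneracy you have not yet proved; the paper instead uses Lemma~\ref{lem part1} (the averaging lemma) to get the time-Lipschitz bound for $\Gamma$ directly from the Lipschitz bound on $u$, without non-degeneracy.

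\textbf{Space-time cones and streamline coordinates in (iv).} The iteration is not run on $u$ with spatial cones $W_{\theta_k,\mu_k}$; it is run on $v(x,t)=u(x+X(t),t)$ with \emph{space-time} cones $\widehat{W}_{\theta_k,\mu_k}$. This requires an additional ingredient you omit: Lemma~\ref{lem 6.3}, which gives $v_t\ge c_2>0$ near $\Gamma$ and hence the initial space-time cone (Lemma~\ref{prop 2.1}). Moreover, the inductive hypothesis $(D_k)$ is strengthened to $\hat{\nabla}_p v_k\ge (2L)^{-1}J^k$ (not merely $\ge 0$ as in \cite{C1alpha}); this quantitative lower bound is exactly what is used to absorb the $O(\delta J^k)$ drift error coming from $w=v_k((x,t)+\gamma p)+E_k(t+2r)$, cf.\ \eqref{cos p mu} and Remark~\ref{remark}. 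Your formulation with spatial translates alone would not close the iteration as stated.
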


The cone monotonicity and \eqref{cond ii} provide sufficient monotonicity properties for the solution to rule out topological singularities and to localize the regularization phenomena driven by the diffusion in the interior of the domain. We follow the outline  for the zero drift built on \cite{C1alpha} and \cite{CVWlipschitz}, while we elaborate on the differences. Most notable difference is in establishing Proposition~\ref{condlem nondeg'}.

%In the proof we set $(x_0,t_0) = (0,0)$ for simplicity.  This result is of independent interest, since the propagation of regularity from the free boundary to positive level sets has not been investigated before in the setting of degenerate diffusions.

\medskip

%Throughout this section, by $C,L,\delta$ we denote constants that depend on the constants in \eqref{C.5.1}.

%\medskip

\begin{lemma}\label{condlem nondeg}
Under the conditions of Theorem \ref{col C1alpha}, $u$ is Lipschitz continuous in $Q_1$, and $\Gamma \cap\, Q_{{1}/{2}}$ is a $d$-dimensional Lipschitz continuous surface.

\end{lemma}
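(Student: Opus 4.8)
\textbf{Proof proposal for Lemma~\ref{condlem nondeg}.}

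The plan is to derive both the Lipschitz continuity of $u$ and the Lipschitz graph property of $\Gamma$ from the cone monotonicity assumption together with the non-degeneracy supplied by Corollary~\ref{cor nondeg} and the one-sided bound \eqref{cond ii}. First I would establish the Lipschitz bound on $u$. From cone monotonicity with respect to $W_{\theta,\mu}$, $\nabla u$ lies in a cone of directions on $\{u>0\}$, so it suffices to bound $\nabla_\mu u$ from above in a slightly smaller cylinder; combined with the semi-convexity estimate $\Delta u\ge -C_0$ (equivalently $\Delta u > -\infty$ from Theorem~\ref{fundamental estimate}) this controls all spatial derivatives. The upper bound on $\nabla_\mu u$ comes from comparing $u$ with a radial barrier: take any $(x_0,t_0)$ with $u(x_0,t_0)>0$, and since $\Gamma$ is of type two, the free boundary cannot be too far in the $-\mu$ direction; a quadratic barrier of the form $c(|x-y_0|^2 - \rho^2)_+$ touching from above at a nearby free boundary point then gives $u(x_0,t_0)\le C\,d(x_0,\Gamma_{t_0})$, which with cone monotonicity yields the desired gradient bound. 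Coupling this with \eqref{cond ii} controls $u_t$, so $u$ is Lipschitz in space-time on $Q_1$ (after shrinking).

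Next I would use the two-sided linear behavior near $\Gamma$ — the lower bound $u(x+\eps\mu,t)\ge\kappa_*\eps$ from Corollary~\ref{cor nondeg} and the upper bound just obtained — to show that $\Gamma_t$ is a Lipschitz graph in the $\mu$ direction. Concretely, for $(x,t)\in\Gamma\cap Q_{1/2}$ and any unit direction $\nu$ in the interior of $W_{\theta,\mu}$, cone monotonicity forces $u(x+s\nu,t)>0$ for $s>0$ and, by the non-degeneracy, $u(x+s\nu,t)\ge \kappa s$ for small $s$; conversely $u(x-s\nu,t)=0$ for small $s>0$, because if $u(x-s\nu,t)>0$ then cone monotonicity would force $u(x,t)>0$. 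This two-sided statement — positive strictly on one side of a half-space of directions, zero on the other — is exactly the statement that $\Gamma_t$ locally coincides with the graph of a Lipschitz function over the hyperplane $\mu^\perp$, with Lipschitz constant $\tan(\pi/2-\theta)$. Letting $t$ vary and using the space-time monotonicity coming from \eqref{cond ii} (which plays the role of the time-direction cone), one upgrades this to $\Gamma\cap Q_{1/2}$ being a $d$-dimensional Lipschitz surface in $\mathbb{R}^{d+1}$.

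The main obstacle I anticipate is the upper gradient (equivalently, the quadratic growth) estimate for $u$ near $\Gamma$: while the lower non-degeneracy is the content of Corollary~\ref{cor nondeg}, the matching upper bound $u\le C\,d(\cdot,\Gamma_t)$ is where one must genuinely use \eqref{cond ii} to prevent the solution from being "tall" near the free boundary due to influence from far away. The natural route is a barrier argument for the operator \eqref{premain}: build a supersolution that is quadratic in the distance to a suitable hyperplane supporting $\Omega_t$ from outside, with coefficients calibrated so that the error terms from the drift $\vec b$ and from $(m-1)u\nabla\cdot\vec b$ are absorbed; assumption \eqref{cond ii} enters to bound $u_t$ along the comparison. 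A secondary technical point is handling the degeneracy of \eqref{premain} at $\{u=0\}$, which I would circumvent as elsewhere in the paper by working with the smooth approximations $u_k$ from Lemma~\ref{approximation}, proving the estimates uniformly in $k$, and passing to the limit. Once the two-sided linear bound is in place, the Lipschitz-graph conclusion is a soft consequence of cone monotonicity and does not require further PDE input.
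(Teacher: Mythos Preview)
Your approach to the Lipschitz bound on $u$ is both more complicated than necessary and has a genuine gap. The paper obtains $|\nabla u|\le C$ by a direct algebraic trick: in $\{u>0\}$ the equation together with $\Delta u\ge -C_0$ gives
\[
u_t \ge |\nabla u|^2 - \sigma(C_0+1)u + \nabla u\cdot\vec b,
\]
while \eqref{cond ii} gives $u_t\le A(|\nabla u|+u+1)$. Combining the two and using the local boundedness of $u$ yields a quadratic inequality $|\nabla u|^2\le C|\nabla u|+C$, hence $|\nabla u|\le C$; then \eqref{cond ii} bounds $|u_t|$. No barrier is needed, and no distance-to-$\Gamma$ estimate is involved. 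Your proposed route via a quadratic barrier $c(|x-y_0|^2-\rho^2)_+$ would at best give $u\le C\,d(\cdot,\Gamma_t)^2$, not the linear bound you claim; and your invocation of ``type two'' to locate the free boundary in the $-\mu$ direction misreads that condition, which concerns temporal behavior along streamlines, not spatial proximity.

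For the Lipschitz surface property of $\Gamma$, the spatial part is indeed immediate from cone monotonicity, and both you and the paper agree there. But your handling of the time direction is too optimistic: \eqref{cond ii} is a one-sided bound and does not by itself yield a space-time cone of monotonicity. Once $u$ is Lipschitz, non-degeneracy (Corollary~\ref{cor nondeg}) easily gives $d(x,\Omega_{t+\tau})\le C\tau$ for $(x,t)\in\Gamma$, but the reverse inequality $d(x,\{u(\cdot,t+\tau)=0\})\le C\tau$ --- i.e., that the zero set does not recede too fast --- does not follow from Lipschitz continuity alone. The paper closes this gap by invoking the local version of Lemma~\ref{lem part1} (Remark~\ref{remark part1}): one shows that the average of $u(\cdot,t+\tau)$ over a ball of radius $\sim\tau$ centered on the appropriate streamline image is at most $c_0\tau$, and then Lemma~\ref{lem part1} forces $u(\cdot,t+\tau)=0$ at a point within $C\tau$ of $x$. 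This step is essential and is missing from your outline.
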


\begin{proof}
%Suppose \eqref{local non deg} holds, we only need to show that $|u_t|$ is bounded. By the equation\[u_t=(m-1)u\Delta u+|\nabla u|^2+\nabla u\cdot\vec{b}+(m-1)u\nabla\cdot\vec{b}.\]With the help of Lemma 3.2 \cite{C1alpha}, we obtain\[ |uD_{ij}u| \leq C,\quad \text{ for all }i,j=1,...,d.\]By boundedness of $\nabla u,\,\vec{b},\,\nabla\cdot\vec{b}$, we obtain boundedness of $|u_t|$ by the equation.

First let us prove that $u$ is Lipschitz continuous in $Q_1$. Since $u$ satisfies a parabolic equation locally uniformly in its positive set, $u$ is smooth in $\{u>0\}$. From the equation and $\Delta u\geq -C_0$, we obtain 

 \begin{equation}\label{fundamental00}
 u_t \geq |\nabla u|^2 - \sigma(C_0+1)u + \nabla u\cdot \vec{b} \hbox{ in } \{u>0\},
 \end{equation}
 where $\sigma$ is universal.
Above estimate combined with condition \eqref{cond ii} yields 
$$(A+\sigma)|\nabla u| + C(C_0,A,\sigma)\, u+A \geq |\nabla u|^2, $$
which turns into a bound on $|\nabla u|$ in $\{u>0\}$. 
From \eqref{cond ii}, we also get a bound on $|u_t|$. Notice the bounds are independent of the ellipticity constants of the equation satisfied by $u$. Indeed we have,
\begin{equation}
    \label{6.1 Lip}
    |\nabla u|+|u_t|\leq C \quad \text{ in } Q_1\cap\{u>0\}
\end{equation}
for some $C$ only depending on $A,C_0$ and universal constants. Since $u$ is continuous and nonnegative, it is not hard to see that the same estimate holds weakly in $Q_1$.

\medskip

Next we turn to the Lipschitz continuity of $\Gamma$, using the cone monotonicity and Lipschitz continuity of $u$. The spatial cone monotonicity of $u$ implies that for each $t\in(-1,1)$, $\Gamma_t$ is a Lipschitz continuous graph in $\mathbb{R}^d$. Thus it remains to show that for each $\tau\in(-1,1)$, $\Gamma_{t+\tau}\cap B_{\frac{1}{2}}$ is in a $C\tau$ neighbourhood of $\Gamma_{t}\cap B_1$ for some $C>0$. 
To this end it is enough to show the following:
for $(x,t)\in \Gamma \cap Q_{\frac{1}{2}}$ and for $\tau>0$ sufficiently small, we have
\begin{equation}\label{claim1}
d(x, \Omega_{t+\tau})\hbox{ and }   d(x, \{u(\cdot,t+\tau) = 0\}) \leq C\tau.
\end{equation}

To show \eqref{claim1}  let us fix $(x,t) \in \Gamma\cap Q_{\frac{1}{2}}$. Observe that from Lemma \ref{streamline}  there exists $C>0$ such that, if $\tau>0$ is small,
$$
d(x, \Omega_{t+\tau}(u)) \leq C\tau.
$$

Thus it remains to show the second inequality in \eqref{claim1}. Let $C_1>0$ be a sufficiently large constant to be chosen later. From the cone monotonicity
\[u(\cdot,t)=0\text{ in }B(y,R), \]
where $y:=x-C_1\tau\mu$ and  $R:=C_1\sin\theta\, \tau$. By the Lipschitz continuity of $u$, 
\begin{align*}
    \sup_{z\in B(X(y,t;\tau),R)}u(z,t+\tau)&\leq u(X(y,t;\tau),t)+C(R+\tau)\\
    &\leq u(y,t)+\sigma C\tau+C(1+C_1\sin\theta)\tau \quad \text{( since $|X(y,t;\tau)-y|\leq \|\vec{b}\|_\infty \tau$)}\\
    &\leq C\,C_1\,\tau,
\end{align*}
where $C$ depends on $Lip(u)$ and $\|\vec{b}\|_\infty$.
Thus, for $c_0$ given in Lemma \ref{lem part1},
\[\oint_{B(X(y,t;\tau),R)} u(z,t+\tau)dz\leq CC_1\tau\leq (c_0\,C_1^2\sin^2\theta)\,\tau=c_0\frac{R^2}{\tau},\]
where the last inequality holds if $C_1$ is large enough compared to $1/c_0, 1/\theta, Lip(u),\|\vec{b}\|_\infty$.
Remark \ref{remark part1} then yields for small $\tau$, \[u(x-C_1\tau\mu,t+\tau)=0\] and therefore \eqref{claim1} is proved.
\end{proof}

Now we start proving the $C^{1,\alpha}$ regularity of the free boundary. By considering $\tilde{u}(x,t):=2u(x_0+\frac{1}{2}x,t_0+\frac{1}{2}t)$ for any $(x_0,t_0)\in Q_{\frac{1}{2}}\cap \Gamma$, we can assume $(0,0)\in\Gamma$. And to prove the rest of Theorem \ref{col C1alpha}, it suffices to show that $\Gamma$ is $C^{1,\alpha}$ at point $(0,0)$.

The following proposition propagates the free boundary non-degeneracy  in Corollary~\ref{cor nondeg} to the nearby level sets.

\begin{proposition}\label{condlem nondeg'}
Under the conditions of Theorem \ref{col C1alpha}, there exist constants $0<\delta_1<\frac{1}{2}$ and $c_1>0$ such that
%\begin{equation}\label{local non deg}\frac{1}{C}\leq |\nabla u(x,t)|\leq C,\end{equation}
\begin{equation*}
    \nabla_{{{\mu}}} u(x,t)\geq c_1\quad \text{ {a.e.} in $Q_{\delta_1} \cap\Omega(u)$.}
\end{equation*} 

\end{proposition}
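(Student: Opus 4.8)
The plan is to derive the interior directional-derivative bound $\nabla_\mu u \geq c_1$ by combining the free boundary non-degeneracy of Corollary~\ref{cor nondeg} with the local barrier/supersolution machinery of Section~\ref{sec 5}, now applied at positive level sets rather than only at $\Gamma$. Fix a point $(x_0,t_0)\in \Omega(u)\cap Q_{\delta_1}$. By the spatial cone monotonicity, the ray $x_0 - \mathbb{R}^+\mu$ exits $\Omega_{t_0}$, so there is a first hitting point $y_0 := x_0 - \ell\mu \in \Gamma_{t_0}$ with $\ell = d(x_0, \Gamma_{t_0})/(\text{something comparable})$; more precisely, cone monotonicity gives $d(x_0,\Gamma_{t_0}) \leq \ell \leq d(x_0,\Gamma_{t_0})/\sin\theta$, so $\ell$ is comparable to the distance to the free boundary. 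The key dichotomy is: either $\ell$ is not too small — in which case the already-established Lipschitz bound \eqref{6.1 Lip} together with non-degeneracy at $y_0$ (Corollary~\ref{cor nondeg}, giving $u(y_0+\eps\mu,t_0)\geq \kappa_*\eps$) and cone monotonicity will directly force $\nabla_\mu u(x_0,t_0)$ to stay bounded below; or $\ell$ is very small, which is the delicate regime where the level set $\{u(\cdot,t_0)=u(x_0,t_0)\}$ is very close to $\Gamma_{t_0}$.

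First I would set up the reduction: after the normalization $(0,0)\in\Gamma$, I want to show there is $\delta_1$ so that for $(x,t)\in Q_{\delta_1}\cap\Omega$, along the segment from $x$ back to the free boundary in the $-\mu$ direction, $u$ grows at least linearly with a uniform slope. Using Corollary~\ref{cor nondeg} at the boundary point $y_0$ we get $u(y_0 + \eps\mu, t_0)\geq \kappa_*\eps$ for $\eps < \eps_0$; if $\ell \geq \eps_0/2$ this already gives $u(x_0,t_0) = u(y_0+\ell\mu, t_0) \geq \kappa_* \eps_0/2$ (using monotonicity to replace $\ell$ by $\eps_0/2$), and then by the gradient bound \eqref{6.1 Lip} in a neighborhood of size comparable to $\eps_0$ one propagates a lower bound on $\nabla_\mu u$ at $x_0$ by a Harnack-type / mean value argument in the interior (where the equation is uniformly parabolic). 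So the real content is the regime $\ell < \eps_0/2$, i.e. $x_0$ close to $\Gamma$.

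For the close regime the idea is to re-run the inf-convolution supersolution construction of Proposition~\ref{lem supersolution} and Corollary~\ref{C.5.5} centered at $y_0$, but now comparing $u$ against a supersolution built from $u$ itself shifted by $\eps\mu$: the output of that comparison, as in the proof of Theorem~\ref{prop nondeg}, is precisely a discrete space-time monotonicity $u(X(x,t;c\eps)-\eps\mu, t+c\eps) \leq u(x,t) \cdot(\text{const})$ type inequality valid now not just at $\Gamma$ but in a whole interior neighborhood; differentiating (or taking the discrete difference quotient) in the $\mu$ direction and absorbing the $O(\eps)$ drift-transport corrections using condition \eqref{cond ii} to control $u_t$ converts this into $\nabla_\mu u(x_0,t_0)\geq c_1 - C\cdot d((x_0,t_0),\Gamma)$, which is the desired bound once $\delta_1$ is small. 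The role of \eqref{cond ii} is crucial here: it is what lets us pass the non-degeneracy between nearby time slices without loss, since the streamline flow shifts time, and it bounds the "backward-in-time" decay of $u$ along the flow. One then combines the two regimes: in both cases $\nabla_\mu u \geq c_1$ in $Q_{\delta_1}\cap\Omega$ after choosing $\delta_1$ small and $c_1$ small depending on $\kappa_*, \eps_0, \theta, C_0, A$ and universal constants.

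\textbf{Main obstacle.} The hard part will be the close-to-$\Gamma$ regime: one cannot simply quote Corollary~\ref{cor nondeg} at the boundary point and "transport" it inward by the Lipschitz bound, because the Lipschitz bound alone allows $u$ to be nearly flat (the gradient could drop) between the free boundary and the level set through $x_0$. Overcoming this requires genuinely re-deriving a local space-time monotonicity for $u$ in a full interior neighborhood of $y_0$ — essentially showing that the barrier comparison of Theorem~\ref{prop nondeg} propagates not just to $\Gamma$ but persists at nearby positive level sets — and then carefully tracking how the streamline flow $X(\cdot,t;s)$ distorts the $\mu$-direction over the (small) time increment, using the $C^{3,1}$ regularity of $\vec b$ and \eqref{cond ii} to keep all error terms of order $\eps^2$ or $\eps\cdot d(\cdot,\Gamma)$, hence negligible after shrinking $\delta_1$. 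Getting the constants to be uniform (independent of which point $(x_0,t_0)$ and of how close it is to $\Gamma$) is where the bookkeeping is most delicate.
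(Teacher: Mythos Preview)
Your proposal has a genuine gap in the close-to-$\Gamma$ regime. The inf-convolution barrier machinery of Section~\ref{sec 5} is engineered to show \emph{positivity} of the shifted solution at $\Gamma$ (Theorem~\ref{prop nondeg}) and then, via the contrapositive of Lemma~\ref{lem part1}, linear growth from $\Gamma$ (Corollary~\ref{cor nondeg}). At an interior point $(x_0,t_0)$ with $u(x_0,t_0)>0$, however, the comparison $w\geq v$ in \eqref{claim} only yields $e^{A_0\eps t}\,u(\,\cdot+r\eps\mu,\,\cdot\,)\gtrsim u(x_0,t_0)$, which after dividing by $\eps$ is essentially the cone monotonicity $\nabla_\mu u\geq 0$ up to an $O(u)$ error; it does \emph{not} produce a strictly positive lower bound on the difference quotient. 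There is no mechanism in that argument that separates $u(x_0+\eps\mu,t_0)$ from $u(x_0,t_0)$ by a fixed amount $c_1\eps$ once both are positive, so the step ``differentiating\dots converts this into $\nabla_\mu u\geq c_1 - C\,d(\cdot,\Gamma)$'' is unjustified.

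The paper's argument is different and avoids any dichotomy in $\ell$. The key point you are missing is that $\phi:=\nabla_\mu u$ itself solves a \emph{linear} parabolic equation in $\{u>0\}$ (obtained by differentiating \eqref{premain}), with ellipticity coefficient $(m-1)u$. Fix $(\hat x,\hat t)\in\Omega\cap Q_\delta$ and set $h:=d(\hat x,\Gamma_{\hat t})$. Using the space-time Lipschitz regularity of $\Gamma$ (Lemma~\ref{condlem nondeg}), go back in time by $h$ to locate a boundary point $(y,\hat t-h)$ with $|y-\hat x|\lesssim h$. Corollary~\ref{cor nondeg} and the fundamental theorem of calculus give an averaged bound $\oint_{B(y,h)\cap\Omega}\nabla_\mu u\geq\kappa$ at time $\hat t-h$, hence a single reference point $z$ at distance $\sim h$ from $\Gamma$ with $\nabla_\mu u(z,\hat t-h)\geq\kappa/2$. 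After rescaling by $h$, the equation for $\phi$ becomes \emph{uniformly} parabolic on the relevant region (since $u/h\gtrsim\kappa_*$ by \eqref{nondeg0}), and the parabolic Harnack inequality transports the lower bound from $(z,\hat t-h)$ forward to $(\hat x,\hat t)$. The inhomogeneous drift terms contribute an $O(h)\leq O(\delta)$ error, absorbed by taking $\delta_1$ small. Note also that \eqref{cond ii} enters only through Lemma~\ref{condlem nondeg} (to get Lipschitz $u$ and Lipschitz $\Gamma$); it is not used to ``pass non-degeneracy between time slices'' as you suggest.
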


\begin{proof}
%$v$ is Lipschitz continuous in a neighbourhood of $(0,0)$, because $u$ is Lipschitz continuous. 

% We conclude that, for $(x,t)\in \Gamma$ and any $\e>0$ small, \begin{equation}\label{nondeg0}\phi(x,t):=(u(x+\eps {\mu},t)-u(x,t))/\eps \geq \kappa>0.\end{equation}

Fix a sufficiently small $\delta>0$ to be determined and pick $(\hat{x},\hat{t})\in \{u>0\}\cap Q_\delta$.
Let $h:={d}(\hat{x}, \Gamma_{\hat{t}})<\delta$. 
From Lemma 6.1, $\Gamma(u)$ is space-time Lipschitz continuous, and actually it can be written as the graph of $x_\nu=F_{u}(x^\perp,t)$ where $x_\nu:=x\cdot\nu$ and $x^\perp\in\{x\cdot\nu=0\}$. Let us denote the space-time Lipschitz constant of $F_u$ as $C$, and choose $C_2:= C+1$. Then

\[{d}(\hat{x},\Gamma_{\hat{t}-h})\leq ({C_2}-1)h.\]
Denote $(y,s)$ such that $s=\hat{t}-h, y\in \Gamma_s$ and
${d}(\hat{x},y)={d}(\hat{x},\Gamma_{s})\leq (C_2-1)h$. Thus $B(y,h)\subseteq B(\hat{x},{C_2} h)$. Also by Lipschitz continuity of $\Gamma_s$ in space, $\partial B(y,h)\cap \{u>0\}$ is of strictly positive measure $Ch$ with $C$ independent of $h$.

By the fundamental theorem of calculus and \eqref{nondeg0},
\[\oint_{B(y,h)\cap \{u>0\}}\nabla_\mu u(x,s)dx\geq\frac{\sigma}{h}\oint_{\partial B(y,h)\cap \{u>0\}}u(x,s)dx\geq {\kappa}\]
for some ${\kappa}>0$ only depending on $\kappa_*$ and $C_2$.

Let us define
\begin{equation}\label{notation omega}\Omega^r:=\{(x,t)\in\Omega, {d}((x,t),\partial\Omega)>r\}.\end{equation}
Fix $\gamma\in (0,\frac{1}{2})$ to be a small constant only depending on ${\kappa}$ such that
\[\oint_{B(y,h)\cap \Omega^{\gamma h}}\nabla_\mu u(x,s)dx\geq \frac{{\kappa}}{2}.\]
Therefore there exists a point  \[z\in B(y,h)\cap \Omega^{\gamma h}\subset B(\hat{x},{{C_2} h})\cap \Omega^{\gamma h}\] such that 
\begin{equation}
    \label{reference point}
    \nabla_\mu u(z,s)\geq \frac{{\kappa}}{2}.
\end{equation}

We will apply Harnack inequality to $\phi:= \nabla_\mu u$, using the fact that it solves a  locally uniform parabolic equation in the positive set of $u$.

Let us consider $\{u>0\}$ and then $u$ is $C^1$ inside the open region.
Differentiating \eqref{premain} in $\{u>0\}$, we can check that $\phi$  satisfies the following parabolic equation
\[ \phi_t=(m-1)\phi\Delta u+(m-1)u\Delta \phi+(2\nabla u + \vec{b})\cdot \nabla \phi +(m-1)\phi\nabla\cdot\vec{b}+f
\]
 in $\Omega^r$, where 
\[ f:=\nabla u\cdot \nabla_{\mu}\vec{b}+(m-1)u\nabla\cdot \nabla_{\mu}\vec{b}.\]
Since $u$ is Lipschitz continuous and $\vec{b}$ is smooth, $f$ is uniformly bounded. 
Then the new function
\[\tilde{\phi}:=\phi e^{C_3(t-s)} +\|f\|_\infty(t-s)\quad \text{ with } C_3:=(m-1)(C_0+\|\nabla\cdot\vec{b}\|_\infty) \]
satisfies 
\[ \tilde{\phi}_t\geq (m-1)u\Delta \tilde{\phi}+(2\nabla u + \vec{b})\cdot\nabla \tilde{\phi}.\]

Next let us define
\begin{align*}
    \label{def sigma}
    \Sigma_1^h&:=\Omega^{\gamma h}\cap\left(B(\hat{x},{{C_2} h})\times (-h+\hat{t},\hat{t}\,)\right),\\
    \Sigma_2^h&:=\Omega^{\gamma h/2}\cap\left(B(\hat{x},{2{C_2} h})\times (-2h+\hat{t},\hat{t}\,)\right),
\end{align*}
where $\Omega^r$ is as given in \eqref{notation omega}.
We have
\[(\hat{x},\hat{t}),\,(z,s)\in \Sigma_1^h\subseteq \Sigma_2^h.\]

For any $(x,t)\in\Sigma_2^h$ which is $\frac{\gamma h}{2}$ away from $\Gamma$, by the cone monotonicity and \eqref{nondeg0} we have
\begin{equation}\label{ineq u ch}
    u\geq \frac{\kappa_*\gamma h}{2} .
\end{equation}
Thus $w(x,t):=\tilde{\phi}(xh+\hat{x}, th+s)$ 
satisfies
\begin{equation}\label{eqn w sigma}
    w_t\geq (m-1)\frac{u}{h}\Delta w+(2\nabla u + \vec{b})\cdot\nabla w
    \quad\text{ in }  \Sigma_2 := (\Sigma_2^h-(\hat{x},s))/h.
\end{equation}
Also we denote \[\Sigma_1:=(\Sigma_1^h-(\hat{x},s))/h\subseteq \Sigma_2.\]
Notice that $\Sigma_1,\Sigma_2$ are domains with Lipschitz boundary with Lipschitz constant depending only on $C,\sigma$. Writing $\Sigma_i(t)=\{x\,|\,(x,t)\in \Sigma_i\}$ for $i=1,2$, we have
\[ \Sigma_2(t)+B_{\frac{\gamma}{2}}\subseteq \Sigma_1(t)\text{ for }t\in (-h+\hat{t},\hat{t}).\]%And each $t$ time slice of $\Sigma_1$ contains the $\frac{\gamma}{2}$-neighbourhood of the time slice of $\Sigma_2$.

Since $\frac{u}{h}\geq \frac{\kappa_*\gamma}{2}>0$ in $\Sigma_1$ due to \eqref{ineq u ch}, the operator in \eqref{eqn w sigma} is uniformly parabolic in $\Sigma_2$. 
Let us apply the Harnack inequality to $w$ in $\Sigma_1$ and write it in terms of $\phi$, 
to obtain
\[\phi(\hat{x},\hat{t})e^{C_3(\hat{t}-s)}+\|f\|_\infty(\hat{t}-s)\geq \frac{1}{C}\phi(z,s).
\] 
for some constant $C=C(\theta,\kappa_*,C_2)>0$, which is larger than $\frac{{\kappa}}{2C}$ due to \eqref{reference point}.

Since $\hat{t}-s=h\leq \delta$, further assuming $\delta $ to be small enough, we can get
$\phi(\hat{x},\hat{t})\geq \frac{{\kappa}}{4C}>0.$ Finally we conclude that $\nabla_\mu u\geq \frac{{\kappa}}{4C}>0$ in ${\Omega\cap Q_\delta}$.

\end{proof}

Next we show the strict monotonicity of $u$ along the streamlines.

\begin{lemma}\label{lem 6.3}
Let $u$ be given as in Proposition~\ref{condlem nondeg'}. Then there exist  $\delta_2\in (0,\delta_1)$ and $c_2>0$ such that, for $v(x,t):=u(x+X(t),t)$ with $X(t)=X(0,0;t)$, we have
\begin{equation*}
  v_t\geq c_2\quad \text{  in $Q_{\delta_2}\cap\{v>0\}$.}
\end{equation*} 

\end{lemma}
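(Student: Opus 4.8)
The plan is to transfer the spatial non-degeneracy $\nabla_\mu u \geq c_1$ from Proposition~\ref{condlem nondeg'} into a lower bound on the streamline time derivative $v_t$, using the equation \eqref{premain} rewritten in the moving frame. Recall that $v(x,t) = u(x+X(t),t)$ solves $\mathcal{L}_2(v)=0$ weakly (cf. \eqref{cal L 2}), and inside $\{v>0\}$ the equation is locally uniformly parabolic, so $v$ is smooth there. First I would write out, in $\{v>0\}\cap Q_{\delta_1}$,
\[
v_t = (m-1)v\,\Delta v + |\nabla v|^2 + \nabla v\cdot(\vec b(x+X,t)-\vec b(X,t)) + (m-1)v\,\nabla\cdot\vec b(x+X,t).
\]
Using $\Delta u \geq -C_0$ (hence $\Delta v \geq -C_0$), the smoothness of $\vec b$, and $|x|\leq \delta_1$ small, the error terms are controlled: $(m-1)v\Delta v \geq -\sigma(C_0+1)v$, the drift-difference term is $O(|x|\,|\nabla v|) = O(\delta_1|\nabla v|)$, and the divergence term is $O(v)$. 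So one gets
\[
v_t \geq |\nabla v|^2 - \sigma\delta_1|\nabla v| - C(C_0)\,v \quad \text{in } \{v>0\}\cap Q_{\delta_1}.
\]

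Next I would invoke Proposition~\ref{condlem nondeg'}: since $\nabla_\mu v(x,t) = \nabla_\mu u(x+X(t),t) \geq c_1$ for $(x,t)$ with $x+X(t)$ in $\Omega(u)\cap Q_{\delta_1}$ — which holds for $(x,t)$ in a slightly smaller cylinder $Q_{\delta_2}$, $\delta_2 < \delta_1$, since $|X(t)| \leq \|\vec b\|_\infty |t|$ — we have $|\nabla v| \geq \nabla_\mu v \geq c_1 > 0$. The scalar inequality $s^2 - \sigma\delta_1 s \geq \tfrac12 s^2$ for $s \geq 2\sigma\delta_1$, valid once $\delta_1$ is small relative to $c_1$, then yields $v_t \geq \tfrac12 c_1^2 - C(C_0)\,v$. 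Finally, shrinking $\delta_2$ further and using the Lipschitz bound on $u$ (Lemma~\ref{condlem nondeg}, which gives $v \leq C\,\delta_2$ near the free boundary wherever $v>0$ in $Q_{\delta_2}$), the term $C(C_0)v$ is absorbed, leaving $v_t \geq c_2 := \tfrac14 c_1^2 > 0$ in $Q_{\delta_2}\cap\{v>0\}$.

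I expect the only genuine subtlety — rather than outright obstacle — is bookkeeping the geometry: ensuring that for $(x,t)\in Q_{\delta_2}$ the shifted point $(x+X(t),t)$ lands inside $Q_{\delta_1}\cap\Omega(u)$ so that Proposition~\ref{condlem nondeg'} applies, and that the $v$-terms really are of size $O(\delta_2)$ near the support boundary (this uses that $v=0$ somewhere nearby in each time slice, by the cone monotonicity, together with the Lipschitz estimate \eqref{6.1 Lip}). Once $\delta_1,\delta_2$ are chosen in the right order — $\delta_1$ small depending on $c_1$ and universal constants, then $\delta_2$ small depending on $\delta_1$, $c_1$, $C_0$ — the pointwise inequality holds classically in $\{v>0\}$, and by continuity of $v$ it extends in the appropriate weak sense as needed; since the statement is only claimed in $\{v>0\}$ where $v$ is smooth, no weak-formulation argument is actually required here.
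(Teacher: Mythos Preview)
Your proposal is correct and follows essentially the same route as the paper: write out $v_t$ from $\mathcal{L}_2(v)=0$, use $\Delta v\geq -C_0$ and the Lipschitz bound to control the $v$-terms, use Proposition~\ref{condlem nondeg'} to get $|\nabla v|\geq c_1$, and then choose $\delta_2$ small so the positive $|\nabla v|^2$ term dominates. The only cosmetic difference is that the paper absorbs the drift-difference term via Young's inequality (sacrificing half of $|\nabla v|^2$) whereas you bound it by $\sigma\delta|\nabla v|$ and absorb directly; both work equally well.
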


\begin{proof}
By definition, $v$ solves $\mathcal{L}_2(v)=0$, where $\mathcal{L}_2$ is as given in \eqref{cal L 2}.
By the equation, we have 
\begin{align*}
    \partial_t v
    &\geq -C_0(m-1) v+\frac{1}{2}|\nabla v|^2-4|\vec{b}(x+X(t))-\vec{b}(X(t))|^2-(m-1)v\|{\nabla}\vec{b}\|_\infty\\
    &\geq -\sigma C_0\delta
+\frac{c_1^2}{2}-4|x|^2\|{\nabla}\vec{b}\|_\infty^2-C\delta\\
    &\geq -\sigma C_0\delta +\frac{c_1^2}{2}-\sigma\delta^2-C\delta   \qquad\qquad\qquad \hbox{ in } Q_\delta,
    \end{align*}
{where the second inequality comes from the fact that $v\leq C\delta$ due to \eqref{6.1 Lip}, and  the third inequality follows from Proposition \ref{condlem nondeg'}.}

\medskip
   
Since $c_1$ is independent of $\delta$, the last quantity is positive if $\delta= \delta_2$ is small enough compared to $C_0, c_1,$ the Lipchitz constant of $u$ and universal constants. We thus conclude.
\end{proof}

Now we are ready to follow the  celebrated iteration procedure given in \cite{C1alpha}. Their argument describes the enlargement of cone of monotonicity as we zoom in near a free boundary point. More precise discussions are below.

\medskip

Our reference point is $(0,0)\in\Gamma$, and let $v$ be from Lemma \ref{lem 6.3}. For $\delta\in (0,\delta_2)$, define
\begin{equation}\label{udelta sec 7}v_\delta(x,t):=\frac{1}{\delta}v(\delta x,\delta t) , \quad \vec{b}_\delta(x,t):=\vec{b}(\delta x,\delta t),  \quad X_\delta:=\frac{1}{\delta}X(\delta t).
 \end{equation} 
Then $X_\delta$ is the streamline generated by $\vec{b}_\delta$ starting at $(0,0)$. We have that $v_\delta$ is a solution to $\calL_2(\cdot)=0$ with $\vec{b},X$ replaced by $\vec{b}_\delta,X_{\delta}$.
From Lemmas \ref{condlem nondeg} - \ref{lem 6.3}, we have for some $L>0$ independent of $\delta$ (depending on constants in \eqref{C.5.1}) such that
\begin{equation}
    0\leq v_{\delta}\leq L,\quad \frac{1}{L}\leq |\nabla v_{\delta}|,\, \nabla_{{{\mu}}}v_{\delta},\, \partial v_{\delta}\leq L,\quad \Delta v_{\delta}\geq -L\delta \quad \hbox{ in } Q_1 \label{cond v sec 7}.
\end{equation}
Denoting $\sigma$ as the $C^2$ norm of $\vec{b}$, we have
\begin{equation}
     \label{cond V sec 7}
 \|\vec{b}_{\delta}\|_\infty\leq \sigma,\quad \|{\nabla}\vec{b}_{\delta}\|_\infty+\|\partial_t\vec{b}_{\delta}\|_\infty\leq \sigma\delta,\quad \|D^2 \vec{b}_{\delta}\|_\infty+\|\nabla\partial_t \vec{b}_{\delta}\|_\infty\leq \sigma\delta^2.
\end{equation}

\medskip

Let $\widehat{W}_{\theta,{\nu}}$ be given as in \eqref{spacetime cone}. We say $v$ has the {\it cone of monotonicity} $\widehat{W}_{\theta,\nu}$ {in $Q_1$} if 
\begin{equation*}
\hat{\nabla}_p v\geq  0\quad\hbox{  in } Q_1\hbox{  for all }p\in \widehat{W}_{\theta,\nu}.
\end{equation*}
%\textcolor{(in the above we need $\geq 0$ instead of $\geq\frac{1}{2C_0}$. to me this definition is only useful in describing Caffarelli's results. I understand the importance of describing the previous result and then present the differences.)}
The following lemma, yielding the initial cone of monotonicity for $v_\delta$, can be proven using \eqref{cond v sec 7}- \eqref{cond V sec 7} with a  parallel proof to  Proposition 2.1 of \cite{C1alpha}. Let us denote the positive time direction as $e_{d+1}$.% and $\bar{\mu}=(\mu,0)\in \mathbb{R}^{d+1}$.

\begin{lemma}\label{prop 2.1}
Let $v_{\delta}$ be as given in  \eqref{udelta sec 7}. Then there exists $\mathcal{\theta}_0>0$ such that  
\[\hat{\nabla}_p v_\delta\geq  \frac{1}{2L}\quad\hbox{  in } Q_1\quad \hbox{  for all } p\in \widehat{W}_{\theta_0,{\mu}_0} \cap {\mathcal{S}^{d+1},}\]
where ${\mu}_0:=\frac{1}{\sqrt{2}}[(\mu,0)+e_{d+1}]$ and $L$ is as given in \eqref{cond v sec 7}.%$v$ has cone of monotonicity $\widehat{W}_{\theta_0, {\mu}_0}$ in $Q_1$. 
\end{lemma}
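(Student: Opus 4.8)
The goal is to produce a fixed opening cone $\widehat W_{\theta_0,\mu_0}$ with a quantitative lower bound $\hat\nabla_p v_\delta \ge \frac{1}{2L}$ for all unit $p$ in the cone, using only the four pointwise/gradient bounds in \eqref{cond v sec 7} together with the smallness of the drift corrections \eqref{cond V sec 7}. The strategy is the elementary one from Proposition 2.1 of \cite{C1alpha}: a monotonicity derivative in a direction $p$ close to $\mu_0$ is a convex combination of the spatial derivative $\nabla_\mu v_\delta$, the time derivative $\partial_t v_\delta$, and derivatives in directions orthogonal to $\mu$ and to $e_{d+1}$; the first two are bounded below by $1/L$, and the orthogonal ones are controlled in absolute value by $|\nabla v_\delta|\le L$. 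Choosing the half-angle $\theta_0$ small (depending only on $L$) makes the good terms dominate the bad ones.

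\medskip

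\textbf{Steps.} First I would fix notation: write $\mu_0=\frac{1}{\sqrt2}[(\mu,0)+e_{d+1}]\in\mathcal S^{d+1}$, and for a unit vector $p\in\widehat W_{\theta_0,\mu_0}$ decompose $p=(\cos\alpha)\,\mu_0+(\sin\alpha)\,q$ where $q\perp\mu_0$ is a unit vector in $\mathbb R^{d+1}$ and $\alpha=\langle p,\mu_0\rangle\le\theta_0$. Then
\[
\hat\nabla_p v_\delta=(\cos\alpha)\,\hat\nabla_{\mu_0} v_\delta+(\sin\alpha)\,\hat\nabla_q v_\delta
=\tfrac{\cos\alpha}{\sqrt2}\big(\nabla_\mu v_\delta+\partial_t v_\delta\big)+(\sin\alpha)\,\hat\nabla_q v_\delta .
\]
Second, from \eqref{cond v sec 7} we have $\nabla_\mu v_\delta\ge\frac1L$ and $\partial_t v_\delta\ge\frac1L$, so the first term is at least $\frac{\sqrt2\cos\alpha}{L}$. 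Third, I bound the error term: $|\hat\nabla_q v_\delta|\le|\nabla v_\delta|+|\partial_t v_\delta|$, and $|\nabla v_\delta|\le L$ by \eqref{cond v sec 7}; the quantity $|\partial_t v_\delta|$ needs a two-sided bound, which follows by plugging the equation $\calL_2(v_\delta)=0$, the bounds $0\le v_\delta\le L$, $|\nabla v_\delta|\le L$, $\Delta v_\delta\ge -L\delta$, and the drift smallness \eqref{cond V sec 7} into the identity for $\partial_t v_\delta$ — this gives $|\partial_t v_\delta|\le C(L)$ in $\{v_\delta>0\}$ and, by continuity, weakly in $Q_1$. Hence $|\hat\nabla_q v_\delta|\le C_1(L)$. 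Fourth, combining, $\hat\nabla_p v_\delta\ge \frac{\sqrt2\cos\alpha}{L}-C_1(L)\sin\alpha$; choosing $\theta_0=\theta_0(L)>0$ small enough that $\cos\theta_0\ge\frac34$ and $C_1(L)\sin\theta_0\le\frac{\sqrt2}{4L}$ yields $\hat\nabla_p v_\delta\ge\frac{3\sqrt2}{4L}-\frac{\sqrt2}{4L}=\frac{\sqrt2}{2L}\ge\frac1{2L}$. One should note $\theta_0$ depends only on $L$, hence (via \eqref{C.5.1}) only on the admissible constants, and in particular not on $\delta$.

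\medskip

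\textbf{Main obstacle.} The genuinely delicate point is not the cone trigonometry but getting a two-sided bound on $\partial_t v_\delta$ that is \emph{uniform in the ellipticity}, i.e.\ not blowing up as $v_\delta\to 0$ near $\Gamma$. The lower bound $\partial_t v_\delta\ge\frac1L$ is given in \eqref{cond v sec 7}; the upper bound must come from the equation, where the only dangerous term is $(m-1)v_\delta\Delta v_\delta$. Using $v_\delta\ge 0$ and $\Delta v_\delta\ge -L\delta$ controls this term from below, and the remaining terms $|\nabla v_\delta|^2$, the drift terms, and $(m-1)v_\delta\nabla\cdot\vec b_\delta$ are bounded by $L^2+C\sigma(L+1)$ using \eqref{cond v sec 7}–\eqref{cond V sec 7}; but for the \emph{upper} bound on $(m-1)v_\delta\Delta v_\delta$ one needs an upper bound on $\Delta v_\delta$, which \eqref{cond v sec 7} does not supply. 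The resolution is that this term only needs to be controlled \emph{weakly / in the distributional sense}, exactly as $|\nabla u|+|u_t|\le C$ was established weakly in the proof of Lemma~\ref{condlem nondeg}: one works in $\{v_\delta>0\}$ where $v_\delta$ is smooth and the parabolic operator is uniformly elliptic, derives $v_\delta|\Delta v_\delta|\le C$ there by treating $v_\delta\Delta v_\delta = \frac12\Delta(v_\delta^2)-|\nabla v_\delta|^2$ and using the equation to write $\frac12\Delta(v_\delta^2)$ in terms of $\partial_t(v_\delta^2)$-type quantities, or more simply by invoking the Lipschitz-in-time estimate $|u_t|\le C$ from Lemma~\ref{condlem nondeg} (which is available since we are under the hypotheses of Theorem~\ref{col C1alpha}) together with the chain rule $\partial_t v_\delta=\delta^{-1}(u_t+\nabla u\cdot\vec b)(\delta x,\delta t)$. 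This last route is cleanest: $|\partial_t v_\delta|\le C(A,C_0)$ follows directly from \eqref{6.1 Lip}, and the cone argument above then goes through verbatim. I would present the proof in that order: quote \eqref{6.1 Lip} for the time-derivative bound, then do the elementary decomposition and angle choice.
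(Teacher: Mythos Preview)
Your proposal is correct and follows essentially the same approach as the paper, which does not give a detailed argument but simply states that the lemma ``can be proven using \eqref{cond v sec 7}--\eqref{cond V sec 7} with a parallel proof to Proposition 2.1 of \cite{C1alpha}''; you have written out exactly that parallel proof. One remark: your ``main obstacle'' discussion is largely unnecessary, because the two-sided bound $\frac{1}{L}\le \partial_t v_\delta\le L$ is already recorded in \eqref{cond v sec 7} (the symbol ``$\partial v_\delta$'' there is $\partial_t v_\delta$, and the upper bound comes directly from the Lipschitz estimate \eqref{6.1 Lip} of Lemma~\ref{condlem nondeg} via $v_t=u_t-\nabla u\cdot\vec b$), so you can simply quote \eqref{cond v sec 7} for $|\hat\nabla v_\delta|\le \sqrt{2}\,L$ and proceed with the trigonometric decomposition without the detour through the equation.
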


Now we begin our iteration procedure. Fix some $J(L)\in (0,1)$ to be chosen later,  define 
\begin{equation}\label{scale_k}
v_k(x,t):=\frac{1}{J^k}v_\delta(J^k x,J^k t) \quad\hbox{ for } k\in \mathbb{N}^+.
\end{equation}
Then $v_k$ satisfies
\begin{equation}\label{6.4}
\begin{aligned}
&    \partial_t{v_k} -(m-1){v_k}\Delta {v_k}-|\nabla {v_k}|^2-\nabla {v_k}\cdot (\vec{b}_k(x+X_k(t),t)-\vec{b}_k(X_k(t),t))\\
&\qquad\qquad -(m-1){v_k}\nabla\cdot \vec{b}_k(x+X_k(t),t)=0.
\end{aligned}
\end{equation}
where $\vec{b}_k(x,t):=\vec{b}_\delta(J^k x,J^k t), X_k(t):=\frac{1}{J^k}X_\delta(J^kt)$. 

\medskip

Due to  \eqref{cond v sec 7} - \eqref{cond V sec 7} the following holds  in $Q_1$:
\begin{itemize}
\item[$(A_k)$] $0\leq v_k\leq {L},$\quad  $\Delta v_k\geq -{L}\delta,$ \quad $ |\nabla v_k|+| \partial_t v_k|\leq {L}$;
\item[$(B_k)$] $\nabla_{{{\mu}}} v_k,$\quad $\partial_t v_k\geq\frac{1}{{L}}$;
\item[$(C_k)$] $\|\vec{b}_k\|_\infty\leq \sigma,$\quad $\|{\nabla}\vec{b}_k\|_\infty+\|\partial_t\vec{b}_k\|_\infty\leq \sigma\delta {J^k},$\quad $\|D^2 \vec{b}_k\|_\infty+\|\nabla\partial_t \vec{b}_k\|_\infty\leq \sigma\delta^2 {J^{2k}}$.% and $|\partial_t X_k|\leq \sigma$.
\end{itemize}

\medskip

The {main step} in the proof of Theorem \ref{col C1alpha} is to show the following property inductively.

%\textcolor{I changed this statement to be more clear, check to see if it is correct.}

\begin{itemize}
\item[$(D_k)$] there exist $s\in (0,1)$ and ${\mu}_k\in\mathbb{R}^{d+1}$ such that for $\theta_k:=\frac{\pi}{2}-s^k(\frac{\pi}{2}-\theta_0)$,
\begin{equation}\label{d_k}
\hat{\nabla}_p v_k\geq  \frac{1}{2L} {J^k} \quad \hbox{ in } Q_1 \quad \hbox{ for all } p\in \widehat{W}_{\theta_k,{\mu}_k} \cap \mathcal{S}^d.
\end{equation}
\end{itemize} 
Once establishing ($D_k$), it shows that the cone of monotonicity $\widehat{W}_{\theta_k, {\mu}_k}$ for $v_k$ has strictly increasing $\theta_k$, converging to $\pi/2$ as $k \to \infty$. The rate of its increasing angles leads to the $C^{1,\alpha}$ regularity of the free boundary.  

In \cite{C1alpha}, {\eqref{d_k} is stated with the weaker requirement $\hat{\nabla}_p v_k\geq  0$.} However for us the competition between diffusion and drift requires a stronger inductive property: see Remark 6.9. This extra observation follows from the enlargement of cones as well as the non-degeneracy of the solution. 

\medskip
 We will proceed with several lemmas that leads to the enlargement of cones in Proposition~\ref{prop 2.5}. The proofs of the lemmas will be postponed until after the proof of the Proposition.

\medskip

First we show that  some improvements on monotonicity can be obtained on the set $\{v_k=\e\}$.

% \textcolor{$(A_k)-(C_k)$ is built in the definition of $v_k$, so it is just $(D_k)$ that is being iterated. I made it clear in the lemma below.} \textcolor{yes}
 
\begin{lemma}\label{prop 2.2} [Enlargement of Cones]
{Let $v_k$ be as given in \eqref{scale_k}, and suppose that $v_k$ satisfies $(D_k)$.} For any $\eps \in(0,1)$, there exist positive constants $r\leq \frac{1}{10},\delta_0<\delta_2, C$ only depending on $\eps,L,\sigma$  such that  the following holds:

For any $\gamma\in (0,\eps)$, $\delta\in (0,\delta_0)$, $p\in \widehat{W}_{\theta_k, {\mu}_k}\cap\,\mathcal{S}^{d}$ and $\tau:=C\eps^{-1}\cos\langle \,p, \hat{\nabla}v_k({{\mu}},-2r) \,\rangle$,  we have 
$$
v_k\leq \eps\,\, \hbox{ in } Q_{2r}; \quad \hbox{ and } \quad v_k((x,t)+\gamma p)\geq (1+\tau\gamma)v_k \,\,\hbox{  on }(B_\frac{3}{4}\times(-2r,2r))\cap \{v=\eps\}.
$$

\end{lemma}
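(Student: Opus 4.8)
\textbf{Proof proposal for Lemma~\ref{prop 2.2}.}

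\emph{The plan is to} follow the classical argument of Caffarelli--Wolanski \cite{C1alpha} adapted to the drift setting, with the essential mechanism being a comparison between $v_k$ and a suitable dilated/shifted copy of itself, carried out on the level set $\{v_k = \eps\}$ rather than at the free boundary.

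\emph{First I would} record the bound $v_k \le \eps$ in $Q_{2r}$: since $(0,0)\in\Gamma$ and $|\nabla v_k|+|\partial_t v_k|\le L$ by $(A_k)$, we have $v_k(x,t)\le L(|x|+|t|)\le 2L\cdot 2r\cdot\sqrt 2 < \eps$ once $r = r(\eps,L)$ is chosen small enough; this is the only use of the nondegeneracy-free part and fixes $r$. \emph{Next I would} set up the comparison. Given $p\in\widehat W_{\theta_k,\mu_k}\cap\mathcal S^d$ and $\gamma\in(0,\eps)$, consider the shifted function $\tilde v(x,t) := v_k((x,t)+\gamma p)$, which (after accounting for the shift in the drift terms, exactly as $\calL_2$ transforms under coordinate shifts — see Remark~\ref{remark}) solves a perturbed version of \eqref{6.4}. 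By $(D_k)$ we already know $\tilde v \ge v_k$ in the region where the shift stays inside $Q_1$, with the quantitative gain $\hat\nabla_p v_k \ge \frac{1}{2L}J^k$ only along the cone axis. The point is to upgrade this to the multiplicative improvement $\tilde v \ge (1+\tau\gamma)v_k$ on $\{v_k=\eps\}$. \emph{To do this I would} use that on $\{v_k=\eps\}$ the pressure is bounded below, so \eqref{6.4} is uniformly parabolic there with ellipticity $\sim(m-1)\eps$; one then runs a barrier/interior-Harnack argument on the positive difference $\tilde v - v_k$ (which is a supersolution of the linearized operator up to lower-order drift errors controlled by $(C_k)$, i.e.\ terms of size $\sigma\delta J^k$), propagating the directional gain from a neighborhood of the axis point $(\mu,-2r)$ to all of $(B_{3/4}\times(-2r,2r))\cap\{v_k=\eps\}$. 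The constant $\tau = C\eps^{-1}\cos\langle p,\hat\nabla v_k(\mu,-2r)\rangle$ arises because the size of the directional derivative $\hat\nabla_p v_k$ near that reference point is comparable to $|\hat\nabla v_k|\cos\langle p,\hat\nabla v_k\rangle$, and dividing by the reference value $v_k\sim\eps$ of the solution converts an additive lower bound into the multiplicative factor; the factor $C$ absorbs the Harnack constant and the ellipticity $\eps^{-1}$, and $\delta_0$ is chosen small so that the drift error terms in $(C_k)$ are dominated by the main gain.

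\emph{The main obstacle} I anticipate is controlling the drift-induced error terms uniformly across scales: unlike the zero-drift case, the shifted function $\tilde v$ solves an equation whose drift differs from that of $v_k$ (the $\vec b_k(x+X_k,t)-\vec b_k(X_k,t)$ structure is not shift-invariant), so the difference $\tilde v - v_k$ picks up inhomogeneous terms of order $\|\nabla\vec b_k\|_\infty|\gamma p|\,|\nabla v_k| \lesssim \sigma\delta J^k\gamma L$ and $\|D^2\vec b_k\|_\infty$-type terms. These must be shown to be lower order compared to $\frac{1}{2L}J^k\gamma$ (the guaranteed gain), which is exactly why the hypotheses $(C_k)$ carry the extra powers $J^k$, $J^{2k}$ and why one needs $\delta<\delta_0$ small; keeping the smallness requirement on $\delta_0$ independent of $k$ is the delicate bookkeeping point. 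A secondary technical nuisance is that $v_k$ is only $C^1$ up to $\Gamma$ and merely Lipschitz there, so the comparison must be phrased weakly (using Lemma~\ref{lem comp supersl} and the weak comparison principle of Remark~\ref{remark 2.0}) rather than via classical maximum principle, and the level set $\{v_k=\eps\}$ must be handled as the boundary of the open set $\{v_k>\eps\}$ where everything is smooth.
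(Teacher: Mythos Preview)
Your proposal is correct and follows essentially the same route as the paper. The only cosmetic difference is that the paper differentiates \eqref{6.4} directly to obtain a linear parabolic equation for $g:=\hat\nabla_p v_k$ (equation \eqref{e.6.6}) and applies Harnack's inequality to $g$ in $(B_{7/8}\times[-3r,3r])\cap\{v_k\ge\eps/2\}$, whereas you work with the finite difference $\tilde v-v_k$; since $g$ is the $\gamma\to0$ limit of this difference and both satisfy the same linearized equation up to the same $O(\sigma L\delta J^k)$ inhomogeneity, the two arguments are equivalent. Your identification of the error control via $(C_k)$--$(D_k)$ and the origin of the $\eps^{-1}$ in $\tau$ is exactly right.
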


Next we show that this improvement can propagate to the zero level set of $v$.

\begin{lemma}\label{prop 2.3}
{Let $v_k$ be as given in Lemma~\ref{prop 2.2}}. Let $\delta(\eps,L),r(\eps,L),\tau(\eps,L)$   be as given  in Lemma \ref{prop 2.2}. 
Let $w$ be a supersolution of \eqref{6.4}, and suppose that $w\geq v_k$ in $Q_1$ and
 $$w\geq (1+\tau\gamma)v_k \hbox{ in }({B_\frac{1}{2}}\times(-2r,2r))\cap \{v_k=\eps\}.
 $$ Then, if $\eps$ is small enough (independently of $\delta, r,\tau$),
\[w\geq (1+\tau\gamma)v_k \text{ in }({B_\frac{1}{4}}\times(-2r,2r))\cap\{v_k\leq \eps\}.\]
\end{lemma}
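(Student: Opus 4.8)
The plan is to run a comparison argument for the operator $\calL_2$ on the open region
$$
D:=\left(B_{1/2}\times(-2r,2r)\right)\cap\{0<v_k<\eps\},
$$
whose boundary decomposes into the piece where $v_k=\eps$, the piece where $v_k=0$ (the free boundary portion), and the lateral/top-bottom portions of $\partial\left(B_{1/2}\times(-2r,2r)\right)$. On $\{v_k=\eps\}$ we are given $w\ge(1+\tau\gamma)v_k$ by hypothesis. The key structural observation — exactly as in \cite{C1alpha} — is that $(1+\tau\gamma)v_k$ is \emph{not} a subsolution of \eqref{6.4} on the nose because the equation \eqref{6.4} is not homogeneous of degree one in $v$: scaling $v_k\mapsto(1+\tau\gamma)v_k$ multiplies the term $(m-1)v_k\Delta v_k$ by $(1+\tau\gamma)^2$ rather than $(1+\tau\gamma)$, and similarly distorts the drift terms $(m-1)v_k\nabla\cdot\vec b_k$. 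So the first step is to show that for $\eps$ small (which forces $v_k\le\eps$ small on $D$ by Lemma~\ref{prop 2.2}, hence $\Delta v_k^{\,}$ enters multiplied by something small, and $|\nabla\vec b_k|,|\nabla\cdot\vec b_k|$ are $O(\delta)$ by $(C_k)$) the function $\underline w:=(1+\tau\gamma)v_k$ is a genuine subsolution of \eqref{6.4} in $D$, up to an error that is absorbed using the non-degeneracy lower bound $|\nabla v_k|\ge 1/L$ from $(B_k)$ and the fact that $\tau\gamma\le\tau\eps\le C$. Concretely one computes $\calL_2((1+\tau\gamma)v_k)$ and checks it is $\le 0$ once $\eps$ is chosen small independently of $\delta,r,\tau$, since every bad term carries a factor $\eps$ or $\delta$ while the good gradient term is bounded below.

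The second step is to control $\underline w$ on the remaining parts of $\partial D$. On the free boundary part $\{v_k=0\}$ we have $\underline w=0\le w$ trivially (using $w\ge v_k\ge 0$). On the lateral boundary $\partial B_{1/2}\times(-2r,2r)$ and on the time-boundary slices $t=\pm 2r$, the region $D$ meets these only where already $v_k\le\eps$; there we want $w\ge\underline w$, and here one must be slightly careful: the hypothesis $w\ge v_k$ in $Q_1$ gives $w\ge v_k$ but not immediately $w\ge(1+\tau\gamma)v_k$. The resolution, following \cite{C1alpha}, is to note that near those boundary pieces the set $\{0<v_k<\eps\}$ is confined to a thin collar near $\Gamma(v_k)$ (because of non-degeneracy $(B_k)$: $v_k<\eps$ forces distance to $\Gamma$ less than $L\eps$), and on that collar the shifted monotonicity is handled by replacing $\underline w$ by a slightly retracted barrier — i.e. one applies the comparison on $\left(B_{1/4}\times(-2r,2r)\right)\cap\{v_k<\eps\}$ rather than on $B_{1/2}$, so that the lateral boundary of the smaller cylinder, where $v_k\le\eps$, can be reached from $\{v_k=\eps\}\cap(B_{1/2}\times\cdots)$ by a cone-monotone path and the improved inequality transfers. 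This is precisely the role of shrinking from $B_{1/2}$ in the hypothesis to $B_{1/4}$ in the conclusion.

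With the subsolution property of $\underline w$ in $D$ (Step 1) and $\underline w\le w$ on $\partial\left(\left(B_{1/4}\times(-2r,2r)\right)\cap\{v_k<\eps\}\right)$ (Step 2), the comparison principle for $\calL_2$ — Remark~\ref{remark 2.0}, valid because $\calL_2$ is locally uniformly parabolic on $\{v_k>0\}$ and the data are ordered on the parabolic boundary — yields $w\ge(1+\tau\gamma)v_k$ throughout $\left(B_{1/4}\times(-2r,2r)\right)\cap\{v_k\le\eps\}$, which is the claim. The main obstacle I expect is Step 1, the non-homogeneity bookkeeping: one has to verify quantitatively that the quadratic-in-$v_k$ and drift error terms produced by the dilation $v_k\mapsto(1+\tau\gamma)v_k$ are dominated by the ellipticity/gradient gain, uniformly in $\delta$, $r$, $\tau$, so that a single threshold $\eps_0(L,\sigma)$ works; the smallness of $\delta$ from $(C_k)$ and the lower bound $|\nabla v_k|\ge1/L$ are what make this go through, but the estimate must be arranged so that $\tau$ (which can be large when $p$ is far from $\hat\nabla v_k$) does not spoil it — here one uses $\tau\gamma\le\tau\eps$ and that the error is $O(\eps\cdot\tau\gamma\cdot v_k)=O(\eps^2\tau\gamma)$ against a gain of order $\tau\gamma/L$.
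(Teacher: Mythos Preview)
Your Step~1 is essentially correct: the computation
\[
\calL_2\bigl((1+\tau\gamma)v_k\bigr)=-(1+\tau\gamma)\tau\gamma\Bigl[(m-1)v_k\Delta v_k+|\nabla v_k|^2\Bigr]
\]
together with $\Delta v_k\ge -L\delta$, $v_k\le\eps$, and $|\nabla v_k|\ge 1/L$ from $(A_k)$--$(B_k)$ does give $\calL_2(\underline w)\le 0$ in $\{0<v_k<\eps\}$ once $\eps$ is small in terms of $L$ only.

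The gap is in Step~2. On the lateral piece $\partial B_{1/4}\cap\{0<v_k<\eps\}$ and on the bottom slice $t=-2r$ you only have $w\ge v_k$, not $w\ge(1+\tau\gamma)v_k$, and your proposed fix does not work. Shrinking from $B_{1/2}$ to $B_{1/4}$ changes nothing: on $\partial B_{1/4}$ the same mismatch persists. The ``cone-monotone path'' argument is a property of $v_k$, not of the arbitrary supersolution $w$; knowing $w\ge(1+\tau\gamma)v_k$ at some point on $\{v_k=\eps\}$ and that $v_k$ decreases along a path tells you nothing about $w$ at the endpoint of that path. So there is no mechanism by which the improved inequality ``transfers'' to $\partial B_{1/4}\cap\{v_k<\eps\}$ or to $t=-2r$.

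The paper (following \cite{C1alpha}) resolves this by building a genuinely different barrier,
\[
\xi(x,t):=v_k(x,t)+\tau\gamma\bigl(v_k(x,t)+\eps(t+\alpha)-f(x)\bigr)_+,
\]
with a spatial cutoff $f\in C^1(B_{1/2})$ satisfying $f\equiv 0$ in $B_{1/4}$, $f\equiv\eps$ on $\partial B_{1/2}$, $|\nabla f|+|\Delta f|\le 10\eps$, and a time parameter $\alpha\in(-2r,2r)$. The point of $f$ and the time shift $\eps(t+\alpha)$ is precisely to switch off the correction $\tau\gamma(\cdots)_+$ on the problematic boundary pieces: on $\partial B_{1/2}\cap\{v_k\le\eps\}$ with $t\le-\alpha$ one has $v_k+\eps(t+\alpha)-f\le\eps+0-\eps=0$, so $\xi=v_k\le w$ there; the comparison on $\{v_k=\eps\}$ uses the hypothesis directly. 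One checks $\calL_2(\xi)\le 0$ (here the convexity $g''\ge 0$ of $s\mapsto \tau\gamma s_+$ and the same gradient lower bound $|\nabla v_k|\ge 1/L$ are used, absorbing the extra drift terms of size $O(\eps)$), and then comparison in $(B_{1/2}\times(-2r,-\alpha))\cap\{v_k\le\eps\}$ yields $w(x,-\alpha)\ge\xi(x,-\alpha)=(1+\tau\gamma)v_k(x,-\alpha)$ in $B_{1/4}$ for every $\alpha$. This interpolated barrier, not the naive dilation $(1+\tau\gamma)v_k$, is the missing idea.
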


Lastly we further improve the monotonicity in a smaller domain of size $r$.

\begin{lemma}
\label{prop 2.4}
Let $v_k,w, \tau $ be as in Lemma \ref{prop 2.3}. There exists a small $\kappa>0$ depending only on $L$ and universal constants such that the following holds. Consider any smooth function  $\phi: \R^d\to \R^+$ such that $\phi$ is supported in $B_{2r}$ and $\phi,|\nabla\phi|,|D^2\phi|\leq \kappa\tau\gamma$.  If $v_k\leq \eps$ in $Q_{2r}$ then we have
 \[w(x,t)\geq v_k(x+(t+2r)\phi(x)\mu,t) \text{ in }Q_{2r}. \]

\end{lemma}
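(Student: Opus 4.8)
The plan is to treat the function $\Psi(x,t):=v_k(x+(t+2r)\phi(x)\mu,t)$ as a perturbation of $v_k$ and show it is a subsolution of \eqref{6.4}, so that comparison with the supersolution $w$ on the parabolic boundary of $Q_{2r}$ finishes the argument. Since $\phi$ is supported in $B_{2r}$, on the lateral boundary of $Q_{2r}$ we have $\Psi=v_k\le w$; on the bottom $t=-2r$ the shift vanishes, so again $\Psi=v_k\le w$. Thus the whole content is: (i) $\Psi$ is a subsolution of the operator in \eqref{6.4} (with drift $\vec b_k$), at least weakly in $Q_{2r}$, and (ii) the comparison principle of Remark~\ref{remark 2.0} applies to $\calL_2$ with this drift. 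Step (ii) is routine given the earlier development; the work is in (i).

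For step (i), first I would set up the change of variables $y = \Phi_t(x) := x+(t+2r)\phi(x)\mu$ and record the Jacobian $D_x\Phi_t = I + (t+2r)\,\mu\otimes\nabla\phi$, which is invertible with inverse $I - (t+2r)\,\mu\otimes\nabla\phi + O((\kappa\tau\gamma)^2)$ because $|\nabla\phi|\le\kappa\tau\gamma$ and $|t+2r|\le 4r\le 1$. Then I would compute $\partial_t\Psi$, $\nabla\Psi$, $\Delta\Psi$ in terms of the derivatives of $v_k$ evaluated at $(y,t)$: schematically $\partial_t\Psi = (\partial_t v_k) + \phi(x)\,\nabla_\mu v_k + (\text{error of size }\kappa\tau\gamma\cdot\|\hat\nabla v_k\|)$, $\nabla\Psi = (D_x\Phi_t)^T\nabla v_k$, and $\Delta\Psi = \Delta v_k + (\text{terms involving }\nabla^2\phi,\ |\nabla\phi|^2$, hence $\le C\kappa\tau\gamma\cdot(\text{bounds on }v_k,\nabla v_k))$. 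Plugging these into the operator in \eqref{6.4} and using that $v_k$ itself solves the equation, the leading-order discrepancy is
\[
\calL_2^{(k)}(\Psi) \;\le\; -\,\phi(x)\,\partial_t v_k \cdot\big(1+o(1)\big) \;+\; \big(\text{drift and curvature errors}\big),
\]
where I have used that the time-shift produces a factor roughly $(1+(t+2r)\phi)^{-1}$ on the $\partial_t$ term; the dominant gain is the term $-\,\phi\,\partial_t v_k$, which is $\le -\frac{1}{L}\phi$ by $(B_k)$, hence strictly negative wherever $\phi>0$. The remaining error terms are bounded by $C\kappa\tau\gamma$ times powers of $L$ together with the small factors $\|\vec b_k\|$-type quantities from $(C_k)$; also the diffusion coefficient $(m-1)v_k\le (m-1)\eps$ multiplying $\Delta\Psi$ makes the curvature error $\le C\eps\kappa\tau\gamma$. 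Choosing $\kappa$ small enough, depending only on $L$ and universal constants (and absorbing everything into the favorable term $-\frac1L\phi$, using $\phi\le\kappa\tau\gamma$ and $|D^2\phi|\le\kappa\tau\gamma$), makes $\calL_2^{(k)}(\Psi)\le 0$ in the classical sense in $\{\Psi>0\}$; then, since $\Psi^{1/(m-1)}$ is Lipschitz and $\Gamma(\Psi)$ is a Lipschitz graph (as $\Psi$ inherits the cone monotonicity of $v_k$ up to a small rotation), Lemma~\ref{lem comp supersl} upgrades this to a weak subsolution statement on all of $Q_{2r}$.

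The main obstacle I expect is the careful bookkeeping in step (i): one must verify that every error term generated by the $x$-dependent, time-dependent shift — in particular the ones coming from $\Delta\Psi$ interacting with $(m-1)v_k\Delta v_k$, from $|\nabla\Psi|^2$, and from the drift terms $\nabla\Psi\cdot(\vec b_k(x+X_k)-\vec b_k(X_k))$ — is genuinely of order $\kappa\tau\gamma$ times a constant that does \emph{not} blow up, and then that all of these are dominated by the single good term $-\tfrac1L\phi\ge -\tfrac1L\kappa\tau\gamma/(\text{something})$; the delicate point is that $\tau$ can be as large as $O(\eps^{-1})$, so one must make sure $\kappa$ is chosen after $\eps$ is fixed, consistently with the statement's claim that $\kappa$ depends only on $L$ and universal constants — this works because the bad terms carry at least one extra factor of $\phi\le\kappa\tau\gamma$ or of $|D^2\phi|\le\kappa\tau\gamma$, so the ratio (bad)/(good) is $O(\kappa)$ uniformly in $\tau,\gamma,\eps$. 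Once this is in hand the comparison principle closes the proof immediately.
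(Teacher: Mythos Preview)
There is a genuine gap in your subsolution computation. The map is $y=\Phi_t(x)=x+(t+2r)\phi(x)\mu$, a time-dependent \emph{spatial} shift; there is no time dilation, so no factor ``roughly $(1+(t+2r)\phi)^{-1}$'' on $\partial_t$. The correct identity is simply
\[
\partial_t\Psi(x,t)=\big(\partial_t v_k\big)(y,t)+\phi(x)\,\big(\nabla_\mu v_k\big)(y,t),
\]
so the extra term produced in $\calL_2^{(k)}(\Psi)$ is $+\,\phi\,\nabla_\mu v_k\ge \tfrac{1}{L}\phi\ge 0$, which has the \emph{wrong sign} for a subsolution. All remaining discrepancies (from $\Delta\Psi$, $|\nabla\Psi|^2$, and the drift) are $O(\kappa\tau\gamma)$, the same order as $\phi\,\nabla_\mu v_k$, and there is no negative term available to absorb it. Hence $\Psi$ is not a subsolution and the comparison with $w$ on $Q_{2r}$ does not close.

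The paper repairs exactly this point by comparing $w$ not with $\Psi$ but with
\[
h(x,t):=(1+\tau\gamma)\,v_k\big(x+(t+2r)\phi(x)\mu,\,t\big).
\]
The multiplicative factor $(1+\tau\gamma)$ interacts with the quadratic nonlinearity: writing $\tau'=\tau\gamma$, one finds (using $v_kD_{ij}v_k\ge -C_4$ as in \eqref{Dij} and the computations of Proposition~2.4 in \cite{C1alpha})
\[
\widetilde{\calL}(h)\le (1+\tau')\,\widetilde{\calL}(v_k)(y,t)-\tau'\Big(\tfrac{1}{L}-C\kappa\Big),
\]
so the good term $-\tau'/L$ now absorbs both the $+\phi\,\nabla_\mu v_k$ contribution and all $O(\kappa\tau')$ errors once $\kappa$ is small. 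Correspondingly, the parabolic-boundary ordering must use the full conclusion of Lemma~\ref{prop 2.3}, namely $w\ge(1+\tau\gamma)v_k$ on $(B_{1/4}\times(-2r,2r))\cap\{v_k\le\eps\}$, not merely $w\ge v_k$; the comparison is carried out on $\Sigma=(B_{1/4}\times(-2r,2r))\cap\{v_k\le\eps\}$ rather than on $Q_{2r}$. Your outline becomes correct once you insert the factor $(1+\tau\gamma)$ and adjust the domain and boundary data accordingly.
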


\begin{remark}\label{remark}
In \cite{C1alpha} for the zero drift case, $w$ in the above lemmas {is chosen as a translation of $v_k$ to derive monotonicity properties of $v_k$}. Since our equation is not translation invariant, we instead choose $w$ of the form $v_k((x,t)+p)+E t$ with $E>0$. {To control the extra term $Et$ we rely on the inductive property $(D_k)$. The order between $v_k,w$ is still enough to derive the Proposition below.}
 \end{remark}

\medskip

Now we state the main proposition.% After establishing the proposition, our Theorem \ref{col C1alpha} follows from the proof of Theorem 1 in \cite{C1alpha}. 

\begin{proposition}\label{prop 2.5}[Improvement of Monotonicity]
{Let $v_\delta$ be as given in \eqref{udelta sec 7} with $\delta<\delta_0$ is as given in Lemma~\ref{prop 2.2}. Then there exist constants $J, s\in (0,1)$} independent of $k$ such that the following holds.  Suppose $(0,0)\in\Gamma$ and \eqref{cond v sec 7} - \eqref{cond V sec 7}. Then there exist a monotone family of cones $\widehat{W}_{\theta_k,{\mu}_k}$ with $\theta_k=\frac{\pi}{2}-s^k(\frac{\pi}{2}-\theta_0)$ such that
$$\hat{\nabla}_p v_\delta\geq (2L)^{-1} J^k \quad \hbox { in } Q_{J^k} \quad \hbox{ for all } p\in \widehat{W}_{\theta_k,{\mu}_k} \cap \mathcal{S}^d.
$$

\end{proposition}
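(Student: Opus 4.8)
**The plan is to prove Proposition~\ref{prop 2.5} by induction on $k$, establishing property $(D_k)$ for all $k$, from which the statement follows by unrescaling.** The base case $(D_0)$ is exactly Lemma~\ref{prop 2.1}, which gives the initial cone $\widehat{W}_{\theta_0,\mu_0}$ with $\hat\nabla_p v_\delta \geq \tfrac{1}{2L}$ for $p \in \widehat{W}_{\theta_0,\mu_0}\cap \mathcal{S}^{d+1}$; note $v_0 = v_\delta$ in the notation of \eqref{scale_k}. For the inductive step, I assume $(D_k)$ holds for $v_k$ and I want to produce $\theta_{k+1} = \tfrac{\pi}{2} - s^{k+1}(\tfrac{\pi}{2}-\theta_0)$ and a direction $\mu_{k+1}$ so that $(D_{k+1})$ holds for $v_{k+1}(x,t) = J^{-1} v_k(Jx, Jt)$. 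The heart of the argument is a quantitative cone-enlargement at a fixed intermediate scale, combining Lemmas~\ref{prop 2.2}, \ref{prop 2.3} and \ref{prop 2.4}.

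**The inductive step proceeds as follows.** Fix $\eps$ small (to be chosen depending only on $L,\sigma$; the smallness required in Lemma~\ref{prop 2.3} must be met), and let $r, \delta_0, C$ be the constants from Lemma~\ref{prop 2.2}. Given any direction $p \in \widehat{W}_{\theta_k,\mu_k}\cap \mathcal{S}^d$, set $\tau := C\eps^{-1}\cos\langle p, \hat\nabla v_k(\mu, -2r)\rangle$ and, for $\gamma \in (0,\eps)$, consider the comparison function of the form indicated in Remark~\ref{remark}, namely $w(x,t) := v_k((x,t) + \gamma p) + E\gamma t$ for a suitable small $E = E(L)>0$; one checks via $(A_k)$--$(C_k)$ and the inductive non-degeneracy $(D_k)$ (which controls the error $E\gamma t$ against the strictly positive lower bound $\tfrac{1}{2L}J^k$ on $\hat\nabla_p v_k$) that $w$ is a supersolution of \eqref{6.4} and $w \geq v_k$ in $Q_1$. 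Lemma~\ref{prop 2.2} upgrades the monotonicity on the level set $\{v_k = \eps\}$ to $w \geq (1+\tau\gamma)v_k$ there; Lemma~\ref{prop 2.3} propagates this to $\{v_k \leq \eps\} \cap (B_{1/4}\times(-2r,2r))$; and Lemma~\ref{prop 2.4} converts the pointwise bound into a $\gamma$-sliding estimate $w(x,t) \geq v_k(x + (t+2r)\phi(x)\mu, t)$ in $Q_{2r}$ for an admissible bump $\phi$. Differentiating this inequality in $\gamma$ at $\gamma = 0$ (using that $v_k$ is $C^1$ in its positive set and continuity up to the boundary, plus the weak formulation near $\Gamma$) yields $\hat\nabla_p v_k \geq c\,\tau\,\nabla_\mu v_k - E t\,(\text{something})$ on $Q_r$; since $\nabla_\mu v_k \geq 1/L$ and $\tau \sim \cos\langle p, \text{axis}\rangle$, this says precisely that at scale $r$ the solution $v_k$ admits a strictly larger cone of monotonicity — an opening increase of the form $\tfrac{\pi}{2} - \theta_{new} \leq s(\tfrac{\pi}{2} - \theta_k)$ with a universal contraction factor $s \in (0,1)$ depending only on $L,\sigma,\eps,r$. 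Rescaling by $x \mapsto rx$ (and absorbing $r$ into a choice $J := r$, or $J$ a fixed fraction of $r$) translates this into $(D_{k+1})$ for $v_{k+1}$, with the new lower bound $\tfrac{1}{2L}J^{k+1}$ coming from the chain-rule factor $J$ produced by the rescaling; here it is essential that $(D_k)$ carried the quantitative bound $\tfrac{1}{2L}J^k$ rather than merely $\hat\nabla_p v_k \geq 0$, since otherwise the drift error terms in $(C_k)$ could not be absorbed (cf.\ Remark~\ref{remark}).

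**Once $(D_k)$ is established for all $k$, the conclusion is immediate:** unrescaling \eqref{scale_k} gives, for every $k$, $\hat\nabla_p v_\delta \geq (2L)^{-1}J^k$ in $Q_{J^k}$ for all $p \in \widehat{W}_{\theta_k,\mu_k}\cap\mathcal{S}^d$, with $\theta_k = \tfrac{\pi}{2} - s^k(\tfrac{\pi}{2}-\theta_0)$, which is exactly the statement of the Proposition. The choices of $J$ and $s$ are made once and for all in the inductive step and depend only on the constants in \eqref{C.5.1}.

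**The main obstacle** is the bookkeeping in the inductive step that ensures the drift-generated error terms do not destroy the enlargement. Because \eqref{6.4} is not translation-invariant, the natural comparison function $v_k((x,t)+p)$ fails to be a (sub/super)solution by an amount governed by $\|\nabla \vec b_k\|_\infty + \|\partial_t \vec b_k\|_\infty \lesssim \sigma\delta J^k$ and $\|D^2\vec b_k\|_\infty \lesssim \sigma\delta^2 J^{2k}$ (property $(C_k)$); the fix is the extra $E\gamma t$ term, but then one must verify that $E t$ is dominated — both in the supersolution check and in the final differentiation in $\gamma$ — by the genuine monotonicity margin $\tfrac{1}{2L}J^k$ supplied by $(D_k)$, uniformly in $k$. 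This forces a careful coupling between the choice of $E$, the scale $J$, the angle-contraction $s$, and the smallness of $\delta$, and is the reason the inductive hypothesis must be the quantitative $(D_k)$. A secondary technical point is justifying the differentiation in $\gamma$ at the free boundary, where $v_k$ is only Lipschitz; this is handled by working with difference quotients and the weak comparison principle (Remark~\ref{remark 2.0}, Corollary~\ref{C.5.5}) rather than classical derivatives, exactly as in \cite{C1alpha}.
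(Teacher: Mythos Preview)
Your proposal is correct and follows essentially the same inductive route as the paper, invoking Lemmas~\ref{prop 2.2}--\ref{prop 2.4} to enlarge the cone at scale $r$ and then rescaling. One small correction: the additive term in the comparison function should be $E_k\gamma(t+2r)$ with $E_k\sim\sigma L\delta J^k$ depending on $k$ (coming directly from $(C_k)$), not a fixed $E=E(L)$, and the shift by $2r$ is needed so that $w\geq v_k$ throughout $Q_{2r}$; you correctly identify the $\delta J^k$ scaling of the drift error in your discussion of the main obstacle, so this is only a notational slip in the definition of $w$.
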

%\textcolor{I don't understand the previous statement. I changed it.}

 The $C^{1,\alpha}$ regularity of $\Gamma$ at $(0,0)$ is a result of the relation $\theta_k = \theta_{k-1} + S(\pi/2 - \theta_{k-1})$ which describes quantitatively the enlargement of {cone of monotonicity} of solutions near the free boundary. Then Theorem \ref{col C1alpha} follows. We omit detailed discussion of this part since it is parallel to Theorem 1 in \cite{C1alpha}.

\begin{proof}
Fix a small $\eps>0$ such that the conclusion of Lemma \ref{prop 2.3} holds, and let $r,\delta_0$ be as given in Lemma \ref{prop 2.2}. Then $\eps,\delta_0,r$ only depend on $L$ and universal constants. Define $\tau$ as in Lemma \ref{prop 2.2}. Let $v_k$ be as in \eqref{scale_k}, 
 and set $\vec{b}_k,X_k$ as before and we take $J\leq r$ to be determined. 
It is straightforward that for all $k\geq 0$, $(A_k)-(C_k)$ hold.
When $k=0$, due to Lemma \ref{prop 2.1}, $(D_0)$ holds for $v=v_0$. 

\medskip

Let us suppose that $(D_k)$ holds for some $k\geq 0$ with ${\mu}_k,\theta_k\geq\theta_0$ i.e. the hypothesis of Lemmas \ref{prop 2.2}- \ref{prop 2.4} are satisfied.
We will show $(D_{k+1})$.

For any $\gamma\in(0,\eps)$ and a unit vector $p\in \widehat{W}_{\theta_{k},{\mu}_{k}}$, define 
$$
\widetilde{w}(x,t):=v_k((x,t)+\gamma p).
$$
Note that  $\widetilde{w}\geq v_k$ in $Q_1$ due to $D_k$.
Next, \eqref{6.4} implies that
\begin{align*}
\mathcal{L}_2 (\widetilde{w})\geq   -\gamma\left(  |\nabla \widetilde{w}| |\hat{\nabla}_p \vec{b}_k(x+X_k)| +(m-1)\widetilde{w} |\nabla\cdot \hat{\nabla}_p \vec{b}_k|\right) .
\end{align*}
By $(A_k)-(C_k)$ and the fact that $|\partial_t X_k|\leq |\vec{b}_k|\leq \sigma$, we have
\[\mathcal{L}_2 (\widetilde{w})\geq   -\gamma  (\sigma L\delta J^k)=:-\gamma E_k.\]
Then for $w:=\widetilde{w}+E_k(t+2r)$, we have $w\geq v_k$ in $Q_{2r}$. 

In view of Lemma \ref{prop 2.2}, $v_k\leq \eps$ in $Q_r$ and $w$ satisfies the hypothesis of Lemma \ref{prop 2.3}. Let $\tau$ be defined as in Lemma \ref{prop 2.2}, and let  $\kappa <\kappa_0$ be from Lemma \ref{prop 2.4}.
We select a smooth function $\phi:\mathbb{R}^d\to \R^+$ such that $\phi$ is supported in $B_{2r}$, and $\phi,|\nabla\phi|, |D^2\phi|\leq \kappa\tau\gamma$, and
\begin{equation}
    \label{lower b phi}
    \phi\geq \sigma r^2\kappa\tau\gamma\quad  \text{ in } B_r \,\,\text{ for some universal }\sigma.
\end{equation}
Clearly such $\phi$ exists.

It follows from Lemmas 6.6-6.8 that
\[w(x,t) \geq  v_k(x+(t+2r)\phi(x){{\mu}},t) \quad\hbox{ in } Q_{2r}.\]
By $(B_k)$ and \eqref{lower b phi}, for $c':= \frac{\sigma r^3\kappa}{L}$ we have
\begin{equation*}
    \label{diff w v}
\begin{aligned}w(x,t) 
\geq v_k(x,t)+\frac{t+2r}{{L}}\phi(x)\geq v_k(x,t)+c'\tau\gamma \hbox{ in } Q_r.
\end{aligned}
\end{equation*}
This implies that 
\begin{equation*}
\begin{aligned}
    \hat{\nabla}_p v_k(x,t)&=\lim_{\gamma\to 0}\frac{v_k((x,t)+\gamma p)-v_k(x,t)}{\gamma}\\
    &\geq \lim_{\gamma\to 0}\frac{w(x,t)-v_k(x,t)}{\gamma}-3E_k r\\
    &\geq c'\tau-\sigma L\delta J^k r \qquad \quad\qquad\quad\hbox{ in } Q_r\cap\{v_k>0\}.
\end{aligned}
\end{equation*}
Using the definition of $\tau$, we obtain
\begin{equation}
    \label{grad estimate} \hat{\nabla}_p v_k(x,t) = C_1 \cos\langle\, p, \hat{\nabla}v_k({{\mu}},-2r)\rangle-\sigma L\delta J^k r
\end{equation}
where $C_1:=c'C\eps^{-1}$ only depending on $L,\sigma$ (since $\eps$ is fixed).  

It follows from ($A_k$) and ($D_k$) that
\begin{equation}
    \label{cos p mu}
    \begin{aligned}
       \cos\langle\, p, \hat{\nabla}v_k({{\mu}},-2r)\rangle&=\frac{\hat{\nabla}_p v_k}{|\hat{\nabla}v_k|}({{\mu}},-2r)
       \geq \frac{1}{L}\hat{\nabla}_pv_k({{\mu}},-2r)\geq \frac{1}{2L^2}J^k.
    \end{aligned}
\end{equation}
Taking $\delta $ to be small enough only depending on $L$ and $\sigma$, \eqref{grad estimate} yields 
\begin{equation*}
    \hat{\nabla}_p v_k(x,t)\geq \frac{C_1}{2}\cos\langle\, p,\hat{\nabla} v_k({{\mu}},-2r)\rangle %\geq C_3 J^k  
    \quad \text{in } Q_r\cap \{v_k>0\}.
    \end{equation*}
%$$ 
Thus in $Q_r\cap \{v_k>0\}$,
\begin{equation}
    \label{x to mu}
    \cos\langle\, p,\hat{\nabla}v_k(x,t)\rangle=\frac{\hat{\nabla}_p v_k}{|\hat{\nabla}v_k|}(x,t) \geq \frac{C_1}{2L}\cos\langle\, p,\hat{\nabla} v_k({{\mu}},-2r)\rangle. %\quad \text{ with  $C_2:=\frac{C_1}{2L}$.}
\end{equation}

For $p\in \mathcal{S}^{d+1}$, set
\[\rho(p):=\frac{C_1}{8L}\cos\langle \,p, \hat{\nabla}v_k({{\mu}},-2r)\rangle.\]
For any $q\in B(p,\rho(p))$ we have
$   \sin\langle\, p,q\,\rangle\leq \rho(p)$
and thus 
\begin{equation*}
%    \label{est cos}
    \begin{aligned}
    \cos\langle\, q,\hat{\nabla}v_k(x,t)\rangle &\geq \cos\langle\, p,\hat{\nabla}v_k(x,t)\rangle-2\sin\langle\, p,q\,\rangle\\
    &\geq \frac{C_1}{2L}\cos\langle\, p,\hat{\nabla}v_k(\mu,-2r)\rangle-2\rho(p)\quad (\text{ by }\eqref{x to mu} )\\
    &=\frac{C_1}{4L}\cos\langle\, p,\hat{\nabla}v_k({{\mu}},-2r)\rangle.
\end{aligned}
\end{equation*}

In view of ($A_k$) and \eqref{cos p mu}, we get
\begin{equation*} \label{est cos Dk}
     \hat{\nabla}_q v_k(x,t)\geq \frac{C_1}{4L^2}\cos\langle\, p,\hat{\nabla}v_k({{\mu}},-2r)\rangle\geq \frac{C_1}{8L^4}J^k.
\end{equation*}
Since the above holds for all $q\in B(p,\rho(p))$,  there exists a larger cone $ \widehat{W}_{\theta_{k+1},{\mu}_{k+1}}$ for some ${\mu}_{k+1}\in\mathbb{R}^{d+1}$, $ S\in (0,1)$ and $\theta_{k+1}= \theta_k+S(\frac{1}{2}\pi-\theta_k)$ such that 
\[\hat{\nabla}_p v_k(x,t)\geq \frac{C_1}{8L^4} J^{k} \quad\text{ for all unit vector }p\in \widehat{W}_{\theta_{k+1},{\mu}_{k+1}} \text{ and } (x,t)\in Q_r.\]
Here $S$ is independent of $k$, because $\rho(p)$ only depends on the angle between $p$ and $\hat{\nabla}v_k({{\mu}},-2r)$. {From the iterative definition of $\theta_k$, we obtain $\theta_k=\frac{\pi}{2}-s^k(\frac{\pi}{2}-\theta_0)$ with $s=1-S$.}
We refer readers to \cite{C1alpha,caffarellipart} for more details.

Let $J:=\min\{{C_1}/{(4L^3)}, r\}$. Recalling $v_{k+1}(x,t)=\frac{1}{J}v_{k}(Jx,Jt)$, we obtain for all unit $p\in \widehat{W}_{\theta_{k+1},{\mu}_{k+1}}$
\[\hat{\nabla}_p v_{k+1}(x,t)=\hat{\nabla}_p v_k \geq \frac{C_1}{8L^4} J^{k}\geq \frac{1}{2L} J^{k+1} \hbox{ in } Q_1.\] We checked $(D_{k+1})$ and therefore by induction we conclude the proof of the theorem.

\end{proof}

Now we give the proofs of Lemmas 6.6-6.8. To simplify notations, we write $v:=v_k$,  $\vec{b}:=\vec{b}_k$ and  $X:=X_k$ in the following proofs. 

\medskip

{\bf Proof of Lemma \ref{prop 2.2}.}
%The proof mostly follows from the proof of Proposition 2.2 in \cite{C1alpha}. We only sketch the proof below.
First note that if $r\leq \frac{\eps}{2{L}}$ , then $v\leq \eps \text{ in } Q_{2r}$ from  ($A_k$) and the fact that $0\in\Gamma_0$.
Next observe that in $Q_1$, $g:=\hat{\nabla}_p v$ solves
\[g_t=(m-1)g\Delta v+2\nabla v\cdot \nabla g+(m-1)v\Delta g+\nabla g\cdot (\vec{b}(x+X)-\vec{b}(X))+(m-1)g\nabla\cdot \vec{b}\]
\begin{equation}\label{e.6.6}
    +\nabla v\cdot \hat{\nabla}_p \vec{b}(x+X)+(m-1)v\nabla\cdot \hat{\nabla}_p \vec{b}.
\end{equation}
By the condition $(A_k)(C_k)$,
\[|\nabla v\cdot \hat{\nabla}_p \vec{b}(x+X)|+|(m-1)v\nabla\cdot \hat{\nabla}_p \vec{b}|\leq \sigma L\delta {J^k}.\]

Now we apply Harnack's inequality to $g$, using \eqref{e.6.6},  in $({B_\frac{7}{8}}\times[-3r,3r])\cap \{v\geq\frac{1}{2}\eps \}$. As done in Proposition 2.2 in \cite{C1alpha}, if we restrict to a smaller region $({B_\frac{3}{4}}\times(-2r,2r)) \cap \{v\geq\eps\}$ for $r$ small enough (depending on $\eps$), there exist $C,C'$ (depending on $L,r,\eps$) such that
\[\hat{\nabla}_p v(x,t)\geq C\hat{\nabla}_p v({{\mu}},-2r)-C'\delta {J^k}.\]

By ($D_k$), we have $\hat{\nabla}_p v({{\mu}},-2r)\geq J^k$. Thus we can select $\delta$ small enough such that for some $C>0$% and all $J\in (0,1)$
\begin{equation}
    \label{est enlarge core}\hat{\nabla}_p v(x,t)\geq C\hat{\nabla}_p v({{\mu}},-2r) \text{ in } (B_\frac{3}{4}\times(-2r,2r))\cap\{u\geq \eps \}.
\end{equation}
To show the assertion, we need to show \begin{equation*}
\frac{v((x,t)+\gamma p)-v(x,t)}{\gamma}\geq \tau v(x,t)=\tau\eps
\end{equation*}
which holds by the definition of $\tau$ and \eqref{est enlarge core}.\hfill$\Box$

\medskip

{\bf Proof of Lemma \ref{prop 2.3}.}
 Let $f\in C^1(B_{1/2})$ be a non-negative function such that  
\[f=0 \hbox{ in }B_\frac{1}{4}; \quad f=\eps\hbox{ on }\partial B_{\frac{1}{2}};\quad |\nabla f|\leq 10\eps; \quad |\Delta f|\leq {10}\eps.
\]   For $\alpha\in (-2r,2r)$, define
\[\xi(x,t):=v(x,t)+\tau\gamma(v(x,t)+\eps(t+\alpha)-f(x))_+.\] We claim that $\xi$ is a subsolution in $\Sigma:=(B_{\frac{1}{2}}\times(-2r,-\alpha))\cap \{v\leq \eps\}$ if $\eps$ is small enough, independent of $r$. 
Let us follow \cite{C1alpha} and only point out the differences coming from the drift. We recall the operator $\calL_2$ defined in \eqref{cal L 2} and denote {the drift independent part} as $\widetilde{\mathcal{L}}$:
\begin{align}
\widetilde{\mathcal{L}}(\xi):=\xi_t-(m-1)\,\xi\,\Delta \xi-|\nabla \xi|^2,\label{def tL}
\end{align}
Let $g(s):=\tau\gamma s_+$ and thus $g'=\tau\gamma\, \chi_{\{s>0\}}$, $g''\geq 0$ in the sense of distribution. Below, we write $g=g(v+\eps(t+\alpha)-f)$.
Direct computations yield
\begin{align*}
&\xi_t=(v+g)_t=(1+g')v_t+\eps g',\\
    &\nabla \xi=\nabla(v+g)=(1+g')\nabla v-g'\nabla f.
\end{align*}% &\Delta \xi=(1+g')\Delta v-g'\Delta f+g''|\nabla (v-f)|^2,
  
Following the computations in Lemma 3.1 of \cite{C1alpha} and using $|\nabla v|\geq \frac{1}{L}$, we obtain
\[    \widetilde{\mathcal{L}}(\xi)\leq (1+g')\widetilde{\mathcal{L}}(v) -\left(\frac{1}{{L}^2}-C\eps\right)g'\quad \text{  with $C$ only depending on $L$ and $\sigma$}.\]

Since $\mathcal{L}_2(\xi)=\widetilde{\mathcal{L}}(\xi)-\nabla \xi\cdot (\vec{b}(x+X)-\vec{b}(X))-(m-1)\xi\nabla\cdot \vec{b}$, then
\begin{align*}
    \mathcal{L}_2(\xi) &\leq (1+g')\widetilde{\mathcal{L}}(v)-\left(\frac{1}{{L}^2}-C\eps\right)g'-\nabla \xi \cdot(\vec{b}(x+X)-\vec{b}(X))-(m-1)\xi\,\nabla\cdot \vec{b}\\
    %&=(1+g')\mathcal{L}_2v+(1+g')(\nabla v\cdot(\vec{b}(x+X)-\vec{b}(X))+(m-1)v\nabla\cdot \vec{b})\\
    %&\quad -(\nabla v+g'\nabla(v-f)) (\vec{b}(x+X)-\vec{b}(X))-(m-1)(v+g)\nabla\cdot \vec{b}-\left(\frac{1}{{L}^2}-C\eps\right)g'\\
    &= (1+g')\mathcal{L}_2(v)+g'\nabla f\cdot(\vec{b}(x+X)-\vec{b}(X))-(m-1)(g-g')\nabla \cdot \vec{b}-\left(\frac{1}{{L}^2}-C\eps\right)g'\\
        &= g'\nabla f\cdot(\vec{b}(x+X)-\vec{b}(X))-(m-1)g\nabla \cdot \vec{b}-\left(\frac{1}{{L}^2}-C\eps-(m-1)\nabla\cdot\vec{b}\right)g'.
\end{align*}
By $(D_k)$, we have $\|\vec{b}\|_\infty\leq \sigma, \|{\nabla}\vec{b}\|_\infty\leq \sigma\delta J^k$. Since we assumed $\delta\leq \eps$ and $J<1$, $\|\nabla \vec{b}\|_\infty\leq \sigma\eps$. Also for $(x,t)\in\Sigma$, $s:=v+\eps(t+\alpha)-f\leq\eps$ and hence $g(s)\leq \eps g'(s)$. We get 
\begin{align*}
     |g'\nabla f\cdot(\vec{b}(x+X)-\vec{b}(X))+(m-1)g
    \nabla \cdot \vec{b}|
    \leq \sigma\eps^2 g' \hbox{ in } Q_{1/2}.
\end{align*}
Thus $\mathcal{L}_2( \xi)\leq 0$ if $\eps $ is small enough.

The rest of the proof follows from the proof of Proposition 2.3 \cite{C1alpha}, where we compare $w$ and $\xi$ in $\Sigma$ to conclude that \begin{equation}
    w(x,-\alpha)\geq (1+\tau\gamma)v(x,-\alpha)
   \quad \text{ in }B_\frac{1}{4}\cap\{v\leq\eps\}
\end{equation}
for all $\alpha\in(-2r,2r)$.

\begin{comment}
Let us sketch the idea below: with the above, we are able to compare $w$ and $\omega$ and obtain
\[w\geq \omega\text{ in }(B_\frac{1}{2}\times (-2r,-\alpha))\cap\{v\leq\eps\}.\]
Since $f=0$ in $B_\frac{1}{4}$, 

\end{comment}

\hfill$\Box$

\medskip

{\bf Proof of Lemma \ref{prop 2.4}.} 
 Based on $(A_k)$,$(B_k)$ and the elliptic regularity estimate {applied to $v$},  one can argue as in Lemma 3.2 of  \cite{C1alpha} to conclude that 
\begin{equation}
\label{Dij}
    vD_{ij}v\geq -C_4,\quad \text{ for all }i,j=1,...,d \quad \hbox{ in } Q_{2r},
\end{equation}
where $C_4$ depends only on $L$, universal constants and the Lipschitz constant of $\Gamma(v)$. We will use this fact in the computation below.

\medskip

%For a given vector ${\mu }\in\mathcal{S}^{d-1}$, define $h$ such that
Define
\[h(x,t):=(1+\tau\gamma)v(x+(t+2r)\phi{\mu },t), \quad y:= x+(t+2r)\phi{\mu }.\] 
Note that $|y-x|\leq {\kappa}\tau\gamma$.  Lemma \ref{prop 2.3} implies that $w\geq h$ on the parabolic boundary of 
\[\Sigma:=({B_\frac{1}{4}}\times (-2r,2r))\cap \{v\leq \eps\}.\] We claim that $\mathcal{L}_2(h) \leq 0$ in $\Sigma$. Write $\tau':=\tau\gamma$. We have
\begin{align*}
    h_t &=(1+\tau')(v_t+v_{{\mu }}\phi),\\
    \nabla h&= (1+\tau')\left(\nabla v+v_{{\mu }}(t+2r)\nabla\phi\right),\\
   {\Delta h}&= {(1+\tau')\left(\Delta v+2(t+2r)\nabla v_{{\mu }}\cdot\nabla\phi+v_{{\mu }{\mu }}(t+2r)^2|\nabla\phi|^2+v_{{\mu }}(t+2r)\Delta\phi\right),}
  \end{align*}
 From \eqref{Dij} and the computations in Proposition 2.4 \cite{C1alpha}
\[    \widetilde{\mathcal{L}}(h) \leq (1+\tau')\widetilde{\mathcal{L}}(v) (y,t)-\tau'\left(\frac{1}{{L}}-C{\kappa}\right)
\]
where $\widetilde{\mathcal{L}}$ is given by \eqref{def tL} and $C$ depends only on $m,L,C_4,\sigma$. Thus
\begin{align*}
    \mathcal{L}_2(h)&\leq (1+\tau') \mathcal{\widetilde{L}}\,v (y,t)-\tau'\left(\frac{1}{{L}}-C{\kappa}\right)-\nabla h\cdot\left(\vec{b}(x+X)-\vec{b}(X)\right)-(m-1)h\nabla \cdot \vec{b}(x+X)\\
    &= (1+\tau')\mathcal{L}_2(v) (y,t)-\tau'\left(\frac{1}{{L}}-C{\kappa}\right)-(1+\tau')\nabla v\cdot\left(\vec{b}(x+X)-\vec{b}(y+X)\right)\\
    &\quad -(m-1)(1+\tau')v(y,t)\nabla \cdot \left(\vec{b}(x+X)-\vec{b}(y+X)\right)-(1+\tau')v_{{\mu }}(t+2r)\nabla\phi)\cdot\left(\vec{b}(x+X)-\vec{b}(X)\right)\\
    &\leq -\tau'\left(\frac{1}{{L}}-C{\kappa}\right)+(1+\tau')|\nabla v|\left\|D\vec{b}\right\|_\infty|x-y|+(m-1)(1+\tau') v\left\|D^2 \vec{b}\right\|_\infty |x-y|\\
    &\quad +(1+\tau')\left|v_{{\mu }}\right|(t+2r)\left|\nabla\phi\right| \,\left\|\nabla \vec{b}\right\|_\infty |x|.
\end{align*}
Now apply ($C_k$) and since $\delta\leq \eps$, we have $\left\|D\vec{b}\right\|_\infty\leq \sigma\eps, \left\|D^2\vec{b}\right\|_\infty\leq \sigma\eps^2$. Since $|\nabla\phi|\leq \kappa\tau'$, we obtain
\begin{align*}
\mathcal{L}_2( h)    %&\leq -\tau'\left(\frac{1}{{L}}-C{\kappa}\right)-C(1+\tau')\|{\nabla}\vec{b}\|_\infty|x-y|-C(1+\tau')r{\kappa}\tau'  -C\|D^2 \vec{b}\|_\infty |x-y|\\
    &\leq -\tau'\left(\frac{1}{{L}}-C{\kappa}\right)-\sigma L
    \eps{\kappa}\tau'-\sigma L \eps r{\kappa}\tau'  -\sigma L\eps^2  {\kappa}\tau'\\
    &\leq -\tau'\left(\frac{1}{L}-C\kappa-\sigma L{\kappa}\right)\leq 0  \quad \hbox{ in } \Sigma,
\end{align*}
if ${\kappa}$ is small enough.
By comparison principle {applied to $w$ and $h$ in $Q_{2r}$}  we can conclude that
\[w(x,t)\geq h(x,t) \geq  v(x+(t+2r)\phi(x)\mu,t) \text{ in }Q_{2r}. \]
\hfill$\Box$

%\begin{theorem}\label{col C1alpha} Let $u$ be a solution of \eqref{premain} and suppose $0\in \Gamma_0$. Assume conditions \eqref{cond i}\eqref{cond ii} and \[\|\vec{b}\|_{C^2}+\|\vec{b}_t\|_\infty<\infty\]hold in ${Q_2}$. Then there exists $\alpha\in (0,1)$ such that $\Gamma\cap Q_{1}$ is a $C^{1,\alpha}$ surface.\end{theorem}

\section{Discussion of traveling waves and potential singularities }\label{sec pres cor}

In this section we discuss evolution of solutions in two space dimensions, in several explicit scenario.

\subsection{A discussion on Traveling Waves}

For simplicity, we restrict to two space dimensions $d=2$. The drift is chosen as
\begin{equation}\label{drift_special}
\vec{b}({x_1}, x_2) := (\alpha(x_2), 0), \hbox{ where } \alpha \hbox{ is Lipschitz and bounded.}
\end{equation} 

 When $\alpha$ is periodic and $\max\{ \alpha\} <c$, it is shown in \cite{traveling} that {there exist} traveling wave solutions of the form $U(x+cte_1)$ for the corresponding pressure equation \eqref{dynamic eqn}, with the growth condition $\lim_{x_1\to\infty} \frac{U(x)}{x_1} = c$. While Lipschitz regularity of the solutions are established therein, the free boundary regularity and possibility of a corner remain open. 

Our regularity analysis cannot address the traveling waves themselves,  but we are able to say that such singularity, if at all, is of asymptotic nature. More precisely we show that dynamic solutions, used in \cite{numerics} to approximate the travelling waves,  stay smooth in any finite time interval.

\begin{theorem}\label{TW}
Let $u$ solve \eqref{premain} in $\R^2\times (0,\infty)$, with $\vec{b}$ given in \eqref{drift_special}, with the initial data
$u_0(x) = (x_1)_+$. Further impose that $\frac{u(x,t)}{x_1}  \to 1 \text{ as }x_1\to\infty.$ Then the following holds:
\begin{itemize}
\item[(a)] $u$ is uniformly Lipschitz continuous in $\mathbb{R}^2\times [0,\infty)$. 
\item[(b)]For any fixed $T>0$, there exists $\tau_0(T)>0$ such that for all $t\in [0,T]$ and $\tau\leq \tau_0$
\begin{equation*}
    \label{mono example}
    \partial_{x_1} u\pm \tau\partial_{x_2} u\geq 0.
\end{equation*}
\item[(c)]  $u$ is non-degenerate, and $\Gamma(u)$ is $C^{1,\alpha}$ in $\mathbb{R}^2\times [0,T]$.
\end{itemize}
\end{theorem}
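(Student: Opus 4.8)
The plan is to verify the hypotheses of the local regularity machinery developed in Sections 5--6 (Theorem~\ref{prop nondeg}, Corollary~\ref{cor nondeg}, and Theorem~\ref{col C1alpha}) in a neighborhood of every free boundary point, after establishing global Lipschitz bounds and the cone monotonicity in parts (a) and (b). For part (a), I would first note that $u_0(x)=(x_1)_+$ is Lipschitz with $\Delta u_0\geq 0$ in the distributional sense, so Theorem~\ref{fundamental estimate} (more precisely the Remark following it) gives $\Delta u\geq -\sigma_1/(t+C)-\sigma_2\geq -C_0$ uniformly in time. Then, exactly as in the proof of Lemma~\ref{condlem nondeg}, combine $\Delta u\geq -C_0$ with the equation \eqref{premain} to get $u_t\geq |\nabla u|^2-\sigma(C_0+1)u+\nabla u\cdot\vec{b}$ in $\{u>0\}$. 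To convert this into an $|\nabla u|$ bound I need an upper bound of the form \eqref{cond ii}; here the structure $\vec{b}=(\alpha(x_2),0)$ and the linear growth condition $u(x,t)/x_1\to 1$ are used to control $u_t$ from above --- one compares $u$ with the explicit supersolution built from the stationary planar profile plus a linear-in-time correction, using that $\alpha$ is bounded, so that $u_t\leq C(1+u)$ holds globally; plugging back gives $|\nabla u|+|u_t|\leq C$ in $\{u>0\}$, hence (by continuity and nonnegativity, as in Lemma~\ref{condlem nondeg}) weakly in all of $\mathbb{R}^2\times[0,\infty)$.

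For part (b), the quantity $\phi:=\partial_{x_1}u\pm\tau\partial_{x_2}u$ solves, in $\{u>0\}$, a linear parabolic equation obtained by differentiating \eqref{premain} in the $e_1\pm\tau e_2$ direction; the inhomogeneous term involves $\nabla u\cdot\partial_{e_1\pm\tau e_2}\vec{b}+(m-1)u\,\nabla\cdot\partial_{e_1\pm\tau e_2}\vec{b}$. Since $\vec{b}=(\alpha(x_2),0)$, the $e_1$-derivative of $\vec{b}$ vanishes and the $e_2$-derivative is $(\alpha'(x_2),0)$, which is bounded, so the forcing is $O(\tau)$ times a bounded function (using the Lipschitz bound from (a)). At $t=0$, $\partial_{x_1}u_0=\mathbbm{1}_{\{x_1>0\}}\geq 0$ and $\partial_{x_2}u_0=0$, so $\phi(\cdot,0)\geq 0$ in fact $\phi(\cdot,0)$ is nonnegative and bounded below by a positive constant on $\{u_0>0\}$; on the free boundary the Hopf-type/barrier argument (as in the proof of Lemma~\ref{prop 2.1} and Proposition~\ref{condlem nondeg'}) keeps $\phi\geq 0$. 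A Gronwall/maximum-principle argument on a fixed time interval $[0,T]$ then shows: if $\tau\leq\tau_0(T)$ is small enough that the $O(\tau)$ forcing cannot overcome the strictly positive lower bound $\partial_{x_1}u\geq 1/L$ coming from non-degeneracy, then $\phi\geq 0$ persists on $[0,T]$. This is precisely the statement that $u$ is monotone with respect to the spatial cone $W_{\theta,\mu}$ with axis $\mu=e_1$ and half-angle $\theta=\theta(\tau)>0$.

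For part (c), fix $T>0$ and any $(x_0,t_0)\in\Gamma$ with $t_0\in(0,T]$; after the parabolic rescaling $\tilde u(x,t)=\lambda^{-1}u(x_0+\lambda x,t_0+\lambda t)$ with $\lambda$ small, the rescaled solution satisfies, in $Q_2$: $\Delta\tilde u\geq -C_0\lambda\geq -C_0$; cone monotonicity w.r.t. $W_{\theta,e_1}$ from (b); and condition \eqref{cond ii} (the $u_t\leq A(\mu\cdot\nabla u+u+1)$ bound) follows from the $u_t\leq C(1+u)$ estimate of part (a) combined with the strict lower bound $\partial_{x_1}u\geq 1/L$ from (b)-nondegeneracy, which lets us absorb the constant. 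It remains to check that $\Gamma$ is of type two: since the initial data has linear (hence super-quadratic in the sense of \eqref{intro less quar grow} with $\varsigma=1$) growth at its free boundary $\{x_1=0\}$, Theorem~\ref{strictly expanding initial condition} gives that every free boundary point is of the second type, with constants $C_*,h,\beta$ depending only on $T$ and universal constants --- and this property is inherited by $\tilde u$. Thus Corollary~\ref{cor nondeg} gives the non-degeneracy lower bound and Theorem~\ref{col C1alpha} gives that $\Gamma(\tilde u)\cap Q_{1/2}$ is $C^{1,\alpha}$; undoing the rescaling, $\Gamma(u)$ is $C^{1,\alpha}$ near $(x_0,t_0)$, with constants uniform over $t_0\in[\eta_0,T]$ for each $\eta_0>0$, and a separate short-time argument near $t=0$ (where $u$ is close to the explicit planar profile, which is already smooth at its front) handles $t_0$ small. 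Patching gives the locally uniform $C^{1,\alpha}$ regularity of $\Gamma$ in $\mathbb{R}^2\times[0,T]$.

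\textbf{Main obstacle.} The delicate point is establishing the cone monotonicity in part (b) \emph{uniformly down to the free boundary and over a full time interval}: the forcing term in the equation for $\phi=\partial_{e_1\pm\tau e_2}u$ is only $O(\tau)$ but it is not sign-definite, so one must run the maximum principle against the positive gap provided by non-degeneracy ($\partial_{x_1}u\geq 1/L$), and that non-degeneracy itself is only available \emph{after} one knows some cone monotonicity --- so the argument is genuinely a bootstrap/continuity-in-$\tau$ argument on $[0,T]$, and one must take care that $\tau_0$ can be chosen depending only on $T$ (through $C_0=C_0(T)$ and the Lipschitz constant) and universal constants, not on the point. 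A secondary technical nuisance is justifying the differentiated equations and the maximum principle at the (a priori merely Lipschitz) free boundary, which is handled as elsewhere in the paper by working in the smooth interior $\{u>0\}$ and using the weak formulation plus the barrier arguments of Sections 5--6.
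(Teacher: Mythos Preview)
Your proposal has the right overall architecture for part (c) --- once (a) and (b) are in hand, the local machinery of Sections 5--6 does the work --- but there are genuine gaps in (a) and (b), and the missing idea in both is the same: you are not exploiting the fact that, because $\vec{b}=(\alpha(x_2),0)$ is independent of $x_1$ and $t$, the equation \eqref{premain} is \emph{invariant under translations in $x_1$ and in $t$}. This lets you compare $u$ with translated or sup-convolved copies of itself, which is how the paper avoids exactly the circularity you flag.

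For (a), your claim that comparison with ``the stationary planar profile plus a linear-in-time correction'' yields $u_t\leq C(1+u)$ is not justified: comparing $u$ with a supersolution $\varphi=(x_1+\sigma_1 t)_+$ bounds $u$, not $u_t$. What the paper does instead is observe that $u^\eps(x,t):=u(x-\sigma_1\eps\,e_1,\,t+\eps)$ is again a solution of the \emph{same} equation (by the translation invariance above), and the supersolution $\varphi$ gives $u^\eps(\cdot,0)\leq u_0$; comparison then yields $u^\eps\leq u$, i.e.\ $u_t\leq\sigma_1\,\partial_{x_1}u$. This is precisely \eqref{cond ii} with $\mu=e_1$ and no lower-order terms, and together with \eqref{fundamental00} it closes the Lipschitz bound.

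For (b), you correctly identify that your maximum-principle approach for $\phi=\partial_{x_1}u\pm\tau\partial_{x_2}u$ is circular: you need $\partial_{x_1}u\geq 1/L$ near the free boundary to absorb the $O(\tau)$ forcing, but that lower bound comes from Proposition~\ref{condlem nondeg'}, which already presupposes cone monotonicity. The paper sidesteps this entirely by a sup-convolution barrier: set $\sigma_2:=\sup|\alpha'|$ and
\[
w(x,t):=\sup_{|y-x|\leq \eps e^{-\sigma_2 t}} u(y-\eps e_1,\,t).
\]
A direct computation (using only that $\vec{b}$ depends on $x_2$ alone, so $|\vec{b}(y-\eps e_1)-\vec{b}(x)|\leq \|\alpha'\|_\infty\,\eps e^{-\sigma_2 t}$) shows $w$ is a weak subsolution; since $w(\cdot,0)\leq u_0$, comparison gives $w\leq u$ for all time. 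Unwinding this at time $t\leq T$ gives exactly the cone monotonicity with half-angle $\arcsin(e^{-\sigma_2 T})$, with no appeal to non-degeneracy and no bootstrap. Once (a) and (b) are obtained this way, your plan for (c) goes through as written.
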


\begin{proof}
 Let us rewrite \eqref{premain} with our choice of $\vec{b}$:
\begin{equation}\label{dynamic eqn}\partial_t u-(m-1)u\,\Delta u-|\nabla u|^2-\alpha( x_2)\, \partial_{x_1}u=0 .\end{equation} 
Define $\varphi(x,t):=(x_1+\sigma_1t)_+$ with $\sigma_1 : = \sup |\alpha| + 1$.  Then $\varphi$ is a supersolution of \eqref{dynamic eqn} with the same initial data as $u$, and thus $u \leq \varphi$. In particular, for any $\eps>0$
\begin{equation}\label{phi2 ini}
    u(x-\sigma_1\e e_1,\e)\leq \varphi(x-\sigma_1\e e_1,\eps)= (x_1)_+=u(x,0),
\end{equation}
where we denote the positive $x_1$ direction as $e_1$.

\medskip

For $\e>0$ let $u^\eps(x,t):=u(x-\sigma_1\eps e_1, t+\e)$. From \eqref{phi2 ini}, it follows that $u^\eps(\cdot,0)\leq u_0$. Since $u^\eps
$ also solves \eqref{dynamic eqn}, by comparison principle it follows that  $u^\eps\leq u$, and thus 
\begin{equation}
u_t-\sigma_1u_{x_1}\leq 0.
\end{equation}
Above inequality with \eqref{fundamental00} yields that $u$ is uniformly Lipschitz continuous in space and time.

\medskip

Next to show $(b)$,  for $\eps>0$ and $\sigma_2= \sup |\partial_{x_2}\vec{b}|$ we define
\[w(x,t):=\sup_{|y-x|\leq \eps e^{-{\sigma_2}t}}u(y-\eps e_1, t).\]
For each $x$, pick $y=y(x,t)$ that realizes the supremum.
As in the proof of Lemma \ref{lem nabla}, for a.e. $(x,t)\in \mathbb{R}^2\times (0,\infty)$ we have 
\[w_t(x,t)= (u_t-{ {\sigma_2} \eps } e^{-{\sigma_2}t}|\nabla u|)(y,t).
\]

Therefore for a.e. $(x,t)\in \mathbb{R}^2\times (0,\infty)$,
\begin{align*}
    &\quad w_t-(m-1)w\Delta w-|{\nabla}w|^2-\nabla w\cdot \vec{b}-(m-1)w\nabla \cdot \vec{b}\\
    &\leq -{ {\sigma_2} \eps } e^{-{\sigma_2}t}|{\nabla}w|+|{\nabla}w|\sup_{\vec{y}\in B(x,\eps e^{-{\sigma_2} t})}|\vec{b}({y}-\eps e_1)-\vec{b}({x})|\\
    &\leq (-{ {\sigma_2} \eps } e^{-{\sigma_2}t}+\eps e^{-{\sigma_2}t}\|\alpha'\|_\infty)|{\nabla}w| \leq 0,
\end{align*}
where for the second equality above we used the fact that $\vec{b}$ only depends on $x_2$. Thus $w$ is a subsolution. Since $w(\cdot,0)\leq u_0$, the comparison principle for \eqref{dynamic eqn} yields $w\leq u$. In particular we have
\[u(x,t)\geq \sup_{|y|\leq \eps e^{-{\sigma_2}T}}u(x+y-\eps e_1,t) \hbox{ for } 0\leq t\leq T,\]
which yields (b) with $\tau\leq \tan(\arcsin(e^{-{\sigma_2}T}))$. Since (a)-(b) imply \eqref{cond ii} and that $u$ is cone monotone, Proposition \ref{condlem nondeg'} and Theorem \ref{col C1alpha} yield $(c)$.

\end{proof}

\begin{remark}
Let us consider the travelling wave solution  $u(x,t)=U(x+cte_1)$ of \eqref{dynamic eqn} with smooth and periodic $\alpha$, studied in \cite{traveling} . It was shown there that, assuming non-degeneracy, the free boundary $\Gamma_0=\partial\{U(x)>0\}$ can be represented by a Lipschitz graph $x_1=f(x_2)$. 

Our analysis shows that under the same assumption the graph function $f$ is at least $C^{1,\alpha}$. Indeed $|\nabla U|$ is globally bounded due to Theorem 1 of \cite{traveling} and thus  \eqref{cond ii} holds for $u$. Now Theorem \ref{col C1alpha} applies to yield the desired regularity of $f$. This improvement suggests that singularity of the free boundary such as corner formulation could happen only when non-degeneracy fails.
\end{remark}

%Before stating more examples, we need the following technical lemma which is used for comparison.

\begin{comment}As for pressure variables, suppose $\varphi=\frac{m}{m-1}\psi^{m-1}$ for some $\psi$ satisfying the conditions of Lemma \ref{wk frm supersl}. Thus according to the lemma, to show that $\varphi$ is a supersolution to \eqref{premain} in $U$, we only need to show 
\[\mathcal{L}\varphi :=\varphi_t-(m-1)\varphi\Delta\varphi-|\nabla\varphi|^2-\nabla\varphi\cdot \vec{b}-(m-1)\varphi\nabla\cdot \vec{b}\geq 0\]
in the set $\{\varphi>0\}\cap U$.
\end{comment}

The rest of the section discusses examples of singular solutions that are not present in the zero drift problem. First we discuss global-time persistence and aggravation of corners.

\begin{theorem}\label{prop corner}
There exist solutions $u_1,u_2$ to \eqref{premain} in $Q$ with bounded smooth spatial vector fields and non-negative, Lipschitz initial data such that 
\begin{itemize}
    \item[1.] $u_1$ is stationary and $\Gamma(u_1)$ has a corner at the origin.
    \item[2.] For a finite time, there is a corner of shrinking angles on $\Gamma(u_2)$.
\end{itemize} 
\end{theorem}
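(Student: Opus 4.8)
\textbf{Proof plan for Theorem~\ref{prop corner}.}

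The plan is to construct both examples by prescribing a profile with the desired singularity and then \emph{solving for the drift} that makes the profile a (sub/super)solution or an exact solution of \eqref{premain}, exploiting the fact that the drift is essentially a free parameter subject only to the regularity bounds $\vec{b}\in C^3_x$ (for part~1) or continuity (for part~2). For the stationary example $u_1$, I would look for a profile of the form $u_1(x)=(\mathrm{dist}(x,K))^2\,\psi(x)$ or, more concretely in two dimensions, a piecewise-quadratic function supported on a convex cone $\{x : x\cdot e_1 \ge |x\cdot e_2|\tan\alpha\}$ with a genuine corner at the origin. One checks that away from the corner such a $u_1$ is smooth with $|\nabla u_1|>0$ on the part of the free boundary away from $0$, so by the velocity law \eqref{with drift speed} stationarity forces $\vec{b}\cdot\vec{n} = |\nabla u_1|$ there; inside the positive set \eqref{premain} with $u_t=0$ becomes an algebraic/first-order relation
$$
0=(m-1)u_1\Delta u_1+|\nabla u_1|^2+\nabla u_1\cdot\vec b + (m-1)u_1\,\nabla\cdot\vec b,
$$
which, treating $\vec b$ as the unknown, can be solved (e.g. by choosing $\vec b = -\nabla u_1 + $ a suitable correction, or along rays emanating from the corner) to produce a \emph{bounded} vector field that is $C^3$ in $x$; the key point is that $u_1$ has exactly quadratic growth at the corner, so $\nabla u_1$ vanishes there and there is no obstruction to $\vec b$ being smooth at the origin. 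This is consistent with, and in fact is the sharpness statement behind, Theorem~\ref{strictly expanding initial condition}: quadratic (not faster) growth is precisely what allows a persistent corner. I would present the explicit two-dimensional formula, verify the weak (distributional) supersolution/subsolution inequalities across the two faces of the cone using Lemma~\ref{lem comp supersl} (whose hypotheses — smoothness in the positive set, Lipschitz $u^{1/m}$, $(d-1)$-dimensional free boundary — are readily checked for a piecewise-quadratic cone profile), and conclude $u_1$ is a genuine weak solution.

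For the second example $u_2$, the idea is to start from a smooth compactly supported (or half-space-type) initial datum with $|\nabla u_2(\cdot,0)|>0$ on its free boundary, and engineer a continuous (not smooth) drift that drags two portions of the free boundary toward each other so that the opening angle at some point decreases to zero in finite time, i.e. a corner forms and sharpens. Concretely in $\R^2$ one can take an initial free boundary that is a smooth graph with a near-flat segment, and choose $\vec b$ to be, roughly, a shear that is continuous but only Lipschitz/Hölder near a line, with opposite transverse components on the two sides, so that the normal-velocity law \eqref{with drift speed} produces a focusing of characteristics of the interface. The cleanest route is again the comparison method: build an explicit sub- and supersolution pair that pinch the true solution's free boundary between two moving wedges whose common angle $\to 0$ at a finite time $t_*$; the continuity (rather than smoothness) of $\vec b$ is what permits the limiting degenerate behavior that would be excluded if the semiconvexity/nondegeneracy machinery of Sections 3--5 applied. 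I would track the angle via an ODE for the position of the two interface branches obtained from \eqref{with drift speed}, arranged so that the branches meet with vanishing angle at $t_*$.

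The main obstacle I anticipate is \emph{part~1}: producing a vector field that is simultaneously (i) bounded on all of $\R^d$, (ii) genuinely $C^3$ (not merely Lipschitz) including \emph{through the corner point}, and (iii) makes the corner profile an \emph{exact stationary weak solution} rather than merely a sub- or supersolution. The danger is that the natural choice $\vec b=-\nabla u_1$ inside $\{u_1>0\}$ is only Lipschitz across the faces of the cone and may not extend to a $C^3$ field globally; one must add a correction supported near the faces, or choose the cone profile so that $\nabla u_1$ is already smooth enough. I would resolve this by not insisting $\vec b=-\nabla u_1$ but instead prescribing $\vec b$ first as a smooth bounded field adapted to the cone geometry (for instance radial plus angular components chosen to vanish to high order at the corner), then \emph{defining} $u_1$ as the corresponding stationary profile and checking a posteriori that it has a corner; alternatively, one constructs $u_1$ on the cone explicitly and invokes a Whitney-type extension to promote $-\nabla u_1$ off the positive set to a global $C^3$ field, using that all relevant derivatives vanish at the origin. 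For part~2 the subtlety is merely bookkeeping: ensuring the barrier wedges remain ordered against the true solution up to $t_*$ and that the drift stays continuous; this I expect to be routine given the comparison principle (Remark~\ref{remark 2.0}) and Lemma~\ref{streamline}.
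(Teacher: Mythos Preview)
Your proposal has genuine gaps in both parts.

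\textbf{Part 1.} Your approach—start with a piecewise-quadratic cone profile, then solve for $\vec b$ and worry about extending it to a global $C^3$ field—is unnecessarily hard, and the $C^3$ issue you flag is real for that route. The paper sidesteps all of this with a one-line observation: for a \emph{potential} drift $\vec b=-\nabla\Phi$ with $\Phi\in C^\infty$, the function $u_1:=\max\{\Phi,0\}$ is automatically a stationary solution of \eqref{premain} (just check that on $\{\Phi>0\}$ every term cancels). Then one only needs a smooth $\Phi$ whose zero level set has a corner, e.g.\ $\Phi(x,y)=g(x)g(y)$ with $g$ smooth and positive exactly on $(0,1)$, so $\partial\{u_1>0\}$ is a square. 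The drift $-\nabla\Phi$ is globally smooth and bounded because $\Phi$ is; there is no extension problem at all. Your alternative of ``prescribing $\vec b$ first'' is vaguely in this direction but you never identify the potential structure, which is the whole point.

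\textbf{Part 2.} You have misread the statement. It does \emph{not} ask for a corner to form from smooth initial data; it asks for a solution with Lipschitz initial data whose free boundary carries a corner of \emph{shrinking angle} over a finite time interval. Accordingly the drift is required to be smooth (the theorem says ``bounded smooth spatial vector fields'' for both $u_1$ and $u_2$), so your plan to use a merely continuous or H\"older drift is off-target—the corner-formation-from-smooth-data phenomenon is the content of Theorem~\ref{prop cusp}, not this one. The paper's construction is again explicit: take the smooth linear field $\vec b=(ax,by)$ (truncated outside a large ball to make it bounded) and the supersolution
\[
\varphi(x,y,t)=\lambda(t)\bigl(x^2-k(t)y^2\bigr)_+ \quad\text{for }x>0,\qquad \lambda(t)=e^{\sigma_1 t},\ k(t)=k_0 e^{t},
\]
whose support is the cone $\{x>\sqrt{k(t)}\,|y|\}$ with strictly decreasing opening angle. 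A direct computation (with $a$ fixed, $b$ large, then $\sigma_1$ large) gives $\mathcal L\varphi\ge 0$ on a short interval, and Lemma~\ref{lem comp supersl} makes $\varphi$ a weak supersolution. Let $u_2$ solve \eqref{premain} with initial data agreeing with $\varphi(\cdot,0)$ near $0$ and lying below it; comparison gives $\Omega_t(u_2)\subset\{x>\sqrt{k(t)}|y|\}$, while the origin is a fixed streamline of $\vec b$, so Lemma~\ref{streamline} keeps $0\in\overline{\Omega_t(u_2)}$. Hence $\Gamma_t(u_2)$ has a corner at $0$ of angle at most $2\arctan k(t)^{-1/2}$, which shrinks.
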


\begin{proof}

%Suppose $V(0)=0$ and $V$ is independent of $t$. By Lemma \ref{streamline}, if $0\in \overline{\{u(\cdot,0)>0\}}$ then $0\in \overline{\{u(\cdot,t)>0\}}$ for all $t\geq 0$.We only need to construct a super solution to \eqref{premain} such that the support of the solution contains a corner with vertex $0$ and the corner preserves for all time or for a short time. 

%\medskip 
Write $(x,y)$ as the space coordinate.
Let
\[\vec{b}:=-\nabla \Phi(x,y)\text{ for some smooth function $\Phi$},\]
and then it can be checked directly that
\[u_1:=\max\{\Phi,0\}\]
 is a stationary solution to \eqref{premain}.
Notice $\Gamma_0(u_1)$ is the $0$-level set of $\Phi$ and we claim that if $\Phi$ is degenerate, the interface can be non-smooth.

 For example, we can take
\begin{equation*}
\Phi(x,y)=g(x)g(y)
    %\Phi(x,y)=\left\{\begin{aligned}&g(x)g(y) &\text{ if }x\geq 0,y\geq 0\\
%    &0 &\text{ otherwise}\end{aligned}\right. 
\end{equation*} 
where $g $ is a function on $\mathbb{R}$ that it is only positive in $(0,1)$.
 Then $\partial\{u_1>0\}$ is a square. In particular, $\partial\{u_1>0\}$ contains a Lipschitz corner at the origin.
% Actually we made $V=\nabla \varphi$. 

%Let $u_0=\varphi$ in ${{B_1}}$ and $u_0$ is smooth with compact support. Let $u$ solves the equation \eqref{premain} with initial data $u_0$. By comparison$\varphi\geq u$for all time and so \[\{u>0\}\subset \{\varphi>0\}\subset\{x>0,y>0\}.\]Since $V=0$ at the origin, the stream line started at the origin is stationary. By Lemma \ref{streamline}, \[0\in \overline{\{u>0\}}.\]We know the support of $u$ has a corner which preserved for all time.

\medskip

Next we show (2). Take $\vec{b}:=(ax,by)$ (for a moment) and
\begin{equation*}
\varphi(x,y,t):=\left\{\begin{aligned}&\lambda(t)(x^2-k(t)y^2)_+\quad &\text{ if }x>0,\\
&0 &\quad\text{ otherwise },
\end{aligned}\right.
\end{equation*}
where 
\[\lambda(t)=e^{{{\sigma_1}} t}, k(t)=k_0 e^t \text{ for some }{{\sigma_1}},k_0>0.\]
Then the $\Gamma_t(\varphi)$ contains a corner with vertex at the origin. %We only consider $k(0)=k_0\in (0,1)$ and so the angle of the corner is initially larger than $\pi/2$. For $k_0\geq 1$, the examples are similar.

Let us show that $\varphi$ is a supersolution to \eqref{main} for $t\in (0,1/{{\sigma_1}})$.
Due to Lemma \ref{lem comp supersl}, we only need to check  this for $x> k^{1/2}|y|$. 
\begin{align}
   \mathcal{L}\varphi &:=\varphi_t-(m-1)\varphi\Delta\varphi-|\nabla\varphi|^2-\nabla\varphi\cdot \vec{b}-(m-1)\varphi\nabla\cdot \vec{b}\nonumber\\
&= (x^2-k y^2)\lambda'-\lambda k'y^2-(m-1)\lambda^2(x^2-k y^2)(2-2k)-4\lambda^2x^2-4\lambda^2 k^2 y^2\nonumber\\
    \quad &-2a\lambda x^2+2bk\lambda y^2-(m-1)\lambda(x^2-k y^2)(a+b)\nonumber\\
%=& (x^2-k y^2)(\lambda'-\lambda^2(m-1)(2-2k)-\lambda(m-1)(a+b))-\lambda k' y^2\\   &\quad-\lambda^2(4|x|^2+4k^2|y|^2)+\lambda(-2a|x|^2+2bk|y|^2)\\
     &= (x^2-k  y^2)(\lambda'-\lambda^2(m-1)(2-2k)-\lambda(m-1)(a+b)-2a-4\lambda^2)\nonumber\\
     &\quad+\lambda y^2(2bk-k'-4\lambda k-4\lambda k^2-2ak ) \nonumber \\
     &\geq (x^2-k  y^2)\lambda\left({{\sigma_1}}-\sigma(\lambda,m,k_0,a,b)\right)+\lambda y^2k((2b-1)-(4\lambda +4\lambda k+2a)). \label{ex 1}
\end{align}
Now we fix $a$ and take $b$ such that
\[2b-1\geq 4\lambda +8\lambda k_0+2a\geq 4\lambda +4\lambda k(t)+2a,\]
if ${{\sigma_1}}\geq 10$ and $t\leq 1/{{\sigma_1}}$.
Next we further take ${{\sigma_1}}$ to be large enough such that, the first part of \eqref{ex 1} is also non-negative. 
We conclude that for $t\in (0,1/\sigma_1)$, $\varphi$ is indeed a supersolution and its support contains a corner with angles shrinking from $2\arctan (k_0^{-\frac{1}{2}}) $ to $2\arctan (k(t)^{-\frac{1}{2}}) $. 

%As a remark, seeing from here, the sign of $\nabla\cdot \vec{b}(=a+b)$ does not affect.

\medskip

Now consider a solution $u_2$ with initial data $u_0$ such that $u_0=\varphi(x,y,0)$ in ${{B_1}}$ and $u_0\leq \varphi(x,y,0)$. By comparison, $\varphi\geq u_2$ for all times and so 
\[\Omega_t(u_2)\subset \Omega_t(\varphi)\subset\{x>k^{1/2}(t)|y|\}.\]
Since $\vec{b}=0$ at the origin, the origin is a one-point streamline. By Lemma \ref{streamline}, $0\in \overline{\Omega_t(u_2)}$ for all $t\geq 0$.
Thus $\Gamma_t(u_2)$ has a shrinking corner for a short time. Lastly since $u_2$ is compactly supported, we can truncate $\vec{b}$ to be bounded which does not affect $u_2$ and its support.

\end{proof}

Next we consider
formation of corners and cusps over time.

\begin{theorem} \label{prop cusp}
There is a solution $u$ to \eqref{premain} in $Q$ with some bounded continuous vector field and non-negative, bounded and Lipshitz initial data $u_0$ such that:
\begin{itemize}
    \item[1. ] $\Gamma_0(u)$ is smooth;
    \item[2. ] $\Gamma_t(u)$ contains a corner/a cusp for a range of time. \end{itemize}
\end{theorem}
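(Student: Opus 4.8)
The plan is to engineer an explicit supersolution and subsolution barrier pair that squeezes the free boundary into a prescribed geometry which develops a corner (and separately a cusp) from a smooth initial configuration, while keeping the drift $\vec b$ bounded and continuous. The main idea is to reverse the mechanism of Theorem~\ref{prop corner}(2): there a corner with shrinking angles persisted because the drift $\vec b=(ax,by)$ compressed the support anisotropically; here we instead want an initially \emph{smooth} interface (say a piece of a parabola, or a flat face) that gets pinched into a corner or a cusp. The construction should again be local near the origin, with $\vec b$ truncated to be bounded outside a neighborhood of $\mathrm{supp}\,u$ — this truncation does not affect the solution by finite speed of propagation and the comparison principle (Theorem~\ref{thm:comp}).

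First I would fix two dimensions $d=2$, write space coordinates $(x,y)$, and look for a time-dependent family of profiles of the form $\varphi(x,y,t)=\lambda(t)\bigl(x - a(t)y^2 - c(t)|y|^{1+\epsilon}\bigr)_+$ or similar, chosen so that at $t=0$ the set $\{\varphi(\cdot,0)>0\}$ has a smooth (e.g. parabolic) boundary, while at a later time $t_*$ the coefficient controlling the leading-order behavior degenerates so that the boundary acquires a corner (take a term like $a(t)y^2$ with $a(t)\to\infty$, forcing the opening angle to $\pi$, i.e. a flattening; or rather arrange two faces $x=\pm k(t)y$ to emerge) — the cusp case uses a fractional power $|y|^{3/2}$ whose coefficient blows up. The next step is to compute $\mathcal L\varphi=\varphi_t-(m-1)\varphi\Delta\varphi-|\nabla\varphi|^2-\nabla\varphi\cdot\vec b-(m-1)\varphi\,\nabla\cdot\vec b$ in the positive set $\{\varphi>0\}$, exactly as in the displayed computation \eqref{ex 1}, and choose the drift coefficients and the growth rates $\lambda(t)$, $a(t)$, $c(t)$ so that $\mathcal L\varphi\ge 0$ there; by Lemma~\ref{lem comp supersl} applied to the density $(\tfrac{m-1}{m}\varphi)^{1/(m-1)}$ (which is Lipschitz in the cusp case provided $m$ is not too large, and whose free boundary has Hausdorff dimension $1$), this promotes $\varphi$ to a genuine weak supersolution of \eqref{premain}. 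Symmetrically, I would build a subsolution $\underline\varphi$ of the \emph{same} geometric type — using that the origin (or a suitable point on the axis) is a stationary streamline because $\vec b$ vanishes there, so Lemma~\ref{streamline} forces $0\in\overline{\Omega_t(u)}$ — pinning the vertex of the evolving region at the origin for all time.

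Then I would take initial data $u_0$ with $\underline\varphi(\cdot,0)\le u_0\le \varphi(\cdot,0)$, with $u_0$ genuinely smooth and its free boundary smooth (so sandwiching at $t=0$ must be arranged carefully: $\varphi(\cdot,0)$ and $\underline\varphi(\cdot,0)$ should agree to leading order near $0$ with a smooth profile). Running the comparison principle (Remark~\ref{remark 2.0}) forward gives $\underline\varphi\le u\le\varphi$ for all $t$, hence $\Omega_t(\underline\varphi)\subseteq\Omega_t(u)\subseteq\Omega_t(\varphi)$; since both the inner and outer regions develop the prescribed corner/cusp at the same vertex and the same opening at each time, so does $\Gamma_t(u)$ for a range of $t$ in $(0,t_*)$. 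For the cusp we use a profile whose two branches come together tangentially (e.g. $x\ge c(t)|y|^{3/2}$ with $c(t)\uparrow\infty$), again checking $\mathcal L\varphi\ge 0$.

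The main obstacle I anticipate is twofold: (i) making the supersolution computation work \emph{for a smooth initial interface} — unlike Theorem~\ref{prop corner}(2), where the corner is present from the start, here the profile must interpolate between a smooth shape at $t=0$ and a singular one later, which means the coefficients (and their time derivatives entering $\varphi_t$) are themselves singular at the transition time, so one must localize to $t\in(0,t_*-\eta)$ and control error terms uniformly; and (ii) verifying the hypotheses of Lemma~\ref{lem comp supersl} for the cusp profile, in particular that $\varphi^\alpha$ is Lipschitz for some $\alpha\in(0,m)$ — a fractional-power interface makes $\nabla\varphi$ blow up, so one needs $\alpha$ large enough to absorb the singularity while staying below $m$, which constrains the allowed exponent of the cusp. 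Handling the drift: since we only need \emph{some} bounded continuous $\vec b$, I would not insist $\vec b$ be a gradient; a direct choice like $\vec b=(a(t,y)x,\,b(t,y)y)$ localized near the support, with the coefficients tuned from \eqref{ex 1}, should suffice, and continuity (rather than smoothness) of $\vec b$ is all that is claimed.
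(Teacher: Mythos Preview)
Your overall framework is right and matches the paper: build an explicit supersolution whose support develops the desired singularity, verify $\mathcal L\varphi\ge 0$ on the positive set and invoke Lemma~\ref{lem comp supersl}, then use Lemma~\ref{streamline} (the origin is a fixed point of the streamline ODE since $\vec b(0)=0$) to keep $0\in\overline{\Omega_t(u)}$. In particular, no subsolution of ``the same geometric type'' is needed: the streamline lemma alone pins the vertex, so once $\Omega_t(u)\subset\Omega_t(\varphi)$ you are done. Drop the sub/super sandwich; it only adds difficulty.

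Where your proposal goes wrong is in the \emph{mechanism} producing the singularity. You propose bounded exponents with coefficients $a(t),c(t)$ that blow up; the paper does the opposite. For the corner it takes $\vec b=-(x+|y|,\,y)$ and $\varphi=\lambda(t)\,x\,(x-\eps t\,|y|)_+$: at $t=0$ the support is the half-plane $\{x>0\}$, and for $t>0$ it is the wedge $\{x>\eps t\,|y|\}$. All coefficients stay bounded; what makes the computation close is the non-smooth $|y|$ term in $\vec b$, which exactly cancels the bad $-\lambda\alpha'|y|$ term arising from $\varphi_t$. Your drift ansatz $(a(t,y)x,\,b(t,y)y)$ with smooth $a,b$ will not produce this cancellation. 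For the cusp the paper takes $\vec b=(x\log x-10x^{1-\delta},0)$ (continuous but not Lipschitz) and $\varphi=\lambda(t)(x^2-|y|^{2\alpha(t)})_+$ with $\alpha(t)=1+\tau(\tau-t)$: the \emph{exponent} decreases through $1$, so the boundary $x=|y|^{\alpha(t)}$ is $C^1$ for $t<\tau$ and becomes a cusp once $\alpha(t)<1$. Your proposal $x\ge c(t)|y|^{3/2}$ with $c(t)\uparrow\infty$ has a $C^1$ boundary for every finite $t$ (the exponent $3/2>1$ means both branches are tangent to the $y$-axis), so it never forms a cusp. Likewise your profile $(x-a(t)y^2-c(t)|y|^{1+\eps})_+$ with $\eps>0$ has a $C^1$ boundary for all $t$ and never develops a corner.

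In short: keep your barrier-plus-streamline outline, discard the subsolution, and replace ``blowing-up coefficients, fixed exponent, smooth $\vec b$'' by ``bounded coefficients, varying exponent (for the cusp), and a drift with a built-in kink ($|y|$) or logarithmic term ($x\log x$)''. The non-smoothness of $\vec b$---permitted since the statement only asks for continuity---is the essential ingredient you are missing.
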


\begin{proof}

%Again we are going to take $\vec{b}=\vec{b}(x,y)$ with $\vec{b}(0,0)=0$. 
First we consider $\vec{b}:=-(x+|y|,\, y).$ We will construct a supersolution for this choice of $\vec{b}$. For some ${\sigma_0},\sigma_1,\eps>0$, set
$\lambda(t)={\sigma_0}\,e^{{\sigma_1}t},\, \alpha(t)=\eps t$ and
\[\varphi(x,y,t):=\lambda(t)x(x-\alpha(t)|y|)_+.\]
When $t=0$, the support of $\varphi$ is a half-plane, while for any $t>0$ there forms a corner on $\Gamma_t(\varphi)$.

In the positive set of $\varphi$ $(x>\alpha |y|)$, we have
\begin{align*}
   % &\varphi_t-(m-1)\varphi\Delta\varphi-|\nabla\varphi|^2-\nabla\varphi\cdot \vec{b}-(m-1)\varphi\nabla\cdot \vec{b}\nonumber\\
    \mathcal{L}\varphi&=\lambda'x(x-\alpha|y|)-\lambda\alpha' x|y|-(m-1)\lambda^2x(x-\alpha|y|)(2-\alpha x\delta_y)-\lambda^2\left|\left(2x-\alpha |y|,\alpha x\frac{y}{|y|}\right)\right|^2\\
    &\quad +\lambda\left(2x-\alpha |y|,\alpha x\frac{y}{|y|}\right)\cdot (x+|y|,y)+2(m-1)\lambda x(x-\alpha|y|).
    \end{align*}
Here $\delta_y$ is the Dirac mass of variable $y$.
Since $\delta_y\geq 0$, the above simplifies to
\begin{align*}
\geq & (x- \alpha|y|)(\lambda'x-2(m-1)\lambda^2 x +2(m-1)\lambda x)-\lambda \alpha' x|y|-\lambda^2|(x-\alpha|y|)+x|^2-\lambda^2\alpha^2 x^2\\
    &\quad +\lambda((x-\alpha|y|)+x)(x+|y|)-\lambda\alpha x|y|\\
\geq &(x- \alpha|y|)(\lambda'x-2m\lambda^2 x +2(m-1)\lambda x-\lambda^2(x-\alpha|y|))-\lambda^2 x^2-\lambda^2\alpha^2 x^2\\
    &\quad +\lambda x(x+|y|)-(\lambda\alpha +\lambda\alpha')x|y|.
\end{align*}   
Select 
${\sigma_1}=4m, {\sigma_0}\leq \frac{1}{2}e^{-4m}, \eps\leq 1/4$ and then
$\lambda'\geq 2(m-1)\lambda +2m\lambda^2$. Therefore for $t\in [0,1]$,
\begin{align*}
\mathcal{L}\varphi &\geq  -(\lambda^2+\lambda^2\alpha^2)(x-\alpha|y|)^2+\lambda  x^2+(\lambda-\lambda\alpha-\lambda\alpha')x|y|\\
    &\geq (\lambda-\lambda^2(1+\eps^2t^2))|x|^2+ \lambda(1- \eps-\eps t) x|y|\geq 0.
\end{align*}
In the last inequality we used that $\lambda\leq 1/2, \eps+\eps t\leq 1/2$.

Thus $\varphi$ is a supersolution in $\mathbb{R}^2\times [0,1]$. Now
 $u_0=\varphi(x,y,0)$ in ${B_1}$ and $u$ be a solution with initial data $u_0$. Then by comparison we conclude that a corner forms on $\Gamma_t(u)$ for $t>0$.

% be such that for some $h>0$\[u_0(x,y)=h\,\varphi(x,y, 0) \text{ for }(x,y)\in B({h}),\, u_0(x,y)\leq h\,\varphi(x,y, 0)\]and $u_0=0$ outside $B({2h})$. Let $u$ be a solution to \eqref{premain} with initial data $u_0$. By finite propagation property, we can select $h$ small enough such that $u=0$ on $(\partial {{B_1}})\times [0,1]  $. This can be done by constructing a suitable supersolution. We skip the details.Then by comparison $u\leq \varphi$ for $t\leq 1$. Hence
%\[\{u>0\}\subset\{\varphi>0\}.\]Apply Lemma \ref{streamline}, $0\in \{u>0\}$.

\medskip

Next we show the possibility of the formation of cusps. Consider
\[    \vec{b}:=(x\log x-10 x^{1-\delta},0).\]
which is continuous but not Lipschitz continuous at $x=0$. In particular in our barrier argument we will use approximations. For some ${\sigma_2}$ is large enough, let
\[\alpha(t):=1+\tau(\tau-t),\,\lambda:=e^{\sigma_2t}\]
and let $\tau,\eps,\delta>0$ be such that
\begin{equation}
    \label{exp assumption}
    1>\delta\geq \frac{2\tau^2}{1-\tau^2},\; \tau\leq 1/2,\; e^{2{\sigma_2}\tau}\leq 1.
\end{equation}
Set
\begin{equation*}
\varphi_\eps(x,y,t):=\left\{
\begin{aligned}&\lambda(t)(x^2-(|y|+\eps)^{2\alpha(t)})_+ \quad &\text{ if }x\geq 0,\\
&0\quad &\text{ otherwise}.
\end{aligned}\right.
\end{equation*}
Then as $\eps\to 0$, for $x\geq 0$, 
\[\varphi_\eps(x,y,t)\to\varphi(x,y,t):=\lambda(t)(x^2-|y|^{2\alpha(t)})_+.\]
Directly from the definition, the support of $\varphi$ is smooth when $\alpha>1$, while a cusp appears when $\alpha=1$ i.e. $t>\tau$.   Set the domain \[\Sigma_\e:=\bigcup_{t\in [0, 2\tau]}\left(\left(\frac{1}{2}\geq x\geq (|y|+\eps)^{\alpha(t)}\right)\times \{t\}\right).\] 
Let us check that $\varphi_\eps$ is a supersolution to \eqref{premain} in $\Sigma_\e$. Notice
\begin{align*}
    &\partial_y (|y|+\eps)^{2\alpha}=2\alpha (|y|+\eps)^{2\alpha-1}\frac{y}{|y|},\\
&\partial_{yy} (|y|+\eps)^{2\alpha}=2\alpha(2\alpha-1) (|y|+\eps)^{2(\alpha-1)}+2\alpha(|y|+\eps)^{2\alpha-1}\delta_y\geq 2\alpha(2\alpha-1) (|y|+\eps)^{2(\alpha-1)}.
\end{align*}
By direct computation, in $\Sigma$
\begin{align*}
    %&\partial_t\varphi_\eps-(m-1)\varphi_\eps\Delta\varphi_\eps-|\nabla\varphi_\eps|^2-\nabla\varphi_\eps\cdot \vec{b}-(m-1)\varphi_\eps\nabla\cdot \vec{b}\nonumber\\
    \mathcal{L}\varphi_\eps&\geq  (x^2- (|y|+\eps)^{2\alpha})(\lambda'-\lambda^2(m-1)(2-2\alpha(2\alpha-1)(|y|+\eps)^{2(\alpha-1)})-\lambda(m-1)\nabla\cdot \vec{b})\nonumber\\
    & -\lambda \alpha' (|y|+\eps)^{2\alpha}\log (|y|+\eps)^2 -\lambda^2(4|x|^2+4\alpha^2(|y|+\eps)^{4\alpha-2})-2\lambda\left(\left(x,-2\alpha (|y|+\eps)^{2\alpha-1}\frac{y}{|y|}\right)\cdot \vec{b}\right)\nonumber
\end{align*}
Note we can assume $\alpha\geq 1/2$ and $\nabla\cdot \vec{b}\leq \sigma$ for some universal $\sigma$ in $\Sigma$, and therefore the above
\begin{align*}
&\geq  (|x|^2- (|y|+\eps)^{2\alpha})(\lambda'-2\lambda^2(m-1)-\sigma\lambda(m-1))-\lambda \alpha'(|y|+\eps)^{2\alpha} \log (|y|+\eps)^2\nonumber\\
    &\quad  -\lambda^2(4(|y|+\eps)^{2\alpha}+4\alpha^2(|y|+\eps)^{4\alpha-2})+2\lambda(-x^2\log x+10 x^{2-\delta})\nonumber\\
   % \geq & (|x|^2- (|y|+\eps)^{2\alpha})(\lambda'-\lambda^2(m-1)(2-4\lambda^2)-\lambda(m-1)\nabla\cdot \vec{b})+\lambda \tau(|y|+\eps)^{2\alpha} \log (|y|+\eps)^2\nonumber\\
   % &\quad  -\lambda^2(4(|y|+\eps)^{2\alpha}+4\alpha^2(|y|+\eps)^{4\alpha-2})+2\lambda(-x^2\log x+10 x^{2-\delta})\nonumber\\
    &=: A_1+A_2+A_3+A_4.
\end{align*}

To have $A_1\geq 0$, we only need
\[\lambda=e^{{\sigma_2}t}\leq e^{2\sigma_2\tau}\leq 2\text{ and }\sigma_2\geq (\sigma+4 )(m-1).  \]
Using $r^2\log r$ is negative and decreasing for $r\in [0,\frac{1}{2}]$ and \eqref{exp assumption}, we have 
\begin{align*}
    A_2&=\lambda\tau (|y|+\eps)^{2\alpha} \log (|y|+\eps)^2 \quad &&\text{( $\alpha'=-\tau$)}\\
    &\geq 4\lambda\tau (|y|+\eps)^{2\alpha} \log (|y|+\eps)^\alpha \quad&&\text{( $\alpha\geq 1/2$)}\\
    &\geq 4\lambda\tau x^{2} \log x\geq 2\lambda x^2\log x \quad &&\text{( $x\geq(|y|+\eps)^\alpha$, $2\tau\leq 1$)}.
\end{align*}
%In the last inequality, we applied that $2\tau\leq 1$.
Also note by \eqref{exp assumption}, we have $\lambda\leq 1, \alpha\leq 1+\tau^2\leq 2$, $4\alpha-2\geq \alpha(2-\delta)$, So
\begin{align*}
A_3    = -4\lambda^2 (|y|+\eps)^{2\alpha}-4\lambda\alpha^2(|y|+\eps)^{4\alpha-2}\geq -4\lambda x^{2}-16\lambda^2 x^{{(2\alpha-1})/\alpha}\geq -20\lambda x^{2-\delta}.
\end{align*}
In all 
$\Sigma_{i=1}^4A_i\geq 0$. %4\lambda\tau x^2\log x-20\lambda x^{2-\delta}+2\lambda(-x^2\log x+10 x^{2-\delta})\geq 0.\]
We proved that $\varphi_\eps$ is a supersolution in $\Sigma_\e$, so by Lemma \ref{lem comp supersl}, it is a supersolution in $B_\frac{1}{2}\times [0,2\tau]$.

\medskip

%\textcolor{This is really a good point. Let us look at the weak formulation and consider ( for $\varrho\sim \varphi_\eps^{1/(m-1)}$)
%\[\iint -\varrho\eta_t+\nabla\varrho^m\nabla\eta+\varrho\vec{b}\cdot\nabla\eta\]
%in a $\delta$-nbhd of the boundary where $\eta$ is a test function supported in a $\delta$-nbhd of one free boundary point. We only need to check it converges to $0$ as $\delta\to 0$.
%This is true as long as $\varphi^{1/m-1},\nabla\varphi_\eps^{m/m-1}\to 0$ near the free boundary, which is indeed the case.}

Now for $h\in (0,1)$, we select $u^h_{0,\eps}$ to be smooth with initial data $u^h_{0,\eps}=h\,\varphi_\eps(\cdot,0)$ in $B_h$ and $u^h_{0,\eps}=0 \text{ in } B^c_{2h}$. Let $u^h_\eps$ solve \eqref{main} with vector field $\vec{b}$ and initial data $u^h_{0,\eps}$. 
By finite propagation property, we can take $h$ to be small enough such that for all $\eps\in (0,1)$ 
\[u^h_\eps(\cdot,t)=0 \quad\text{ on }(\partial B_{\frac{1}{2}})\times [0,2\tau].\]
By comparison (which is valid since $\vec{b}$ is smooth in $\Sigma_\e$), $u^h_\eps\leq \varphi_\eps$ in $B_{\frac{1}{2}}\times [0,2\tau]$. Now passing $\eps\to 0$ gives a solution $u^h$ with initial data $h\varphi(\cdot,0)$ in $B_h$
such that $u^h\leq \varphi$ for $t\in [0,2\tau]$. As before we conclude by the geometry of $\Omega(\varphi)$ and Lemma \ref{streamline} that a cusp appears for $\tau <t <2\tau$.

\medskip

\end{proof}

%\begin{thebibliography}{10}

%\end{thebibliography}

\newpage

\appendix

\section{Proof of Lemma \ref{lem comp supersl}}

Let us only consider the case when $U=\mathbb{R}^d$. The case of $U=B_1$ follows similarly. 

Fix one non-negative $\phi\in C_c^\infty(\mathbb{R}^d\times [0,T))$. Denote \[U_0:=\{\phi>0\}\cap \{\psi>0\}.\] 
For any $\eps>0$, take finitely many space time balls $U_i, i=1,...,n$ such that
\begin{itemize}
    \item[1. ] for each $ i\geq 1$, $|U_i|\leq \eps^d$ and $U_i$ is in the $\eps$-neighbourhood of $\Gamma(\psi)$,
    
    \item[2. ] $\{ U_i\}_{i=1,...,n}$ is an open cover of $\Gamma(\psi) \cap \{\phi>0\}.$
\end{itemize}
Since $\Gamma(\psi)$ is of dimension $d-1$, we can assume
\begin{equation}
    \label{appc n}
    n\lesssim 1/\eps^{d-1}.
\end{equation}
Take a partition of unity $\{\rho_i, i=0,...,n\}$ which is subordinate to the open cover $\{U_i\}_{i\geq 0}$. Then for $i\geq 1$, 
\begin{equation}
\label{appc rho}
    |\nabla \rho_i|+|\partial_t\rho_i|\lesssim 1/\eps.
\end{equation}  

By the assumption, $\psi$ is a supersolution in the interior of its positive set. And since $\eps$ can be arbitrarily small, to show \eqref{wk frm supersl} we only need to show
\begin{equation*}
 I_\eps:=\sum_{i=1}^{n(\eps)}\left(\int_0^T\int_{\mathbb{R}^d} \psi\,(\phi\rho_i)_t-(\nabla \psi^m+\psi\,\vec{b})\nabla (\phi\rho_i)\;dxdt - \int_{\mathbb{R}^d} \psi(0,x)\phi(0,x)\rho_idx\right)\to 0
\end{equation*}
as $\eps\to 0$.

By property 1 of $U_i$ and the regularity assumption on $\psi $, in all $U_i, i\geq 1$ we have 
\[\psi\leq C\eps^{\frac{1}{\alpha}},\quad |\nabla \psi^m|\leq C\psi^{m-\alpha}|\nabla \psi^\alpha|\leq  C\eps^{\frac{m-\alpha}{\alpha}}. \]
Now from \eqref{appc n}, \eqref{appc rho} and $\alpha<m$, it follows that
\begin{align*}
    |I_\eps|&\leq C\eps^{-d+1}\left(\iint_{U_i} \frac{1}{\eps} (\psi+|\nabla \psi^m|)\;dxdt +\int_{U_i\cap\{t=0\}} \psi(0,x)dx\right)\\
    &\leq C(\eps^{\frac{1}{\alpha}}+\eps^{\frac{m-\alpha}{\alpha}}+\eps)
\end{align*}
which indeed converges to $0$ as $\eps\to 0$.

\section{Sketch of the proof of Lemma \ref{lem delta}}

We follow the idea of Lemma 9 \cite{caffarellipart} and compute
\[\Delta f(0)=\overline{\lim}_{r\to 0}\left(\oint_{B_r}f(x)-f(0)dx\right).\]
Without loss of generality, suppose locally near the origin
\[f(x)=\inf_{|\nu|=1}h\left(x+\psi(x)\nu\right),\]
because otherwise $\Delta f(0)=0$.
Choosing an appropriate system of coordinates, we can have
\begin{align*}
    &f(0)=h(\psi(0)e_n);\\
    &\nabla\psi(0)=\alpha e_1+\beta e_n.
\end{align*}

We will evaluate $w$ by above by choosing $\nu(x)=\frac{\nu_*(x)}{|\nu_*(x)|}$ where
\[\nu_*(x):=e_n+\frac{\beta x_1-\alpha x_n}{\psi(0)}e_1+\frac{\gamma}{\psi(0)}\left(\Sigma_{i=2}^{d-1}\,x_i\,e_i\right)\]
where $\gamma$ satisfies
\[(1+\gamma)^2=(1+\beta)^2+\alpha^2.\]
With this choice of $\nu$, we define $y:=x+\psi(x)\nu(x)$ and so
$y(0)=\psi(0)e_n$.
After direct computations (also see \cite{caffarellipart}), we can write
\[y=Y_*(x)+\psi(0)e_n+o(|x|^2)\]
such that the first-order term, except the translation $\varphi(0)e_n$, satisfies
\[Y_*(x):=x+(\alpha x_1+\beta x_n)e_n+(\beta x_1-\alpha)e_1+\gamma\Sigma_{i=1}^d x_ie_i.\]  
Hence $Y_*(x)$ is a rigid rotation plus a dilation and we have
\begin{equation}
    \label{B.1}
     \left|\frac{D (Y_*-x)}{Dx}\right|\leq \sigma\|\nabla\psi\|_\infty.
\end{equation}

Then
\begin{align*}
    \oint_{B_r}f(x)-f(0)dx&\leq 
    \oint_{B_r}h(y(x))-h(y(0))dx\\
    &\leq 
    \oint_{B_r}h(y(x))-h(Y_*(x)+y(0))dx+
    \oint_{B_r}h(Y_*(x)+y(0))-h(y(0))dx. 
\end{align*}
By the condition on $\psi$ and the computations done in Lemma 9 \cite{caffarellipart}, the first term is non-positive. 

Since $h$ is smooth, the second term converges to 
\[\left(\left|\frac{D Y_*}{D x}\right|_{x=0}\right)^2(\Delta h)(y(0))\quad \text{ as }r\to 0.\]
Now using \eqref{B.1} and the assumption that $\Delta h\geq -C$ and $\|\nabla\psi\|_\infty\leq 1$, we get
\begin{align*}
    \oint_{B_r}f(x)-f(0)dx&
    \leq \oint_{B_r}h(Y_*(x)+y(0))-h(y(0))dx\\
    &\leq (1+\sigma \|\nabla\psi\|_\infty)(\Delta h) (y(0))+\sigma \|\nabla\psi\|_\infty C.
\end{align*}
Thus we finished the proof.

\section{Proof of Lemma \ref{lem nabla}}

Let us suppose $x=0$ and $f(0)=h(y)$ for a unique $y$. We only compute $\partial_1 f(0)=\partial_{x_1}f(0)$. If $\nabla h(y)=0$, it is not hard to see
\[\partial_1 f(0)=\partial_1 h(y)=0.\]
Next suppose $\nabla h(y)\ne 0$. We know that $h$ obtains its minimum over $B(0,\psi(0))$ at point $y\in\partial B(0,{\psi(0)})$. Let us assume
\[y=(y_1,y_2,0,...,0),\quad \text{ and thus }|y_1|^2+|y_2|^2=(\psi(0))^2.\] 

For smooth $h$, it is not hard to see that
\[\nabla h(y)=-k y\quad \text{with }k=\frac{|\nabla h|}{\psi(0)}.\] Near point $y$
\[h(x)-h(y)=-ky_1(x_1-y_1)-ky_2(x_2-y_2)+o(|x-y|).\]
To estimate $w((\delta,0,...,0))$, consider the leading terms:
\[A(\delta):=-ky_1(x_1-y_1)-ky_2(x_2-y_2)=-ky_1(x_1-\delta)-ky_2x_2+ky^2_1+ky^2_2-ky_1\delta.\]

By a standard argument, under the constrain
\[|x_1-\delta|^2+|x_2|^2+|x_3|^2+...+|x_n|^2\leq \psi(\delta,0...0)^2,\]
 $A(\delta)$ achieves its minimum at 
\[x_1=y_1\psi(\delta,0...0)/(y^2_1+y^2_2)^{\frac{1}{2}}+\delta,\,x_2=y_2\psi(\delta,0...0)/(y^2_1+y^2_2)^{\frac{1}{2}}\]
with value
\[-k\psi(\delta,0...0)(y^2_1+y^2_2)^{\frac{1}{2}}+ky^2_1+ky^2_2-ky_1\delta=-k\psi(\delta,0...0)\psi(0)+k\psi(0)^2-ky_1\delta.\]
Thus
\[\partial_1 f(0)=\lim_{\delta\to 0}A(\delta)/\delta=-k\psi(0)\,\partial_1 \psi(0)-ky_1.\]
Notice that $\partial_1 h(y)=-ky_1$. So we find
\[\partial_1 f(0)-\partial_1 h(y)=-k\psi(0)\,\partial_1 \psi(0)=-|\nabla h|\,\partial_1 \psi(0).\]
This leads to the conclusion.

%\bibliography{freeboundary}
%\bibliographystyle{plain}

\end{document}